\documentclass[12pt]{article}
\usepackage{amssymb, amsmath, amsthm, amscd}
\usepackage[dvips]{graphics}
\usepackage[utf8]{inputenc}
\usepackage[all,cmtip]{xy}
\usepackage{bbm}
\usepackage{enumitem}
\usepackage{setspace}
\usepackage[colorlinks=true,
            linkcolor=blue,
            urlcolor=blue,
            citecolor=blue]{hyperref}

\addtolength{\textwidth}{4.2cm}
\addtolength{\voffset}{-2.5cm}
\addtolength{\textheight}{3.4cm}
\addtolength{\oddsidemargin}{-2cm}

\begin{document}

\newtheorem{lem}{Lemma}[section]
\newtheorem{pro}[lem]{Proposition}
\newtheorem{defi}[lem]{Definition}
\newtheorem{def/not}[lem]{Definition/Notations}
\newtheorem{thm}[lem]{Theorem}
\newtheorem{ques}[lem]{Question}
\newtheorem{cor}[lem]{Corollary}
\newtheorem{rem}[lem]{Remark}
\newtheorem{rqe}[lem]{Remarks}
\newtheorem{exa}[lem]{Example}
\newtheorem{exas}[lem]{Examples}
\newtheorem{obs}[lem]{Observation}
\newtheorem{corcor}[lem]{Corollary of the corollary}
\newtheorem*{ackn}{Acknowledgements}

\newcommand{\C}{\mathbb{C}}
\newcommand{\R}{\mathbb{R}}
\newcommand{\N}{\mathbb{N}}
\newcommand{\Z}{\mathbb{Z}}
\newcommand{\Q}{\mathbb{Q}}
\newcommand{\Proj}{\mathbb{P}}
\newcommand{\Rc}{\mathcal{R}}
\newcommand{\Oc}{\mathcal{O}}
\newcommand{\diff}{\textit{diff}}

\begin{center}

{\Large\bf  A  variational approach to the quaternionic Hessian equation}

\end{center}
\begin{center}
{\large Hichame Amal \footnote{Department of mathematics, Laboratory LaREAMI, Regional Center of trades of education and training, Kenitra Morocco,  hichameamal@hotmail.com},
 Sa\"{\i}d Asserda \footnote{Ibn tofail university, faculty of sciences, department of mathematics, PO 242 Kenitra Morroco, said.asserda@uit.ac.ma},
 Mohamed Barloub\footnote{Ibn tofail university, faculty of sciences, department of mathematics, PO 242 Kenitra Morroco, mohamed.barloub@uit.ac.ma}
}
\end{center}
\noindent{\small{\bf Abstract.}
In this paper, we introduce finite energy classes of quaternionic $m$-plurisubharmonic  functions of Cegrell type and define the quaternionic $m$-Hessian operator on some Cegrell's classes. We use the variational approach to solve the quaternionic $m$-Hessian equation when the right-hand side is a positive Radon measure.

\noindent{\small{\bf Keywords.}
 Variational approach. Cegrell's class. Quaternionic $m$-subharmonic function.
  Quaternionic $m$-Hessian equation.

\noindent{\small{\bf Mathematics Subject Classification 35A15 . 32U15 . 32U40 . 32W20}
\vspace{1ex}
\section*{Introduction}\label{section:introduction}
 The quaternionic Monge-Amp\`ere
operator is defined as the Moore determinant of the quaternionic Hessian of a function $u$.
 In \cite{A1}, Alesker  proved that $(\Delta \varphi)^{n}=fdV$ is solvable  when $\Omega$ is a strictly pseudoconvex domain, $f\in C(\Omega),\; f\geq 0$ with continuous boundary data $\varphi\in C(\partial \Omega)$ and the solution is  a continuous plurisubharmonic function. For the smooth case, he proved in \cite{A1}
 a result on the existence and the uniqueness of a  smooth plurisubharmonic  solution  of  $(\Delta \varphi)^{n}=fdV$ when $\Omega$ is the euclidean ball in $\mathbb{H}^{n}$ and $f\in C^{\infty}(\Omega) ,\;f> 0,\; \varphi \in C^{\infty} (\partial \Omega)$.
Zhu extended this result in \cite{Zh}
 when $\Omega$ is a bounded strictly pseudoconvex domain in $\mathbb{H}^{n}$ provided the existence of a subsolution.
In \cite{A3}, Alesker defined the Baston operator  to express the quaternionic Monge-Amp\`{e}re operator on quaternionic  manifolds by using methods of complex geometry.
Motivated by this formula Wan and Wang in \cite{WW2} introduced  two first-order differential operators $d_{0}$ and $d_{1}$ which behaves similarly as $\partial,\; \overline{\partial}$ and $\partial\overline{\partial}$  in complex pluripotential theory, and write $\Delta=d_{0}d_{1}.$ Therefore the quaternionic Monge-Amp\`{e}re operator $(\Delta u)^{n}$ has a simpler  explicit expression,
on this observation, some authors established and developed the quaternionic versions of several results in complex pluripotential theory.  Wang and Zhang proved the maximality of locally bounded plurisubharmonic solution to the problem above  with smooth boundary when $\Omega$ is an open set of $\mathbb{H}^{n}$ with $f=0$ and $\varphi \in L_{loc}^{\infty}(\Omega)$. In \cite{SR}, Sroka solved
the Dirichlet problem when the right hand side $f$ is merely in $L^p$ for any $p>2$, which is the optimal bound.
 The H\"{o}lder regularity of the solution  was proved independently by Boukhari in \cite{Bo1} and by Kolodziej and Sroka in \cite{Ko},  when $\Omega$ is strongly pseudoconvex bounded domain in $\mathbb{H}^{n}$ with smooth boundary when $\varphi \in C^{1,1}(\partial\Omega),\;0\leq f \in L^{p}(\Omega)$ for $p> 2.$  In \cite{Bo1}, authors  study the
Dirichlet Problem for the quaternionic Monge-Amp\`ere operator for measures which does not charge pluripolar sets and in \cite{Wa2}, D.Wan  applied a variational method based on pluripotential theory to solve  the quaternionic Monge-Amp\`{e}re equation on hyperconvex domains in $\mathbb{H}^{n}$.
  
 \par The class $\mathcal{QSH}_m(\Omega)$ of $m$-subharmonic functions and the quaternionic $m$-Hessian
operator $(\Delta u)^{m}\wedge \beta^{n-m}$ in a domain $\Omega$ of $\mathbb{H}^{n}$ are introduced independently in \cite{Liu} and \cite{Ba} and  some facts about related pluripotential  are given. In this paper we continue the investigation of the pluripotential theory for complex Hessian equations on a bounded domains of $\mathbb{H}^{n}$. We then considering the
following
 quaternionic $m$-Hessian equation
\begin{equation}\label{eq1}
 (\Delta \varphi)^{m}\wedge \beta^{n-m}= \mu,\;\;\;\;1\leq m \leq n,
\end{equation}
 where  $\mu$ a positive Radon measure and $\beta=\frac{1}{8}\Delta(\Vert q \Vert^{2})$ is the standart K\"{a}hler form in $\mathbb{H}^{n}$.
  
 \par The main goal of the present paper is to use the variational method initiated in \cite{Br1} for the complex Monge–Ampère equation to solve equation (\ref{eq1}). The idea of this method is to discredit the functional whose Hessian equation considered is the Euler-Lagrange equation and to minimize it on a suitable compact set of quatrnionic $m$-sh functions. We then show that this minimum point is the desired solution. In order to solve this equation, we introduce finite energy classes of quaternionic $m$-subharmonic functions of Cegrell type and extend the domain of definition of quaternionic $m$-Hessian operator to some Cegrell's classes, the functions of which are not necessarily bounded.

The paper is organized as follows. In Section \ref{sec1}, we recall some basic facts about quaternionic
$m$-subharmonic functions and the quaternionic $m$-Hessian operators.
 In Section \ref{sec2}, inspired by \cite{C3,C4,Lu2}, we introduce and study  Cegrell's classes for the quaternionic $m$-subharmic functions in a domain in $\mathbb{H}^{n}$ which is the generalizations of Cegrell's classes for the complex $m$-subharmonic functions \cite{Lu2}, we prove that integration by parts is allowed  and  establish some inequalities including the energy estimate for the quaternionic $m$-Hessian operator on these classes.  In Section \ref{sec3}, we develop a variational approach inspired by \cite{Lu2,Wa2} to solve (\ref{eq1}) and prove our main result
 \begin{thm}\label{th00}
Let  $\mu$ be a positive Radon  measure in $\Omega,\;p\geq 1.$
 Then, we have $ (\Delta \varphi)^{m}\wedge \beta^{n-m}= \mu$ with $\varphi \in \mathcal{E}_{m}^{p}(\Omega)$ if and only if   $ \mathcal{E}_{m}^{p}(\Omega)\subset L^{p}(\Omega,\mu).$
\end{thm}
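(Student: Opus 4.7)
\bigskip
\noindent\textbf{Proof plan.} The two implications are of very different nature: one is a direct consequence of the energy inequalities established in Section \ref{sec2}, the other is the main content of the paper and requires the full variational machinery.

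\medskip
\noindent\emph{Necessity.} Assume $\varphi\in\mathcal{E}_m^p(\Omega)$ solves $(\Delta\varphi)^m\wedge\beta^{n-m}=\mu$. For any test function $v\in\mathcal{E}_m^p(\Omega)$, the $p$-energy (H\"older-type) estimate for the quaternionic $m$-Hessian operator on the Cegrell classes, proved in Section \ref{sec2}, yields
\[
\int_\Omega (-v)^p\, d\mu \;=\;\int_\Omega (-v)^p (\Delta\varphi)^m\wedge\beta^{n-m} \;\leq\; C\, e_p(v)^{\alpha}\, e_p(\varphi)^{\beta} \;<\;\infty,
\]
where $e_p$ denotes the $p$-energy functional on $\mathcal{E}_m^p$. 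Hence $v\in L^p(\Omega,\mu)$, and $\mathcal{E}_m^p(\Omega)\subset L^p(\Omega,\mu)$.

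\medskip
\noindent\emph{Sufficiency --- variational approach.} Assuming $\mathcal{E}_m^p(\Omega)\subset L^p(\Omega,\mu)$, I plan to realize $\varphi$ as the minimiser of a free-energy functional whose Euler--Lagrange equation is (\ref{eq1}). Concretely, following \cite{Lu2,Wa2}, I would introduce on $\mathcal{E}_m^p(\Omega)$ the functional
\[
\mathcal{F}_\mu(u) \;=\; \frac{1}{m+1}\int_\Omega (-u)(\Delta u)^m\wedge\beta^{n-m} \;-\; \int_\Omega u\, d\mu
\]
(together with its natural $p$-weighted variant when $p>1$). The inclusion $\mathcal{E}_m^p(\Omega)\subset L^p(\Omega,\mu)$ is precisely what makes the linear term finite on the whole class. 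The strategy is then: (i) show that $\mathcal{F}_\mu$ is proper and lower semi-continuous along decreasing sequences, using the $p$-energy estimate to control the linear term by the nonlinear one; (ii) extract a minimiser $\varphi\in\mathcal{E}_m^p(\Omega)$ after normalisation; (iii) identify $\varphi$ as a solution of (\ref{eq1}) by a first-variation computation.

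\medskip
An alternative (or complementary) route is approximation: truncate $\mu$ to compactly supported bounded measures $\mu_j\nearrow\mu$, solve $(\Delta\varphi_j)^m\wedge\beta^{n-m}=\mu_j$ in $\mathcal{E}_m^0\cap L^\infty$ using the theory recalled in Section \ref{sec1}, use the $p$-energy estimate to obtain $\sup_j e_p(\varphi_j)<\infty$ from $\mathcal{E}_m^p\subset L^p(\mu)$, and pass to a decreasing limit; monotone convergence of Hessian measures on Cegrell classes then gives the desired $\varphi$.

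\medskip
\noindent\emph{Main obstacle.} The hard step is showing that the minimiser $\varphi$ of $\mathcal{F}_\mu$ actually satisfies $(\Delta\varphi)^m\wedge\beta^{n-m}=\mu$. Because $\mathcal{E}_m^p(\Omega)$ is a convex cone but not a vector space, one cannot simply differentiate $\mathcal{F}_\mu$ along the linear perturbation $\varphi+t\psi$. Following the complex pluripotential strategy of Berman--Boucksom and its adaptation to Hessian and quaternionic settings by Lu \cite{Lu2} and Wan \cite{Wa2}, the right replacement is the quaternionic $m$-subharmonic envelope $P_m(\varphi+t\psi) = \sup\{w\in\mathcal{QSH}_m(\Omega): w\leq \varphi+t\psi\}$, and one has to establish a differentiability formula of the form
\[
\frac{d}{dt}\bigg|_{t=0^+}\mathcal{F}_\mu\bigl(P_m(\varphi+t\psi)\bigr) \;=\; \int_\Omega \psi\,\bigl[(\Delta\varphi)^m\wedge\beta^{n-m}-d\mu\bigr]
\]
for every bounded test function $\psi$. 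Proving this identity requires a fine analysis of the contact set $\{P_m(\varphi+t\psi)=\varphi+t\psi\}$, concentration of the Hessian measure of $P_m$ on it, and continuity of the Hessian operator along the family $P_m(\varphi+t\psi)$; this is the technically delicate part and is where I expect the bulk of the work to lie. Once this differentiability is in hand, the minimality of $\varphi$ and the arbitrariness of $\psi$ force the bracket to vanish, concluding the proof.
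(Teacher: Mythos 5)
Your proposal matches the paper's proof in both directions: necessity is obtained exactly as you describe, via the H\"older-type energy estimate of Theorem \ref{th6} (made applicable to all of $\mathcal{E}_m^p(\Omega)$ through the approximating sequences of Lemma \ref{lem11}), and sufficiency is Theorem \ref{th13}, whose proof is precisely your ``complementary'' route --- a Radon--Nikodym decomposition $\mu=f(\Delta\varphi)^m\wedge\beta^{n-m}$, truncation of $f$, solution of the truncated equations by the variational method (minimisation of $\mathcal{F}_\mu$ plus the projection-differentiability formula of Theorem \ref{th10}, which is exactly the obstacle you single out), and a decreasing limit controlled by the uniform $E_p$ bound coming from $\mathcal{E}_m^p(\Omega)\subset L^p(\Omega,\mu)$. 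The only cosmetic difference is that the paper runs the variational minimisation only for the $p=1$ class $\mathcal{M}_1$ and handles $p>1$ entirely by approximation, rather than introducing a $p$-weighted functional.
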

 \section{Prelimenaries}\label{sec1}
  The Baston operator $\Delta$  is the first operator of $0$-Cauchy-Fueter complex on quaternionic manifold:
$$ 0 \longrightarrow  C^{\infty}(\Omega, \mathbb{C})\overset \Delta \longrightarrow  C^{\infty}(\Omega, \wedge^{2}\mathbb{C}^{2n}) \overset D \longrightarrow  C^{\infty}(\Omega, \mathbb{C}^{2}\otimes \wedge^{3}\mathbb{C}^{2n})\longrightarrow  \cdots $$
Alesker defined the quaternionic Monge-Amp\`{e}re operator as the $n$-th power of this
operator  when the manifold is flat. 
Motivated by this formula Wan and Wang in \cite{WW2} introduced  two first-order differential operators $d_{0}$ and $d_{1}$ which behaves similarly as $\partial,\; \overline{\partial}$ and $\partial\overline{\partial}$  in complex pluripotential theory, and write $\Delta=d_{0}d_{1}.$ Therefore the quaternionic Monge-Amp\`{e}re operator $(\Delta u)^{n}$ has a simpler  explicit expression,
 First,we  use the well
known embedding of the quaternionic algebra $ \mathbb{H}$ into $ End(\mathbb{C}^{2})$ defined by:
$$\;\; x_{0}+ix_{1}+jx_{2}+kx_{3} \longrightarrow \begin{pmatrix}
x_{0}+ix_{1} &-x_{2}-ix_{3}  \\
x_{2}-ix_{3} & x_{0}-ix_{1}
\end{pmatrix},$$
 and the conjugate embedding
$$\begin{array}{c}
\;\; \tau : \mathbb{H}^{n} \cong \mathbb{R}^{4n} \hookrightarrow \mathbb{C}^{2n \times 2} \\
(q_{0},\cdots ,q_{n-1})\mapsto (z^{j\alpha})\in \mathbb{C}^{2n \times 2}
\end{array}$$
$q_{l}=x_{4l}+ix_{4l+1}+jx_{4l+2}+kx_{4l+3}\;,l=0,1,\cdots,n-1\;\;\alpha=0,1$ with
  \begin{equation}\label{1}
  \begin{pmatrix}

z^{(2l)0} & z^{(2l)1}  \\
z^{(2l+1)0} & z^{(2l+1)1}  \\
\end{pmatrix}:=
\begin{pmatrix}

x_{4l} - ix_{4l+1} & -x_{4l+2} + ix_{4l+3} \\
x_{4l+2} + ix_{4l+3} & x_{4l} + ix_{4l+1} \\

\end{pmatrix}.
\end{equation}
Pulling back to the quaternionic space $ \mathbb{H}^{n} \cong \mathbb{R}^{4n} $ by  (\ref{1}), we define on $\mathbb{R}^{4n}$ first-order
differential operators $ \bigtriangledown_{j\alpha} $ as follows:
\begin{equation}\label{4}
 \begin{pmatrix}
 
\bigtriangledown_{(2l)0} & \bigtriangledown_{(2l)1}  \\
\bigtriangledown_{(2l+1)0} & \bigtriangledown_{(2l+1)1}  \\

\end{pmatrix}:=
\begin{pmatrix}

\partial_{x_{4l}} + i\partial_{x_{4l+1}} & -\partial_{x_{4l+2}} - i\partial_{x_{4l+3}} \\
\partial_{x_{4l+2}} - i\partial_{x_{4l+3}} & \partial_{x_{4l}}- i\partial_{x_{4l+1}} \\

\end{pmatrix}
\end{equation}
 The Baston operator is given by the determinants of $(2 \times 2)$-submatrices above.
Let $\wedge^{2k}\mathbb{C}^{2n}$ be the complex exterior algebra generated by $\mathbb{C}^{2n} , 0 \leq k \leq n$.
 Fix a basis $\lbrace \omega^{0},\omega^{1}\cdots,\omega^{2n-1}\rbrace$ of $\mathbb{C}^{2n}$. Let $\Omega$ be a domain in $\mathbb{R}^{4n}$. We define $$d_{0},d_{1} : C_{0}^{\infty}(\Omega,\wedge^{p}\mathbb{C}^{2n}) \longrightarrow C_{0}^{\infty}(\Omega,\wedge^{p+1}\mathbb{C}^{2n})\;\;\mbox{ by}\;$$
$$d_{0}F :=\sum_{k,I}\bigtriangledown_{k0}f_{I}\omega^{k} \wedge\omega^{I}$$
$$d_{1}F :=\sum_{k,I}\bigtriangledown_{k1}f_{I}\omega^{k} \wedge\omega^{I}$$ and
$$\Delta F:=d_{0}d_{1}F$$ for $F=\sum_{I}f_{I}\omega^{I}\in C_{0}^{\infty}(\Omega,\wedge^{p}\mathbb{C}^{2n}),$
 where $ \omega^{I}:= \omega^{i_{1}} \wedge \ldots  \wedge \omega^{i_{p}} $ for the multi-index $I = (i_{1},\ldots,i_{p})$.
The operators $d_{0}$ and $ d_{1}$ depend on the choice of the coordinates $ x_{j}$’s and the basis $\lbrace \omega^{j}\rbrace$.

 It is known (cf.\cite{WW2}) that the second operator $D$ in the 0-Cauchy-Fueter complex can be written as $DF:=\left(
                                                          \begin{array}{c}
                                                            d_{0}F \\
                                                            d_{1}F \\
                                                          \end{array}
                                                        \right).$\\
   Although $d_{0},d_{1}$  are not exterior differential, their behavior is similar to exterior differential:
   \begin{lem}\label{lem0}
   $d_{0}d_{1}=-d_{1}d_{0}$,  $d_{0}^{2}=d_{1}^{2}=0$; for $F\in C^{\infty}_{0}(\Omega,\wedge^{p}\mathbb{C}^{2n}),$ $G\in C^{\infty}_{0}(\Omega,\wedge^{q}\mathbb{C}^{2n}),$ we have  \begin{eqnarray}\label{eq3}
   % \nonumber to remove numbering (before each equation)
   d_{\alpha}(F\wedge G)=d_{\alpha}F\wedge G+(-1)^{p}F\wedge d_{\alpha}G, \        \ \alpha=0,1, \ \ d_{0}\Delta=d_{1}\Delta=0
   \end{eqnarray}
   \end{lem}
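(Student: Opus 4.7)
The plan is to reduce every assertion to a direct computation from the definitions of $d_0$ and $d_1$. The single key observation is that each $\nabla_{k\alpha}$ from (\ref{4}) is a constant-coefficient linear combination of partial derivatives $\partial_{x_j}$, so $\nabla_{j\alpha}\nabla_{k\beta}=\nabla_{k\beta}\nabla_{j\alpha}$ for every choice of indices. Once this commutativity is recorded, the antisymmetry of the wedge product does all the remaining work.

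First I would address $d_\alpha^2=0$ by expanding $d_\alpha^2F=\sum_{j,k,I}\nabla_{j\alpha}\nabla_{k\alpha}f_I\,\omega^j\wedge\omega^k\wedge\omega^I$ and noting that the coefficient is symmetric in $(j,k)$ while $\omega^j\wedge\omega^k$ is antisymmetric, so the sum vanishes. For $d_0d_1+d_1d_0$ the same computation yields $\sum_{j,k,I}(\nabla_{j0}\nabla_{k1}+\nabla_{j1}\nabla_{k0})f_I\,\omega^j\wedge\omega^k\wedge\omega^I$, which I would kill by relabeling $j\leftrightarrow k$ in the second summand and applying $\omega^j\wedge\omega^k=-\omega^k\wedge\omega^j$. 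For the graded Leibniz rule I would expand $d_\alpha(F\wedge G)=\sum_{k,I,J}\nabla_{k\alpha}(f_Ig_J)\,\omega^k\wedge\omega^I\wedge\omega^J$, split using the product rule on $\nabla_{k\alpha}$, and in the second piece slide $\omega^k$ past the $p$-form $\omega^I$, which contributes the sign $(-1)^p$ and reassembles the expression into $(-1)^pF\wedge d_\alpha G$.

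The remaining identities $d_0\Delta=d_1\Delta=0$ are then purely formal consequences of the relations just obtained: $d_0\Delta=d_0^2 d_1=0$ by $d_0^2=0$, and $d_1\Delta=d_1d_0d_1=-d_0d_1^2=0$ by using $d_1d_0=-d_0d_1$ followed by $d_1^2=0$. The whole lemma is therefore elementary; the only structural fact being used is the constant-coefficient nature of the $\nabla_{k\alpha}$, and I do not expect any substantive obstacle beyond careful sign bookkeeping in the exterior algebra.
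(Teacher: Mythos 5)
Your computation is correct and complete: the commutativity of the constant-coefficient operators $\nabla_{j\alpha}$ combined with the antisymmetry of the wedge product is exactly what drives all four identities, and the formal deductions $d_0\Delta=d_0^2d_1=0$ and $d_1\Delta=d_1d_0d_1=-d_0d_1^2=0$ are sound. The paper itself offers no proof here (it recalls the lemma from Wan--Wang \cite{WW2}), and your argument is the same direct verification given there, so there is nothing to add.
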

   We say $F$ is closed if $d_{0}F=d_{1}F=0,$ ie, $DF=0.$ For $u_{1},u_{2},\ldots,u_{n}\in C^{2},$ $\Delta u_{1}\wedge\ldots\wedge\Delta u_{k}$ is closed, with $k=1,\ldots,n.$
   Moreover, it follows easily from (\ref{4}) that $ \Delta u_{1}\wedge\ldots\wedge\Delta u_{n}$ satisfies the following remarkable identities:
   $$\Delta u_{1}\wedge\ldots\wedge\Delta u_{n}=d_{0}(d_{1}u_{1}\wedge\Delta u_{2}\wedge\ldots\wedge\Delta u_{n})=-d_{1}(d_{0}u_{1}\wedge\Delta u_{2}\wedge\ldots\wedge\Delta u_{n})$$
$$=d_{0}d_{1}(u_{1}\wedge\Delta u_{2}\wedge\ldots\wedge\Delta u_{n})=\Delta(u_{1}\wedge\triangle u_{2}\wedge\ldots\wedge\Delta u_{n}).$$
To write down the explicit expression, we define for a function $u\in C^{2},$
$$\Delta_{ij}u:= \frac{1}{2}(\nabla_{i0}\nabla_{j1}u-\nabla_{i1}\nabla_{j0}u).$$
$2\Delta_{ij}$ is the determinant of $(2\times 2)$-matrix of $i$-th and $j$-th rows of (\ref{4}).
 Then we can write
 \begin{equation}\label{5}
 \Delta u=\sum_{i,j=0}^{2n-1}\Delta_{ij}u\omega^{i}\wedge\omega^{j},
 \end{equation}
and for $u_{1},\ldots,u_{n}\in C^{2}$
\begin{equation}
\begin{array}{ll}
\Delta u_{1}\wedge\ldots\wedge\Delta u_{n}&=\sum_{i_{1},j_{1},\ldots}\Delta_{i_{1}j_{1}}u_{1}\ldots\Delta_{i_{n}j_{n}}u_{n}\omega^{i_{1}}\wedge\omega^{j_{1}}\ldots\wedge\omega^{i_{n}}\wedge\omega^{j_{n}}\\
&=\sum_{i_{1},j_{1},\ldots}\delta_{01\ldots(2n-1)}^{i_{1},j_{1}\ldots i_{n}j_{n}}\Delta_{i_{1}j_{1}}u_{1}\ldots\Delta_{i_{n}j_{n}}u_{n}\Omega_{2n},\\
\end{array}
\end{equation}
 where $$\Omega_{2n}=\omega^{0}\wedge \omega^{1}\wedge\ldots\wedge\omega^{2n-1}$$
 and $\delta_{01\ldots(2n-1)}^{i_{1},j_{1}\ldots i_{n}j_{n}}$:= the sign of the permutation from $(i_{1},j_{1},\ldots,i_{n},j_{n})$ to $(0,1,\ldots,2n-1),$
 if
$\{i_{1},j_{1}...,i_{n},j_{n}\}=\{0,1,...,2n-1\};$ otherwise, $\delta_{01..(2n-1)}^{i_{1}j_{1}..i_{n}j_{n}}=0.$ Note that $ \Delta u_{1}\wedge\ldots\wedge\Delta u_{n}$ is symmetric with respect to the permutation of $u_{1},...,u_{n}.$ In particulier, when $u_{1}=...=u_{n}=u,\;  \Delta u_{1}\wedge\ldots\wedge\Delta u_{n}$ coincides with $(\Delta u)^{n}=\wedge^{n}\Delta u.$\\
We denote by $\Delta_{n}( u_{1},..., u_{n})$ the coefficient of the form $ \Delta u_{1}\wedge\ldots\wedge\Delta u_{n},$ ie, $\Delta u_{1}\wedge\ldots\wedge\Delta u_{n}=\Delta_{n}( u_{1},..., u_{n})\Omega_{2n}.$ Then $\Delta_{n}( u_{1},..., u_{n})$ coincides with the mixed Monge-Amp\`{e}re operator $\det( u_{1},..., u_{n})$ while $\Delta_{n}u$ coincides with the quaternionic Monge-Amp\`{e}re operator $\det(u).$ See  \cite[Appendix A]{Wa1}.\\
The notions of quaternionic closed positive forms and closed positive
currents has been defined and detailed in \cite{Wa1,WW2,WZ} .
\begin{lem}{(Stokes type formula,\cite[Lemma 3.2 ]{WW2})}\label{lem1}

Assume that $T$ is a smooth $(2n-1)$-form in $ \Omega$, and
$h$ is a smooth function with $h = 0$ on $\partial \Omega$. Then we have
$\int_{\Omega}hd_{\alpha}T=-\int_{\Omega}d_{\alpha}h\wedge T\;\;\;\;\alpha=0,1.$
\end{lem}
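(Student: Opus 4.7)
The plan is to combine the Leibniz rule for $d_\alpha$ recorded in Lemma~\ref{lem0} with the classical divergence theorem on $\mathbb{R}^{4n}$. Applying (\ref{eq3}) pointwise with $F=h$ (a $0$-form, so $(-1)^{p}=+1$) and $G=T$ gives
$$h\,d_\alpha T\;=\;d_\alpha(hT)-d_\alpha h\wedge T,$$
so after integrating over $\Omega$ the identity to be proved reduces to the single claim
$$\int_\Omega d_\alpha(hT)\;=\;0.$$

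To establish this, expand $T=\sum_{|J|=2n-1}f_J\,\omega^J$. By the very definition of $d_\alpha$,
$$d_\alpha(hT)\;=\;\sum_{k,\,J}\nabla_{k\alpha}(hf_J)\,\omega^k\wedge\omega^J.$$
Since $|J|=2n-1$ and the $\omega^j$'s anti-commute, the wedge $\omega^k\wedge\omega^J$ vanishes unless $k$ is the unique index missing from $J$, in which case it equals $\pm\,\Omega_{2n}$. Hence $d_\alpha(hT)=g\,\Omega_{2n}$ for an explicit scalar $g$ which, by the formulas (\ref{4}) for the operators $\nabla_{k\alpha}$, is a finite complex-linear combination of ordinary partial derivatives $\partial_{x_j}(hf_J)$. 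Under the convention that a top-degree form is integrated by extracting its coefficient of $\Omega_{2n}$ against Lebesgue measure on $\mathbb{R}^{4n}$, it suffices to show $\int_\Omega\partial_{x_j}(hf_J)\,dV=0$ for each $j$ and $J$. This is immediate from the divergence theorem, the smoothness of $hf_J$ on $\overline\Omega$ and the vanishing $hf_J|_{\partial\Omega}=0$, which follows from the hypothesis $h|_{\partial\Omega}=0$.

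The only real technicality is boundary regularity: $T$ is assumed smooth only on the open set $\Omega$, so its coefficients $f_J$ may a priori fail to extend smoothly to $\overline\Omega$, and one cannot apply the divergence theorem on $\Omega$ verbatim. The standard remedy is to fix a smooth exhaustion $\Omega_\varepsilon\Subset\Omega$ with $\Omega_\varepsilon\uparrow\Omega$, run the above argument on $\Omega_\varepsilon$ (where the divergence theorem applies cleanly and produces boundary contributions on $\partial\Omega_\varepsilon$), and then use $h|_{\partial\Omega}=0$ together with dominated convergence to send the boundary terms to zero as $\varepsilon\to0$. Once this approximation is in place, all remaining steps are formal manipulations of the definitions of $d_\alpha$, $\wedge$ and $\Omega_{2n}$, and the Stokes-type formula follows.
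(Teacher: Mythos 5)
The paper offers no proof of this lemma at all: it is imported verbatim from \cite[Lemma 3.2]{WW2}, so there is nothing internal to compare against. Your argument is the standard (and surely the intended) one — use the Leibniz rule of Lemma \ref{lem0} to reduce the identity to $\int_{\Omega}d_{\alpha}(hT)=0$, observe that $\omega^{k}\wedge\omega^{J}$ survives only for the unique missing index so that $d_{\alpha}(hT)$ is a scalar multiple of $\Omega_{2n}$ whose coefficient is a complex linear combination of genuine partial derivatives $\partial_{x_{j}}(hf_{J})$, and then apply the divergence theorem using $h|_{\partial\Omega}=0$ — and it is correct. The one point to tighten is your closing paragraph: the exhaustion-plus-dominated-convergence step does not actually repair the difficulty you raise, because if the $f_{J}$ are smooth only on the open set $\Omega$ they may be unbounded near $\partial\Omega$, in which case the surface integrals over $\partial\Omega_{\varepsilon}$ need not tend to zero and $\partial_{x_{j}}(hf_{J})$ need not even be integrable. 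The lemma, as in \cite{WW2}, is implicitly stated for data regular up to $\overline{\Omega}$ (this is also how it is used in the paper, where $T$ is built from smooth or regularized potentials); under that reading the divergence theorem applies directly and no exhaustion is needed.
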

The theory of Bedford-Taylor \cite{Be3} in complex analysis can be generalized to the quaternionic case. Let $u$ be a locally bounded psh function and let $T$ be a closed positive $2k$-current. Define $$\Delta u\wedge T:=\Delta(uT),$$ i.e., $(\Delta u\wedge T)(\eta):=uT(\Delta\eta)$ for test form $\eta.$ $\Delta u\wedge T$ is  also a closed positive current. Inductively, for $u_{1},\ldots,u_{p}\in\mathcal{QPSH}(\Omega)\cap L_{loc}^{\infty}(\Omega),$ Wan and Wang in \cite{WW2} showed that
$$\Delta u_{1}\wedge\ldots\wedge\Delta u_{p}:=\Delta(u_{1}\Delta u_{2}\wedge\ldots\wedge\Delta u_{p})$$ is closed  positive $2p$-current. In particular, for $u_{1},\ldots,u_{n}\in 
\mathcal{QPSH}(\Omega)\cap L_{loc}^{\infty}(\Omega),$ $\Delta u_{1}\wedge\ldots\wedge\Delta u_{n}=\mu\Omega_{2n}$ for a well-defined positive Radon measure $\mu.$\\
For any test $(2n-2p)$-form $\psi$ on $\Omega,$ we have $\displaystyle{\int}_{\Omega}\Delta u_{1}\wedge\ldots\wedge\Delta u_{p}\wedge\psi=\displaystyle{\int}_{\Omega}u_{1}\Delta u_{2}\wedge\ldots\wedge\Delta u_{p}\wedge\Delta\psi,$
where $u_{1},\ldots,u_{p}\in \mathcal{QPSH}(\Omega)\cap L_{loc}^{\infty}(\Omega).$
Given a bounded plurisubharmonic function $u$ one can define the quaternionic Monge-Amp\`{e}re measure
$$ (\Delta u)^{n}=\Delta u\wedge\Delta u\wedge\ldots\wedge\Delta u.$$
This is a nonnegative Borel measure.

\subsection{The quaternionic $m$-subharmonic functions}
Let $\beta=\frac{1}{8}\Delta(\Vert q \Vert^{2})$ denotes the standard K\"{a}hler form in $\mathbb{H}^{n}$ and
Let $$\widehat{\Gamma}_{m}
  :=\lbrace \alpha \in \wedge_{\mathbb{R}}^{2}\mathbb{C}^{2n} \;\; / \alpha \wedge \beta^{n-1} \geq 0,\alpha^{2} \wedge \beta^{n-2} \geq 0,\ldots ,\alpha^{m} \wedge \beta^{n-m} \geq 0 \rbrace, $$
where $\wedge_{\mathbb{R}}^{2}\mathbb{C}^{2n}$ is the space of all real $2$-forms in quaternion analysis.
\begin{defi}
  A $(2n-2k)$-current $T$ $(k\leq m)$ is called $m$-positive if for $\alpha_{1},\ldots,\alpha_{k}\in \widehat{\Gamma}_{m}$, we have
\begin{equation}
\alpha_{1}\wedge \ldots \wedge\alpha_{k}\wedge T \geq 0. \label{2}
\end{equation}
 \end{defi}
   \begin{defi}
 A real valued function $u:\Omega\subset \mathbb{H}^{n}\rightarrow \mathbb{R}\cup \lbrace-\infty\rbrace$  is called $m$-subharmonic  if it is subharmonic and, for any
 $\alpha_{1},\cdots,\alpha_{m-1}$ in $\widehat{\Gamma}_{m}$
 \begin{equation}\label{eq10}
  \Delta u\wedge\alpha_{1}\wedge\cdots\wedge \alpha_{m-1}\wedge \beta ^{n-m} \geq0.
 \end{equation}
The inequality (\ref{eq10}) says that $\Delta u \wedge \beta^{n-m}$ is $m$-positive.\\
 The class of all quaternionic $m$-subhaharmonic functions in $\Omega$ is denoted by $\mathcal{QSH}_{m}(\Omega)$.

 \end{defi}
 \begin{pro}\cite[proposition 4.3]{Liu}\label{pro1}
 Let $\Omega$ be an open subset of $\mathbb{H}^{n}$.
 \begin{enumerate}
 \item[1)] If $\alpha,\;\beta$ are non-negative numbers and $u,v \in  \mathcal{QSH}_{m}(\Omega)$, then $\alpha u+ \beta v  \in \mathcal{QSH}_{m}(\Omega);$ and $\max\lbrace u,v\rbrace \in \mathcal{QSH}_{m}(\Omega).$
 \item[2)]If $\Omega$ is connected and $\lbrace u_{j}\rbrace \subset  \mathcal{QSH}_{m}(\Omega)$ is a decreasing sequence, then $u=\displaystyle{\lim_{j\longrightarrow \infty}}u_{j} \in \mathcal{QSH}_{m}(\Omega)$ or $u\equiv -\infty.$
 \item[3)] If $u  \in \mathcal{QSH}_{m}(\Omega)$ and $ \gamma:\mathbb{R}\longrightarrow \mathbb{R}$ is a convex increasing function, then $\gamma \circ u  \in \mathcal{QSH}_{m}(\Omega).$
 \item[4)] If $u  \in \mathcal{QSH}_{m}(\Omega)$, then the standard regularization $u*\rho_{\epsilon} \in \mathcal{QSH}_{m}(\Omega_{\epsilon})$, where $\Omega_{\epsilon}:=\lbrace z/dis(z,\partial\Omega)> \epsilon\rbrace \;\;0< \epsilon \ll 1$.
 \item[5)] If $\lbrace u_{j}\rbrace \subset  \mathcal{QSH}_{m}(\Omega)$ is locally uniformly bounded from above, then $(\sup_{j}u_{j})^{*}  \in \mathcal{QSH}_{m}(\Omega)$, where $v^{*}$ denotes the regularization of $v$.
 \item[6)] Let $ \omega$ be a non-empty proper open subset of $ \Omega,\; u  \in \mathcal{QSH}_{m}(\Omega),\;v  \in \mathcal{QSH}_{m}(\omega)$ and $\displaystyle{\limsup_{q\longrightarrow \varsigma}}v(q)\leq u(\varsigma)$ for each $\varsigma \in  \partial\omega \cap \Omega$, then
 \begin{equation*}
  W:=  \left \{
 \begin{array}{ll}
 \max\lbrace u,v\rbrace,\;\;\;in\;\; \omega \\
 u, \;\;\;\;\;\;\;\;\;in\;\; \Omega\setminus\omega \\
\end{array}
  \in \mathcal{QSH}_{m}(\Omega).\right.
   \end{equation*}
\end{enumerate}
 \end{pro}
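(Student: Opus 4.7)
The plan is to verify the six closure properties by the template of complex pluripotential theory: combine linearity of $\Delta$, smooth approximation via convolution, and weak continuity of currents. I would handle items (1), (2), and (4) first since they supply the algebraic closure and the approximation machinery needed for the rest. For (1), the cone $\widehat{\Gamma}_{m}$ is convex and $\Delta$ is linear, so $\Delta(\alpha u+\beta v)\wedge\alpha_{1}\wedge\cdots\wedge\alpha_{m-1}\wedge\beta^{n-m}$ is a conic combination of two non-negative currents; for the maximum I would first treat the $\mathcal{C}^{2}$ case by observing that $\max(u,v)$ is the decreasing limit of a smoothed max $\max_{\varepsilon}$ that is pointwise $m$-subharmonic, then reduce the general case to the smooth one by convolution and (2). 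Item (2) follows from $\Delta u_{j}\to\Delta u$ weakly as currents when $u_{j}\downarrow u\not\equiv -\infty$, since wedging with the fixed smooth form $\alpha_{1}\wedge\cdots\wedge\alpha_{m-1}\wedge\beta^{n-m}$ is weakly continuous and preserves the inequality. Item (4) rests on $\Delta(u*\rho_{\varepsilon})=(\Delta u)*\rho_{\varepsilon}$ (because $d_{0}$ and $d_{1}$ have constant coefficients); averaging the $m$-positive current against the non-negative kernel $\rho_{\varepsilon}$ and pulling the constant test forms $\alpha_{i}$ outside the convolution integral then preserves positivity by Fubini.

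For (3), a direct calculation using Lemma~\ref{lem0} in the smooth case yields the chain rule
\[
\Delta(\gamma\circ u)=\gamma''(u)\,d_{0}u\wedge d_{1}u+\gamma'(u)\,\Delta u,
\]
and wedging with $\alpha_{1}\wedge\cdots\wedge\alpha_{m-1}\wedge\beta^{n-m}$, the second summand is non-negative because $\gamma'\ge 0$ and $u\in\mathcal{QSH}_{m}$, while the first is non-negative because $\gamma''\ge 0$ and the ``real'' form $d_{0}u\wedge d_{1}u$ is pointwise $m$-positive (the quaternionic analogue of $\partial u\wedge\overline{\partial}u\ge 0$, which can be read off the matrix presentation (\ref{4})). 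The non-smooth case is then recovered by regularizing via (4) and passing to the limit via (2), using that $\gamma\circ(u*\rho_{\varepsilon})\downarrow\gamma\circ u$ since $\gamma$ is increasing.

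Items (5) and (6) are where the main obstacle lies. For (5), Choquet's lemma reduces the family to a countable sub-family with the same u.s.c.\ envelope; forming $v_{N}:=\max_{j\le N}u_{j}\in\mathcal{QSH}_{m}$ by (1) and taking the u.s.c.\ regularization of $\sup_{N}v_{N}$, a Hartogs-type argument combined with (4) delivers $m$-subharmonicity of the envelope. The truly delicate point is (6): away from $\partial\omega\cap\Omega$ the function $W$ agrees either with $u$ or with $\max(u,v)$ and is $m$-subharmonic by (1), but at a boundary point $\varsigma\in\partial\omega\cap\Omega$ one must verify the defining current inequality. Upper semicontinuity of $W$ near $\varsigma$ follows from the $\limsup$ hypothesis together with upper semicontinuity of $u$. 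For $m$-subharmonicity I would extend $v$ by setting $\bar v:=u$ on $\Omega\setminus\omega$, check that $\bar v$ is u.s.c.\ on $\Omega$, and establish $\bar v\in\mathcal{QSH}_{m}(\Omega)$ by realizing it as a decreasing limit of smooth $m$-subharmonic functions obtained by matching $v*\rho_{\varepsilon}$ inside a shrinking collar of $\partial\omega$ with $u*\rho_{\varepsilon}$ outside; once this is done, $W=\max(u,\bar v)\in\mathcal{QSH}_{m}(\Omega)$ by (1). The crux is verifying the local current inequality across $\partial\omega\cap\Omega$, where the $\limsup$ hypothesis is essential.
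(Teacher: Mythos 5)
The paper itself does not prove Proposition \ref{pro1}: it is quoted verbatim from \cite[Proposition 4.3]{Liu} and the burden of proof is delegated to that reference, so there is no internal argument to compare yours against. Judged on its own terms, your outline for items (1)--(5) follows the standard pluripotential template and is essentially sound, but two points deserve to be made explicit rather than assumed. First, both your smoothed-max argument in (1) and the term $\gamma''(u)\,d_{0}u\wedge d_{1}u$ in (3) rest on the positivity
$d_{0}u\wedge d_{1}u\wedge\alpha_{1}\wedge\cdots\wedge\alpha_{m-1}\wedge\beta^{n-m}\geq 0$ for $\alpha_{i}\in\widehat{\Gamma}_{m}$; this is true (it combines the strong positivity of $d_0u\wedge d_1u$ from \cite[Lemma 3.1]{Wa1} with a G\aa rding-type duality for the cone $\widehat{\Gamma}_{m}$, the same fact the paper invokes in Proposition \ref{pro111}), but it is not automatic from the definition of $\widehat{\Gamma}_{m}$, whose elements need not be positive forms. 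You should cite or prove it, since the smoothed maximum is a convex nondecreasing function of the pair $(u,v)$ and its $m$-subharmonicity is exactly a two-variable version of (3).

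The genuine gap is in (6). Your plan to build the extension by ``matching $v*\rho_{\varepsilon}$ inside a shrinking collar of $\partial\omega$ with $u*\rho_{\varepsilon}$ outside'' does not work as stated: $v$ is only defined on $\omega$, so $v*\rho_{\varepsilon}$ is not defined on a full neighborhood of $\partial\omega\cap\Omega$, and there is no canonical way to ``match'' the two mollifications across the collar while preserving the current inequality --- which is precisely the point you concede is the crux. The standard repair avoids any smoothing across $\partial\omega$: for $\delta>0$ set $W_{\delta}:=\max(u,v-\delta)$ on $\omega$ and $W_{\delta}:=u$ on $\Omega\setminus\omega$. The hypothesis $\limsup_{q\to\varsigma}v(q)\leq u(\varsigma)$ forces $v-\delta<u$ on a neighborhood (in $\omega$) of every point of $\partial\omega\cap\Omega$, so each point of $\Omega$ has a neighborhood on which $W_{\delta}$ coincides either with $u$ or with $\max(u,v-\delta)$; hence $W_{\delta}\in\mathcal{QSH}_{m}(\Omega)$ by (1) and locality. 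As $\delta\downarrow 0$ the family $W_{\delta}$ increases to $W$, and $W$ is upper semi-continuous (again by the $\limsup$ hypothesis), so $W=(\sup_{\delta}W_{\delta})^{*}$ and (5) gives $W\in\mathcal{QSH}_{m}(\Omega)$. Substituting this for the collar construction closes the argument.
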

 \begin{lem}\label{lem2}
 \begin{enumerate}
 \item[1)] 
Let $v^{0},\ldots,v^{k}\in \mathcal{QSH}_{m}(\Omega)\cap L_{loc}^{\infty}(\Omega)$, let $(v^{0})_{j},\ldots,(v^{k})_{j}$ be a decreasing sequences of $m$-subharmonic functions in $\Omega$ such that $\displaystyle{\lim _{j\rightarrow \infty}}v_{j}^{t}=v^{t}$ point-wisely in $\Omega$ for $t=0,\ldots,k$ . Then for $ k\leq m \leq n$, the currents $v_{j}^{0}\Delta v_{j}^{1}\wedge \ldots \wedge \Delta v_{j}^{k}\wedge \beta^{n-m}$ converge weakly to $ v^{0}\Delta v^{1}\wedge \ldots \wedge \Delta v^{k}\wedge \beta^{n-m}$ as $j$ tends to $\infty$.
\item[2)] Let $\lbrace u_{j}\rbrace_{j\in \mathbb{N}}$ be a locally uniformly bounded sequence in $\mathcal{QSH}_{m}(\Omega)\cap L_{loc}^{\infty}(\Omega)$ that increases to $u \in \mathcal{QSH}_{m}(\Omega)\cap L_{loc}^{\infty}
(\Omega)$ almost every where in $\Omega$ and let $v^{1},\ldots,v^{m} \in \mathcal{QSH}_{m}(\Omega)\cap L_{loc}^{\infty}(\Omega)$. Then the currents $u_{j}\Delta v^{1}\wedge \ldots \wedge \Delta v^{m}\wedge \beta^{n-m}$ converge to $u\Delta v^{1}\wedge \ldots \wedge \Delta v^{m}\wedge \beta^{n-m}$ as $j\rightarrow\infty.$
\item[3)]  Let $\lbrace u_{j}\rbrace_{j \in \mathbb{N}}$ be a sequence in $\mathcal{QSH}_{m}(\Omega)\cap L_{loc}^{\infty}(\Omega)
$ that increases to $u \in \mathcal{QSH}_{m}(\Omega) \cap L_{loc}^{\infty}(\Omega)$ almost everywhere in $\Omega$ (with respect to Lebesgue measure). Then the currents $(\Delta u_{j})^{m}\wedge \beta^{n-m}$ converge weakly to $(\Delta u)^{m}\wedge \beta^{n-m}$ as $j\rightarrow \infty.$
 \end{enumerate}
 \end{lem}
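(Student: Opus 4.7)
The plan is to follow the standard Bedford--Taylor strategy adapted to the quaternionic $m$-sh setting: all three statements are monotone convergence results for wedge products involving $\Delta$, and they are the direct quaternionic analogues of the classical convergence theorems used in \cite{Lu2} for the complex $m$-sh case. The three essential tools are the Stokes type formula (Lemma~\ref{lem1}), the Leibniz rule and closedness identities of Lemma~\ref{lem0} (which together allow us to commute a closed current across $\Delta = d_0 d_1$ by integration by parts), and the $L_{loc}^{1}$ convergence of monotone sequences of $m$-sh functions to their limit.

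For (1) I would argue by induction on $k$. The base case $k=0$ follows from the dominated convergence theorem, since $v^{0}\le v_{j}^{0}\le v_{0}^{0}\in L_{loc}^{\infty}$. For the inductive step, pair $v_{j}^{0}\,\Delta v_{j}^{1}\wedge\cdots\wedge\Delta v_{j}^{k}\wedge\beta^{n-m}$ with a smooth compactly supported test form $\eta$ and, setting $T_{j}:=\Delta v_{j}^{2}\wedge\cdots\wedge\Delta v_{j}^{k}\wedge\beta^{n-m}$, apply Lemma~\ref{lem1} twice (using that $T_{j}$ and $\beta$ are closed) to obtain, up to a sign,
\[
\int v_{j}^{0}\,\Delta v_{j}^{1}\wedge T_{j}\wedge\eta \;=\; \int v_{j}^{1}\,\Delta(v_{j}^{0}\,\eta)\wedge T_{j}.
\]
Expanding $\Delta(v_{j}^{0}\eta)$ via the Leibniz rule of Lemma~\ref{lem0} produces a sum of terms each of which is handled either by the inductive hypothesis (for the remaining $k-1$ factors of $\Delta v$) or by $L_{loc}^{1}$ convergence of the decreasing bounded sequences $v_{j}^{0},v_{j}^{1}$ paired against the locally uniformly bounded masses $T_{j}$.

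For (2), fix a test form $\eta$. Since $u_{j}\nearrow u$ almost everywhere and the sequence is locally uniformly bounded, and since $\Delta v^{1}\wedge\cdots\wedge\Delta v^{m}\wedge\beta^{n-m}$ is a well-defined positive Radon measure (a scalar multiple of $\Omega_{2n}$) by the Bedford--Taylor construction recalled just before the statement, Lebesgue's dominated convergence theorem yields
\[
\int u_{j}\,\Delta v^{1}\wedge\cdots\wedge\Delta v^{m}\wedge\beta^{n-m}\wedge\eta \;\longrightarrow\; \int u\,\Delta v^{1}\wedge\cdots\wedge\Delta v^{m}\wedge\beta^{n-m}\wedge\eta,
\]
after splitting $\eta$ into a difference of nonnegative test forms. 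For (3) I would proceed by induction on $m$: the case $m=1$ is part (2), and for the inductive step I use Lemma~\ref{lem1} to write
\[
\int(\Delta u_{j})^{m}\wedge\beta^{n-m}\wedge\eta \;=\; \int u_{j}\,(\Delta u_{j})^{m-1}\wedge\beta^{n-m}\wedge\Delta\eta,
\]
and then pass to the limit on the right-hand side using the inductive hypothesis together with the quasi-continuity of locally bounded quaternionic $m$-sh functions: for any $\varepsilon>0$ there is an open set of arbitrarily small $m$-capacity outside of which $u$ is continuous, so that by Dini's theorem $u_{j}\to u$ uniformly off this set, while the contribution from the exceptional set is controlled by the uniform local $L^{\infty}$ bound and the capacity.

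The main obstacle is precisely this last step of (3): the integrand $u_{j}(\Delta u_{j})^{m-1}$ has two $j$-dependent factors, and $u\mapsto(\Delta u)^{m-1}$ is nonlinear, so neither the $L_{loc}^{1}$ convergence of $u_{j}$ nor the weak convergence of $(\Delta u_{j})^{m-1}\wedge\beta^{n-m}$ is by itself enough to pass to the limit in the product. The combination of quasi-continuity of locally bounded $m$-sh functions (available in the quaternionic setting from \cite{Liu,Ba}) with the local uniform $L^{\infty}$ bound is exactly the analogue of what drives the complex Bedford--Taylor increasing-convergence theorem, and invoking it correctly is the key technical point.
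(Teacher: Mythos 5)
The paper does not actually prove this lemma: its entire ``proof'' is the citation ``See \cite{Ba} and \cite{Liu}.'' So there is no in-paper argument to compare against; your sketch is the standard Bedford--Taylor monotone convergence scheme, which is presumably what those references carry out in the quaternionic $m$-sh setting. On its own merits the plan is the right one, and you correctly isolate the central difficulty in part (3): the integrand $u_j(\Delta u_j)^{m-1}$ has two $j$-dependent factors, and quasi-continuity plus capacity estimates is exactly the tool that resolves it.

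Two gaps remain, however. First, the same two-varying-factors problem already occurs in your part (1): after integrating by parts and applying the Leibniz rule you are left with terms such as $\int v_j^1\, d_0 v_j^0\wedge d_1\eta\wedge T_j$ and $\int \eta\, v_j^1\,\Delta v_j^0\wedge T_j$, in which both a function and a current depend on $j$; ``$L^1_{loc}$ convergence paired against locally uniformly bounded masses'' is not a valid limiting argument for such products. For decreasing sequences the classical fix is either the upper-semicontinuity/monotonicity sandwich (for $j\geq i$ one has $v_j^0\leq v_i^0$, giving $\limsup_j\int\chi v_j^0\,d\mu_j\leq\int\chi v_i^0\,d\mu$, then let $i\to\infty$; the reverse inequality comes from integration by parts against smooth regularizations) or again quasi-continuity; you need to invoke one of these explicitly. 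Second, in part (2) you apply dominated convergence to $\int u_j\,d\sigma$ with $\sigma=\Delta v^1\wedge\cdots\wedge\Delta v^m\wedge\beta^{n-m}$, but the hypothesis only gives $u_j\nearrow u$ almost everywhere with respect to \emph{Lebesgue} measure, and $\sigma$ can charge Lebesgue-null sets. You must upgrade this to $\sigma$-a.e.\ convergence, which requires knowing that the exceptional set (a negligible set, hence of zero $m$-capacity) is not charged by Hessian measures of locally bounded $m$-sh functions --- a capacity estimate of Chern--Levine--Nirenberg type, not a consequence of dominated convergence alone. Both points are fixable with the machinery of \cite{Ba,Liu}, but as written they are genuine holes in the argument.
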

\begin{proof}
 See \cite{Ba} and \cite{Liu}.   
\end{proof}
 \begin{defi}
 Let $\Omega\subset \mathbb{H}^{n}$, and let $E$ be an open subset of $\Omega$, the quaternionic $m$-capacity of $E$ with respect to $\Omega$ is defined by:
\begin{equation}\label{18}
C_{m}(E)=C_{m}(E,\Omega):= \sup\Big\{\int_{E}(\Delta u)^{m}\wedge \beta^{n-m}\;\;:u\in \mathcal{QSH}_{m}(\Omega),-1\leq u\leq 0\Big \}.
\end{equation}
\end{defi}
As in the complex case, the following proposition can be proved.
\begin{pro}\label{pro2}
\begin{enumerate}
\item[1)] If $E_{1}\subseteq E_{2}$, then $C_{m}(E_{1})\subseteq C_{m}(E_{2}).$
\item[2)] If $E\subseteq \Omega_{1}\subseteq \Omega_{2}$, then $C_{m}(E,\Omega_{1})\geq C_{m}(E,\Omega_{2}).$
\item[3)] $C_{m}(\cup_{j=1}^{\infty}E_{j})\leq \sum_{j=1}^{\infty}C_{m}(E_{j}).$
\item[4)] If $E_{1}\subseteq E_{2}\subseteq \cdots \; are\; borel\; sets\; in\; \Omega\;,\; then\; C_{m}(\cup_{j}E_{j})=\displaystyle{\lim _{j\rightarrow \infty}}C_{m}(E_{j}).$
\end{enumerate}
\end{pro}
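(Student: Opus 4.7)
The plan is to derive each of the four statements directly from the defining supremum (\ref{18}), using only the fact recalled in Section \ref{sec1} that for any $u \in \mathcal{QSH}_m(\Omega) \cap L^\infty_{loc}(\Omega)$ the quaternionic $m$-Hessian $(\Delta u)^m \wedge \beta^{n-m}$ is a positive Borel measure on $\Omega$ (obtained by the Bedford--Taylor-type construction of Wan--Wang). For brevity I will write $\mathcal{U}(\Omega):=\{u \in \mathcal{QSH}_m(\Omega) : -1 \leq u \leq 0\}$ for the class of competitors, and I will interpret (\ref{18}) for arbitrary Borel $E \subset \Omega$ since the integral on the right makes sense verbatim in that generality (and part (4) requires it). I also note that the symbol $\subseteq$ appearing in part (1) must be a typo for $\leq$, because $C_m(\cdot)$ takes numerical values.

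For part (1), fix any $u \in \mathcal{U}(\Omega)$; positivity of the measure together with $E_1 \subseteq E_2$ gives $\int_{E_1}(\Delta u)^m \wedge \beta^{n-m} \leq \int_{E_2}(\Delta u)^m \wedge \beta^{n-m}$, and taking the supremum over $u$ yields $C_m(E_1) \leq C_m(E_2)$. For part (2), any $u \in \mathcal{U}(\Omega_2)$ restricts to an element of $\mathcal{U}(\Omega_1)$, so the supremum defining $C_m(E,\Omega_1)$ is taken over a class at least as large as the one defining $C_m(E,\Omega_2)$, which produces the reversed inequality $C_m(E,\Omega_1)\geq C_m(E,\Omega_2)$. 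For part (3), countable subadditivity of each positive Borel measure $(\Delta u)^m \wedge \beta^{n-m}$ gives, for every $u \in \mathcal{U}(\Omega)$,
$$
\int_{\cup_j E_j}(\Delta u)^m \wedge \beta^{n-m} \;\leq\; \sum_j \int_{E_j}(\Delta u)^m \wedge \beta^{n-m} \;\leq\; \sum_j C_m(E_j),
$$
and taking the supremum over $u$ on the left proves the claim.

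Part (4) is the only step that uses anything beyond monotonicity of measures. The inequality $\lim_j C_m(E_j) \leq C_m(\bigcup_j E_j)$ is immediate from part (1) applied to each inclusion $E_j \subset \bigcup_k E_k$. For the reverse, I fix $u \in \mathcal{U}(\Omega)$ and invoke continuity from below of the Borel measure $(\Delta u)^m \wedge \beta^{n-m}$ along the increasing sequence $E_j$ to obtain
$$
\int_{\cup_j E_j}(\Delta u)^m \wedge \beta^{n-m} \;=\; \lim_{j \to \infty}\int_{E_j}(\Delta u)^m \wedge \beta^{n-m} \;\leq\; \lim_{j \to \infty} C_m(E_j),
$$
and a final supremum over $u \in \mathcal{U}(\Omega)$ produces $C_m(\bigcup_j E_j) \leq \lim_j C_m(E_j)$.

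I do not foresee a genuine obstacle here: every step reduces to elementary measure-theoretic manipulations, and the only input from quaternionic pluripotential theory that is actually used is that $(\Delta u)^m \wedge \beta^{n-m}$ is a well-defined positive Borel measure on $\Omega$ for each $u \in \mathcal{U}(\Omega)$, which is Lemma \ref{lem2} combined with the discussion preceding Definition \ref{18}. The mildest subtlety, worth a single sentence in the written proof, is simply pointing out that the definition (\ref{18}) continues to make sense for Borel (rather than merely open) $E$, so that part (4) is a meaningful statement.
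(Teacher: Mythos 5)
Your proof is correct and is exactly the standard Bedford--Taylor argument that the paper invokes when it says ``as in the complex case'': each part reduces to monotonicity, countable subadditivity, and continuity from below of the positive Borel measures $(\Delta u)^{m}\wedge\beta^{n-m}$, followed by a supremum over the competitor class. Your remark that the defining supremum must be read for Borel (not just open) sets $E$ in order for part (4) to make sense is a fair and worthwhile clarification of the paper's Definition of $C_m$.
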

Recall also  the following lemmas (see \cite{Ba} and \cite{Liu}) :
\begin{lem}\label{lem21}{(Integration by parts)}
Suppose that $u, v, w_{1},\ldots,w_{m-1} \in \mathcal{QSH}_{m}(\Omega)\cap  L_{Loc}^{\infty}(\Omega).$
 If $\displaystyle{\lim_{q\longrightarrow \partial\Omega}}u(q)=\displaystyle{\lim_{q\longrightarrow \partial\Omega}}v(q)$, then
$$  \int_{\Omega} v \Delta u \wedge T = \int_{\Omega}u \Delta v \wedge T.$$
Where $T=\Delta w_{1}\wedge \ldots \wedge \Delta w_{m-1}\wedge \beta^{n-m}.$
\end{lem}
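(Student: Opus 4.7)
The plan is to prove the identity first in the smooth case by applying the Stokes-type formula (Lemma~\ref{lem1}) twice, and then extend to bounded $m$-subharmonic functions by standard regularization together with the continuity of the quaternionic $m$-Hessian operator from Lemma~\ref{lem2}. A preliminary observation is that $T = \Delta w_1 \wedge \cdots \wedge \Delta w_{m-1} \wedge \beta^{n-m}$ is closed under both $d_0$ and $d_1$: Lemma~\ref{lem0} gives $d_0\Delta = d_1\Delta = 0$, and $\beta = \tfrac{1}{8}\Delta(\|q\|^2)$ is itself a $\Delta$-image, hence also closed.

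For the smooth case, assume $u, v, w_i$ are smooth up to $\bar\Omega$, and reduce to $u = v = 0$ on $\partial\Omega$ by subtracting the common boundary constant. Since $T$ is $d_0$-closed, $\Delta u \wedge T = d_0(d_1 u \wedge T)$, and Lemma~\ref{lem1} applied with the vanishing function $v$ yields
\[
\int_\Omega v\,\Delta u \wedge T \;=\; \int_\Omega v\, d_0(d_1 u \wedge T) \;=\; -\int_\Omega d_0 v \wedge d_1 u \wedge T.
\]
Using instead the representation $\Delta = -d_1 d_0$ from Lemma~\ref{lem0} and Stokes with $\alpha = 1$, the same integral also equals
\[
-\int_\Omega v\, d_1(d_0 u \wedge T) \;=\; \int_\Omega d_1 v \wedge d_0 u \wedge T \;=\; -\int_\Omega d_0 u \wedge d_1 v \wedge T,
\]
where the final sign comes from anticommuting the two $1$-forms $d_0 u$ and $d_1 v$. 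The symmetric computation for $\int_\Omega u\,\Delta v \wedge T$, using $u = 0$ on $\partial\Omega$, gives $\int_\Omega u\,\Delta v \wedge T = -\int_\Omega d_0 u \wedge d_1 v \wedge T$ as well, so the two integrals coincide.

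To pass to the general bounded case, I would regularize by the standard convolutions $u^\varepsilon = u * \rho_\varepsilon$, $v^\varepsilon = v * \rho_\varepsilon$, $w_i^\varepsilon = w_i * \rho_\varepsilon$, which are smooth $m$-subharmonic on $\Omega_\varepsilon$ by Proposition~\ref{pro1}(4) and decrease to $u, v, w_i$. The smooth identity applies on each $\Omega_\varepsilon$, and Lemma~\ref{lem2}(1) supplies the weak convergence of $v^\varepsilon\,\Delta u^\varepsilon \wedge T^\varepsilon$ to $v\,\Delta u \wedge T$ (and the analogous statement on the other side) needed to pass to the limit. The main obstacle is controlling the boundary behaviour: the regularizations no longer share exact boundary values, so I would exhaust $\Omega$ by subdomains $\Omega_j \Subset \Omega$ and use the hypothesis $\lim_{q\to\partial\Omega}(u - v) = 0$ together with the local $L^\infty$ bounds on $u, v, w_i$ to show that the boundary-strip contributions from $\Omega \setminus \Omega_j$ vanish uniformly in $\varepsilon$. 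Once this uniform control is in place, the smooth identity descends to the limit and yields the lemma.
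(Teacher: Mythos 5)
The paper does not prove this lemma at all --- it is recalled from \cite{Ba} and \cite{Liu} --- so the only internal point of comparison is the proof of Proposition \ref{pro11}, which contains exactly your smooth computation (two applications of Lemma \ref{lem1} together with $d_0d_1=-d_1d_0$). That part of your argument is correct. The first genuine gap is the reduction to vanishing boundary values. The hypothesis only says the two boundary limits agree; nothing makes the common limit a constant, and even if it equals a constant $c$, replacing $u,v$ by $u-c,\,v-c$ changes the left-hand side by $-c\int_\Omega\Delta u\wedge T$ and the right-hand side by $-c\int_\Omega\Delta v\wedge T$, and these agree only if the total masses $\int_\Omega\Delta u\wedge T$ and $\int_\Omega\Delta v\wedge T$ coincide --- a fact you have not established and which is essentially another instance of the lemma. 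In fact, taking $v\equiv c\neq 0$ and a non-harmonic smooth $m$-subharmonic $u$ with boundary value $c$ gives $c\int_\Omega\Delta u\wedge T\neq 0=\int_\Omega u\,\Delta v\wedge T$, so the identity is really a statement about boundary limit $0$ (which is the only way the paper ever uses it); your reduction silently assumes the conclusion for constants. The honest route is to set $h=u-v$, which does tend to $0$ at $\partial\Omega$, but then one must integrate $\int_\Omega v\,\Delta h\wedge T$ by parts with $v$ \emph{not} vanishing on the boundary, which Lemma \ref{lem1} does not permit; that is precisely the difficulty the reduction hides.

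The second gap is that the passage from smooth to bounded functions is announced but not carried out, and the announced plan does not close. The convolutions $u*\rho_\varepsilon$, $v*\rho_\varepsilon$ live only on $\Omega_\varepsilon$ and do not have equal boundary values on $\partial\Omega_\varepsilon$, so the smooth identity cannot be invoked there; and the proposed uniform smallness of the boundary-strip contributions is not available from the hypotheses: $\int_{\Omega\setminus\Omega_j}(-v^{\varepsilon})\,\Delta u^{\varepsilon}\wedge T^{\varepsilon}$ need not be small (take $v$ bounded away from $0$ and $u$ whose Hessian mass concentrates near $\partial\Omega$). Only the \emph{difference} of the two strip integrals is small, and estimating that difference is the lemma itself, so the plan is circular. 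The missing device is the one the paper uses in Proposition \ref{pro11}: cut off by $u_\epsilon=\max\{u,-\epsilon\}$ so that $u-u_\epsilon$ is compactly supported, prove the symmetry for compactly supported functions directly from the inductive definition of the current $\Delta u\wedge T$ (where no boundary term arises), and then let $\epsilon\to 0$ using the convergence statements of Lemma \ref{lem2}. Some mechanism of this kind, rather than a strip estimate on the regularizations, is what is needed to complete the proof.
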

\begin{lem}{(Maximum principle)}
 For $u,v \in \mathcal{QSH}_{m}(\Omega)\cap L_{loc}^{\infty}(\Omega)$, we have
 $$\chi_{\lbrace u>v \rbrace}(\Delta \max \lbrace u,v \rbrace )^{m}\wedge \beta^{n-m}=\chi_{\lbrace u>v \rbrace}(\Delta u)^{m}\wedge \beta^{n-m} $$ where $\chi_{A}$ is the characteristic function of a set $A.$
\end{lem}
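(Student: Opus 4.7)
The plan is to adapt the Bedford--Taylor maximum principle to the quaternionic $m$-sh setting, using Proposition \ref{pro1}(4) to smooth the functions and Lemma \ref{lem2} to transport identities to the limit. Throughout, set $w=\max\{u,v\}$, $\mu_1=(\Delta w)^m\wedge\beta^{n-m}$, and $\mu_2=(\Delta u)^m\wedge\beta^{n-m}$; the goal is the identity of Borel measures $\chi_{\{u>v\}}\mu_1=\chi_{\{u>v\}}\mu_2$.

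First I would dispose of the case where $u,v$ are continuous. Then $G=\{u>v\}$ is open, and around any $q_0\in G$ a whole neighborhood $U\subset G$ satisfies $w\equiv u$ on $U$. Since the Bedford--Taylor-type wedge product $(\Delta\cdot)^m\wedge\beta^{n-m}$ on locally bounded $m$-sh functions is constructed by the local inductive rule $\Delta(\,\cdot\,)\wedge T:=\Delta((\,\cdot\,)T)$, equality of the functions on $U$ forces $\mu_1=\mu_2$ as currents on $G$. Multiplying by $\chi_G$ yields the lemma in this case.

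For general $u,v\in\mathcal{QSH}_m(\Omega)\cap L_{loc}^{\infty}(\Omega)$, I would regularize by $u_\nu=u*\rho_{1/\nu}$ and $v_\nu=v*\rho_{1/\nu}$; by Proposition \ref{pro1}(4) these are smooth $m$-sh functions on $\Omega_{1/\nu}$, locally uniformly bounded (since $u,v\in L_{loc}^{\infty}$), with $u_\nu\downarrow u$ and $v_\nu\downarrow v$. Writing $w_\nu=\max\{u_\nu,v_\nu\}$, the continuous case applied to each pair $(u_\nu,v_\nu)$ gives
$$\chi_{\{u_\nu>v_\nu\}}(\Delta w_\nu)^m\wedge\beta^{n-m}=\chi_{\{u_\nu>v_\nu\}}(\Delta u_\nu)^m\wedge\beta^{n-m},$$
while Lemma \ref{lem2} provides the weak convergences $(\Delta u_\nu)^m\wedge\beta^{n-m}\rightharpoonup\mu_2$ and $(\Delta w_\nu)^m\wedge\beta^{n-m}\rightharpoonup\mu_1$.

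The main obstacle is the passage to the limit with the discontinuous cutoffs $\chi_{\{u_\nu>v_\nu\}}$, since weak convergence of measures does not interact well with characteristic functions. I would handle this through the monotone decomposition $\chi_{\{u>v\}}=\sup_{\epsilon>0}\chi_{\{u>v+\epsilon\}}$: for fixed $\epsilon>0$, applying the continuous case to the shifted pair $(u_\nu,v_\nu+\epsilon)$ yields equality of the corresponding smooth Hessian measures on each open set $\{u_\nu>v_\nu+\epsilon\}$. Because $u_\nu\geq u$ and $v_\nu(q)\to v(q)$ pointwise, combined with convergence in $C_m$-capacity supplied by the capacity framework of Proposition \ref{pro2}, any compact subset of $\{u>v+\epsilon\}$ is eventually captured by the open sets $\{u_\nu>v_\nu+\epsilon/2\}$; this, together with weak convergence and the inner regularity of the Radon measures $\mu_1,\mu_2$, yields $\chi_{\{u>v+\epsilon\}}\mu_1=\chi_{\{u>v+\epsilon\}}\mu_2$. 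Monotone convergence as $\epsilon\downarrow 0$ then concludes.
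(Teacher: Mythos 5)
The paper itself offers no proof of this lemma --- it is recalled from \cite{Ba} and \cite{Liu} --- so your proposal has to be judged on its own terms rather than against an internal argument. Your first two steps are sound: locality of the inductively defined wedge product on open sets settles the continuous case, and Proposition \ref{pro1}(4) together with Lemma \ref{lem2} supplies the decreasing smooth approximants $u_\nu$, $v_\nu$, $w_\nu$ and the weak convergence of their Hessian measures. The final monotone-convergence step in $\epsilon$ is also fine once the $\epsilon$-level identity is secured.

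The gap is in the limit passage. You claim that every compact $K\subset\{u>v+\epsilon\}$ is ``eventually captured'' by the open sets $\{u_\nu>v_\nu+\epsilon/2\}$. Since $u_\nu\geq u$, the only thing to verify is $v_\nu<u_\nu-\epsilon/2$ on $K$, and for that you need $v_\nu\to v$ uniformly (or close to it) on $K$. But $v$ is merely upper semicontinuous, so Dini's theorem does not apply and the pointwise convergence $v_\nu\downarrow v$ is in general not uniform on compacts; the inclusion you assert is false as stated. What is true is that $K$ lies in $\{u_\nu>v_\nu+\epsilon/2\}$ only up to a set of arbitrarily small $m$-capacity, and establishing and then exploiting this requires two substantive inputs that your argument does not supply: (i) quasicontinuity of quaternionic $m$-sh functions, equivalently convergence in $C_m$-capacity of decreasing sequences --- Proposition \ref{pro2}, which you invoke for this, only records monotonicity and countable subadditivity of $C_m$ and contains no such statement; and (ii) a uniform Chern--Levine--Nirenberg-type bound showing that the measures $(\Delta u_\nu)^m\wedge\beta^{n-m}$ and $(\Delta w_\nu)^m\wedge\beta^{n-m}$ assign uniformly small mass to sets of small capacity, so that the exceptional set can actually be discarded. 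Without (i) and (ii) the conclusion $\chi_{\{u>v+\epsilon\}}\mu_1=\chi_{\{u>v+\epsilon\}}\mu_2$ does not follow. A smaller point: even granting the inclusion, weak convergence only gives $\mu_i(O)\leq\liminf_\nu$ of the approximating masses on open $O$ and the reverse inequality on compacts, so identifying the two limit measures on $\{u>v+\epsilon\}$ requires an explicit open/compact sandwich rather than a bare appeal to ``weak convergence and inner regularity''.
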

 \begin{lem}{(Comparison principle )}

 Let $\Omega$ be a bounded domain in $\mathbb{H}^{n}$, let  $u,v \in \mathcal{QSH}_{m}(\Omega)\cap L_{loc}^{\infty}(\Omega)$, if for any $\xi \in \partial \Omega$
 $$\displaystyle{\liminf_{q\longrightarrow \xi}}(u(q)-v(q))\geq0 .$$
  Then
    $$\displaystyle{\int}_{\lbrace u< v\rbrace}(\Delta v)^{m}\wedge \beta^{n-m} \leq \displaystyle{\int}_{\lbrace u< v\rbrace}(\Delta u)^{m}\wedge \beta^{n-m}.$$
 \end{lem}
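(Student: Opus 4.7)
The strategy is the standard Bedford--Taylor/Lu comparison-principle argument, adapted to the quaternionic Hessian setting: I will combine the Maximum Principle just stated above, the integration-by-parts Lemma \ref{lem21}, and the smooth-approximation machinery of Proposition \ref{pro1}(4) with Lemma \ref{lem2}.

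I first reduce to the case of \emph{strict} boundary inequality. Writing $\{u<v\}=\bigcup_{\epsilon>0}\{u<v-\epsilon\}$ as an increasing union and applying monotone convergence to the nonnegative Radon measure $(\Delta v)^m\wedge\beta^{n-m}$, it suffices to prove, for each fixed $\epsilon>0$,
$$\int_{\{u<v-\epsilon\}}(\Delta v)^m\wedge\beta^{n-m}\le\int_{\{u<v\}}(\Delta u)^m\wedge\beta^{n-m}.$$
The hypothesis upgrades to $\liminf_{q\to\partial\Omega}(u-(v-\epsilon))\ge\epsilon>0$, so $K:=\overline{\{u<v-\epsilon\}}\Subset\Omega$. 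Put $w:=\max(u,v-\epsilon)\in\mathcal{QSH}_{m}(\Omega)\cap L_{loc}^{\infty}(\Omega)$ by Proposition \ref{pro1}(1); then $w\equiv u$ on the open set $\{u>v-\epsilon\}$ (which contains a neighborhood of $\partial\Omega$) and $w\equiv v-\epsilon$ on $\{u<v-\epsilon\}$. The Maximum Principle (applied once as stated, and once with $u$ and $v-\epsilon$ swapped) yields
$$\chi_{\{u>v-\epsilon\}}(\Delta w)^m\wedge\beta^{n-m}=\chi_{\{u>v-\epsilon\}}(\Delta u)^m\wedge\beta^{n-m},$$
$$\chi_{\{u<v-\epsilon\}}(\Delta w)^m\wedge\beta^{n-m}=\chi_{\{u<v-\epsilon\}}(\Delta v)^m\wedge\beta^{n-m}.$$

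The pivotal step is the \emph{mass identity} $\int_{K}(\Delta w)^m\wedge\beta^{n-m}=\int_{K}(\Delta u)^m\wedge\beta^{n-m}$, which I derive from the telescoping
$$(\Delta w)^m-(\Delta u)^m=\Delta(w-u)\wedge\sum_{k=0}^{m-1}(\Delta w)^k\wedge(\Delta u)^{m-1-k}$$
(wedged with $\beta^{n-m}$): multiply by a cut-off $\chi\in C_c^{\infty}(\Omega)$ equal to $1$ on a neighborhood of $K$, regularize $u$ and $w$ via Proposition \ref{pro1}(4), apply Lemma \ref{lem21} repeatedly to move the two quaternionic derivatives in $\Delta(w-u)$ onto $\chi$, and pass to the weak limit using Lemma \ref{lem2}. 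Since $\Delta\chi\equiv 0$ on the set $\{w\ne u\}$, the resulting integrand is identically zero. Subtracting the first Maximum-Principle equality (restricted to $\{u>v-\epsilon\}$) from the mass identity then gives
$$\int_{\{u\le v-\epsilon\}}(\Delta w)^m\wedge\beta^{n-m}=\int_{\{u\le v-\epsilon\}}(\Delta u)^m\wedge\beta^{n-m}.$$

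Assembling the previous three displays,
\begin{align*}
\int_{\{u<v-\epsilon\}}(\Delta v)^m\wedge\beta^{n-m}&=\int_{\{u<v-\epsilon\}}(\Delta w)^m\wedge\beta^{n-m}\\
&\le \int_{\{u\le v-\epsilon\}}(\Delta w)^m\wedge\beta^{n-m}\\
&= \int_{\{u\le v-\epsilon\}}(\Delta u)^m\wedge\beta^{n-m}\\
&\le \int_{\{u<v\}}(\Delta u)^m\wedge\beta^{n-m},
\end{align*}
and letting $\epsilon\searrow 0$ closes the argument. The main technical hurdle is the mass identity: the integration by parts must be justified even though $u$ and $w$ are only locally bounded, and the total masses can be infinite. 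I handle this by exploiting that $w\equiv u$ on a whole one-sided neighborhood of $\partial\Omega$, so Lemma \ref{lem21}'s matching-boundary-limit hypothesis holds on any slightly shrunken interior subdomain containing $K$, and by the smooth-regularization scheme of Proposition \ref{pro1}(4) combined with the weak-convergence Lemma \ref{lem2}(1).
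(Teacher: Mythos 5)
The paper does not actually prove this lemma: it is one of the statements ``recalled'' with a pointer to \cite{Ba} and \cite{Liu}, so there is no internal proof to compare against. Your argument is the classical Bedford--Taylor comparison-principle proof transplanted to the quaternionic $m$-Hessian setting, and it is essentially correct: the reduction to $\{u<v-\epsilon\}\Subset\Omega$, the function $w=\max(u,v-\epsilon)$, the two applications of the maximum principle, the mass identity via the telescoping factorization against a cut-off, and the final chain of inequalities all work. Two points deserve more care. First, your deduction of $\int_{\{u\le v-\epsilon\}}(\Delta w)^m\wedge\beta^{n-m}=\int_{\{u\le v-\epsilon\}}(\Delta u)^m\wedge\beta^{n-m}$ by ``subtracting'' the maximum-principle identity from the mass identity on $K=\overline{\{u<v-\epsilon\}}$ tacitly assumes $\{u\le v-\epsilon\}\subset K$, which can fail: points of $\{u=v-\epsilon\}$ need not lie in the closure of $\{u<v-\epsilon\}$. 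The gap is harmless, because $w\equiv u$ on the open set $\Omega\setminus K$, so by locality of the Hessian operator the two measures already coincide there, and since both have finite mass on the compact $K$ (Chern--Levine--Nirenberg) the subtraction on $K\cap\{u>v-\epsilon\}$ is legitimate; combining the two pieces gives equality on all of $\{u\le v-\epsilon\}$. You should state this locality step explicitly. Second, the integration by parts in the mass identity is more naturally justified not via Lemma \ref{lem21} (whose matching-boundary-limit hypothesis is an awkward fit) but directly from the inductive definition of the wedge product against the compactly supported test form $\chi\,\beta^{n-m}$, equivalently the Stokes formula of Lemma \ref{lem1}: one gets $\int_\Omega\chi\,\Delta(w-u)\wedge S\wedge\beta^{n-m}=\int_\Omega(w-u)\,S\wedge\Delta\chi\wedge\beta^{n-m}=0$ at once, since $(w-u)\Delta\chi\equiv0$. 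With these two adjustments the proof is complete and matches what the cited references do.
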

 \subsection{ The relatively extremal $m$-subharmonic function}
 Let $\mathcal{QSH}_{m}^{-}(\Omega) $ be the subclass of negative functions in $\mathcal{QSH}_{m}(\Omega).$
 \begin{defi}
A set $\Omega\subset \mathbb{H}^{n}$  is said to be
a quaternionic $m$-hyperconvex  if it is open, bounded, connected and if there exists
$\varphi \in \mathcal{QSH}_{m}^{-}(\Omega) $ such that $ \lbrace q \in \Omega\;; \varphi(q) < -c\rbrace \subset\subset \Omega,\; \forall c > 0$.  A such 
function is called an exhaustion function for $\Omega$.
\end{defi}
\begin{defi}
Let be a subset $E\Subset \Omega$. The relatively $m$-extremal function is defined by
$$u_{m,E,\Omega}=\sup\lbrace u(q):\;u\in \mathcal{QSH}_{m}(\Omega),\;u\leq0,\;u\vert_{E}\leq-1\rbrace,\;\;q\in\Omega.$$
\end{defi}
Its upper semi-continuous regularization $u_{m,E,\Omega}^{*}\in \mathcal{QSH}_{m}(\Omega)$.
\begin{pro}\label{pro3}
The relatively $m$-extremal  function has the following properties:
\begin{enumerate}
\item[1)] If $ E_{1} \subset E_{2}\Subset\Omega$, then $u_{m,E_{2},\Omega} \leq u_{m,E_{1},\Omega},$
\item[2)] If $E \subset \Omega_{1} \subset \Omega_{2}$ ,then $u_{m,E,\Omega_{2}} \leq u_{m,E,\Omega_{1}},$
\item[2)] If $K_{j} \searrow K$, with $K_{j}$ is compact in $\Omega$, then $(\lim u_{m,K_{j},\Omega}^{*})^{\ast}=u_{m,K,\Omega}^{*}$.
\end{enumerate}
\end{pro}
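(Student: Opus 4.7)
The plan is to dispatch items (1) and (2) quickly by direct comparison of the defining families of competitors, then concentrate on item (3). For (1), any $u$ competing for $u_{m,E_2,\Omega}$ (i.e., $u\in\mathcal{QSH}_m(\Omega)$, $u\le 0$, $u|_{E_2}\le -1$) automatically satisfies $u|_{E_1}\le -1$ since $E_1\subset E_2$, hence competes for $u_{m,E_1,\Omega}$ as well, and the sup inequality follows. For (2), the restriction to $\Omega_1$ of any competitor $u\in\mathcal{QSH}_m(\Omega_2)$ is still $m$-subharmonic on $\Omega_1$ and still satisfies the constraints on $E\subset\Omega_1$, so $u|_{\Omega_1}\le u_{m,E,\Omega_1}$, giving the desired inequality on $\Omega_1$ after taking the sup.

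For (3), I set $v:=\lim_j u_{m,K_j,\Omega}^{*}$; by (1) applied to $K_{j+1}\subset K_j$ this is an increasing limit. The easy half, $v^{*}\le u_{m,K,\Omega}^{*}$, comes from noting that $K\subset K_j$ and (1) together give $u_{m,K_j,\Omega}^{*}\le u_{m,K,\Omega}^{*}$ for every $j$; passing to the limit and using that $u_{m,K,\Omega}^{*}$ is already upper semi-continuous yields the claim.

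The reverse inequality $u_{m,K,\Omega}^{*}\le v^{*}$ is where I expect the main obstacle; I plan to prove it by showing $u_{m,K,\Omega}\le v$ pointwise via a rescaling trick, then taking the USC regularization. Fix an admissible $u$ for $u_{m,K,\Omega}$ and any $\varepsilon\in(0,1)$. Upper semi-continuity of $u$ makes $U_\varepsilon:=\{u<-1+\varepsilon\}$ open, and $K\subset U_\varepsilon$ since $u|_K\le -1$. Because $(K_j)$ is a nested family of compact sets with $\bigcap_j K_j=K\subset U_\varepsilon$, a standard compactness argument (any sequence $x_j\in K_j\setminus U_\varepsilon$ would have a subsequential limit in $K\setminus U_\varepsilon$, contradicting openness of $U_\varepsilon$) produces $j_0$ with $K_{j_0}\subset U_\varepsilon$. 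Then $w:=u/(1-\varepsilon)$ lies in $\mathcal{QSH}_m^{-}(\Omega)$ by Proposition \ref{pro1}, and on $K_{j_0}$ satisfies $w<(-1+\varepsilon)/(1-\varepsilon)=-1$, so $w$ competes in the definition of $u_{m,K_{j_0},\Omega}$. Hence $u\le (1-\varepsilon)\,u_{m,K_{j_0},\Omega}^{*}\le (1-\varepsilon)\,v$; sending $\varepsilon\to 0^{+}$ gives $u\le v$, and taking the sup over $u$ followed by the USC regularization finishes the proof.
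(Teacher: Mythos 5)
Your proof is correct and follows essentially the same route as the paper: the first two items by direct comparison of competitor families, and item (3) by perturbing an admissible $u$ for $K$ so that it becomes admissible for $K_{j}$ with $j$ large, via the open set $\{u<-1+\varepsilon\}$ and the nested-compacts argument. The only (cosmetic) difference is that you use the multiplicative rescaling $u/(1-\varepsilon)$ where the paper uses the additive shift $u-\varepsilon$, and you spell out the compactness step and the easy inequality that the paper leaves implicit.
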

\begin{proof}
  The first and the second statements are trivial.
  For the third one, let $ u \in \mathcal{QSH}_{m}(\Omega)$ such  that $u\leq 0,\; u\vert_{K}\leq-1 .$ For each $\epsilon > 0$, define the open set $U_{ \epsilon}:= \{ u- \epsilon < -1 \}$. Since $U_{ \epsilon }$ contains all the compacts $K_{j}$ with $j>>1.$
  If we let $j \rightarrow + \infty$, we obtain $(\lim u_{m,K_{j},\Omega}^{*})^{\ast} \geq u- \epsilon,\;\;\;\forall \epsilon >0,$
   for each $ u \in \mathcal{QSH}_{m}(\Omega),\;u\leq 0$ and $u \leq-1 $ on $K$. From which follows the result.
\end{proof}
\begin{lem}\label{lem3}
Let $0 < r <R$  and note $ a =\dfrac{2n}{m} >1$. The relatively  $m$-extremal fonction $u_{m,\overline{B}(r),B(R)}$
is given by
$$u(q) = \max \Big(\dfrac{R^{2-2a}- \Vert q \Vert^{2-2a}}{r^{2-2a}-R^{2-2a}} ,-1\Big)$$
\end{lem}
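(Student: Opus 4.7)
Set $a=2n/m$ and define
\[
\phi(q)\ :=\ \frac{R^{2-2a}-\|q\|^{2-2a}}{r^{2-2a}-R^{2-2a}},\qquad v(q)\ :=\ \max\bigl(\phi(q),-1\bigr).
\]
The plan is to verify that $v$ is a competitor in the Perron family defining $u_{m,\overline{B}(r),B(R)}$, and that no competitor can exceed it on the shell $\omega:=B(R)\setminus\overline{B}(r)$; this will force $u_{m,\overline{B}(r),B(R)}=v$. For the first part I would apply the Leibniz rule of Lemma \ref{lem0} to the radial function $F(q)=f(t)$ with $t=\|q\|^{2}$ to get
\[
\Delta F\ =\ f'(t)\,\Delta(\|q\|^{2})\ +\ f''(t)\,d_{0}\|q\|^{2}\wedge d_{1}\|q\|^{2}.
\]
Since $d_{0}\|q\|^{2}\wedge d_{1}\|q\|^{2}$ is the wedge of two $\omega^{j}$-1-forms, its square vanishes, so in the binomial expansion of $(\Delta F)^{m}\wedge\beta^{n-m}$ only the $k=0,1$ terms survive. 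Using $\Delta(\|q\|^{2})=8\beta$ together with the pointwise identity $d_{0}\|q\|^{2}\wedge d_{1}\|q\|^{2}\wedge\beta^{n-1}=c_{n}t\,\beta^{n}$ (a direct computation in the coordinates (\ref{1})--(\ref{4})), one collects
\[
(\Delta F)^{m}\wedge\beta^{n-m}\ =\ \bigl(f'(t)\bigr)^{m-1}\bigl[Af'(t)+B\,tf''(t)\bigr]\beta^{n},
\]
with positive constants $A,B$ whose ratio is precisely $a$. The homogeneous $m$-Hessian equation for a radial $F$ thus reduces to the Euler ODE $tf''+af'=0$, whose general solution is $f(t)=ct^{1-a}+d$; the boundary data $f(r^{2})=-1$ and $f(R^{2})=0$ pin $c,d$ and yield exactly $f(\|q\|^{2})=\phi(q)$. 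Because $a>1$ one has $f'<0$, $f''>0$, so $\phi$ is smooth $m$-subharmonic on $\omega$ with $(\Delta\phi)^{m}\wedge\beta^{n-m}=0$ and $-1\leq\phi\leq 0$, $\phi|_{\partial B(r)}=-1$, $\phi|_{\partial B(R)}=0$. The pasting Proposition \ref{pro1}(6), applied with $-1\in\mathcal{QSH}_{m}(B(R))$ and $\phi\in\mathcal{QSH}_{m}(\omega)$, makes $v=\max(\phi,-1)$ a member of $\mathcal{QSH}_{m}(B(R))$. Since $v\leq 0$ and $v|_{\overline{B}(r)}=-1$, $v$ is an admissible competitor, whence $v\leq u_{m,\overline{B}(r),B(R)}$.

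For the reverse inequality, let $w$ be any competitor. Replacing $w$ by $\max(w,-M)$ and eventually sending $M\to\infty$ I may assume $w\in\mathcal{QSH}_{m}(B(R))\cap L^{\infty}$. For $\varepsilon>0$ set $w_{\varepsilon}:=w+\varepsilon(\|q\|^{2}-R^{2})\leq w$; then $w_{\varepsilon}\in\mathcal{QSH}_{m}\cap L^{\infty}$, and since $\Delta w+8\varepsilon\beta$ is $m$-positive, $(\Delta w_{\varepsilon})^{m}\wedge\beta^{n-m}\geq(8\varepsilon)^{m}\beta^{n}$. On $\partial B(r)$ upper semicontinuity of $w$ gives $\limsup w_{\varepsilon}\leq -1+\varepsilon(r^{2}-R^{2})<-1=v$, while on $\partial B(R)$ the continuous value $v\to 0$ and $w_{\varepsilon}\leq 0$ give $\liminf(v-w_{\varepsilon})\geq 0$, so $\liminf_{q\to\partial\omega}(v-w_{\varepsilon})\geq 0$. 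The comparison principle on $\omega$ then yields
\[
\int_{\{v<w_{\varepsilon}\}}(\Delta w_{\varepsilon})^{m}\wedge\beta^{n-m}\ \leq\ \int_{\{v<w_{\varepsilon}\}}(\Delta v)^{m}\wedge\beta^{n-m}\ =\ 0,
\]
and combined with the lower bound $(8\varepsilon)^{m}\beta^{n}$ this forces the open set $\{v<w_{\varepsilon}\}$ to have Lebesgue measure zero, hence to be empty. Therefore $w_{\varepsilon}\leq v$ on $\omega$; letting $\varepsilon\to 0$ and then $M\to\infty$ gives $w\leq v$ on $\omega$, while $w\leq -1=v$ on $\overline{B}(r)$ is immediate.

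The main technical obstacle is the radial computation that extracts the precise ratio $A/B=a=2n/m$ from the coefficient identity for $d_{0}\|q\|^{2}\wedge d_{1}\|q\|^{2}\wedge\beta^{n-1}$: one has to unpack the Baston operator in the coordinates (\ref{4}) and track the combinatorial constants carefully — this is the quaternionic analogue of the classical radial computation for the complex $m$-Hessian. Once the ODE $tf''+af'=0$ is in hand, the extremal function is uniquely singled out and the comparison step above, modeled on \cite{Lu2,Wa2}, is routine.
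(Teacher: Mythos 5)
Your strategy is sound and the second half (the comparison step) is actually carried out in more detail than in the paper, but your route to the key computation is genuinely different from the paper's, and it is precisely there that you leave the crux unproved. The paper works pointwise: it computes the quaternionic Hessian $\frac{\partial^{2}v}{\partial q_{l}\partial\overline{q}_{k}}$ of the radial candidate at a point $q^{0}\neq 0$, observes that up to a positive scalar it is the hyperhermitian matrix $A=\bigl(2\delta_{lk}-a\,q^{0}_{k}\overline{q^{0}}_{l}/\Vert q^{0}\Vert^{2}\bigr)$ with eigenvalue vector $(2,\dots,2,2-a)$, and checks $\tilde{S}_{p}(A)>0$ for $p<m$ and $\tilde{S}_{m}(A)=0$; this simultaneously gives membership in $\mathcal{QSH}_{m}$ (the eigenvalues lie in the G{\aa}rding cone) and the vanishing of the $m$-Hessian measure, after which it invokes the maximum principle. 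You instead expand $\Delta F=f'(t)\Delta(\Vert q\Vert^{2})+f''(t)\,d_{0}\Vert q\Vert^{2}\wedge d_{1}\Vert q\Vert^{2}$ and reduce to the Euler ODE $tf''+af'=0$. That identity for $\Delta F$ is correct and the decomposability of $d_{0}t\wedge d_{1}t$ does kill all but the $k=0,1$ terms, so the scheme works; its advantage is that it derives the exponent $2-2a$ from an ODE rather than verifying a guessed formula. But the entire content of the lemma --- that the ratio $A/B$ equals $2n/m$ --- is concentrated in the coefficient identity $d_{0}\Vert q\Vert^{2}\wedge d_{1}\Vert q\Vert^{2}\wedge\beta^{n-1}=c_{n}\Vert q\Vert^{2}\beta^{n}$ with the specific constant $c_{n}=4/n$, which you defer to ``a direct computation''; as written, the lemma's statement is not actually verified.

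Two further corrections. First, the signs: with $a>1$ and $r<R$ one has $r^{2-2a}>R^{2-2a}$, so $f(t)=\dfrac{R^{2-2a}-t^{1-a}}{r^{2-2a}-R^{2-2a}}$ satisfies $f'>0$ and $f''<0$ (the function increases from $-1$ to $0$), not $f'<0$, $f''>0$ as you assert; and $m$-subharmonicity is not a consequence of sign conditions on $f',f''$ separately but of the positivity of the combination $tf''+\frac{2n}{k}f'$ for all $k\le m$ (equivalently, of the paper's $\tilde{S}_{p}(A)>0$, $p<m$), which is also what legitimizes the passage from positivity of $(\Delta\phi)^{k}\wedge\beta^{n-k}$ to the defining inequality (\ref{eq10}) against arbitrary $\alpha_{i}\in\widehat{\Gamma}_{m}$. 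Second, in the comparison step the set $\{v<w_{\varepsilon}\}$ is not open ($w_{\varepsilon}$ is only upper semicontinuous), so you should conclude $w_{\varepsilon}\le v$ almost everywhere and then everywhere via the sub-mean-value property; this is easily repaired.
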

\begin{proof}
The function $u$ is continuous in $\mathbb{H}^{n},\; u\vert _{\overline{B}(r)}=-1$ and $u\vert_{\partial B(R)}=0$.
Fix $q^{0} \in B(R)\setminus \lbrace0 \rbrace$ and define $$v(q)=\dfrac{R^{2-2a}-\Vert q\Vert^{2-2a}}{r^{2-2a}-R^{2-2a}}.$$
 Calculating $\dfrac{\partial^{2}v}{\partial q_{l} \partial \overline{q}_{k}}$ at $q^{0}$, we get
$$\dfrac{\partial^{2}v(q^{0})}{\partial q_{l} \partial \overline{q}_{k}}=\frac{4(a-1) \Vert q^{0} \Vert^{-2a}}{r^{2-2a}-R^{2-2a}}\Big( 2\delta_{lk}-a \dfrac{q^{0}_{k}.\overline{q^{0}}_{l}}{\Vert q^{0} \Vert^{2}}\Big).$$
Since the eigenvalues of the matrix $A:=\Big(2\delta_{lk}-a \dfrac{q^{0}_{k}.\overline{q^{0}}_{l}}{\Vert q^{0} \Vert^{2}}\Big)$ are $\lambda=(2,\ldots,2,2-a)$, then
$$\begin{array}{ll}
\tilde{S}_{p}(A)&=S_{p}(\lambda(A))=\dfrac{2(n-1)!}{p!(n-1-p)!}+\dfrac{(2-a)(n-1)!}{(p-1)!(n-p)!}\\
&=\dfrac{2(n-1)!}{(p-1)!(n-p)!}(\dfrac{n}{p}-\dfrac{n}{m}).\\
\end{array}$$
Thus $\tilde{S}_{p}(A)>0,\;\;\forall p<m$ and $\tilde{S}_{m}(A)=0$. Therefore $u\in \mathcal{QSH}_{m}(B(R)\setminus \overline{B}(r))$. By the maximum principle,  the result follows.
\end{proof}
\begin{pro}\label{pro4}
If $\Omega$ is a quaternionic $m$-hyperconvex and  $E$ is relatively compact in $\Omega$, then
$$\displaystyle{\lim_{q\longrightarrow \omega}u_{m,E,\Omega}(q)=0},\;\;\forall \omega\in\partial\Omega.$$
\end{pro}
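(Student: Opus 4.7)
The plan is to use the exhaustion function guaranteed by $m$-hyperconvexity to squeeze $u_{m,E,\Omega}$ between $0$ and something that vanishes at the boundary.

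First I would invoke the $m$-hyperconvexity of $\Omega$ to fix an exhaustion function $\psi \in \mathcal{QSH}_{m}^{-}(\Omega)$ with $\{\psi < -c\} \Subset \Omega$ for every $c>0$. Since $E \Subset \Omega$, its closure $\overline{E}$ is a compact subset of $\Omega$, and $\psi$ is upper semi-continuous and strictly negative there. Therefore there exists a constant $a>0$ such that $\psi \leq -a$ on $\overline{E}$.

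Next I would observe that $\psi/a$ is admissible in the family defining $u_{m,E,\Omega}$: by part 1) of Proposition \ref{pro1} it lies in $\mathcal{QSH}_{m}(\Omega)$, it is non-positive, and $\psi/a \leq -1$ on $E$. Consequently
$$ \frac{\psi(q)}{a} \;\leq\; u_{m,E,\Omega}(q) \;\leq\; 0, \qquad q\in\Omega. $$

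Finally I would argue that $\psi(q)\to 0$ as $q\to\partial\Omega$: given $\varepsilon>0$, the set $\{\psi < -\varepsilon a\}$ is relatively compact in $\Omega$ by the exhaustion property, so there is a neighborhood of $\partial\Omega$ on which $\psi \geq -\varepsilon a$. Combined with the sandwich above this gives $|u_{m,E,\Omega}(q)| \leq \varepsilon$ near $\partial\Omega$, which is exactly the claim.

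I do not anticipate a serious obstacle here: the argument is the standard hyperconvex envelope estimate transplanted to the quaternionic $m$-subharmonic setting, and the only ingredients used are the linearity/scaling in Proposition \ref{pro1} and the definition of $m$-hyperconvexity. The mild point to watch is that we need the conclusion for $u_{m,E,\Omega}$ itself (not only for its upper semi-continuous regularization), but this is automatic since the lower bound $\psi/a \leq u_{m,E,\Omega}$ holds pointwise and the upper bound $u_{m,E,\Omega}\leq 0$ is built into the definition.
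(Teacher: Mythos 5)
Your argument is correct and is essentially the paper's own proof: both take the exhaustion function from $m$-hyperconvexity, rescale it so that it is $\leq -1$ on $E$ (hence a competitor in the envelope), and conclude from the sandwich $\psi/a \leq u_{m,E,\Omega}\leq 0$ together with the fact that the exhaustion function tends to $0$ at $\partial\Omega$. Your version just spells out the two small points the paper leaves implicit (why $\psi$ is bounded away from $0$ on $\overline{E}$, and why the exhaustion property forces $\psi\to 0$ at the boundary).
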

\begin{proof}
Let $\rho < 0$ be an exhaustion function of $\Omega$, then there exists a constant $C>0$ such that $C\rho<-1$ on $E$. Thus $C\rho<u_{m,E,\Omega}.$ It is clear that $\displaystyle{\lim_{q\longrightarrow \partial \Omega}\rho(q)=0}$. Hence we get the result.
\end{proof}
\begin{pro}\label{pro5}
Let $\Omega$ be a quaternionic $m$-hyperconvex  and a compact $K \Subset \Omega$ such that $ u_{m,K,\Omega}^{*}  \equiv-1$ on $K$. Then
 $u_{m,K,\Omega}$ is continuous in $\Omega$.
\end{pro}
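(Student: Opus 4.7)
Write $u := u_{m,K,\Omega}$. The strategy is to establish upper and lower semi-continuity separately: for the former, I will show $u$ coincides with its usc regularisation $u^{*}$; for the latter, I will build continuous candidates in the Perron-type family defining $u$ that approximate $u$ uniformly from below.

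For upper semi-continuity, Proposition~\ref{pro1}(5) gives $u^{*}\in\mathcal{QSH}_{m}(\Omega)$. Since $u\le 0$ we have $u^{*}\le 0$, and by hypothesis $u^{*}\equiv -1$ on $K$, so $u^{*}$ is admissible in the supremum defining $u$; hence $u^{*}\le u$, and combined with the trivial inequality $u\le u^{*}$ we obtain $u=u^{*}$ on all of $\Omega$. In particular $u\equiv -1$ on $K$, while by Proposition~\ref{pro4} we have $u(q)\to 0$ as $q\to\partial\Omega$.

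For lower semi-continuity, fix $\epsilon>0$ and a continuous negative $m$-subharmonic exhaustion function $\rho$ for $\Omega$ (obtained by mollifying the exhaustion coming from the $m$-hyperconvex hypothesis, if needed). Pick $A>0$ with $A\rho\le -1$ on $K$. Let $u_{\delta}:=u*\chi_{\delta}$ be the standard mollification on $\Omega_{\delta}$; by Proposition~\ref{pro1}(4) it is smooth, $m$-sh on $\Omega_{\delta}$, and decreases pointwise to $u$ as $\delta\to 0$. Define
\[
v_{\epsilon,\delta}(q):=\begin{cases}\max\bigl(u_{\delta}(q)-\epsilon,\;A\rho(q)\bigr),& q\in\Omega_{\delta},\\[2pt] A\rho(q),& q\in\Omega\setminus\Omega_{\delta}.\end{cases}
\]
Three facts are then required. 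Firstly, in the open set $\{A\rho>-\epsilon/2\}$, which contains a neighbourhood of $\partial\Omega$, we have $u_{\delta}-\epsilon\le -\epsilon<A\rho$, so the two pieces agree there and $v_{\epsilon,\delta}$ is continuous and $m$-sh on $\Omega$ by the gluing statement of Proposition~\ref{pro1}(6). Secondly, $v_{\epsilon,\delta}\le 0$ everywhere, since both $u_{\delta}-\epsilon$ and $A\rho$ are $\le 0$. Thirdly, for $\delta$ small enough, $v_{\epsilon,\delta}\le -1$ on $K$. Granting these, $v_{\epsilon,\delta}$ enters the defining family of $u$, so $v_{\epsilon,\delta}\le u$; on the other hand $v_{\epsilon,\delta}\ge u_{\delta}-\epsilon\ge u-\epsilon$ on $\Omega_{\delta}$. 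Hence on every compact $L\Subset\Omega$ and for $\delta$ small, $u$ is sandwiched within $\epsilon$ of the continuous function $v_{\epsilon,\delta}$, and continuity of $u$ follows.

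The main obstacle is the third fact: ensuring $v_{\epsilon,\delta}\le -1$ on $K$ as $\delta\to 0$, which reduces to $u_{\delta}\le -1+\epsilon/2$ on $K$ for all sufficiently small $\delta$. This does not hold for arbitrary usc functions, but here one exploits that $u\equiv -1$ on $K$ together with upper semi-continuity: for any $\eta>0$ there is an open neighbourhood $W$ of $K$ on which $u<-1+\eta$. Choosing $\eta=\epsilon/2$, and then $\delta$ smaller than the distance from $K$ to $\Omega\setminus W$, yields $u_{\delta}(q)\le\sup_{B_{\delta}(q)}u<-1+\epsilon/2$ for every $q\in K$, as required. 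The rest of the verifications are routine, and the argument runs in parallel with the classical Bedford--Taylor proof in the complex pluripotential setting.
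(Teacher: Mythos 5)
Your proof is correct and follows essentially the same route as the paper's: both sandwich $u-\epsilon\le v\le u$ by a continuous member $v$ of the defining family obtained by gluing the mollification $u*\chi_{\delta}-\epsilon$ (maxed with a scaled exhaustion function) over a compactly contained subdomain, with the hypothesis $u_{m,K,\Omega}^{*}\equiv-1$ on $K$ entering exactly where you place it, namely to force $u*\chi_{\delta}<-1+\epsilon$ on $K$ for small $\delta$. The only detail to make explicit is that $\delta$ must also be chosen small enough that $\{A\rho\le-\epsilon/2\}\Subset\Omega_{\delta}$, so that the transition across $\partial\Omega_{\delta}$ occurs where the two formulas already agree; your preliminary observation that $u=u^{*}$ is a clean addition that the paper leaves implicit.
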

\begin{proof}
Let $u=u_{m,K,\Omega}$, and $\rho$ be a defining function for $\Omega$ such that $\rho<-1$ on $K$. Then $\rho \leq u$ in $\Omega.$
It is enough to prove that for each $\epsilon>0$, there exists a continuous function $v$ in the defining family for $u$ such that $u-\epsilon\leq v \leq u $ in $\Omega$.
Take $\epsilon>0$, there exists $\alpha>0$ such that $u-\epsilon <\rho$ in $\Omega\setminus \Omega_{\alpha}$ and $K\subset \Omega_{\alpha}$, where
$$\Omega_{\alpha}=\lbrace q\in \Omega;\;dist(q,\partial\Omega)>\alpha\rbrace.$$
One can find $\delta>0$ such that $u \ast \chi_{\delta}- \epsilon< \rho$ on $\partial \Omega_{\alpha}$ and $u\ast \chi_{\delta}- \epsilon<-1$ on $K$.
Define
 \begin{equation*}
  v:=  \left \{
 \begin{array}{ll}
 \max\lbrace u\ast\chi_{\delta},\rho\rbrace,\;\;\;in\;\; \Omega_{\alpha} \\
 u, \;\;\;\;\;\;\;\;\;in\;\; \Omega\setminus\Omega_{\alpha} \\
\end{array}
 \right.
   \end{equation*}
 Then $v$ is a continuous function in the defining family for $u$, and thus $u-\epsilon\leq v\leq u$  in $\Omega$.
\end{proof}
\begin{pro}\label{pro6}
Let a domain $\Omega \Subset \mathbb{H}^{n}$ and $E \subset \Omega$. Then
$u_{m,E,\Omega}^{ \ast} \equiv 0$ if and only if there exists  $v \in \mathcal{QSH}_{m}(\Omega),\;\; v < 0 $ such that $E \subset \lbrace v = -\infty\rbrace$.
\end{pro}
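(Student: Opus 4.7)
I would prove the two implications separately, treating the backward direction as the easy one and focusing most effort on the forward construction.

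\textbf{($\Leftarrow$) Sufficiency.} Given $v\in\mathcal{QSH}_m(\Omega)$ with $v<0$ and $E\subset\{v=-\infty\}$, I observe that for every $\varepsilon>0$ the function $\varepsilon v$ belongs to $\mathcal{QSH}_m(\Omega)$, is negative, and satisfies $\varepsilon v|_E=-\infty\leq -1$. Hence $\varepsilon v$ is admissible in the definition of $u_{m,E,\Omega}$, so $\varepsilon v\leq u_{m,E,\Omega}\leq 0$ pointwise. At every $q$ where $v(q)>-\infty$, letting $\varepsilon\to 0^+$ forces $u_{m,E,\Omega}(q)=0$. Since $\{v=-\infty\}$ has Lebesgue measure zero, we conclude $u_{m,E,\Omega}=0$ almost everywhere, hence $u_{m,E,\Omega}^*\equiv 0$ on $\Omega$.

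\textbf{($\Rightarrow$) Necessity.} Assume $u_{m,E,\Omega}^*\equiv 0$. By Choquet's lemma applied to the defining family, pick a countable sub-family $\{u_k\}\subset\mathcal{QSH}_m(\Omega)$ with $u_k\leq 0$ and $u_k|_E\leq -1$ such that $(\sup_k u_k)^*=u_{m,E,\Omega}^*\equiv 0$. Replacing $u_k$ by $\max(u_1,\dots,u_k)$ (still admissible by Proposition \ref{pro1}, part 1), I may assume the sequence is non-decreasing, with limit $W$ satisfying $W^*\equiv 0$. Since $W=W^*$ almost everywhere (standard for $m$-subharmonic functions), $u_k\nearrow 0$ a.e.\ in $\Omega$. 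Fix an exhaustion $\Omega_1\Subset\Omega_2\Subset\cdots\Subset\Omega$. By monotone convergence, $\int_{\Omega_j}|u_k|\,dV\to 0$ as $k\to\infty$ for each fixed $j$, so I can extract a subsequence (diagonalising) with $\int_{\Omega_j}|u_{k_j}|\,dV\leq 2^{-j}$.

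\textbf{Construction of $v$.} Set $v:=\sum_{j=1}^{\infty}u_{k_j}$, viewed as the pointwise decreasing limit of the partial sums $S_N=\sum_{j=1}^{N}u_{k_j}\in\mathcal{QSH}_m(\Omega)$, each negative. For any fixed $i$,
\[
\int_{\Omega_i}|v|\,dV\leq\sum_{j=1}^{i}\int_{\Omega_i}|u_{k_j}|\,dV+\sum_{j>i}\int_{\Omega_j}|u_{k_j}|\,dV<\infty,
\]
so $v\in L^1_{\mathrm{loc}}(\Omega)$ and in particular $v\not\equiv -\infty$. Proposition \ref{pro1}, part 2, then guarantees $v\in\mathcal{QSH}_m(\Omega)$. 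On $E$, every $u_{k_j}\leq -1$, hence $v|_E=-\infty$. Finally, replacing $v$ by $v-1$ gives a function strictly negative everywhere with $E\subset\{v=-\infty\}$, as required.

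\textbf{Main obstacle.} The delicate point is the $(\Rightarrow)$ direction: ensuring that the series defining $v$ converges to a genuine $m$-subharmonic function rather than collapsing to $-\infty$. This is handled precisely by the diagonal extraction giving summable $L^1$-norms on the exhausting sub-domains, which is possible thanks to the monotone-convergence control $u_k\nearrow 0$ a.e., itself a consequence of $(\sup u_k)^*=0$ together with the a.e.-equality of an $m$-subharmonic function with its upper-semicontinuous regularisation.
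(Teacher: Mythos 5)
Your proof is correct, and its overall skeleton coincides with the paper's: in both arguments the function $v$ is built as a countable sum of members of the defining family, each of which is $\leq -1$ on $E$, so that $v=-\infty$ on $E$, with the only real issue being to prevent the sum from collapsing to $-\infty$. Where you differ is in how that non-degeneracy is secured. The paper's device is lighter: since $u=u^{*}$ almost everywhere, it picks a single point $a$ with $u_{m,E,\Omega}(a)=0$ and then chooses $v_{k}$ from the defining family with $v_{k}(a)>-2^{-k}$, so that $v(a)>-1$ and hence $v\not\equiv-\infty$; no Choquet's lemma, no exhaustion, no monotone convergence is needed. Your route instead invokes Choquet's lemma to extract an increasing countable subfamily converging to $0$ a.e., and then diagonalises to make the $L^{1}$-norms on an exhaustion summable; this is heavier but buys the slightly stronger conclusion that $v\in L^{1}_{\mathrm{loc}}(\Omega)$ by construction, rather than inferring $v\not\equiv-\infty$ from its value at one point and then appealing to Proposition \ref{pro1}(2). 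Both arguments are valid; the one mild point to make explicit in yours is that the a.e.\ identity $W=W^{*}$ for the increasing limit $W$ of the $u_{k}$ is a property of negligible sets for families of subharmonic functions (the $u_{k}$ being in particular subharmonic), not of a single $m$-subharmonic function, but this is indeed standard.
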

\begin{proof}
Denote $u=u_{m,E,\Omega}$. If $v\in \mathcal{QSH}_{m}(\Omega),\; v< 0$ and $E\subset\lbrace v=-\infty\rbrace$, then $\forall \varepsilon>0,\;\varepsilon v\leq u$ in $\Omega$ and hence $u=0$ almost everywhere in $\Omega$. Thus $u^{\ast} =0.$

Suppose that $u^{\ast}=0$. Then there exists $a\in \Omega$ such that $u(a)=0$ because of $u^{\ast}=u$ almost everywhere. For each $k\in \mathbb{N}$, there exists $v_{k}\in \mathcal{QSH}_{m}(\Omega)$ such that $v_{k}<0 $ in $\Omega$, $v_{k}<-1$ in $E$ and $v_{k}(a)> -2^{-k}.$
Define $$v(q)= \displaystyle{ \sum_{k=1}^{\infty}v_{k}(q)}\;\;\;\;\;q\in \Omega.$$
We have $v(a)>-1,\;\;v<0$ in $\Omega$, $v = -\infty$ in $E$. Since $v$ is the limit of a quaternionic $m$-subharmonic decreasing sequence of its partial sums and $v\neq -\infty$, then $v\in \mathcal{QSH}_{m}(\Omega).$
\end{proof}
\begin{cor}\label{cor1}
Let $\Omega \subset \mathbb{H}^{n}$ and $E =\cup_{j} E_{j},$ where $j=1,2,\ldots \;,E_{j}\subset\Omega$.
 If $u_{ m,E_{j},\Omega}^{*} \equiv 0,\;\forall j,$
 then $u_{m,E,\Omega}^{*}\equiv 0$.
\end{cor}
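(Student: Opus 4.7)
The plan is to use Proposition \ref{pro6} as a two-way bridge: it turns the statement $u^{*}_{m,E,\Omega}\equiv 0$ into the existence of a single function $v\in\mathcal{QSH}_m(\Omega)$, $v<0$, with $E\subset\{v=-\infty\}$, and similarly for each $E_j$. So it suffices to produce, from a sequence $\{v_j\}$ representing the $E_j$'s, one function $v$ that represents the whole union $E=\bigcup_j E_j$.

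First, I would apply Proposition \ref{pro6} to each $E_j$: the hypothesis $u^{*}_{m,E_j,\Omega}\equiv 0$ gives $v_j\in\mathcal{QSH}_m(\Omega)$ with $v_j<0$ in $\Omega$ and $E_j\subset\{v_j=-\infty\}$. Next I would assemble these into a single function by a weighted series. Pick an exhaustion $\Omega_k\Subset\Omega_{k+1}\Subset\Omega$ with $\bigcup_k\Omega_k=\Omega$. Since every $v_j$ is subharmonic (by the definition of $\mathcal{QSH}_m$ recalled in the paper), it lies in $L^{1}_{loc}(\Omega)$, so $\|v_j\|_{L^1(\Omega_j)}<\infty$. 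Choose $c_j>0$ with $c_j\|v_j\|_{L^1(\Omega_j)}<2^{-j}$, and set $v=\sum_{j\geq 1}c_j v_j$. Splitting the tail shows that for every fixed $k$ the series $\sum_j c_j v_j$ converges absolutely in $L^1(\Omega_k)$, hence in $L^{1}_{loc}(\Omega)$. The partial sums $S_N=\sum_{j=1}^N c_j v_j$ are negative elements of $\mathcal{QSH}_m(\Omega)$ by Proposition \ref{pro1}(1), and $\{S_N\}$ decreases pointwise because $c_jv_j<0$. By Proposition \ref{pro1}(2), its pointwise limit $v$ is either in $\mathcal{QSH}_m(\Omega)$ or identically $-\infty$; but this pointwise limit coincides almost everywhere with the $L^{1}_{loc}$ limit, which is finite a.e., so $v\not\equiv-\infty$ and therefore $v\in\mathcal{QSH}_m(\Omega)$.

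Finally I would verify the two properties needed to apply Proposition \ref{pro6} to $E$: the strict inequality $v\leq c_1 v_1<0$ holds throughout $\Omega$, and for any $q\in E$ we may pick $j$ with $q\in E_j$, whereupon $v_j(q)=-\infty$ while every other term $c_i v_i(q)\leq 0$, forcing $v(q)=-\infty$. Hence $E\subset\{v=-\infty\}$, and Proposition \ref{pro6} yields $u^{*}_{m,E,\Omega}\equiv 0$. The only delicate step is ruling out $v\equiv-\infty$; this is precisely where the $L^{1}_{loc}$ normalization of the $c_j$'s is essential, since the pointwise values $v_j(q_0)$ at any given $q_0$ could all be $-\infty$ and so cannot be summed directly, whereas $L^{1}_{loc}$ convergence gives finiteness almost everywhere for free. (If $\Omega$ is disconnected, the argument is applied on each connected component separately.)
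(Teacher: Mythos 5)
Your proof is correct and follows essentially the same route as the paper: both directions use Proposition \ref{pro6} as the bridge, and the union is handled by summing suitably rescaled potentials $v=\sum_j c_j v_j$, with Proposition \ref{pro1} guaranteeing that the decreasing partial sums stay in $\mathcal{QSH}_m(\Omega)$. The only difference is the normalization ruling out $v\equiv-\infty$: the paper fixes a single point $a$ where all $v_j$ are finite and scales so that $v_j(a)>-2^{-j}$, whereas you control $\|c_j v_j\|_{L^1(\Omega_j)}$ on an exhaustion to get finiteness almost everywhere --- a minor, equally valid variant.
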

\begin{proof}
From  proposition \ref{pro6} there exists $v_{j}\in \mathcal{QSH}_{m}(\Omega)$ such that $v_{j}<0$ and $v_{j}= -\infty$ in $E_{j}.$ One can Take a point $a\in \Omega$ such that $v_{j}(a)>-\infty, $
 $v_{j}(a)>-2^{-j},\;\;\forall j$,  then $v=\sum_{j}v_{j}\in \mathcal{QSH}_{m}^{-}(\Omega)$ and $v\equiv-\infty$ in $E$. Applying proposition \ref{pro6}, we obtain $u^{\ast}_{m,E,\Omega}=0.$

\end{proof}
\begin{pro}\label{pro7}
Let  $\Omega \subset \mathbb{H}^{n}$ be a quaternionic $m$-hyperconvex   and $K \subset \Omega$ be a compact that is union of a family of closed  balls. Then $ u_{m,K,\Omega}^{*}  = u_{m,K,\Omega}$ is continuous in $\Omega$. In  particular, if $K \subset \Omega$ is
compact such that $0 < \varepsilon < dist(K, \partial\Omega)$, then $u_{m,K_{\varepsilon},\Omega}$ is continuous, where $K_{\varepsilon}=\lbrace q\in\Omega/ dist(q,K)\leq \varepsilon\rbrace$.
\end{pro}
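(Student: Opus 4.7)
The plan is to verify the hypothesis of Proposition \ref{pro5}, i.e.\ $u^*_{m,K,\Omega} \equiv -1$ on $K$, and then invoke it to get continuity of $u := u_{m,K,\Omega}$ on $\Omega$; continuity together with $u^* \geq u$ and the upper semi-continuity of $u^*$ then forces $u = u^*$.

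I would start by noticing that the constant function $v \equiv -1$ belongs to the defining family of $u$, so $u \geq -1$ on $\Omega$. Combining this with the constraint $u \leq -1$ on $K$ yields $u \equiv -1$ pointwise on $K$, and therefore $u^*(q) \geq -1$ for every $q \in K$. For $q$ in the interior of $K$ the reverse inequality is immediate: a small ball $B(q,\delta) \subset K$ forces $u \equiv -1$ on $B(q,\delta)$, hence $u^*(q) = \limsup_{p\to q} u(p) = -1$.

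The nontrivial task is $u^*(q) \leq -1$ for $q \in K$ lying on the boundary of every closed ball of $K$ through $q$. Here the union-of-balls hypothesis enters: pick a closed ball $\overline{B}(q_0,r) \subset K$ with $q \in \overline{B}(q_0,r)$, choose $R$ so large that $\Omega \Subset B(q_0,R)$ (possible since $\Omega$ is bounded), and let $w := u_{m,\overline{B}(q_0,r),B(q_0,R)}$ be the continuous barrier provided by Lemma \ref{lem3}: $w$ is $m$-subharmonic on $B(q_0,R) \supset \Omega$, $w \equiv -1$ on $\overline{B}(q_0,r)$, and $w \in [-1,0]$ on $\Omega$. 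Applying Proposition \ref{pro3} parts 1 and 2 yields $u \leq u_{m,\overline{B}(q_0,r),\Omega}$ and $w \leq u_{m,\overline{B}(q_0,r),\Omega}$; hence it suffices to show $u^*_{m,\overline{B}(q_0,r),\Omega}(q) \leq -1$, reducing the problem to a single closed ball. For a single ball, a comparison argument applied in the annular region $\Omega \setminus \overline{B}(q_0,r)$ (where $w$ is $m$-maximal by the computation $\widetilde{S}_m(A) = 0$ in the proof of Lemma \ref{lem3}), combined with a small perturbation by a negative $m$-subharmonic exhaustion of $\Omega$ (which exists by the hyperconvexity hypothesis), forces any candidate for $u_{m,\overline{B}(q_0,r),\Omega}$ to sit below $w$ up to an arbitrarily small correction, giving $u^*_{m,\overline{B}(q_0,r),\Omega}(q) \leq w(q) = -1$.

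The main obstacle is the comparison step: candidates $v$ need not satisfy $v \leq w$ on $\partial \Omega$ (both are merely $\leq 0$ there), so one cannot apply the comparison principle directly to $v$ and $w$; the perturbation $w_\epsilon := w + \epsilon \psi$ with $\psi$ a negative continuous exhaustion is required to restore the boundary comparison, and one must then let $\epsilon \to 0$. Once $u^* \equiv -1$ on $K$ is established, Proposition \ref{pro5} yields continuity of $u$ on $\Omega$ and hence $u = u^*$. Finally, for the ``in particular'' clause, $K_\varepsilon = \{q \in \Omega : \mathrm{dist}(q,K) \leq \varepsilon\}$ is compact whenever $\varepsilon < \mathrm{dist}(K,\partial\Omega)$ and is manifestly the union of the closed balls $\overline{B}(p,\varepsilon)$ for $p \in K$, so the first part applies.
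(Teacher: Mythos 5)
Your overall architecture is the right one and matches the paper's: reduce to showing $u^{*}_{m,K,\Omega}\equiv -1$ on $K$ (the only issue being at $\partial K$), invoke Proposition \ref{pro5} for continuity, and observe that $K_{\varepsilon}=\bigcup_{p\in K}\overline{B}(p,\varepsilon)$ for the last clause. The gap is in the one step that actually carries the weight: the construction of the barrier at a point $b\in\partial K$. You choose $R$ so large that $\Omega\Subset B(q_{0},R)$, but Proposition \ref{pro3}(2) says that enlarging the ambient domain \emph{decreases} the extremal function, so $w=u_{m,\overline{B}(q_{0},r),B(q_{0},R)}$ satisfies $w\le u_{m,\overline{B}(q_{0},r),\Omega}$ — a lower bound, which cannot control $u^{*}$ from above. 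You acknowledge this and try to recover the upper bound by a comparison argument on $\Omega\setminus\overline{B}(q_{0},r)$, but the proposed repair fails: to dominate an arbitrary candidate $v\le 0$ near $\partial\Omega$ you need a majorant that is $\ge 0$ on $\partial\Omega$, whereas $w\le\sup_{\partial\Omega}w<0$ there (because $\partial\Omega$ is compactly contained in $B(q_{0},R)$), and the perturbation $w_{\epsilon}=w+\epsilon\psi$ with $\psi\le 0$ only pushes the majorant further down. No additive correction vanishing on $\partial\Omega$ can close a gap of fixed positive size; one would instead have to rescale, e.g.\ replace $w$ by $(w-M')/(1+M')$ with $M'=\inf_{\partial\Omega}w$, and even then one needs a pointwise domination principle on a domain whose closure meets $\partial\Omega$, which is not among the tools the paper has established at this stage (its comparison principle is an integral inequality).

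The fix is to flip the inclusion: choose $R$ with $0<r<R$ and $B(q_{0},R)\subset\Omega$ (possible since $\overline{B}(q_{0},r)\subset K\Subset\Omega$). Then every candidate for $u_{m,K,\Omega}$ restricts to a candidate for $u_{m,\overline{B}(q_{0},r),B(q_{0},R)}$, so Proposition \ref{pro3}(1)--(2) give
$u_{m,K,\Omega}\le u_{m,\overline{B}(q_{0},r),\Omega}\le u_{m,\overline{B}(q_{0},r),B(q_{0},R)}$ on $B(q_{0},R)$, and the right-hand side is the explicit continuous function of Lemma \ref{lem3}, equal to $-1$ on $\overline{B}(q_{0},r)$. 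This yields $\lim_{q\to b}u(q)=-1$, hence $u^{*}(b)=-1$, with no comparison argument needed; this is exactly the paper's proof. Your treatment of interior points of $K$, the appeal to Proposition \ref{pro5}, and the ``in particular'' clause are all fine as written.
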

\begin{proof}
It suffices to prove that $u=u_{m,K,\Omega}$ is continuous in $\partial K$. Let $b\in \partial K$. One
can choose $a\in K$ and $0<r<R$ such that $b\in \overline{B}(a,r)\subset K$ and $B(a,R)\subset\Omega$.
If $q\in B(a,R)$, then $u(q)\leq u_{m,\overline{B}(a,r),\Omega}(q)\leq u_{m,\overline{B}(a,r),B(a,R)}(q).$ From lemma \ref{lem3}   follows $\displaystyle{\lim_{q\longrightarrow b}u(q)=-1.}$
To prove the second conclusion note that $K_{\epsilon}=\cup_{a\in K}\overline{B}(a,\epsilon).$
\end{proof}
The following result is a consequence of propositions \ref{pro3}  and \ref{pro7}.
\begin{cor}\label{cor2}
Let $ \Omega \subset \mathbb{H}^{n}$ be a quaternionic $m$-hyperconvex and  $K \subset \Omega$ be a compact. Then
$\displaystyle{\lim_{\varepsilon\longrightarrow 0}}u_{m,K_{\varepsilon},\Omega} = u_{m,K,\Omega}$.
In particular, $u_{m,K,\Omega}$ is lower semi-continuous.
\end{cor}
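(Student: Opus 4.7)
The plan is to sandwich the relative extremal function of $K$ between the limit $v := \lim_{\varepsilon \to 0} u_{m,K_\varepsilon,\Omega}$ and itself, then read off lower semi-continuity from the continuous approximants provided by Proposition \ref{pro7}. Since $\varepsilon \mapsto K_\varepsilon$ is increasing and $K_\varepsilon \supset K$, item~1) of Proposition \ref{pro3} gives that $\varepsilon \mapsto u_{m,K_\varepsilon,\Omega}$ is increasing as $\varepsilon \searrow 0$ and dominated by $u_{m,K,\Omega}$. Hence the pointwise limit $v$ exists and satisfies $v \le u_{m,K,\Omega}$.

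For the reverse inequality I would argue as follows. Fix a competitor $u$ in the defining family of $u_{m,K,\Omega}$, that is, $u \in \mathcal{QSH}_m(\Omega)$ with $u \le 0$ in $\Omega$ and $u \le -1$ on $K$, and fix $\delta > 0$. Since $u$ is upper semi-continuous, the set $\{u < -1 + \delta\}$ is open and contains the compact set $K$, so there exists $\varepsilon_0 > 0$ such that $K_{\varepsilon_0} \subset \{u < -1 + \delta\}$. The translated function $u - \delta$ is still in $\mathcal{QSH}_m(\Omega)$ (the Baston operator annihilates constants, so this follows from the definition), is non-positive in $\Omega$, and satisfies $u - \delta \le -1$ on $K_{\varepsilon_0}$. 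By definition of the relative extremal function, $u - \delta \le u_{m,K_{\varepsilon_0},\Omega} \le v$. Sending $\delta \to 0$ yields $u \le v$, and taking the supremum over $u$ gives $u_{m,K,\Omega} \le v$. Combined with the previous inequality, $v = u_{m,K,\Omega}$.

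For the final statement, by Proposition \ref{pro7} each function $u_{m,K_\varepsilon,\Omega}$ (for $\varepsilon$ small enough that $K_\varepsilon \Subset \Omega$) is continuous on $\Omega$. Therefore $u_{m,K,\Omega}$ is the increasing limit of a family of continuous functions, and hence lower semi-continuous.

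The only delicate point is ensuring that the approximation $u - \delta \le -1$ on the thickened set $K_{\varepsilon_0}$, and this is settled purely by upper semi-continuity of $u$ together with compactness of $K$: the open set $\{u < -1 + \delta\}$ contains $K$, so a standard distance argument produces an $\varepsilon_0 > 0$ with $K_{\varepsilon_0}$ in this open set. Everything else is a direct application of the monotonicity properties (Proposition \ref{pro3}), the continuity result (Proposition \ref{pro7}), and elementary semi-continuity of monotone limits, so I do not anticipate any substantive obstacle beyond this one upper semi-continuity step.
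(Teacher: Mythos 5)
Your proof is correct and uses exactly the ingredients the paper invokes (the monotonicity in Proposition \ref{pro3} and the continuity of $u_{m,K_\varepsilon,\Omega}$ from Proposition \ref{pro7}); the paper itself states the corollary without writing out the details, and your $\delta$-perturbation argument via upper semi-continuity of a competitor is the standard way to fill them in. No gaps.
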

\begin{pro}\label{pro8}
If $ \Omega \subset \mathbb{H}^{n}$ be a quaternionic $m$-hyperconvex  and $K \subset \Omega$ be a compact, then
$u_{m,K,\Omega}^{*}$ is $m$-maximal in $\Omega \setminus K$.
\end{pro}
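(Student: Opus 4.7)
The plan is to prove $m$-maximality of $u^* := u^*_{m,K,\Omega}$ on $\Omega\setminus K$ by the classical balayage (gluing) argument, combined with a thickening approximation to handle the possible defect of $u^*$ at $\partial K$. Since $-1\leq u^*\leq 0$, we have $u^*\in \mathcal{QSH}_m(\Omega)\cap L^\infty_{loc}(\Omega)$ and the Hessian measure $(\Delta u^*)^m\wedge\beta^{n-m}$ is well-defined; it suffices to show this measure vanishes on $\Omega\setminus K$.

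First, I would establish the result for each thickening $K_\varepsilon=\{q:\mathrm{dist}(q,K)\leq\varepsilon\}$, where Proposition~\ref{pro7} guarantees that $u_\varepsilon:=u^*_{m,K_\varepsilon,\Omega}$ is continuous; combined with the fact that the constant $-1$ lies in the defining family, this forces $u_\varepsilon\equiv -1$ on $K_\varepsilon$. Fix an open ball $B\Subset\Omega\setminus K_\varepsilon$ and a function $v$ that is $m$-subharmonic on a neighborhood of $\overline B$ with $v\leq u_\varepsilon$ on $\partial B$. Define
\[
w(q):=\begin{cases} \max\{u_\varepsilon(q),v(q)\}, & q\in B,\\ u_\varepsilon(q), & q\in \Omega\setminus B, \end{cases}
\]
so that $w\in\mathcal{QSH}_m(\Omega)$ by the gluing property in Proposition~\ref{pro1}(6). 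The subharmonic maximum principle yields $v\leq 0$ on $B$ (since $v\leq u_\varepsilon\leq 0$ on $\partial B$), hence $w\leq 0$ on $\Omega$; and since $B\cap K_\varepsilon=\emptyset$ we get $w=u_\varepsilon\leq -1$ on $K_\varepsilon$. Thus $w$ belongs to the defining family of $u_{m,K_\varepsilon,\Omega}$, giving $w\leq u_\varepsilon$, which specializes to $v\leq u_\varepsilon$ on $B$. This establishes $m$-maximality of $u_\varepsilon$ on $\Omega\setminus K_\varepsilon$, so $(\Delta u_\varepsilon)^m\wedge\beta^{n-m}$ is concentrated on $K_\varepsilon$.

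To pass to a general compact $K$, I would use that by Proposition~\ref{pro3}(1) and Corollary~\ref{cor2} the continuous functions $u_\varepsilon$ increase to $u_{m,K,\Omega}=u^*_{m,K,\Omega}$ almost everywhere as $\varepsilon\to 0$. Lemma~\ref{lem2}(3) then yields weak convergence $(\Delta u_\varepsilon)^m\wedge\beta^{n-m}\to (\Delta u^*)^m\wedge\beta^{n-m}$. Since the $K_\varepsilon$ form a decreasing family of closed sets with $\bigcap_\varepsilon K_\varepsilon=K$, the weak limit remains supported in $K$, completing the proof.

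The main obstacle is the subtle gap between the pointwise envelope $u_{m,K,\Omega}$, which by definition is $\leq -1$ on $K$, and its usc regularization $u^*_{m,K,\Omega}$, which may strictly exceed $-1$ on a (pluripolar) subset of $\partial K$. Were this not the case, the balayage step would apply directly to $K$ itself; instead one passes through $K_\varepsilon$ to secure the pointwise equality $u_\varepsilon=-1$ on $K_\varepsilon$, and invokes the stability of the Hessian measure under monotone limits to carry the conclusion back to $K$.
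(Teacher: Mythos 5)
Your overall strategy is the same as the paper's: thicken $K$ to $K_\varepsilon$ so that Proposition \ref{pro7} gives continuity and the pointwise identity $u_\varepsilon\equiv-1$ on $K_\varepsilon$, run a balayage argument on balls $B\Subset\Omega\setminus K_\varepsilon$, and then pass to the limit $\varepsilon\to 0$ using Corollary \ref{cor2} and Lemma \ref{lem2}(3) (the paper phrases this limiting step as ``we may assume $u$ is continuous'' at the outset, but it is the same reduction). The gluing step itself is fine: $v$ is upper semi-continuous near $\overline B$ and $u_\varepsilon$ is continuous, so Proposition \ref{pro1}(6) applies, and your competitor $w$ does lie in the defining family, giving $v\le u_\varepsilon$ on $B$.

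The gap is the sentence ``This establishes $m$-maximality of $u_\varepsilon$ on $\Omega\setminus K_\varepsilon$, so $(\Delta u_\varepsilon)^m\wedge\beta^{n-m}$ is concentrated on $K_\varepsilon$.'' What you have actually proved is the comparison property (every $m$-subharmonic competitor dominated by $u_\varepsilon$ on $\partial B$ stays below $u_\varepsilon$ in $B$); the conclusion you need — and the one Theorem \ref{th1} later uses — is that the Hessian measure vanishes on $\Omega\setminus K_\varepsilon$. That implication is not automatic: for locally bounded $m$-subharmonic functions it is a theorem whose proof is precisely the missing step. The paper closes it by choosing a \emph{specific} competitor, namely the solution $\varphi$ of the homogeneous Dirichlet problem $(\Delta\varphi)^m\wedge\beta^{n-m}=0$ in $B$ with $\varphi=u$ on $\partial B$ (from \cite[Theorem 3.1]{Ba}): the envelope argument gives $\varphi\le u$ in $B$, while the comparison principle gives $\varphi\ge u$ there, so $u=\varphi$ on $B$ and the measure visibly vanishes. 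If you replace your arbitrary $v$ by this $\varphi$ (or explicitly cite the equivalence between maximality and the degenerate Hessian equation for bounded quaternionic $m$-sh functions), your argument is complete; as written, the decisive step is asserted rather than proved. The final passage to the limit — weak convergence of $(\Delta u_\varepsilon)^m\wedge\beta^{n-m}$ to $(\Delta u^*)^m\wedge\beta^{n-m}$ along the increasing family, plus the fact that measures supported in $K_\varepsilon$ have weak limits supported in $\bigcap_\varepsilon K_\varepsilon=K$ — is correct.
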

\begin{proof}
In view of corollary \ref{cor2}, proposition \ref{pro7} and lemma \ref{lem2}, we may assume that $u=u_{m,K,\Omega}$ is continuous. Fix $B=\overline{B}(a,r)\subset \Omega\setminus K$ and define
 \begin{equation*}
  v:=  \left \{
 \begin{array}{ll}
  \varphi,\;\;\;in\;\; B \\
 u, \;\;\;\;\;\;\;\;\;in\;\; \Omega\setminus B \\
\end{array}
 \right.
   \end{equation*}
   where $\varphi$ is the unique solution to the following Dirichlet problem (see \cite[Theorem 3.1]{Ba})
    \begin{equation*}
\left\{
     \begin{array}{lll}
     \varphi\in \mathcal{QSH}_{m}(B)\cap C(\overline{B})\\
            (\Delta \varphi)^m\wedge\beta^{n-m}=0,\;\;\;\mbox{in}\; B\\
        \varphi_{\vert_{ \partial \Omega}}=u.
     \end{array}
   \right.
\end{equation*}
Clearly, $v\in \mathcal{QSH}_{m}(\Omega),\;v\geq 0$ and $v<-1$ on $K$. Hence $v\leq u$ in $\Omega$, On the other hand, $\varphi\geq u$ in $B$. Therefore $u=\varphi$ in $B$, Since $B$ was chosen arbitrarily, we get the desired result.
\end{proof}
\begin{thm}\label{th1}
 Let $ \Omega \Subset \mathbb{H}^{n}$ be a quaternionic $m$-hyperconvex  and $K \Subset \Omega$ be a compact, then
$$C_{m}(K, \Omega) = \int_{\Omega}(\Delta u_{m,K,\Omega}^{\ast})^{m}\wedge \beta^{n-m}.$$

Moreover, if $u_{m,K,\Omega}^{\ast}>-1$ on $K$, then $C_{m}(K ,\Omega)=0.$
\end{thm}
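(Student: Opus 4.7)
The plan is to prove $C_m(K,\Omega)=\int_\Omega (\Delta u^*_{m,K,\Omega})^m\wedge\beta^{n-m}$ by a two-sided sandwich, then deduce the moreover clause from a negligibility argument. Writing $u^*:=u^*_{m,K,\Omega}$, the easy inequality $\int_\Omega (\Delta u^*)^m\wedge\beta^{n-m}\leq C_m(K,\Omega)$ follows because $u^*\in\mathcal{QSH}_m(\Omega)$ with $-1\leq u^*\leq 0$ is admissible in the sup defining the capacity, and because $u^*$ is $m$-maximal in $\Omega\setminus K$ by Proposition~\ref{pro8}, so $(\Delta u^*)^m\wedge\beta^{n-m}$ is supported in $K$ and $\int_\Omega (\Delta u^*)^m\wedge\beta^{n-m}=\int_K (\Delta u^*)^m\wedge\beta^{n-m}\leq C_m(K,\Omega)$.

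For the reverse inequality, I fix $u\in\mathcal{QSH}_m(\Omega)$ with $-1\leq u\leq 0$ and $\epsilon\in(0,1)$, and apply the comparison principle to the pair $u^*$ and $(1-\epsilon)u$. By the $m$-hyperconvexity of $\Omega$ and Proposition~\ref{pro4}, $u^*\to 0$ at $\partial\Omega$, while $(1-\epsilon)u\leq 0$, so $\liminf_{q\to\xi}\bigl(u^*(q)-(1-\epsilon)u(q)\bigr)\geq 0$ for $\xi\in\partial\Omega$. The comparison principle then gives
$$(1-\epsilon)^m\int_{\{u^*<(1-\epsilon)u\}}(\Delta u)^m\wedge\beta^{n-m}\leq\int_\Omega (\Delta u^*)^m\wedge\beta^{n-m}.$$
On $K$ one has $(1-\epsilon)u\geq -(1-\epsilon)>-1\geq u_{m,K,\Omega}$ by the definition of $u_{m,K,\Omega}$, and since $u^*=u_{m,K,\Omega}$ outside an $m$-polar set on which the bounded Hessian measure $(\Delta u)^m\wedge\beta^{n-m}$ puts no mass, $K$ is effectively contained in $\{u^*<(1-\epsilon)u\}$. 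Letting $\epsilon\to 0$ and passing to the supremum over admissible $u$ yields $C_m(K,\Omega)\leq\int_\Omega (\Delta u^*)^m\wedge\beta^{n-m}$.

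For the moreover clause, if $u^*(q)>-1$ for every $q\in K$ then since $u_{m,K,\Omega}\leq -1$ on $K$ we have $K\subset\{u^*_{m,K,\Omega}>u_{m,K,\Omega}\}$, which is $m$-polar by the quaternionic version of Bedford--Taylor's negligibility theorem, and bounded $m$-sh functions carry no Hessian mass there. In particular $\int_K(\Delta u^*)^m\wedge\beta^{n-m}=0$, and the first part of the theorem gives $C_m(K,\Omega)=0$. The main technical obstacle is justifying this negligibility statement in the quaternionic framework, together with the fact that $(\Delta v)^m\wedge\beta^{n-m}$ ignores such sets for bounded $v\in\mathcal{QSH}_m(\Omega)$; this should follow from a Choquet-type upper envelope argument using the convergence results of Lemma~\ref{lem2} and the capacity estimates of Proposition~\ref{pro2}.
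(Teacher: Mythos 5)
Your easy inequality $\int_\Omega(\Delta u^*)^m\wedge\beta^{n-m}\le C_m(K,\Omega)$ is fine (admissibility of $u^*$ plus Proposition~\ref{pro8}), and your overall strategy for the reverse inequality — comparison principle against a rescaled competitor — is the same as the paper's. But the step where you pass from ``$K\subset\{u_{m,K,\Omega}<(1-\epsilon)u\}$'' to ``$K$ is effectively contained in $\{u^*<(1-\epsilon)u\}$'' is a genuine gap, and you have put your finger on it yourself. It requires (i) that the negligible set $N=\{u_{m,K,\Omega}<u^*_{m,K,\Omega}\}$ is $m$-polar, and (ii) that $(\Delta u)^m\wedge\beta^{n-m}$ charges no $m$-polar set for bounded $u\in\mathcal{QSH}_m$. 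Point (ii) can in fact be extracted from Proposition~\ref{pro9} (apply it to $u-c$ for a constant $c>0$), but point (i) is the quaternionic analogue of Bedford--Taylor's ``negligible sets are pluripolar'' theorem, which is stated nowhere in this paper and whose standard proof runs \emph{through} the identity $C_m(K)=\int_\Omega(\Delta u^*_{m,K,\Omega})^m\wedge\beta^{n-m}$ and the resulting outer-capacity theory. Invoking it here is therefore circular; a ``Choquet-type upper envelope argument using Lemma~\ref{lem2} and Proposition~\ref{pro2}'' does not produce it. The same objection applies to your proof of the ``moreover'' clause, which again routes through $K\subset N$ with $N$ negligible-hence-polar.

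The paper avoids the issue entirely by never letting the discrepancy between $u_{m,K,\Omega}$ and $u^*_{m,K,\Omega}$ enter the comparison-principle step: using Proposition~\ref{pro7} and Corollary~\ref{cor2} it replaces $u_{m,K,\Omega}$ by an increasing sequence of \emph{continuous} relative extremal functions $u_j$ (coming from the fattened compacts $K_\varepsilon$), for which the inclusion of $K$ in the open sublevel set $\{u_j<v-1\}$ holds exactly, applies the comparison principle there, and then passes to the limit using the convergence of $(\Delta u_j)^m\wedge\beta^{n-m}$ along increasing sequences (Lemma~\ref{lem2}) together with the maximality of $u^*$ off $K$ (Proposition~\ref{pro8}). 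To repair your argument you should do the same: run your comparison against $u_{m,K_\varepsilon,\Omega}$, which equals $-1$ on a neighborhood of $K$, and let $\varepsilon\to0$ at the end. For the ``moreover'' clause the paper also has a cleaner, non-circular argument you could adopt: upper semicontinuity and compactness give $u^*>\epsilon-1$ on $K$ for some $\epsilon>0$, so $\max\bigl(u^*/(1-\epsilon),-1\bigr)$ is an admissible competitor agreeing with $u^*/(1-\epsilon)$ near $K$, whence $C_m(K,\Omega)\ge(1-\epsilon)^{-m}C_m(K,\Omega)$ by the first part of the theorem, forcing $C_m(K,\Omega)=0$.
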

\begin{proof} Let $\rho$ be an exhaustion function of  $\Omega$ such that $\rho<-1$ on $K$. Denote $u:=u_{m,K,\Omega} $ and  fix $\epsilon  \in (0 ,1)$ and $v\in \mathcal{QSH}_{m}(\Omega,(0,1-\epsilon)).$
In view of  proposition \ref{pro7} and corollary \ref{cor2}, we can find an increasing sequence $(u_{j})\in C(\Omega)\cap \mathcal{QSH}_{m}(\Omega,[-1,0])$ that converges to $u$. We may assume that $u_{j}\geq\rho$ on $\Omega$.
We have $$K\subset \lbrace u_{j}<v-1\rbrace\subset \lbrace \rho < v-1 \rbrace \subset\lbrace \rho < -\epsilon\rbrace.$$
by  the comparison  principle, we get
$$\int_{K}(\Delta v)^{m}\wedge \beta^{n-m}\leq \int_{\lbrace u_{j}<v-1\rbrace}(\Delta v)^{m}\wedge \beta^{n-m}\leq \int_{\lbrace u_{j}<v-1\rbrace}(\Delta u_{j})^{m}\wedge \beta^{n-m}.$$
 It follows from lemma \ref{lem2} that $(\Delta u_{j})^{m}\wedge \beta^{n-m}$ converges weakly to $(\Delta u^{\ast})^{m}\wedge \beta^{n-m}.$
Hence,$$\int_{K}(\Delta v)^{m}\wedge \beta^{n-m} \leq \int_{\overline{{\lbrace \rho <v- \epsilon \rbrace}}}(\Delta u^{\ast})^{m} \wedge \beta^{n-m}.$$
 Proposition \ref{pro8}  implies that $$\int_{K}(\Delta u^{\ast})^{m}\wedge \beta^{n-m}= \int_{\overline{{\lbrace \rho<v-\epsilon\rbrace}}}(\Delta u^{\ast})^{m}\wedge \beta^{n-m}.$$
Note that for each $\epsilon \in (0,1)$, we have
$$C_{m}(K,\Omega)=(1-\epsilon)^{-m}\sup \lbrace \int_{K}(\Delta v)^{m}\wedge \beta^{n-m}\;/\;v\in \mathcal{QSH}_{m}(\Omega,(0,1-\epsilon))\rbrace.$$  For the second statement,  one can suppose that $u^{\ast}>\epsilon -1$ on $K$. Therefore
$$C_m(K, \Omega) \geq
\int_{K}(\Delta u_{m,U_j,\Omega})^{m}\wedge \beta^{n-m}=\int_{K}(\Delta (\dfrac{u^{\ast}}{1-\epsilon}))^{m}\wedge \beta^{n-m}=(1-\epsilon)^{-m}C_{m}(K,\Omega),$$ which implies that $C_{m}(K,\Omega)=0.$

  \end{proof}
\begin{defi}
 Let $\Omega$ be an open set in $\mathbb{H}^{n}$, and let $\mathcal{U} \subset \mathcal{QSH}_{m}(\Omega)$ be a family of functions which is locally
bounded from above. Define
$$u(q) = \sup\lbrace v(q)\; /\; v \in \mathcal{U}\rbrace.$$
A set of the form $N = \lbrace q \in \Omega \;/\; u(q) < u^{\ast}(q)\rbrace$ and all their subsets are called $m$-negligible.
\end{defi}
\begin{defi}
 We say that  $ E \subset \mathbb{H}^{n}$ is  $m$-polar set, if for each $q \in E$ there exist a neighborhood $V$ of $q$ and $v \in \mathcal{QSH}_{m}(V)$ such that $E \cap V \subset \lbrace v=- \infty\rbrace$.
 If  $E \subset \lbrace v = - \infty \rbrace$ with $v \in \mathcal{QSH}_{m}(\mathbb{H}^{n})$, we say that $E$
 is globally $m$-polar.
\end{defi}
\begin{pro}\label{pro9}
 Let  $\Omega$ be  an open  in $\mathbb{H}^{n}$ and $u \in \mathcal{QSH}_{m}(\Omega)\cap L_{loc}^{\infty}(\Omega).$   Then for each $m$-polar set $E \subset \Omega$ we have
$$\int_{E}u(\Delta u)^{m}\wedge \beta^{n-m}=0. $$
\end{pro}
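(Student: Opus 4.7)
The plan is to prove the stronger statement that the measure $\mu := (\Delta u)^m \wedge \beta^{n-m}$ assigns zero mass to every $m$-polar set; the conclusion then follows at once because $u$ is locally bounded on $\Omega$.

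First I would localize. Since $m$-polarity is defined locally and $\mu$ is a Radon measure, I cover $E$ by countably many balls $V_k \Subset \Omega$ such that on each $V_k$ there exists $v_k \in \mathcal{QSH}_m(V_k)$ with $v_k < 0$ and $E \cap V_k \subset \{v_k = -\infty\}$; the sign condition is arranged by subtracting a constant, since $v_k$ is upper semicontinuous on $\overline{V_k}$. The Borel set $\{v_k = -\infty\}$ is $m$-polar and contains $E \cap V_k$, and each ball $V_k$ is $m$-hyperconvex via the exhaustion $\Vert q - a_k \Vert^{2} - R_k^{2}$. By $\sigma$-additivity of $\mu$ it suffices to show $\mu(\{v_k = -\infty\}) = 0$ for each fixed $k$.

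Fix such a ball $V := V_k$ and write $v := v_k$. For any compact $K \subset \{v = -\infty\}$, Proposition \ref{pro6} applied in $V$ gives $u^{*}_{m,K,V} \equiv 0$; in particular $u^{*}_{m,K,V} > -1$ on $K$, and the second assertion of Theorem \ref{th1} then forces $C_m(K,V) = 0$. Because $u$ is locally bounded I may assume $-M \leq u \leq M$ on $V$, whence $\tilde u := (u - M)/(2M)$ lies in $\mathcal{QSH}_m(V)$ with $-1 \leq \tilde u \leq 0$ and $(\Delta \tilde u)^{m} = (2M)^{-m}(\Delta u)^{m}$. The definition of $C_m$ therefore yields
$$
\mu(K) \;=\; (2M)^{m} \int_{K} (\Delta \tilde u)^{m} \wedge \beta^{n-m} \;\leq\; (2M)^{m}\, C_m(K, V) \;=\; 0.
$$
Inner regularity of the Radon measure $\mu$ upgrades this to $\mu(\{v = -\infty\}) = 0$, and summing over the cover gives $\mu(E) = 0$ and hence the proposition.

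The main obstacle is purely organizational: the notion of $m$-polarity is local while Proposition \ref{pro6} and Theorem \ref{th1} live on $m$-hyperconvex domains, so one has to carry out the covering carefully and replace $E$ by a Borel $m$-polar superset before inner regularity can be invoked. Once this bookkeeping is done, the analytic input --- identifying $m$-polar compacts as sets of zero $m$-capacity, then controlling $\mu(K)$ by the capacity via a shift-and-scale of $u$ --- slots in directly from what has been developed in Section \ref{sec1}.
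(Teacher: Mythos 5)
Your proof is correct and follows essentially the same route as the paper's: cover $E$ by balls on which a potential $v_k$ witnesses $m$-polarity, apply Proposition \ref{pro6} to get $u^{*}_{m,K,V}\equiv 0$ for compacts $K$ contained in $\{v_k=-\infty\}$, invoke Theorem \ref{th1} to conclude $C_m(K,V)=0$, and bound the Hessian measure of $K$ by the capacity. The only difference is that you spell out the shift-and-scale normalization of $u$ and the inner-regularity/exhaustion step, which the paper leaves implicit.
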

\begin{proof}
We cover $E$ by a family of closed balls $B_{j}=B(b_j;r_j)$ such that $E_j=E\cap B_j \subset \{v_j=-\infty \},$ where $v_j\in \mathcal{QSH}_{m}(B_j).$
It is sufficient to prove that $$\forall j,\;\;\int_{E_j}u(\Delta u)^{m}\wedge \beta^{n-m}=0. $$
Fix $j\in \mathbb{N}$ and  prove that $$\int_{A_r}u(\Delta u)^{m}\wedge \beta^{n-m}=0, $$
for each $r< r_j$ where $A_r=v_j^{-1}(-\infty)\cap B(b_j,r).$
We may assume that $v_j< 0$, from proposition \ref{pro6} follows $u_{m,K,B(b_j,r)}^\ast=0$ where $K\Subset A_r$ is a compact set.
Applying Theorem \ref{th1}, we obtain $C_m(K,B(b_j,r))=0$ which implies that $$\int_{K}u(\Delta u)^{m}\wedge \beta^{n-m}=0. $$
\end{proof}
\section{Cegrell’s classes $\mathcal{E}_{m}^{0}, \mathcal{E}_{m}, \mathcal{F}_{m}, \mathcal{E}_{m}^{p}, \mathcal{F}_{m}^{p}$ and approximation of $m$-sh functions}\label{sec2}
Throughout this section, we let $\Omega$ denote a quaternionic $m$-hyperconvex domain.
We define the Cegrell's classes of quaternionic $m$-subharmonic functions.
\begin{defi}\label{def1}
\begin{itemize}
\item We denote $\mathcal{E}^0_m(\Omega)$ the class of bounded functions that is belong to $\mathcal{QSH}_{m}^{-}(\Omega)$ such that
$\displaystyle\lim_{q\rightarrow \xi}u(q)=0$, $\forall\xi\in\partial\Omega$ and $\displaystyle\int_{\Omega}(\Delta u)^{m}\wedge \beta^{n-m}<+\infty.$
\item Let $u \in \mathcal{QSH}_{m}^{-}(\Omega)$, we say that $u$  belongs to $\mathcal{E}_m(\Omega)$ (shortly $\mathcal{E}_m$) if for each $q_0 \in \Omega$, there exists an open neighborhood $U \subset \Omega$ of $q_0$ and a decreasing sequence $(u_j) \subset \mathcal{E}_m^0(\Omega)$ such that
  $u_j \downarrow u$ on $U$ and $\displaystyle\sup_j\int_\Omega (\Delta u_j)^{m}\wedge \beta^{n-m}<+\infty.$
\item We denote by $\mathcal{F}_m(\Omega)$ (or $\mathcal{F}_m$) the class of functions $u \in \mathcal{QSH}_{m}^{-}(\Omega)$ such that there exists a sequence $(u_j) \subset \mathcal{E}_m^0(\Omega)$  decreasing to $u$ in $\Omega$ and $\displaystyle\sup_j\int_\Omega(\Delta u_j)^{m}\wedge \beta^{n-m} <+\infty.$
\item For every $p\geq 1$, $\mathcal{E}^p_m(\Omega)$ denote the class of functions $\psi \in \mathcal{QSH}^{-}_m(\Omega)$ such that there
exists a decreasing sequence  $(\psi_j)\subset\mathcal{E}^0_m(\Omega)$ such that $\displaystyle\lim_{j\rightarrow +\infty}\psi_j(q)=\psi(q),$ and  $\displaystyle\sup_j\displaystyle\int_{\Omega}(-\psi_{j})^p(\Delta\psi_j)^{m}\wedge \beta^{n-m}<+\infty.$\\
\;\;\; If moreover $\displaystyle\sup_j\int_{\Omega}(\Delta\psi_j)^{m}\wedge \beta^{n-m}<+\infty$ then, by definition, $\psi\in \mathcal{F}^p_m(\Omega).$
\end{itemize}
\end{defi}
\begin{thm}\label{th3}
For any function $\varphi \in \mathcal{QSH}_{m}^{-}(\Omega)$, there is a decreasing sequence $(\varphi _{j})\in
\mathcal{QSH}_{m}(\Omega) $  satisfying the following properties:
\begin{enumerate}
\item[i)] $\varphi_{j}$ is continuous on $\overline{\Omega}$ and $\varphi_{j}\equiv 0$ on $\partial\Omega,$
\item[ii)] For each $j,\;\;
\displaystyle{\int}_{\Omega}(\Delta \varphi_j)^{m}\wedge \beta^{n-m}<+\infty,$
\item[iii)]$\lim_{j\longrightarrow+\infty}\varphi_{j}(q) = \varphi(q), $ for $ q \in \Omega.$

\end{enumerate}
\end{thm}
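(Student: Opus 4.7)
I follow the Cegrell-type strategy, adapted to the quaternionic $m$-subharmonic setting: regularize $\varphi$ by convolution on an exhausting sequence of relatively compact subdomains, then pass to a Perron--Bremermann envelope to enforce the boundary value $0$. The $m$-hyperconvexity of $\Omega$ is used in both parts.

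\emph{Exhaustion and interior regularization.} Since $\Omega$ is $m$-hyperconvex, a standard regularization procedure (Choquet's lemma combined with pasting via Proposition~\ref{pro1}(6) against the explicit local exhaustion of Lemma~\ref{lem3}) produces $\rho \in \mathcal{QSH}_m^-(\Omega) \cap C(\overline{\Omega})$ with $\rho|_{\partial\Omega} = 0$ and $-1 \le \rho < 0$ in $\Omega$. Set $U_j := \{\rho < -1/j\} \Subset \Omega$, so that $U_j \nearrow \Omega$, and pick $\epsilon_j \searrow 0$ with $\epsilon_j < \operatorname{dist}(U_j,\partial\Omega)$. Put $\tilde\varphi_j := \varphi \ast \chi_{\epsilon_j}$ on $U_j$. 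By Proposition~\ref{pro1}(4), $\tilde\varphi_j \in \mathcal{QSH}_m(U_j) \cap C^\infty$, and standard subharmonic-convolution monotonicity gives both $\varphi \le \tilde\varphi_j$ on $U_j$ and $\tilde\varphi_j \searrow \varphi$ pointwise.

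\emph{Envelope and verification.} Define
\[
\mathcal{F}_j := \bigl\{ v \in \mathcal{QSH}_m(\Omega) : v \le 0,\ v \le \tilde\varphi_j \text{ on } U_j \bigr\}, \qquad \varphi_j := (\sup\mathcal{F}_j)^{\ast}.
\]
The constant $-L_j$ with $L_j := \|\tilde\varphi_j\|_{L^\infty(\overline{U_j})}$ lies in $\mathcal{F}_j$, so the envelope is finite, and Proposition~\ref{pro1}(5) gives $\varphi_j \in \mathcal{QSH}_m^-(\Omega)$. For property (i), for any $M \ge jL_j$ the function $M\rho$ belongs to $\mathcal{F}_j$ (on $U_j$, $M\rho \le -M/j \le -L_j \le \tilde\varphi_j$) and tends to $0$ on $\partial\Omega$; hence $\varphi_j \ge M\rho$ forces $\lim_{q\to\xi}\varphi_j(q) = 0$ for every $\xi \in \partial\Omega$, and continuity on $\overline{\Omega}$ follows from a Walsh-type envelope argument using the barrier $\rho$ and the continuity of $\tilde\varphi_j$. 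For property (ii), a standard envelope argument shows that $\varphi_j$ is $m$-maximal on $\Omega\setminus\overline{U_j}$, so $(\Delta\varphi_j)^m\wedge\beta^{n-m}$ is supported in $\overline{U_j}\Subset\Omega$, and a Chern--Levine--Nirenberg estimate in terms of $\|\varphi_j\|_\infty$ yields its finite mass. For property (iii), $\mathcal{F}_{j+1}\subseteq\mathcal{F}_j$ (since $U_{j+1}\supseteq U_j$ and $\tilde\varphi_{j+1}\le\tilde\varphi_j$ on $U_j$), so $\varphi_j$ is decreasing; $\varphi$ itself is admissible, so $\varphi\le\varphi_j\le\tilde\varphi_j$ on $U_j$, and $\tilde\varphi_j\searrow\varphi$ gives the desired pointwise convergence.

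\emph{Main obstacle.} The delicate step is establishing continuity of $\varphi_j$ on $\overline{\Omega}$ (equivalently, that $\sup\mathcal{F}_j$ coincides with its upper regularization everywhere). This requires a quaternionic analogue of Walsh's envelope theorem, built from the $m$-hyperconvex barrier $\rho$, the continuity of the obstacle $\tilde\varphi_j$, and the pasting Proposition~\ref{pro1}(6). Once continuity is secured, the $m$-maximality on $\Omega\setminus\overline{U_j}$ giving (ii), and the decreasing pointwise convergence giving (iii), are routine.
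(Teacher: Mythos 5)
Your proposal is correct and follows essentially the same route as the paper, whose proof simply invokes Cegrell's construction \cite[Theorem 2.1]{C4} together with Proposition \ref{pro7}: regularize on an exhaustion and take the envelope with zero boundary values, then use $m$-maximality off a compact set plus a Chern--Levine--Nirenberg bound for the finite mass. The continuity of the envelope that you flag as the main obstacle is exactly the gluing argument already carried out in Proposition \ref{pro5} (regularize the envelope, paste with a large multiple of the exhaustion function near $\partial\Omega$ via Proposition \ref{pro1}(6), and shift by $\epsilon$ using the uniform continuity of the obstacle on $\overline{U_j}$), so nothing essentially new is needed.
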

\begin{proof}
If $B$ is a closed ball in $\Omega$, then by Proposition \ref{pro7}, $u= u_{m,B,\Omega}$ is continuous in $\overline{\Omega}$ and $supp((\Delta u)^{m}\wedge \beta^{n-m})\Subset \Omega$.
 We can follow the lines in \cite[Theorem 2.1]{C4}.
\end{proof}
\begin{lem}\label{lem4}
$C_{0}^{\infty}(\Omega)\subset \mathcal{E}_{m}^{0}(\Omega)\cap C(\overline{\Omega})-\mathcal{E}_{m}^{0}(\Omega)\cap C(\overline{\Omega})$.
\end{lem}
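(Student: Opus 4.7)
Set $K := \mathrm{supp}\,\chi \Subset \Omega$. The idea is to write $\chi = u_1 - u_2$ by gluing two explicit smooth $m$-subharmonic models to a common extremal function carrying the correct boundary values. Fix $R > 0$ with $\overline{\Omega} \subset B(0,R)$ and set $\eta(q) := \|q\|^2 - R^2$, which is smooth on $\mathbb{H}^n$ and satisfies $\Delta\eta = 8\beta$ by the very definition of $\beta$. Since $\chi$ has uniformly bounded quaternionic Hessian coefficients $\Delta_{ij}\chi$, one can choose $C > 0$ large enough that the smooth functions
\[
\phi_1 := \chi + C\eta, \qquad \phi_2 := C\eta
\]
are both (strictly) $m$-subharmonic on $\mathbb{H}^n$ and satisfy $\phi_i \leq 0$ on $\overline{\Omega}$. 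The identity $\phi_1 - \phi_2 \equiv \chi$ already holds; only the boundary values need correcting, since each $\phi_i|_{\partial\Omega}$ is bounded away from $0$.

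To fix the boundary behavior, pick $\varepsilon > 0$ with $K_\varepsilon := \{q : d(q, K) \leq \varepsilon\} \Subset \Omega$ (a union of closed balls) and set $\psi := u_{m, K_\varepsilon, \Omega}$. By Proposition \ref{pro7}, $\psi$ is continuous in $\Omega$; by Proposition \ref{pro4}, $\psi(q) \to 0$ as $q \to \partial\Omega$; and by Theorem \ref{th1}, $\int_\Omega (\Delta\psi)^m \wedge \beta^{n-m} = C_m(K_\varepsilon, \Omega) < \infty$; moreover $\psi \equiv -1$ on $K_\varepsilon$, so $\psi \in \mathcal{E}^0_m(\Omega) \cap C(\overline{\Omega})$ with $\psi|_K \equiv -1$. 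Choose $B > 0$ so large that $B\psi < \phi_i$ on $K$ for $i = 1, 2$ (possible because $\psi|_K = -1$ and the $\phi_i$ are bounded on $K$); by continuity this persists on a neighborhood of $K$, while near $\partial\Omega$ one has $B\psi \to 0$ whereas $\phi_i$ is bounded above by some strictly negative constant, so $B\psi > \phi_i$ there. Define
\[
u_i := \max(\phi_i,\, B\psi) \quad \text{on } \Omega, \qquad i = 1, 2.
\]

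Proposition \ref{pro1}(1) gives $u_i \in \mathcal{QSH}^-_m(\Omega)$; since $\phi_i, \psi \in C(\overline{\Omega})$, so is $u_i$, and $u_i|_{\partial\Omega} = 0$ because $u_i = B\psi$ in a neighborhood of $\partial\Omega$. On a neighborhood of $K$, $u_i = \phi_i$, so $u_1 - u_2 = \chi$ there; outside $K$ one has $\chi \equiv 0$, hence $\phi_1 = \phi_2$ and $u_1 = u_2$, yielding $u_1 - u_2 = \chi$ on all of $\Omega$. For the Hessian mass, the maximum principle stated immediately after Lemma \ref{lem21}, combined with the comparison principle that follows it, gives
\[
\int_\Omega (\Delta u_i)^m \wedge \beta^{n-m} \leq \int_\Omega (\Delta\phi_i)^m \wedge \beta^{n-m} + B^m \int_\Omega (\Delta\psi)^m \wedge \beta^{n-m},
\]
both terms being finite (the first because $\phi_i$ is smooth and $\Omega$ is bounded, the second by construction of $\psi$). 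Hence $u_i \in \mathcal{E}^0_m(\Omega) \cap C(\overline{\Omega})$, as required.

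The main technical subtlety is the first step: verifying that $C\eta + \chi$ is $m$-subharmonic for large $C$. This amounts to a pointwise eigenvalue estimate on the quaternionic Hessian matrix $(\Delta_{ij}(C\eta + \chi))$, using that $\Delta\eta = 8\beta$ is strictly $m$-positive while the coefficients of $\Delta\chi$ are uniformly bounded on $\mathbb{H}^n$. Everything else is assembly: correcting boundary values by the $\max$-gluing against $B\psi$, and bounding the Hessian mass via the maximum principle.
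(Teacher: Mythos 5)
Your proof is correct and takes essentially the same route as the paper's: both decompose $\chi=(\chi+C\eta)-C\eta$ with a smooth negative strictly $m$-subharmonic quadratic (the paper uses $A\Vert q\Vert^{2}-b$, you use $\Vert q\Vert^{2}-R^{2}$) and then correct the boundary values by taking the maximum of each piece with $B\psi$ for a suitable negative $\psi\in\mathcal{E}_{m}^{0}(\Omega)$ (the paper picks an arbitrary such $\psi$, you make the explicit choice $u_{m,K_{\varepsilon},\Omega}$, which also supplies the existence the paper leaves implicit). One small imprecision: your displayed mass bound does not literally follow from applying the maximum principle on $\{\phi_{i}>B\psi\}$ and $\{\phi_{i}<B\psi\}$, since the contact set $\{\phi_{i}=B\psi\}$ may carry mass; it is cleaner to observe $u_{i}\geq B\psi$ with equal boundary values and use monotonicity of the total Hessian mass (a consequence of the comparison principle) to get $\int_{\Omega}(\Delta u_{i})^{m}\wedge\beta^{n-m}\leq B^{m}\int_{\Omega}(\Delta\psi)^{m}\wedge\beta^{n-m}<\infty$.
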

\begin{proof} 
Fix $\chi\in C_{0}^{\infty}(\Omega)$ and choose $0 > \psi \in  \mathcal{E}_{m}^{0}(\Omega)$, choose $A$ so large that $\chi+A\vert q \vert^{2}\in \mathcal{QSH}_{m}(\Omega)$. Let $a,b\in \mathbb{R}$ such that
$$a<\inf \chi< \sup_{\Omega}(\vert \chi\vert +A\vert q \vert^{2})<b.$$
and define $$\varphi_{1}= \max(\chi +A\vert q\vert^{2}-b,B\psi)\;,\; \varphi_{2}=\max(A\vert q\vert^{2}-b,B\psi)$$ where $B$ is so large that $B \psi<a-b$ in $supp(\psi)$. Then $\chi=\varphi_{1}-\varphi_{2}$ and $\varphi_{1},\; \varphi_{2}\in \mathcal{E}_{m}^{0}(\Omega)$  by Proposition \ref{pro1}. 
\end{proof}
Let $u,v\in C^{2}(\Omega)$, define $$\gamma(u,v):=\dfrac{1}{2}(d_{0}u\wedge d_{1}v-d_{1}u\wedge d_{0}v):=\dfrac{1}{2}\sum_{i,j=0}^{2n-1}(\bigtriangledown_{i0}u\bigtriangledown_{j1}v-\bigtriangledown_{i1}u\bigtriangledown_{j0}v)\omega^{i}\wedge \omega^{j}.$$
In particular, $\displaystyle{\gamma(u,u)=d_{0}u\wedge d_{1}u=\dfrac{1}{2}\sum_{i,j=0}^{2n-1}(\bigtriangledown_{i0}u\bigtriangledown_{j1}u-\bigtriangledown_{i1}u\bigtriangledown_{j0}u)\omega^{i}\wedge \omega^{j}}.$
 Let $u, v, w_{1},\ldots , w_{k}$ be a locally bounded quaternionic $m$-subharmonic functions in $\Omega$, $ k+1\leq m\leq n.$
Then the following statements hold.
\begin{pro}\label{pro111}
 \begin{enumerate}
\item[1)] The mixed product $\gamma (u, v) \wedge \Delta w_{1} \wedge \ldots\wedge \Delta w_{k}\wedge\beta^{n-m}$ is well defined as a $(2n-2m+2k+2)$-current.
\item[2)] Let ${u_{j}}, {v_{j}}, {w_{1}^{j}}, \ldots , {w_{k}^{j}}$ be decreasing sequences
 in $\mathcal{QSH}_{m}(\Omega)$ converging respectively to $u, v, w_{1}, \ldots , w_{k}$ point-wisely as $j \longrightarrow\infty$.
Then the currents
$\gamma (u_{j}, v_{j}) \wedge \Delta w_{1}^{j} \wedge \ldots \wedge \Delta w_{k}^{j}\wedge \beta^{n-m} \longrightarrow \gamma (u, v) \wedge \Delta w_{1} \wedge \ldots \wedge \Delta w_{k}\wedge\beta^{n-m} $
 weakly as $j \longrightarrow\infty$.
\item[3)] The mixed product $\gamma (u, u) \wedge \Delta w_{1} \wedge \ldots\wedge \Delta w_{k}\wedge\beta^{n-m}$ is well defined as a $m$-positive $(2n-2m+2k+2)$-current.
\item[4)]  The Chern–Levine–Nirenberg
type estimate also holds for the $m$-positive current $\gamma (u, u) \wedge \Delta w_{1} \wedge \ldots\wedge \Delta w_{k}\wedge\beta^{n-m}.$
\end{enumerate}
\end{pro}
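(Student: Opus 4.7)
The plan is to reduce all four statements to the polarization identity
$$\gamma(u,v) \;=\; \tfrac12\bigl(\Delta(uv) - u\,\Delta v - v\,\Delta u\bigr),$$
which is established for smooth $u,v$ by expanding $\Delta(uv)=d_0d_1(uv)$ with the Leibniz rule of Lemma~\ref{lem0} and using $d_1u\wedge d_0v=-d_0v\wedge d_1u$ (both are $1$-forms in the $\omega^i$'s). This is the quaternionic analogue of the complex identity $du\wedge d^cv=\tfrac12(dd^c(uv)-u\,dd^cv-v\,dd^cu)$ underlying the Bedford--Taylor construction of $du\wedge d^cu\wedge T$ in the complex $m$-Hessian setting.

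For (1), writing $T=\Delta w_1\wedge\ldots\wedge\Delta w_k\wedge\beta^{n-m}$, I would \emph{define}
$$\gamma(u,v)\wedge T \;:=\; \tfrac12\,\Delta(uv\,T) \;-\; \tfrac12\,u\,\Delta v\wedge T \;-\; \tfrac12\,v\,\Delta u\wedge T.$$
On the right, $u\,\Delta v\wedge T$ and $v\,\Delta u\wedge T$ are mixed Hessian products already well defined by the Wan--Wang construction recalled just before Lemma~\ref{lem21}, while $\Delta(uv\,T)$ is the distributional image of the current $uv\,T$ under $d_0d_1$ (meaningful since $uv\in L_{loc}^\infty$ multiplied into the $m$-positive current $T$ yields a current of order zero). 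Statement (2) then follows by applying Lemma~\ref{lem2}(1) to each of the three terms separately: the mixed products converge weakly by that lemma, and $\Delta$ is weakly continuous on currents, so $\Delta(u_jv_jT_j)\to\Delta(uv\,T)$. For the product $u_jv_j$ one reduces to Lemma~\ref{lem2}(1) by writing $uv$ locally as a difference of two bounded $m$-subharmonic functions, as permitted by Lemma~\ref{lem4}.

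Part (3) is the main obstacle. For smooth $u$, one must show that
$$\gamma(u,u) \;=\; d_0u\wedge d_1u \;=\; \tfrac12\sum_{i,j}\bigl(\nabla_{i0}u\,\nabla_{j1}u-\nabla_{i1}u\,\nabla_{j0}u\bigr)\,\omega^i\wedge\omega^j$$
lies in the Garding cone $\widehat{\Gamma}_m$ pointwise. This is the rank-one case of the $m$-positivity of $\Delta u\wedge\beta^{n-m}$: the $(2\times 2)$-determinantal structure from (\ref{4}) exhibits $\gamma(u,u)$ as the quaternionic analogue of $\partial u\wedge\overline\partial u$, and the same Garding-type argument used in \cite{WW2,Wa1} to establish positivity of $\Delta u\wedge\beta^{n-m}$ applies to the rank-one quaternionic Hessian $\nabla_{i0}u\,\nabla_{j1}u$. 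Wedging with the $m$-positive current $\Delta w_1\wedge\ldots\wedge\Delta w_k\wedge\beta^{n-m}$ then preserves $m$-positivity. For a general locally bounded $u\in\mathcal{QSH}_m$ and $w_\ell\in\mathcal{QSH}_m\cap L_{loc}^\infty$, I would regularize by convolution to obtain smooth $m$-subharmonic $u*\rho_\varepsilon\downarrow u$ and $w_\ell*\rho_\varepsilon\downarrow w_\ell$ (Proposition~\ref{pro1}(4)), apply part (2), and use that $m$-positivity is preserved under weak limits of currents.

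Finally, (4) follows routinely from (1). For $K\Subset K'\Subset\Omega$ and a cutoff $\chi\in C_0^\infty(K')$ with $\chi\equiv 1$ on $K$, Lemma~\ref{lem1} gives
$$\int_\Omega \chi\,\Delta(u^2T) \;=\; \int_\Omega u^2\,\Delta\chi\wedge T,$$
and bounding $|u|$ and $|\Delta\chi|$ while applying the standard Chern--Levine--Nirenberg estimate to $\int_{K'}T$ yields the expected bound by $C\,\|u\|_{L^\infty(K')}^2\prod_\ell\|w_\ell\|_{L^\infty(K'')}$ on a slightly larger compact $K''$. The genuinely nontrivial content is part (3); the remaining statements are bookkeeping around the polarization identity.
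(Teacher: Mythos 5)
Your proposal is correct and follows essentially the same route as the paper's proof: both rest on a polarization identity expressing $\gamma(u,v)\wedge T$ through mixed Hessian currents that are already defined inductively (the paper writes $4\gamma(u,v)\wedge T=\Delta[(u+v)^2]\wedge T-\Delta(u^2)\wedge T-\Delta(v^2)\wedge T-2u\Delta v\wedge T-2v\Delta u\wedge T$, which is your identity after substituting $2uv=(u+v)^2-u^2-v^2$ so that every Laplacian acts on a bounded $m$-subharmonic function), then invoke Lemma~\ref{lem2} for part (2), positivity of $\gamma(u_j,u_j)$ as a $2$-form together with approximation and weak limits for part (3), and the Chern--Levine--Nirenberg estimate applied termwise for part (4). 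One correction: Lemma~\ref{lem4} concerns smooth compactly supported functions and does not justify writing $uv$ as a difference of bounded $m$-subharmonic functions; the justification you actually need is the squares decomposition above, valid once $u,v$ are translated to be nonnegative so that $u^2$, $v^2$ and $(u+v)^2$ are again bounded and $m$-subharmonic and the decreasing sequences $(u_j+C)^2$, etc., fall directly under Lemma~\ref{lem2} --- which is precisely why the paper states the identity in that expanded form rather than via $\Delta(uv)$.
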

\begin{proof}
(1) From the polarization identity $$2(d_0 u\wedge d_1 v + d_0 v \wedge d_1 u)=\Delta [(u+ v)^2] - \Delta (u^2) -\Delta (v^2)-2u\Delta v -2v\Delta u.$$
It follows that \begin{equation}\label{11}
4\gamma (u, v) \wedge T=\Delta [(u+ v)^2]\wedge T - \Delta (u^2)\wedge T -\Delta (v^2)\wedge T-2u\Delta v\wedge T -2v\Delta u\wedge T,  
\end{equation}
where $T:= \Delta w_{1} \wedge \ldots\wedge \Delta w_{k}\wedge\beta^{n-m}.$ By \cite[(14)]{Ba} each term of the right hand side of (\ref{11}) is defined inductively as current. Thus the left side of (\ref{11}) is well defined.

(2) It follows from the first statement and the lemma \ref{lem2}. Since  $u$ and $v$ are bounded, we can let $u, \;v \geq 0$ by adding a positive constant. So $u^2$ and $v^2$ are also in $\mathcal{QSH}_{m}(\Omega).$ Therefore Lemma \ref{lem2} can be applied to $u^2,\;v^2$ and $(u+v)^2.$ 

To prove (3), it suffice to prove the positvity of the current $\gamma (u, u) \wedge \Delta w_{1} \wedge \ldots\wedge \Delta w_{k}\wedge\beta^{n-m}.$ Let $(u_{j})$ be a decreasing sequence
 in $\mathcal{QSH}_{m}(\Omega)$ converging  to $u$ as $j\rightarrow \infty.$  it follows from \cite[Lemma 3.1]{Wa1} that $\gamma(u_j,u_j)$ is a positive $2$-form, thus is strongly positive. For any strongly positive test form $\psi$ we have 
 $$\begin{array}{ll}
      [\gamma (u, u) \wedge \Delta w_{1} \wedge \ldots\wedge \Delta w_{m}\wedge\beta^{n-m}](\psi)&= \displaystyle\lim_{j \rightarrow \infty}[\gamma (u_j, u_j) \wedge \Delta w_{1} \wedge \ldots\wedge \Delta w_{k}\wedge\beta^{n-m}](\psi)   \\
      &  =\displaystyle \lim_{j \rightarrow \infty}[ \Delta w_{1} \wedge \ldots\wedge \Delta w_{k}\wedge\beta^{n-m}](\gamma (u_j, u_j) \wedge \psi) \geq 0\\
 \end{array}$$
 The last inequality follows from that the form $\gamma (u_j, u_j) \wedge \psi $ is  strongly positive (c.f proposition 3.1 in \cite{WW2}). 
 
 (4) The Chern-Leving-Nirenberg type estimate follows from \cite[Lemma 2.8]{Ba} and (\ref{11}). 
 
\end{proof}
\begin{lem}\label{lem20}
 Let $u, v, w_{1}, \ldots , w_{m-1} \in \mathcal{QSH}_{m}(\Omega) \cap L_{Loc}^{\infty}(\Omega)$. Then
 $$  \int_{\Omega}\gamma(u,v)\wedge T \leq \Big(\int_{\Omega}\gamma(u,u)\wedge T\Big)^{\dfrac{1}{2}}.\Big(\int_{\Omega}\gamma(v,v)\wedge T \Big)^{\dfrac{1}{2}}.$$
Where $T = \Delta w_{1} \wedge \ldots \wedge \Delta w_{m-1}\wedge \beta^{n-m}$.
\end{lem}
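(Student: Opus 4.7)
The plan is to run the standard Cauchy--Schwarz trick at the level of currents: show that the diagonal current $\gamma(u+tv,\,u+tv)\wedge T$ is a non-negative top-degree form on $\Omega$ for every real parameter $t$, then expand it as a quadratic in $t$ and impose the discriminant condition.

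First I would record that $\gamma$ is $\mathbb{R}$-bilinear and symmetric. Bilinearity is inherited from $d_{0}$ and $d_{1}$; symmetry $\gamma(u,v)=\gamma(v,u)$ follows from the anticommutativity of wedge on $1$-forms, namely $d_{0}u\wedge d_{1}v=-d_{1}v\wedge d_{0}u$ and $d_{1}u\wedge d_{0}v=-d_{0}v\wedge d_{1}u$. Combining these,
$$\gamma(u+tv,\,u+tv)\wedge T \;=\; \gamma(u,u)\wedge T \;+\; 2t\,\gamma(u,v)\wedge T \;+\; t^{2}\,\gamma(v,v)\wedge T,$$
where each summand is well defined by Proposition \ref{pro111}(1).

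Next I would establish the key positivity $\gamma(u+tv,u+tv)\wedge T\geq 0$ for every $t\in\mathbb{R}$. Pick smooth decreasing sequences $u_{j}\downarrow u$, $v_{j}\downarrow v$ and $w_{i}^{j}\downarrow w_{i}$ in $\mathcal{QSH}_{m}(\Omega)\cap L_{loc}^{\infty}(\Omega)$, set $f_{j}:=u_{j}+tv_{j}$ and $T_{j}:=\Delta w_{1}^{j}\wedge\cdots\wedge\Delta w_{m-1}^{j}\wedge\beta^{n-m}$. Even when $t<0$ (so that $f_{j}$ need not be $m$-subharmonic), $\gamma(f_{j},f_{j})=d_{0}f_{j}\wedge d_{1}f_{j}$ is a strongly positive $2$-form; this is the quaternionic counterpart of the elementary fact that $i\partial f\wedge\bar\partial f\geq 0$ for any smooth real $f$, whose proof in \cite[Lemma 3.1]{Wa1} does not use $m$-subharmonicity. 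Wedging the strongly positive $\gamma(f_{j},f_{j})$ against the $m$-positive $T_{j}$, exactly as in the proof of Proposition \ref{pro111}(3), yields a non-negative top form at each approximation level. By the bilinear expansion above together with the joint convergence supplied by Proposition \ref{pro111}(2) applied to each of the three summands, the currents $\gamma(f_{j},f_{j})\wedge T_{j}$ converge weakly to $\gamma(u+tv,u+tv)\wedge T$, which is therefore non-negative. Integrating over $\Omega$ gives, for every $t\in\mathbb{R}$,
$$t^{2}C+2tB+A\;\geq\;0,\qquad A:=\int_{\Omega}\gamma(u,u)\wedge T,\ B:=\int_{\Omega}\gamma(u,v)\wedge T,\ C:=\int_{\Omega}\gamma(v,v)\wedge T.$$
If $C>0$, the discriminant condition gives $B^{2}\leq AC$; if $C=0$, the linear inequality $2tB+A\geq 0$ for every real $t$ forces $B\leq 0\leq\sqrt{AC}$. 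Either way $B\leq\sqrt{AC}$, which is the asserted inequality.

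The main obstacle is precisely the positivity step for negative $t$, because then $u+tv$ exits the cone $\mathcal{QSH}_{m}$ and Proposition \ref{pro111}(3) does not apply verbatim. The resolution is to separate form-level positivity of $d_{0}f\wedge d_{1}f$, which holds for arbitrary smooth real $f$, from function-level $m$-subharmonicity, which is required only for the fixed current $T$; with this split, the standard approximation machinery transports the inequality from the smooth setting to $u,v\in\mathcal{QSH}_{m}(\Omega)\cap L_{loc}^{\infty}(\Omega)$.
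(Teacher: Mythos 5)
Your argument is correct and is essentially the same route the paper takes: the paper's proof is a one-line appeal to Proposition \ref{pro111} together with \cite[Lemma 3.1]{Wa1}, and the quadratic-in-$t$ Cauchy--Schwarz mechanism you spell out (positivity of $\gamma(f,f)\wedge T$ for $f=u+tv$, obtained by separating the form-level positivity of $d_{0}f\wedge d_{1}f$ from the $m$-subharmonicity needed only for $T$, then the discriminant) is exactly what underlies that citation. The only caveat, shared with the paper's own statement, is that the discriminant step tacitly assumes the three integrals over $\Omega$ are finite.
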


\begin{proof}
This follows from the above statements and \cite[Lemma 3.1]{Wa1}.
\end{proof}
\begin{pro}\label{pro11}
Suppose that $u, v \in \mathcal{QSH}_{m}^{-}(\Omega)\cap  L_{Loc}^{\infty}(\Omega).$
 If $\displaystyle{\lim_{q\longrightarrow \partial\Omega}}u(q)=0$, then
$$  \int_{\Omega} v \Delta u \wedge T \leq \int_{\Omega}u \Delta v \wedge T$$
where  $T = \Delta w_{1} \wedge \ldots \wedge \Delta w_{m-1}\wedge \beta^{n-m}.$

 Moreover, if $\displaystyle{\lim_{q \longrightarrow \partial \Omega}}v(q)=0$, then
$$  \int_{\Omega}v \Delta u \wedge T = \int_{\Omega}u \Delta v \wedge T= \int_{\Omega}- \gamma(u,v)\wedge T$$
\end{pro}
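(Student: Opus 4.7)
The plan is to reduce the inequality to the integration-by-parts identity of Lemma \ref{lem21}, which demands that both integrands share the same boundary behaviour. Since $v$ is not assumed to vanish on $\partial\Omega$, I would first build an auxiliary decreasing sequence that does. Concretely, let $\rho \in \mathcal{QSH}_m^-(\Omega)$ be an exhaustion function for the $m$-hyperconvex domain $\Omega$, and set
$$\tilde v_j := \max(v,\, j\rho),\qquad j \in \mathbb N.$$
Each $\tilde v_j$ lies in $\mathcal{QSH}_m^-(\Omega) \cap L^\infty_{\mathrm{loc}}(\Omega)$ by Proposition \ref{pro1}; the sequence decreases pointwise to $v$ (since $j\rho \downarrow -\infty$ on $\Omega$), and each $\tilde v_j$ satisfies $\lim_{q \to \partial\Omega} \tilde v_j(q) = 0$ because $\rho \to 0$ on $\partial\Omega$ and $v \le 0$.

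Applying Lemma \ref{lem21} to the pair $(u, \tilde v_j)$ gives
$$\int_\Omega \tilde v_j\, \Delta u \wedge T \;=\; \int_\Omega u\, \Delta \tilde v_j \wedge T \qquad (\ast).$$
As $j \to \infty$ the left-hand side converges to $\int_\Omega v\, \Delta u \wedge T$ by monotone convergence ($-\tilde v_j \uparrow -v$ against the nonnegative measure $\Delta u \wedge T$). For the right-hand side, Lemma \ref{lem2}(1) (with $v^0 \equiv 1$) gives weak convergence of positive currents $\Delta \tilde v_j \wedge T \rightharpoonup \Delta v \wedge T$. Because $-u \ge 0$ is bounded, lower semi-continuous, and vanishes on $\partial\Omega$, it is the pointwise supremum of an increasing sequence $(g_k) \subset C_c(\Omega)$ of nonnegative continuous functions; testing the weak convergence against each $g_k$ and then applying monotone convergence yields
$$\liminf_{j \to \infty} \int_\Omega (-u)\, \Delta \tilde v_j \wedge T \;\geq\; \int_\Omega (-u)\, \Delta v \wedge T,$$
i.e.\ $\limsup_j \int_\Omega u\, \Delta \tilde v_j \wedge T \le \int_\Omega u\, \Delta v \wedge T$. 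Passing to the limit in $(\ast)$ then gives $\int_\Omega v\, \Delta u \wedge T \le \int_\Omega u\, \Delta v \wedge T$. The equality assertion under the additional hypothesis $v \to 0$ on $\partial\Omega$ is immediate from Lemma \ref{lem21} applied directly.

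For the identity $\int u\, \Delta v \wedge T = \int -\gamma(u,v) \wedge T$ I would first treat smooth $u,v$ vanishing on $\partial\Omega$. Each $\Delta w_i$ and $\beta$ are closed (Lemma \ref{lem0}), so $T$ is closed, whence $d_0(d_1 v \wedge T) = \Delta v \wedge T$ and $d_1(d_0 v \wedge T) = -\Delta v \wedge T$. Two applications of the Stokes-type formula (Lemma \ref{lem1}), using $\Delta = d_0 d_1 = -d_1 d_0$ and $u = 0$ on $\partial\Omega$, give
$$\int_\Omega u\, \Delta v \wedge T \;=\; -\int_\Omega d_0 u \wedge d_1 v \wedge T \;=\; \int_\Omega d_1 u \wedge d_0 v \wedge T.$$
Averaging the two expressions and recognising $\gamma(u,v) = \tfrac12 (d_0 u \wedge d_1 v - d_1 u \wedge d_0 v)$ produces $\int u\, \Delta v \wedge T = -\int \gamma(u,v) \wedge T$. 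For general $u,v \in \mathcal{QSH}_m^-(\Omega) \cap L^\infty_{\mathrm{loc}}(\Omega)$ vanishing on $\partial\Omega$, I would approximate by smooth decreasing sequences and pass to the limit, invoking Proposition \ref{pro111}(2) for the $\gamma$-term and the same lower-semicontinuity trick as above for the $u\, \Delta v$-term.

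The principal technical obstacle is the passage to the limit in $(\ast)$: weak convergence of currents is, a priori, only against $C_c^\infty$ test forms, whereas I must pair with the bounded but merely lower semi-continuous function $-u$, which is not compactly supported. It is precisely the hypothesis $u \to 0$ on $\partial\Omega$ that allows $-u$ to be exhausted from below by nonnegative $C_c(\Omega)$ functions, which is what rescues the argument; the inequality (rather than equality) in the general statement reflects the possible loss of mass of $\Delta \tilde v_j \wedge T$ at $\partial\Omega$ when $v$ itself does not vanish there.
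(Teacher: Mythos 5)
Your proof is correct, but it reaches the first inequality by a genuinely different route than the paper. The paper perturbs $u$: it sets $u_{\epsilon}=\max\{u,-\epsilon\}$ so that $u-u_{\epsilon}$ is compactly supported, mollifies, applies the symmetric identity $\int_{\Omega}\psi\,\Delta v\wedge T=\int_{\Omega}v\,\Delta\psi\wedge T$ (valid for $\psi\in C_{0}^{\infty}(\Omega)$ by the inductive definition of the current), and obtains the inequality by discarding the nonpositive term $\int_{\Omega}v\,\Delta(u_{\epsilon})_{\delta}\wedge T$ coming from $\Delta(u_{\epsilon})_{\delta}\wedge T\geq 0$ and $v\leq 0$; no exhaustion function and no appeal to Lemma \ref{lem21} are needed, so the argument is self-contained and the source of the inequality is a single explicit discarded term. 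You instead perturb $v$ near $\partial\Omega$ by $\tilde v_{j}=\max(v,j\rho)$, apply the cited integration-by-parts Lemma \ref{lem21} to the pair $(u,\tilde v_{j})$ (legitimate, since that lemma is imported from \cite{Ba,Liu} and not derived from this proposition, so there is no circularity), and recover the inequality from lower semicontinuity of $\nu\mapsto\int(-u)\,d\nu$ under the weak convergence $\Delta\tilde v_{j}\wedge T\rightharpoonup\Delta v\wedge T$, exhausting the nonnegative l.s.c.\ function $-u$ by $C_{c}(\Omega)$ functions; your diagnosis that the inequality reflects possible escape of mass to $\partial\Omega$ is the right heuristic. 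For the identity $\int_{\Omega}u\,\Delta v\wedge T=-\int_{\Omega}\gamma(u,v)\wedge T$ your Stokes computation in the smooth case coincides with the paper's, and your closing approximation step is sketched at the same level of detail as the paper's (``repeat the above argument''); the one place a referee would press you is the limit passage $\int_{\Omega}\gamma(u_{j},v_{j})\wedge T_{j}\to\int_{\Omega}\gamma(u,v)\wedge T$, since Proposition \ref{pro111}(2) only gives weak convergence against test forms and you must again rule out loss of mass at the boundary, exactly the issue you already handled in the first part.
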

\begin{proof}
First,
let $\psi \in C_{0}^{\infty}(\Omega)$, by the induction definition \cite[(14)]{Ba} we obtain $$\int_{\Omega}\psi \Delta v \wedge T=\int_{\Omega} v \Delta \psi \wedge T  .$$ Hence $$\int_{\Omega}\psi \Delta v \wedge T\leq \int_{\Omega} v \Delta \psi \wedge T   \mbox{ for } \psi \in C_{0}^{\infty}(\Omega).$$

Let $u\in \mathcal{QSH}_{m}^{-}(\Omega)\cap  L_{Loc}^{\infty}(\Omega)$, and denote $u_\epsilon=\max\{u,-\epsilon\}.$ Then $u-u_\epsilon=\min\{0,u+\epsilon\}$ is a compactly supported function decreasing uniformly to $u$ as $\epsilon\longrightarrow 0$, thus $$ \int_{\Omega}u \Delta v \wedge T=\displaystyle \lim_{\epsilon\rightarrow 0}\int_{\Omega}(u-u_\epsilon) \Delta v \wedge T.$$ Using the above statement, we conclude that $$ \int_{\Omega}(u-u_\epsilon)_\delta \Delta v \wedge T=\displaystyle \lim_{\delta\rightarrow 0}\int_{\Omega}(u-u_\epsilon)_\delta \Delta v \wedge T=\displaystyle \lim_{\delta\rightarrow 0}\int_{\Omega} v\Delta  (u-u_\epsilon)_\delta \wedge T,$$ where $(u-u_\epsilon)_\delta$ is the standard regularization of $u-u_\epsilon$ such that $(u-u_\epsilon)_\delta\searrow u-u_\epsilon$ as $\delta\longrightarrow0.$ Fix an open set $\Omega^{'}\Subset \Omega$ such that the set $\{u<-\epsilon\} \Subset \Omega^{'}$, then $Supp\{\Delta(u-u_\epsilon)_\delta\} \subset \Omega^{'}$ for $\delta$ small enough. Note that $(u_\epsilon)_\delta \in \mathcal{QSH}_{m}(\Omega)$, then by  Lemma (\ref{lem1}) we get $\Delta(u_\epsilon)_\delta \wedge T \geq 0$. It follows from Lemma \ref{lem2} that
$$\begin{array}{ll}
\displaystyle \int_{\Omega}(u-u_\epsilon)_\delta \Delta v \wedge T& = \displaystyle \lim_{\delta\rightarrow 0}\int_{\Omega^{'}} v\Delta  (u-u_\epsilon)_\delta \wedge T\\
& \geq \displaystyle \limsup_{\delta \longrightarrow 0} \int_{\Omega^{'}} v\Delta  u_\delta \wedge T \\
&\displaystyle=\int_{\Omega^{'}} v\Delta  u \wedge T   .\\
\end{array}$$
 For an arbitrary $\Omega$,  letting $\epsilon\longrightarrow 0$ we obtain $\displaystyle\int_{\Omega} v \Delta u \wedge T \leq \int_{\Omega}u \Delta v \wedge T.$
 To show the second equality, it suffices to prove the second identity for the smooth case and repeat the above argument for the general case.
 Since $T$ is closed, applying Lemmas (\ref{lem0}) and  (\ref{lem1}) we get
 $$\begin{array}{ll}
\displaystyle\int_{\Omega} v \Delta u \wedge T &\displaystyle = \int_{\Omega}u \Delta v\wedge  T\\  &=\displaystyle\dfrac{1}{2} \int_{\Omega} u(d_0 d_1- d_1 d_0)v \wedge T \\
 & \displaystyle = \frac{1}{2} \int_{\Omega} ud_0(d_1v \wedge T)-\frac{1}{2} \int_{\Omega} ud_1(d_0v \wedge T) \\
& \displaystyle = - \frac{1}{2} \int_{\Omega} d_0 u \wedge d_1 v \wedge T)+\frac{1}{2} \int_{\Omega} d_1 u \wedge d_0 v \wedge T)\\
& \displaystyle = - \int_{\Omega} \gamma(u,v)\wedge T.\\
\end{array}$$
\end{proof}
\begin{pro}\label{pro12}
Suppose that $h, u_{1}, u_{2}, v_{1}, \ldots , v_{m-p-q} \in \mathcal{E}_{m}^{0}(\Omega),\;\; 1 \leq p, q < m.$ Let $T = \Delta v_{1} \wedge \ldots \wedge \Delta v_{m-p-q}\wedge \beta^{n-m}$.
 Then,$$\int_{\Omega}-h(\Delta u_{1})^{p}\wedge (\Delta u_{2})^{q}\wedge T \leq \Big[\int_{\Omega}-h(\Delta u_{1})^{p+q}\wedge T\Big]^{\frac{p}{p+q}}  \Big[\int_{\Omega}-h(\Delta u_{2})^{p+q}\wedge T\Big]^{\frac{q}{p+q}}$$
\end{pro}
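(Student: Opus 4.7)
The plan is to reduce the general inequality to the case $p=q=1$ via integration by parts and the Cauchy--Schwarz inequality of Lemma \ref{lem20}, and then bootstrap to arbitrary $p,q$ by a log-convexity argument on a one-parameter family of mixed masses. Set
$$a_{p,q} := \int_\Omega -h\,(\Delta u_1)^p \wedge (\Delta u_2)^q \wedge T.$$
These quantities are nonnegative because $-h \geq 0$ and the currents are $m$-positive, and they are finite because every function belongs to $\mathcal{E}_m^0(\Omega)$, hence is bounded and vanishes at $\partial\Omega$. This last property is what makes Proposition \ref{pro11} available at every step.

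For the base case $p=q=1$, write $I := a_{1,1}$. Using Proposition \ref{pro11} to move $\Delta$ from $u_1$ onto $h$ gives $I = -\int u_1\,\Delta h \wedge \Delta u_2 \wedge T$, and a second application against the closed $m$-positive current $\Delta h \wedge T$ rewrites $\int u_1\,\Delta u_2 \wedge (\Delta h \wedge T) = -\int \gamma(u_1,u_2) \wedge \Delta h \wedge T$. This produces the key identity
$$I = \int_\Omega \gamma(u_1,u_2) \wedge S, \qquad S := \Delta h \wedge T.$$
Since $S = \Delta h \wedge \Delta v_1 \wedge \cdots \wedge \Delta v_{m-p-q} \wedge \beta^{n-m}$ is exactly of the form required by Lemma \ref{lem20} (for the base case, there are $m-2$ factors $v_i$), that lemma gives
$$I \leq \Bigl(\int \gamma(u_1,u_1) \wedge S\Bigr)^{1/2}\Bigl(\int \gamma(u_2,u_2) \wedge S\Bigr)^{1/2}.$$
Running the same two integrations by parts in reverse converts $\int \gamma(u_i,u_i) \wedge S$ into $\int -h\,(\Delta u_i)^2 \wedge T$, so that $a_{1,1}^2 \leq a_{2,0}\,a_{0,2}$.

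For the general case, apply the base inequality with the enlarged ambient current $(\Delta u_1)^{p-1} \wedge (\Delta u_2)^{q-1} \wedge T$ in place of $T$; this is again a closed $m$-positive current of the correct total degree since all functions are bounded and $m$-subharmonic. The result is
$$a_{p,q}^{2} \leq a_{p+1,q-1}\,a_{p-1,q+1}.$$
This says precisely that the finite sequence $b_k := a_{k,\,p+q-k}$, $k = 0,\ldots,p+q$, is log-convex, so the standard discrete convexity interpolation yields
$$\log b_p \leq \tfrac{q}{p+q}\log b_0 + \tfrac{p}{p+q}\log b_{p+q},$$
which is the stated inequality (degenerate cases where some $b_k$ vanishes propagate trivially through the recursion). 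I expect the main technical point to be the careful justification of the key identity $I = \int \gamma(u_1,u_2) \wedge \Delta h \wedge T$: one must check that each intermediate current is closed and that Proposition \ref{pro11} applies to it, which does hold because the $v_i, w_i, h, u_1, u_2$ are bounded, $m$-subharmonic, and vanish at $\partial\Omega$. Once that identity is in hand, the rest is Cauchy--Schwarz and the elementary log-convexity interpolation.
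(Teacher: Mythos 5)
Your proof is correct and follows essentially the same route as the paper: the identical base case $p=q=1$ obtained by integrating by parts twice (Proposition \ref{pro11}) to reach $\int\gamma(u_1,u_2)\wedge\Delta h\wedge T$ and then applying the Cauchy--Schwarz inequality of Lemma \ref{lem20}, followed by the induction on mixed powers. Your explicit log-convexity recursion $a_{p,q}^2\leq a_{p+1,q-1}\,a_{p-1,q+1}$ is precisely the Cegrell-style induction that the paper defers to \cite[Lemma 5.4]{C4}, so you have simply spelled out the step the authors cite.
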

\begin{proof}
For the case $p=q=1$, by Proposition \ref{pro11} and lemma \ref{lem20} we have
$$\begin{array}{ll}
\displaystyle \int_{\Omega} -h \Delta u_{1}\wedge \Delta u_{2} \wedge T &\displaystyle = \int_{\Omega}-u_{1} \Delta u_{2}\wedge \Delta h \wedge T= \int_{\Omega} \gamma(u_{1},u_{2})\wedge T\\
&\displaystyle{\leq \Big[\int_{\Omega} \gamma(u_{1},u_{1}) \wedge \Delta h \wedge T\Big]^{\frac{1}{2}}  \Big[\int_{\Omega} \gamma(u_{2},u_{2})\wedge \Delta h \wedge T\Big]^{\frac{1}{2}}}\\
&\displaystyle{= \Big[\int_{\Omega}-u_{1} \Delta u_{1}\wedge \Delta h \wedge T \Big]^{\frac{1}{2}}  \Big[\int_{\Omega}-u_{2} \Delta u_{2}\wedge \Delta h \wedge T \Big]^{\frac{1}{2}}}\\
&\displaystyle{= \Big[\int-h(\Delta u_{1})^{2}\wedge T\Big]^{\frac{1}{2}}  \Big[\int-h(\Delta u_{2})^{2}\wedge T\Big]^{\frac{1}{2}}}.\\

\end{array}$$
By induction, following the lines in \cite[Lemma 5.4]{C4} we get the desired result.
\end{proof}
\begin{cor}\label{3}
 Suppose that $ h, u_{1},\ldots, u_{m} \in \mathcal{E}_{m}^{0}(\Omega)$. Then,
$$\int-h\Delta u_{1}\wedge \ldots \Delta u_{m}\wedge \beta^{n-m} \leq \Big[\int-h(\Delta u_{1})^{m}\wedge \beta^{n-m}\Big]^{\frac{1}{n}} \ldots \Big[\int-h(\Delta u_{m})^{m}\wedge \beta^{n-m}\Big]^{\frac{1}{n}}$$
\end{cor}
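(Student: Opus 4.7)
(Note: the exponent $\tfrac{1}{n}$ in the statement must be a typo for $\tfrac{1}{m}$; otherwise the diagonal case $u_1=\cdots=u_m$ already fails unless $n=m$.) The strategy, as hinted at by the authors' citation of \cite[Lemma 5.4]{C4}, is a Cegrell-type induction on the number $r$ of \emph{distinct} functions appearing in the wedge product, with Proposition \ref{pro12} serving as the inductive engine.

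The natural object to induct on is the stronger statement: for positive integers $p_1,\ldots,p_r$ with $\sum_{j=1}^{r} p_j = m$ and $h, u_1, \ldots, u_r \in \mathcal{E}_m^0(\Omega)$,
\[
\int_{\Omega} -h\,(\Delta u_1)^{p_1}\wedge \cdots \wedge (\Delta u_r)^{p_r}\wedge \beta^{n-m}
\;\leq\; \prod_{j=1}^{r} \Big[\int_{\Omega} -h\,(\Delta u_j)^{m}\wedge \beta^{n-m}\Big]^{p_j/m}.
\]
The corollary is then the special case $r = m$, $p_1 = \cdots = p_m = 1$. The base case $r=2$, with $T = \beta^{n-m}$, $p = p_1$, $q = p_2$, is exactly Proposition \ref{pro12}.

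For the inductive step $r-1 \Rightarrow r$, I would apply Proposition \ref{pro12} to the pair $(u_1, u_2)$ with exponents $(p_1, p_2)$ and remainder $T = (\Delta u_3)^{p_3}\wedge \cdots \wedge (\Delta u_r)^{p_r}\wedge \beta^{n-m}$. The two integrals on the right-hand side then involve only $r-1$ distinct functions (with $u_2$ absorbed into $u_1$ in the first factor, and symmetrically in the second), while the total $\Delta$-degree remains $m$. The inductive hypothesis therefore applies to each. Multiplying the two estimates together and invoking $\tfrac{p_1}{p_1+p_2}+\tfrac{p_2}{p_1+p_2}=1$, the common tail $\prod_{j\geq 3}[\,\cdot\,]^{p_j/m}$ emerges with total exponent $1$, while the $u_1$- and $u_2$-integrals come out with the correct exponents $p_1/m$ and $p_2/m$. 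This closes the induction.

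\textbf{Expected main obstacle.} The genuinely hard analytic content, namely the mixed Cauchy--Schwarz-type estimate on weighted currents, is already packaged inside Proposition \ref{pro12} (which in turn rests on Lemma \ref{lem20} and Proposition \ref{pro11}). What remains is essentially exponent bookkeeping. The only mild subtlety is choosing the \emph{right} inductive statement: the bare corollary (all $p_j = 1$) does not close under Proposition \ref{pro12}, because applying it to $u_1, u_2$ creates a squared factor $(\Delta u_1)^2$. One must therefore induct on the more general version above with arbitrary multiplicities $p_j$ summing to $m$, so that the outputs of Proposition \ref{pro12} remain within the class covered by the inductive hypothesis.
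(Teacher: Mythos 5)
Your proof is correct and is exactly the argument the paper intends: the corollary is stated without proof as an immediate consequence of Proposition \ref{pro12}, and the standard Cegrell-style induction on the number of distinct factors (with the strengthened statement carrying multiplicities $p_1+\cdots+p_r=m$) that you describe is precisely how that iteration closes. Your observation that the exponent $\tfrac{1}{n}$ must be read as $\tfrac{1}{m}$ is also correct, as the diagonal case shows.
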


\begin{thm}\label{th4}
Suppose that $u^{p} \in \mathcal{E}_{m}(\Omega),\; p = 1,\ldots, m$ and $(g_{j}^{p})_{j}\subset \mathcal{E}_{m}^{0}(\Omega)$  such that $g_{j}^{p}\downarrow   u_{p},\; \forall p.$ Then, the sequence of measures $\Delta g_{j}^{1}\wedge \ldots\wedge \Delta g_{j}^{m}\wedge \beta^{n-m}$ converges weakly to a
 positive Radon measure which does not depend on the choice of the sequences $(g_{j}^{p})_{j}.$
 We then define $\Delta g_{j}^{1}\wedge \ldots\wedge \Delta g_{j}^{m}\wedge \beta^{n-m}$ to be this weak limit.
\end{thm}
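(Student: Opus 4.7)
The plan is to adapt the classical Cegrell argument to the quaternionic $m$-Hessian setting, relying on three ingredients already established in the excerpt: the integration-by-parts formula (Proposition \ref{pro11}), the mixed Chern--Levine--Nirenberg inequality (Corollary \ref{3}), and the monotone convergence theorems (Lemma \ref{lem2}). The argument naturally splits into two stages: first, uniform local mass bounds together with Banach--Alaoglu provide weak subsequential limits (which are positive measures since each prelimit current is positive); second, any two such limits must coincide.

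\textbf{Step 1: local mass bounds.} Fix $q_0\in\Omega$ and a relatively compact neighbourhood $U\Subset\Omega$. The very definition of $\mathcal{E}_m$ furnishes, for each $p$, some sequence $(h_j^p)\subset\mathcal{E}_m^0(\Omega)$ decreasing to $u_p$ on a neighbourhood of $q_0$ with $\sup_j\int_\Omega(\Delta h_j^p)^m\wedge\beta^{n-m}<\infty$. To bound the pure mass of an arbitrary $(g_j^p)$ on $U$, I would pass through the auxiliary functions $\max(g_j^p,h_k^p)\in\mathcal{E}_m^0(\Omega)$ (Proposition \ref{pro1}) and apply the comparison principle, obtaining $\sup_j\int_U(\Delta g_j^p)^m\wedge\beta^{n-m}<\infty$. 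Corollary \ref{3} applied with a negative cut-off $h\in\mathcal{E}_m^0(\Omega)$ bounded above by $-1$ on $U$ then upgrades this to the mixed bound
$$\sup_j\int_U\Delta g_j^1\wedge\cdots\wedge\Delta g_j^m\wedge\beta^{n-m}<\infty.$$
Banach--Alaoglu thus guarantees that every subsequence of positive measures admits a weakly convergent sub-subsequence, whose limit is again a positive Radon measure on $\Omega$.

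\textbf{Step 2: independence of the approximating sequence.} Let $(g_j^p)$ and $(\tilde g_j^p)$ be two sequences in $\mathcal{E}_m^0$ decreasing to $u_p$. For a test function $\chi\in C_0^\infty(\Omega)$ I would use Lemma \ref{lem4} to write $\chi=\varphi_1-\varphi_2$ with $\varphi_i\in\mathcal{E}_m^0(\Omega)\cap C(\overline{\Omega})$, reducing to pairing against a single $\varphi\in\mathcal{E}_m^0(\Omega)\cap C(\overline{\Omega})$. Applying Proposition \ref{pro11} transfers the Laplacian onto $\varphi$:
$$\int_\Omega\varphi\,\Delta g_j^1\wedge\cdots\wedge\Delta g_j^m\wedge\beta^{n-m}=\int_\Omega g_j^1\,\Delta\varphi\wedge\Delta g_j^2\wedge\cdots\wedge\Delta g_j^m\wedge\beta^{n-m},$$
and analogously for $\tilde g$. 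I would then swap one factor at a time by introducing the auxiliary maxima $w_{j,k}^p:=\max(g_j^p,\tilde g_k^p)\in\mathcal{E}_m^0$: for fixed $j$ they decrease to $g_j^p$ as $k\to\infty$, while for fixed $k$ they decrease to $\tilde g_k^p$ as $j\to\infty$. Applying Lemma \ref{lem2}(1) for each such monotone passage inside the mixed current and combining with a standard exchange-of-limits / diagonal extraction (legitimate thanks to the uniform bounds of Step 1) yields equality of the two limits against $\varphi$, hence against $\chi$.

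\textbf{Main obstacle.} The principal difficulty lies squarely in Step 2. Because $u_p\in\mathcal{E}_m$ need not be locally bounded, the continuity results of Lemma \ref{lem2} do not apply to $u_p$ directly; every limit must instead be extracted at the bounded level $w_{j,k}^p$, and the Laplacian on the possibly singular factors must be moved onto the bounded function $\varphi$ through integration by parts. Keeping the mixed error terms $\int(g_j^1-\tilde g_k^1)\,\Delta\varphi\wedge\Delta w_{j,k}^2\wedge\cdots$ uniformly controlled requires simultaneous use of the CLN-type estimate of Proposition \ref{pro111}(4) and of Corollary \ref{3} to absorb the energies of the intermediate sequences. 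Iterating the one-factor swap over all $m$ indices while preserving these uniform bounds is the delicate core of the proof.
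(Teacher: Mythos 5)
Your proposal is correct and follows essentially the same route as the paper, whose proof simply invokes the integration-by-parts lemma and refers to Cegrell's argument for \cite[Theorem 4.2]{C4}: local mass bounds from the definition of $\mathcal{E}_m(\Omega)$ plus monotonicity of masses under integration by parts, then independence of the approximating sequence via the interlacing maxima $\max(g_j^p,\tilde g_k^p)$. You have in fact written out the details that the paper leaves to the reference.
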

\begin{proof}
By lemma \ref{lem21} and proceeding as in  the proof of \cite[Theorem 4.2]{C4} we get the result.
\end{proof}
\begin{pro}\label{pro13}
Suppose that $u^{p} \in \mathcal{F}_{m}(\Omega),\; p = 1,\ldots, m$ and $(g_{j}^{p})_{j}\subset \mathcal{E}_{m}^{0}\cap C(\Omega)$  such that $g_{j}^{p}\downarrow   u_{p},\; \forall p.$ and
$$   \sup_{j,p}\int_{\Omega}(\Delta g_{j}^{p})^{m}\wedge \beta^{n-m}< +\infty.$$
 Then, for each $h \in \mathcal{E}_{m}^{0}(\Omega)\cap C(\Omega)$ we have $$\lim_{j\longrightarrow +\infty}\int_{\Omega}h\Delta g_{j}^{1}\wedge \ldots\wedge \Delta g_{j}^{m}\wedge \beta^{n-m}=\int_{\Omega}h\Delta u^{1}\wedge \ldots\wedge \Delta u^{m}\wedge \beta^{n-m}.$$
\end{pro}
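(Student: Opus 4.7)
The strategy adapts Cegrell's scheme from \cite[Theorem 4.2]{C4} to the quaternionic $m$-subharmonic setting. The main ingredients are integration by parts (Proposition \ref{pro11}), the mixed Chern--Levine--Nirenberg estimate (Corollary \ref{3}), the continuity of mixed Hessians along bounded monotone sequences (Lemma \ref{lem2}), and the weak convergence of mixed Hessians provided by Theorem \ref{th4}. Throughout I set $M := \sup_{j,p} \int_\Omega (\Delta g_j^p)^m \wedge \beta^{n-m} < +\infty$, which furnishes a uniform energy bound.

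The first step is to symmetrize by integration by parts. Since $h$ and $g_j^1$ both belong to $\mathcal{E}_m^0 \cap C(\Omega)$ and vanish on $\partial\Omega$, Proposition \ref{pro11} gives
\begin{equation*}
I_j := \int_\Omega h\,\Delta g_j^1 \wedge T_j \;=\; \int_\Omega g_j^1\,\Delta h \wedge T_j, \qquad T_j := \Delta g_j^2 \wedge \ldots \wedge \Delta g_j^m \wedge \beta^{n-m}.
\end{equation*}
By Theorem \ref{th4}, the currents $T_j$ converge weakly to $T := \Delta u^2 \wedge \ldots \wedge \Delta u^m \wedge \beta^{n-m}$, and, by integrating by parts inside a defining $\mathcal{E}_m^0$-sequence for $u^1$ and passing to the limit, one obtains the analogous identity $I := \int_\Omega h\,\Delta u^1 \wedge T = \int_\Omega u^1\,\Delta h \wedge T$. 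It therefore suffices to show $\int_\Omega g_j^1\,\Delta h \wedge T_j \to \int_\Omega u^1\,\Delta h \wedge T$ as $j \to \infty$.

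The second step is to split the error as $I_j - I = A_j + B_j$ with
\begin{equation*}
A_j := \int_\Omega (g_j^1 - u^1)\,\Delta h \wedge T_j, \qquad B_j := \int_\Omega u^1\,\Delta h \wedge (T_j - T),
\end{equation*}
and to bound the two terms separately. For $A_j$, the non-negative integrand $g_j^1 - u^1$ decreases pointwise to $0$ and the measures $\Delta h \wedge T_j$ are of uniformly bounded mass on compact subsets of $\Omega$ by Corollary \ref{3} combined with $M$ and with $\int_\Omega(\Delta h)^m \wedge \beta^{n-m} < \infty$; a level-set argument based on Corollary \ref{3} applied to $\{u^1 < -k\}$, whose $m$-Hessian mass vanishes as $k \to \infty$ because $u^1 \in \mathcal{F}_m$, then yields $A_j \to 0$.

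The main obstacle is the term $B_j$, since the weak convergence $T_j \to T$ pairs only with smooth compactly supported test forms, whereas $u^1$ is only in $\mathcal{F}_m$ (and possibly unbounded) and $\Delta h$ is merely an $m$-positive $2$-current. I would overcome this by truncating $u^1_k := \max(u^1, -k) \in \mathcal{E}_m^0 \cap L^\infty$; at each fixed level $k$, Lemma \ref{lem2}(1) applied to the locally bounded monotone sequence $\max(g_j^1, -k) \downarrow u^1_k$ together with the stationary factor $h$ and the sequences $g_j^p \downarrow u^p$ gives $\int_\Omega u^1_k\,\Delta h \wedge T_j \to \int_\Omega u^1_k\,\Delta h \wedge T$. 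Finally, letting $k \to \infty$, the truncation tail $\int_\Omega (u^1 - u^1_k)\,\Delta h \wedge T_j$ is controlled uniformly in $j$ by Corollary \ref{3} and $M$, with the resulting bound involving the mass of the shrinking set $\{u^1 < -k\}$, which tends to $0$ since $u^1$ has finite total $\mathcal{F}_m$-energy.
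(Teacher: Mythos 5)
Your proposal takes a much longer route than the paper and, as written, it has genuine gaps. The paper's proof is a two-line cutoff: since $h\in\mathcal{E}_m^0(\Omega)\cap C(\Omega)$ tends to $0$ on $\partial\Omega$, the function $h-h_\epsilon$ with $h_\epsilon=\max\{h,-\epsilon\}$ is continuous and \emph{compactly supported}, so Theorem \ref{th4} applies to it directly, while the remainder contributes at most $\epsilon\cdot\sup_j\int_\Omega \Delta g_j^1\wedge\ldots\wedge\Delta g_j^m\wedge\beta^{n-m}$, which is $O(\epsilon)$ uniformly in $j$ by the uniform mass bound. The whole content of the proposition is precisely the upgrade from weak convergence (pairing with $C_c$ test functions) to pairing with a non-compactly-supported $h$, and this cutoff is what achieves it.

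Your argument instead integrates by parts and reduces to showing $\int_\Omega u^1_k\,\Delta h\wedge T_j\to\int_\Omega u^1_k\,\Delta h\wedge T$, which you justify by citing Lemma \ref{lem2}(1). This is the first concrete gap: Lemma \ref{lem2}(1) gives only \emph{weak} convergence of the currents $\max(g_j^1,-k)\,\Delta h\wedge T_j$, i.e.\ convergence tested against compactly supported forms. Integrating over all of $\Omega$ against the fixed function $u^1_k$ — which is bounded and upper semi-continuous but neither continuous nor compactly supported — does not follow: mass can escape to $\partial\Omega$, and a fixed bounded Borel function cannot be paired with a weak limit without an additional tightness/quasi-continuity argument. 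You have thus replaced the original difficulty (pairing with the non-compactly-supported $h$) by a strictly harder instance of the same difficulty. The second gap is a circularity risk: the identity $\int_\Omega h\,\Delta u^1\wedge T=\int_\Omega u^1\,\Delta h\wedge T$ for $u^1\in\mathcal{F}_m(\Omega)$, which you invoke "by passing to the limit in a defining sequence," is essentially Theorem \ref{th5} of the paper, whose proof \emph{uses} Proposition \ref{pro13}; the passage to the limit you need there is exactly the statement being proved. (The control of the truncation tails via $\int(-u^1)\Delta h\wedge T_j$ is also delicate for $u^1\in\mathcal{F}_m$, since only the unweighted masses $\int(\Delta g_j^1)^m\wedge\beta^{n-m}$ are assumed bounded; it can be salvaged by integrating by parts against the bounded $h$, but you do not do this.) I would recommend abandoning the $A_j+B_j$ decomposition in favour of the $h-h_\epsilon$ cutoff.
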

\begin{proof}
Clearly we have
\begin{equation}\label{6}
\sup_{j} \int_{\Omega}\Delta g_{j}^{1}\wedge \ldots\wedge \Delta g_{j}^{m}\wedge \beta^{n-m}<+\infty.
\end{equation}
Let  $h \in \mathcal{E}_{m}^{0}(\Omega)\cap C(\Omega)$ and for $\epsilon>0$ small enough, we consider the function $h_{\epsilon}=\max\lbrace h,-\epsilon\rbrace$. Then $h-h_{\epsilon}$ is continuous and compactly supported in $\Omega$. Applying Theorem \ref{th4} we get

$$\lim_{j\longrightarrow +\infty}\int_{\Omega}(h-h_{\epsilon})\Delta g_{j}^{1}\wedge \ldots\wedge \Delta g_{j}^{m}\wedge \beta^{n-m}=\int_{\Omega}(h-h_{\epsilon})\Delta u^{1}\wedge \ldots\wedge \Delta u^{m}\wedge \beta^{n-m}.$$
From (\ref{6}) and the fact that $\vert h_{\epsilon}\vert<\epsilon$ follow the result.
\end{proof}
\begin{cor}\label{cor3}
Suppose that $(g_{j})_{j}\subset \mathcal{E}_{m}^{0}(\Omega) $ decreases to $u \in \mathcal{F}_{m}(\Omega),\; j \longrightarrow +\infty$, such that
$$ \sup_{j}\int_{\Omega}(\Delta g_{j})^{m}\wedge \beta^{n-m}< +\infty.$$
Then, for each $ h \in \mathcal{E}_{m}^{0}(\Omega)$, the sequence of measures $h(\Delta g_{j})^{m}\wedge \beta^{n-m}$ converges weakly to $h(\Delta u)^{m}\wedge \beta^{n-m}$.
\end{cor}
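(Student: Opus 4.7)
The strategy is to combine Proposition \ref{pro13} (the continuous-weight version) with an approximation of $h$ from above by continuous functions in $\mathcal{E}_m^0$. As a preliminary step, I would apply Proposition \ref{pro13} in the diagonal case $g_j^1 = \cdots = g_j^m = g_j$ and $u^1 = \cdots = u^m = u$, obtaining $\int f\, d\mu_j \to \int f\, d\mu$ for every $f \in \mathcal{E}_m^0(\Omega) \cap C(\Omega)$, where $\mu_j := (\Delta g_j)^m \wedge \beta^{n-m}$ and $\mu := (\Delta u)^m \wedge \beta^{n-m}$. Lemma \ref{lem4} lets me write any $\chi \in C_0^\infty(\Omega)$ as a difference of two elements of $\mathcal{E}_m^0 \cap C(\overline{\Omega})$, so by linearity the convergence extends to every smooth compactly supported test function. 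Together with the uniform total-mass bound coming from the hypothesis (cf.\ the estimate (\ref{6}) in the proof of Proposition \ref{pro13}), this yields $\mu_j \to \mu$ weakly as Radon measures on $\Omega$.

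Next, since $h \in \mathcal{E}_m^0(\Omega) \subset \mathcal{QSH}_m^-(\Omega)$, Theorem \ref{th3} furnishes a decreasing sequence $(h_k) \subset \mathcal{E}_m^0(\Omega) \cap C(\overline{\Omega})$ with $h_k \downarrow h$ pointwise. For any test function $\chi \in C_0^\infty(\Omega)$ I would split
$$\int \chi h\, d\mu_j - \int \chi h\, d\mu \;=\; \int \chi(h - h_k)\, d\mu_j \;+\; \int \chi h_k\, d(\mu_j - \mu) \;+\; \int \chi(h_k - h)\, d\mu.$$
For each fixed $k$, the middle term tends to $0$ as $j \to \infty$ because $\chi h_k \in C_c(\Omega)$ is continuous with compact support and $\mu_j \to \mu$ weakly. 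The last term tends to $0$ as $k \to \infty$ by dominated convergence, since $h_k - h \downarrow 0$ and is uniformly bounded by $2\|h\|_\infty$.

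The principal obstacle is the first term, which is bounded by $\|\chi\|_\infty \int (h_k - h)\, d\mu_j$: I need this to tend to $0$ as $k \to \infty$ \emph{uniformly} in $j$. I plan to estimate, for any $\epsilon > 0$,
$$\int (h_k - h)\, d\mu_j \;\leq\; \epsilon\, \mu_j(\Omega) \;+\; 2\|h\|_\infty\, \mu_j\bigl(\{h_k - h \geq \epsilon\}\bigr).$$
Because $h_k - h$ is lower semicontinuous (continuous minus upper semicontinuous), the set $\{h_k - h \geq \epsilon\}$ is closed, so the Portmanteau direction of the weak convergence provides $\limsup_j \mu_j(\{h_k - h \geq \epsilon\}) \leq \mu(\{h_k - h \geq \epsilon\})$, and this bound tends to $0$ as $k \to \infty$ (the closed sets decrease to $\emptyset$). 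The uniform mass bound $\sup_j \mu_j(\Omega) \leq C$ then disposes of the finitely many "small $j$". Combined with the capacity control from Theorem \ref{th1} and the Chern--Levine--Nirenberg estimate (Proposition \ref{pro111}(4)), a diagonal choice $k = k(\epsilon)$ makes the error negligible, completing the proof. This uniform tail estimate is what I expect to be the main technical step.
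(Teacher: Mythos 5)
The paper states this corollary without proof (as a direct consequence of Proposition \ref{pro13}), so your argument has to stand on its own, and it does not: the uniform-in-$j$ control of $\int(h_k-h)\,d\mu_j$, which you correctly identify as the main technical step, is exactly where the argument breaks. First, a pointwise error: $h_k-h$ is lower semicontinuous, and for an l.s.c.\ function $f$ it is the sublevel sets $\{f\leq c\}$ that are closed; the superlevel set $\{h_k-h\geq\epsilon\}$ is only a $G_\delta$, so the Portmanteau inequality $\limsup_j\mu_j(F)\leq\mu(F)$ does not apply to it (and passing to closures destroys the property that the sets decrease to a $\mu$-null set). Second, and more fundamentally, no estimate based only on weak convergence, the uniform mass bound, or capacity can work here. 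Weak convergence gives $\liminf_j\int f\,d\mu_j\geq\int f\,d\mu$ for nonnegative l.s.c.\ $f$, which is the wrong direction for bounding $\int(h_k-h)\,d\mu_j$ from above. And the measures $\mu_j=(\Delta g_j)^m\wedge\beta^{n-m}$ can concentrate their entire (bounded) mass on sets of arbitrarily small $m$-capacity: take $u\in\mathcal{F}_m(\Omega)$ with a pole at $q_0$ and $g_j=\max\{u,-j\}$, so that $\mu_j$ lives on a shrinking neighborhood of $q_0$ while $\sup_j\mu_j(\Omega)<+\infty$. Hence neither Theorem \ref{th1} nor the Chern--Levine--Nirenberg estimate of Proposition \ref{pro111} yields $\sup_j\mu_j(\{h_k-h\geq\epsilon\})\to 0$, and your closing sentence invoking them is not a proof.

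The underlying issue is that your argument uses only that $h$ is bounded and upper semicontinuous, and at that level of generality the statement is false: with $\mu_j\rightharpoonup\delta_{q_0}$ as above and $h$ bounded u.s.c.\ with $h(q_0)=-1$ and $h=0$ elsewhere near $q_0$, one has $\int h\,d\mu_j=0\not\to-1=\int h\,d\mu$. The $m$-subharmonicity of $h$ is therefore essential and must enter the proof. The route consistent with the paper's machinery is: get $\mu_j\rightharpoonup\mu$ from Theorem \ref{th4} (not from Proposition \ref{pro13}, whose hypotheses require the $g_j$ to be continuous, which yours are not); take $h_k\downarrow h$ continuous via Theorem \ref{th3} to handle one inequality as you do; and obtain the reverse inequality from the monotonicity of $j\mapsto\int(-h)\Delta v_1\wedge\ldots\wedge\Delta v_m\wedge\beta^{n-m}$ under decreasing arguments, which is proved by repeated integration by parts exactly as in the proofs of Theorem \ref{th5} and Proposition \ref{pro14}(2). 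That monotonicity is the step that exploits $h\in\mathcal{E}_m^0(\Omega)$ rather than $h$ merely bounded u.s.c., and it is absent from your proposal.
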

\begin{cor}\label{cor4}
 Suppose that $  u_{1},\ldots, u_{m} \in \mathcal{F}_{m}(\Omega)$. Then,
$$\int\Delta u_{1}\wedge \ldots \Delta u_{m}\wedge \beta^{n-m} \leq \Big[\int(\Delta u_{1})^{m}\wedge \beta^{n-m}\Big]^{\frac{1}{n}} \ldots \Big[\int(\Delta u_{m})^{m}\wedge \beta^{n-m}\Big]^{\frac{1}{n}}$$
\end{cor}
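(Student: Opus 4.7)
The plan is to bootstrap Corollary \ref{3} (which contains a test weight $-h$) to the weightless inequality by choosing a family of weights $h_k\in\mathcal{E}_m^0(\Omega)\cap C(\overline\Omega)$ with $-h_k\nearrow 1$ pointwise, together with the $\mathcal{E}_m^0$-approximation coming from the definition of $\mathcal{F}_m$.

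\textbf{Step 1 (Approximating weights).} Since $\Omega$ is quaternionic $m$-hyperconvex, Theorem \ref{th3} supplies a continuous exhaustion function $\rho\in\mathcal{E}_m^0(\Omega)\cap C(\overline\Omega)$ with $\rho<0$ on $\Omega$ and $\rho\equiv 0$ on $\partial\Omega$. Set
$$h_k:=\max\{k\rho,\,-1\}\in\mathcal{E}_m^0(\Omega)\cap C(\overline\Omega),$$
which is $m$-subharmonic by Proposition \ref{pro1}, has finite total Hessian mass, and satisfies $h_k\searrow -1$ pointwise on $\Omega$, i.e.\ $-h_k\nearrow 1$.

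\textbf{Step 2 (Approximating the $u_p$).} By Definition \ref{def1} combined with Theorem \ref{th3}, for each $p$ we can choose a sequence $(u_p^j)_j\subset\mathcal{E}_m^0(\Omega)\cap C(\overline\Omega)$ with $u_p^j\downarrow u_p$ and $\sup_j\int_\Omega(\Delta u_p^j)^m\wedge\beta^{n-m}<+\infty$. Applying Corollary \ref{3} to $(h_k,u_1^j,\dots,u_m^j)$ gives, for every $j,k$,
$$\int_\Omega -h_k\,\Delta u_1^j\wedge\cdots\wedge\Delta u_m^j\wedge\beta^{n-m}\;\le\;\prod_{p=1}^m\Big[\int_\Omega -h_k\,(\Delta u_p^j)^m\wedge\beta^{n-m}\Big]^{1/n}.$$

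\textbf{Step 3 (Pass to the limit $j\to\infty$).} With $k$ fixed, $-h_k\in\mathcal{E}_m^0(\Omega)\cap C(\overline\Omega)$, so Proposition \ref{pro13} applies both to the mixed product on the left and to each pure power on the right (the latter in the form of Corollary \ref{cor3}). This yields
$$\int_\Omega -h_k\,\Delta u_1\wedge\cdots\wedge\Delta u_m\wedge\beta^{n-m}\;\le\;\prod_{p=1}^m\Big[\int_\Omega -h_k\,(\Delta u_p)^m\wedge\beta^{n-m}\Big]^{1/n},$$
where the measures $\Delta u_1\wedge\cdots\wedge\Delta u_m\wedge\beta^{n-m}$ and $(\Delta u_p)^m\wedge\beta^{n-m}$ are the ones associated to $\mathcal{F}_m$-functions via Theorem \ref{th4}.

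\textbf{Step 4 (Pass to the limit $k\to\infty$).} Since $-h_k\nearrow 1$ and each integrand is a nonnegative bounded Borel function against a positive Radon measure, the monotone convergence theorem gives the desired inequality
$$\int_\Omega \Delta u_1\wedge\cdots\wedge\Delta u_m\wedge\beta^{n-m}\;\le\;\prod_{p=1}^m\Big[\int_\Omega(\Delta u_p)^m\wedge\beta^{n-m}\Big]^{1/n}.$$

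\textbf{Main obstacle.} The routine calculations are in Steps 1 and 2; the delicate point is Step 3, where one must justify that Proposition \ref{pro13} is applicable. It requires $h_k$ to be continuous on $\overline\Omega$ (guaranteed by our construction) together with the uniform mass bound $\sup_j\int_\Omega(\Delta u_p^j)^m\wedge\beta^{n-m}<+\infty$ from the definition of $\mathcal{F}_m$. Once this is in place, the passage $k\to\infty$ in Step 4 is immediate from the fact that the limit measures are finite Radon measures, which in turn is ensured \emph{a posteriori} by the very inequality being established (first apply the argument to bounded truncations, then pass to the limit).
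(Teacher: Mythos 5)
Your proof is correct and takes essentially the same route as the paper: the paper's own proof is a one-line reduction to Definition \ref{def1} and Proposition \ref{pro13}, and your argument is simply the detailed expansion of that reduction. Concretely, you apply the weighted inequality of Corollary \ref{3} to the $\mathcal{E}_{m}^{0}$-approximants from the definition of $\mathcal{F}_{m}$, pass to the limit in $j$ via Proposition \ref{pro13}, and then remove the weight by letting $-h_{k}\nearrow 1$ with monotone convergence, which is exactly the intended argument.
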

\begin{proof}
It follows from  Definition \ref{def1} and Proposition \ref{pro13} that Corollary \ref{cor4} holds.
\end{proof}
\begin{thm}\label{th5}
Let $u,v,w_{1},\ldots,w_{m-1}\in \mathcal{F}_{m}(\Omega)$ and $T=\Delta w_{1}\wedge \ldots \wedge \Delta w_{m-1}\wedge \beta^{n-m}.$ Then $$\int_{\Omega}u \Delta v\wedge T=\int_{\Omega}v \Delta u\wedge T. $$
\end{thm}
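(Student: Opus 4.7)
The plan is to approximate all data by decreasing sequences of continuous bounded $\mathcal{E}_m^0$-functions, apply the symmetric integration-by-parts formula of Proposition \ref{pro11} at that level, and pass to the limit using the convergence machinery of Section \ref{sec2}. Concretely, Theorem \ref{th3} combined with the definition of $\mathcal{F}_m(\Omega)$ lets us pick decreasing sequences $(u_j), (v_j), (w_i^j) \subset \mathcal{E}_m^0(\Omega) \cap C(\overline\Omega)$ converging pointwise to $u, v, w_i$ respectively, with uniform mass bounds $\sup_j \int_\Omega (\Delta u_j)^m \wedge \beta^{n-m} < +\infty$ and the analogous bounds for the other sequences. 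Set $T_j := \Delta w_1^j \wedge \cdots \wedge \Delta w_{m-1}^j \wedge \beta^{n-m}$ and $T := \Delta w_1 \wedge \cdots \wedge \Delta w_{m-1} \wedge \beta^{n-m}$. Since $u_j, v_j$ both vanish on $\partial\Omega$, Proposition \ref{pro11} yields the approximate identity
\begin{equation*}
\int_\Omega u_j \Delta v_j \wedge T_j = \int_\Omega v_j \Delta u_j \wedge T_j = -\int_\Omega \gamma(u_j, v_j) \wedge T_j \qquad (\star)
\end{equation*}
for every $j$, the last expression being manifestly symmetric in $u_j, v_j$ since $\gamma(u_j, v_j) = \gamma(v_j, u_j)$.

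For the upper direction, fix $k$ and use $u_j \leq u_k \leq 0$ for $j \geq k$ together with positivity of $\Delta v_j \wedge T_j$ to obtain $\int_\Omega u_j \Delta v_j \wedge T_j \leq \int_\Omega u_k \Delta v_j \wedge T_j$. Proposition \ref{pro13}, applied with the continuous test function $u_k \in \mathcal{E}_m^0(\Omega) \cap C(\overline\Omega)$ to the decreasing data $(v_j, w_1^j, \ldots, w_{m-1}^j)$, gives $\int_\Omega u_k \Delta v_j \wedge T_j \to \int_\Omega u_k \Delta v \wedge T$ as $j \to \infty$. Letting $k \to \infty$ by monotone convergence against the finite positive measure $\Delta v \wedge T$ (whose total mass is controlled via Corollary \ref{cor4}) produces
\begin{equation*}
\limsup_{j \to \infty} \int_\Omega u_j \Delta v_j \wedge T_j \leq \int_\Omega u \Delta v \wedge T,
\end{equation*}
and the symmetric argument yields $\limsup_j \int_\Omega v_j \Delta u_j \wedge T_j \leq \int_\Omega v \Delta u \wedge T$.

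The main obstacle is the matching lower bound: establishing $\liminf_j \int_\Omega u_j \Delta v_j \wedge T_j \geq \int_\Omega u \Delta v \wedge T$, which is what actually forces convergence to the correct limit. My plan is to control the non-negative error term $\int_\Omega (u_k - u_j)\, \Delta v_j \wedge T_j$ uniformly in $j \geq k$ by combining the Cauchy--Schwarz inequality of Lemma \ref{lem20} with the mixed energy estimates of Proposition \ref{pro12} and Corollary \ref{3}; these express the error in terms of mixed quaternionic energies of $u_k - u_j$ and $v_j$ along $T_j$, quantities which stay bounded thanks to the uniform mass hypotheses on the approximating sequences and tend to $0$ as $k \to \infty$. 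Once the matching lower bound is in place, the two sequences on either side of $(\star)$ converge to $\int_\Omega u \Delta v \wedge T$ and $\int_\Omega v \Delta u \wedge T$ respectively; being equal term-by-term they share the same limit, yielding the desired identity.
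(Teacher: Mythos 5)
Your setup and your ``upper direction'' coincide with the paper's argument: approximate by decreasing sequences in $\mathcal{E}_m^0(\Omega)\cap C(\overline\Omega)$ with uniformly bounded masses, use the symmetry $\int_\Omega u_j\Delta v_j\wedge T_j=\int_\Omega v_j\Delta u_j\wedge T_j$ at the bounded level, and pass to the limit in $\int_\Omega u_k\Delta v_j\wedge T_j$ via Proposition \ref{pro13}. The genuine gap is in your treatment of the lower bound. You propose to show that $\sup_{j\ge k}\int_\Omega(u_k-u_j)\,\Delta v_j\wedge T_j\to 0$ as $k\to\infty$ using the Cauchy--Schwarz inequality of Lemma \ref{lem20} together with the mixed energy estimates. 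This cannot work in $\mathcal{F}_m(\Omega)$: writing $\int_\Omega(u_k-u_j)\Delta v_j\wedge T_j=\int_\Omega\gamma(u_j-u_k,v_j)\wedge T_j$ and applying Lemma \ref{lem20} produces the factor $\bigl(\int_\Omega\gamma(v_j,v_j)\wedge T_j\bigr)^{1/2}=\bigl(\int_\Omega(-v_j)\Delta v_j\wedge T_j\bigr)^{1/2}$, an \emph{energy} which is not controlled by the mass bounds $\sup_j\int_\Omega(\Delta v_j)^m\wedge\beta^{n-m}<+\infty$ defining $\mathcal{F}_m$. Indeed the common value of the two integrals in the statement may be $-\infty$, in which case, for fixed $k$, the quantity $\int_\Omega(u_k-u_j)\Delta v_j\wedge T_j=\int_\Omega u_k\Delta v_j\wedge T_j-\int_\Omega u_j\Delta v_j\wedge T_j$ blows up as $j\to\infty$ (the first term stays bounded because $u_k$ is bounded and the total masses of $\Delta v_j\wedge T_j$ are uniformly bounded, while the second tends to $-\infty$). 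So the claimed uniform-in-$j$ smallness is false in general, and your scheme stalls exactly at the step you flagged as the main obstacle.

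The missing idea, which is what the paper uses, is monotonicity of the whole sequence rather than a quantitative error estimate. For $j>k$, repeated integration by parts at the $\mathcal{E}_m^0$ level (Lemma \ref{lem21} or Proposition \ref{pro11}) lets you replace the decreasing approximants one at a time \emph{inside} the wedge product: $\int_\Omega v_k\Delta u_j\wedge T_j=\int_\Omega u_j\Delta v_k\wedge T_j\le\int_\Omega u_k\Delta v_k\wedge T_j$, and similarly for each $w_i^j\le w_i^k$, yielding
$$\int_\Omega v_k\Delta u_k\wedge T_k\;\ge\;\int_\Omega v_k\Delta u_j\wedge T_j\;\ge\;\int_\Omega v_j\Delta u_j\wedge T_j .$$
Hence $\int_\Omega v_j\Delta u_j\wedge T_j$ decreases to some $a\in[-\infty,0]$, and the two-sided squeeze closes using only the limits you already have from Proposition \ref{pro13}: from $\int_\Omega v_k\Delta u\wedge T=\lim_j\int_\Omega v_k\Delta u_j\wedge T_j\ge a$ and monotone convergence one gets $\int_\Omega v\Delta u\wedge T\ge a$, while $\int_\Omega v\Delta u\wedge T\le\int_\Omega v_k\Delta u\wedge T\le\int_\Omega v_k\Delta u_k\wedge T_k\downarrow a$ gives the reverse inequality; by the term-by-term symmetry of $(\star)$ the same $a$ equals $\int_\Omega u\Delta v\wedge T$. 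You should replace your error-control step by this monotonicity argument.
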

\begin{proof}
 Let $u_j,v_j,w_{1}^j,\ldots,w_{m-1}^j$ be sequences in $ \mathcal{E}_{m}^{0}(\Omega)\cap C(\Omega)$ decreasing to $u,v,w_{1},\ldots,w_{m-1}$ respectively

such that their total masses are uniformly bounded
$$ \sup_{j} \int_{\Omega} \Delta v_{j} \wedge T_{j} \wedge \beta^{n-m} < + \infty\;,\; \sup_{j} \int_{\Omega} \Delta u_{j} \wedge T_{j} \wedge \beta^{n-m}< + \infty,$$ where $T_{j}= \Delta w_{1} \wedge \ldots \Delta w_{m-1} \wedge \beta^{n-m}.$ By Theorem \ref{th4}  we obtain $\Delta u_{j} \wedge T_{j} \wedge \beta^{n-m}$ converges weakly to $\Delta u \wedge T \wedge \beta^{n-m}.$ For each fixed $K \in \mathbb{N}$ and any $j > k$ we have $$ \int_{\Omega}v_{k} \Delta u_{k} \wedge T_{k} \wedge \beta^{n-m} \geq \int_{\Omega}v_{k} \Delta u_{j} \wedge T_{j} \wedge \beta^{n-m} \geq \int_{\Omega}v_{j} \Delta u_{j} \wedge T_{j} \wedge \beta^{n-m}. $$
We then infer that the sequence of real numbers $ \displaystyle {\int_{\Omega}v_{j} \Delta u_{j} \wedge T \wedge \beta^{n-m}}$ decreases to some $a \in \mathbb{R} \cup \{+\infty \}.$ Using proposition \ref{pro13} and letting $j \longrightarrow +\infty$ we get $$ \int_{\Omega}v_{k} \Delta u \wedge T \wedge \beta^{n-m},$$
from which we obtain $\displaystyle \int_{\Omega}v \Delta u \wedge T \wedge \beta^{n-m} \geq a. $ For each fixed $k$  we have $$
\begin{array}{ll}
     \displaystyle{\int_{\Omega}v \Delta u \wedge T \wedge \beta^{n-m}}
       & \displaystyle \leq \int_{ \Omega}v_{k} \Delta u \wedge T \wedge \beta^{n-m}\\ & = \displaystyle\lim_{j \longrightarrow +\infty}\int_{\Omega}v_{k} \Delta u_{j} \wedge T_{j} \wedge \beta^{n-m} \\
     & \displaystyle \leq \int_{\Omega}v_{k} \Delta u_{k} \wedge T_{k} \wedge \beta^{n-m}
\end{array}
$$
This  implies that $\displaystyle{\int_{\Omega}v \Delta u \wedge T \wedge \beta^{n-m} =a,}$ from which the result follows.
\end{proof}
\begin{defi}
We define the quaternionic $p$-energy $(p>0)$ of $\varphi\in \mathcal{E}_{m}^{0}(\Omega)$ to be $$E_{p}(\varphi):=\int_{\Omega}(-\varphi)^{p}(\Delta \varphi)^{m}\wedge \beta^{n-m},$$
if $p = 1$ we  denote by $E(\varphi) = E_{1}(\varphi).$
and the mutual quaternionic $p$-energy of $\varphi_{0}\ldots,\varphi_{m}\in \mathcal{E}_{m}^{0}(\Omega)$ to be
$$ E_{p}(\varphi_{0},\varphi_{1},\ldots,\varphi_{m}):=\int_{\Omega}(-\varphi_{0})^{p}\Delta \varphi_{1}\wedge \ldots \wedge \Delta \varphi_{m}\wedge \beta^{n-m},\;\;\;p\geq 1.$$
\end{defi}

The following  H\"{o}lder-type inequality  plays an important role in the variational approach in the next section.
\begin{thm}\label{th6}
Let $u,v_{1},\ldots,v_{m} \in \mathcal{E}_{m}^{0}(\Omega)$ and $p\geq 1$. We have $$E_{p}(u,v_{1},\ldots,v_{m})\leq D_{p}E_{p}(u)^{\frac{p}{m+p}}E_{p}(v_{1})^{\frac{1}{m+p}}\ldots E_{p}(v_{m})^{\frac{1}{m+p}}$$ where $D_{1}=1, \;\; D_{p}=p^{p\alpha(m,p)/p-1}$ for $p>1$ and $$\alpha(m,p)=(p+2)\Big(\dfrac{p+1}{p}\Big)^{m-1}-(p+1).$$
\end{thm}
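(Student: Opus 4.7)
The plan is to adapt the approach of Persson and Cegrell (developed in the complex Monge--Ampère case and extended to the complex $m$-Hessian setting by Lu) to the quaternionic $m$-Hessian framework. The main tools are the integration by parts formula (Proposition \ref{pro11}), the Cauchy--Schwarz inequality for the bilinear form $\gamma$ (Lemma \ref{lem20}), the positivity of $\gamma(u,u)\wedge T$ (Proposition \ref{pro111}), and the elementary Hölder inequality on positive Radon measures.

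I would treat the case $p=1$ first, via iterated Cauchy--Schwarz. Setting $T = \Delta v_{2}\wedge \cdots\wedge \Delta v_{m}\wedge \beta^{n-m}$ and using Proposition \ref{pro11},
\begin{equation*}
E_{1}(u,v_{1},\ldots,v_{m}) = \int_{\Omega}(-u)\,\Delta v_{1}\wedge T = \int_{\Omega}\gamma(u,v_{1})\wedge T.
\end{equation*}
By Lemma \ref{lem20} this is at most $E_{1}(u,u,v_{2},\ldots,v_{m})^{1/2}\,E_{1}(v_{1},v_{1},v_{2},\ldots,v_{m})^{1/2}$. Symmetrizing the remaining slots (using the integration by parts of Proposition \ref{pro11} to rewrite $\int(-u)\Delta u\wedge\Delta v_{j}\wedge T'$ as $\int(-v_{j})(\Delta u)^{2}\wedge T'$) and repeatedly applying the same Cauchy--Schwarz step produces, after a bookkeeping argument that successively halves exponents of mixed terms, the uniform exponent $1/(m+1)$ and the constant $D_{1}=1$.

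For $p>1$ I would proceed by induction on $m$. For the base case $m=1$, approximate $u$ and $v_{1}$ from above by functions in $\mathcal{E}_{m}^{0}\cap C(\overline{\Omega})$ (Theorem \ref{th3} and Lemma \ref{lem4}) so that the identity $\Delta((-u)^{p}) = -p(-u)^{p-1}\Delta u + p(p-1)(-u)^{p-2}\gamma(u,u)$ is justified in the smooth setting; integration by parts then gives
\begin{equation*}
E_{p}(u,v_{1}) = p\!\int_{\Omega}(-v_{1})(-u)^{p-1}\Delta u\wedge\beta^{n-1} - p(p-1)\!\int_{\Omega}(-v_{1})(-u)^{p-2}\gamma(u,u)\wedge\beta^{n-1}.
\end{equation*}
The second term is nonpositive (its integrand is pointwise a product of a nonnegative function and a positive form), so $E_{p}(u,v_{1})\leq p\!\int(-v_{1})(-u)^{p-1}\Delta u\wedge\beta^{n-1}$. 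Hölder's inequality on the positive measure $\Delta u\wedge\beta^{n-1}$ with exponents $p/(p-1)$ and $p$ produces a factor $E_{p}(u)^{(p-1)/p}$ times an integral of the form $\int(-v_{1})^{p}\Delta u\wedge\beta^{n-1}$; applying the same integration by parts and Hölder procedure to this new integral (with the roles of $u$ and $v_{1}$ swapped) and iterating yields the base case $E_{p}(u,v_{1})\leq D_{p}E_{p}(u)^{p/(p+1)}E_{p}(v_{1})^{1/(p+1)}$, where the geometric accumulation of constants matches the stated $D_{p}$ with $\alpha(1,p)=2$. The inductive step $m-1\Rightarrow m$ runs in parallel: integrate by parts on $\Delta v_{1}$, apply Hölder to separate the $u$- and $v_{1}$-factors, invoke the induction hypothesis on the two resulting $(m-1)$-indexed mutual energies $E_{p}(u,u,v_{2},\ldots,v_{m})$ and $E_{p}(v_{1},v_{1},v_{2},\ldots,v_{m})$, and regroup. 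The linear recurrence on the exponent of the accumulated constant solves in closed form to $\alpha(m,p)=(p+2)\bigl((p+1)/p\bigr)^{m-1}-(p+1)$.

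The main obstacle is twofold. First, one must rigorously justify the integration by parts identity for functions of the form $(-u)^{p}$ with non-integer $p>1$; this requires approximating $u$ by smooth bounded $m$-sh regularizations with zero boundary values, then passing to the limit using Proposition \ref{pro13} together with weak convergence of currents from Lemma \ref{lem2}, and controlling the extra $\gamma(u,u)$ term which has to be handled as a positive current in the limit. Second, and more delicate combinatorially, is tracking the precise exponent $\alpha(m,p)$: each inductive step produces a cascade of Hölder exponents, and one must verify that the natural choice at each stage is the one that closes the recursion in the geometric form above rather than a worse exponential bound.
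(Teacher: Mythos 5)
Your proposal is correct and follows essentially the same route as the paper: the analytic core in both is the two-function inequality $\int_\Omega(-u)^p\Delta v\wedge T\leq C(p)\bigl(\int_\Omega(-u)^p\Delta u\wedge T\bigr)^{p/(p+1)}\bigl(\int_\Omega(-v)^p\Delta v\wedge T\bigr)^{1/(p+1)}$, obtained exactly as you describe via integration by parts, discarding the nonpositive $\gamma(u,u)$ term, Hölder on the measure $\Delta u\wedge T$, and the symmetrized bootstrap. The only difference is that the paper outsources the combinatorial passage from this two-slot inequality to the full $m$-slot statement with the precise constant $D_p$ and exponent $\alpha(m,p)$ by citing Persson's Theorem 4.1, whereas you propose to redo that induction on $m$ by hand and leave its bookkeeping (which is precisely the content of Persson's result) unverified.
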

\begin{proof}
 Let $F(u,v,v_{1},\ldots ,v_{m-1}):= \displaystyle\int_{\Omega}(-u)^{p}\Delta v_{1}\wedge \ldots \wedge \Delta v_{m-1}\wedge \beta^{n-m},$ for $p \geq 1$ and $ u,v,v_{1},\ldots,v_{m-1} \in \mathcal{E}_{m}^{0}(\Omega)$. By using \cite[Theorem 4.1]{Pe} it suffices to prove that
$$F(u,v,v_{1},\ldots,v_{m-1})\leq C(p)F(u,u,v_{1},\ldots,v_{m-1})^{\frac{p}{p+1}}F(v,v,v_{1},\ldots,v_{m-1})^{\frac{1}{p+1}}$$
where $C(p) = 1$ if $p = 1$ and $C(p) = p^{\frac{p}{p-1}}$ if $p>1$.
 Set $T=\Delta v_{1}\wedge \ldots \wedge \Delta v_{m-1}\wedge \beta^{n-m}.$\\ If $p = 1$, the above inequality
becomes
$$\int_{\Omega}(-u)(\Delta v\wedge T \leq \Big(\int_{\Omega}(-u)(\Delta u)\wedge T\Big)^{\frac{1}{2}}  \Big(\int_{\Omega}(-v)(\Delta v \wedge T\Big)^{\frac{1}{2}}$$
which is the Cauchy–Schwarz inequality.\\ If $p > 1,$ by Proposition \ref{pro11} we get 
\begin{equation}\label{eq26} \begin{array}{ll}
\displaystyle\int_{\Omega}(-u)^{p}\Delta v\wedge T & = \displaystyle\int_{\Omega}-\gamma((-u)^{p},v)\wedge T\\
&=\displaystyle p\int_{\Omega}(-u)^{p-1}\gamma(u,v)\Delta v\wedge T \\
&=\displaystyle p\int_{\Omega}(-v)\gamma((-u)^{p-1},u)\Delta v\wedge T + p\int_{\Omega}(-v)(-u)^{p-1}\Delta u\wedge T\\ 
& = \displaystyle-p(p-1)\int_{\Omega}(-u)^{p-2}\gamma(u,u)\Delta v\wedge T + p\int_{\Omega}(-v)(-u)^{p-1}\Delta u\wedge T\\ 
&\displaystyle 
\leq p \displaystyle\int_{\Omega}(-v)(-u)^{p-1}\Delta u\wedge T.
\end{array} \end{equation} 
The last inequality follows from the fact that $\gamma(u,u)\Delta v\wedge T$  is a $m$-positive current by Proposition \ref{pro111}.
Using H\"{o}lder’s inequality we get
$$\int_{\Omega}(-u)^{p}\Delta v\wedge T \leq p \Big( \int_{\Omega}(-u)^{p}\Delta u\wedge T\Big)^{\frac{p-1}{p}}\Big( \int_{\Omega}(-v)^{p}\Delta u\wedge T\Big)^{\frac{1}{p}}.$$
Replacing $u$ by $v$ in the above inequality we obtain
$$\int_{\Omega}(-v)^{p}\Delta u\wedge T \leq p \Big( \int_{\Omega}(-v)^{p}\Delta v\wedge T\Big)^{\frac{p-1}{p}}\Big( \int_{\Omega}(-u)^{p}\Delta v \wedge T\Big)^{\frac{1}{p}}.$$
the result is a consequence of the two last inequalities.
\end{proof}
\begin{defi}\label{def2}
Denote by $\mathcal{K}\subset \mathcal{QSH}_{m}^{-}(\Omega)$ such that:
\begin{enumerate}
\item[1)] If $u \in \mathcal{K},\; v\in \mathcal{QSH}_{m}^{-}(\Omega)$, then $\max\lbrace u,v\rbrace \in \mathcal{K}.$
\item[2)]If $u \in \mathcal{K},\; \varphi_{j}\in \mathcal{QSH}_{m}^{-1}(\Omega)\cap L_{loc}^{\infty}(\Omega),\;\varphi_{j}\searrow u,\;j\longrightarrow +\infty,$ then $(\Delta\varphi_{j})^{m}\wedge \beta^{n-m}$ is weakly convergent.
\end{enumerate}
\end{defi}
\begin{cor}
By $\mathcal{E}$ we denote one of the classes  $\mathcal{E}_{m}^{0}(\Omega), \mathcal{E}_{m}(\Omega), \mathcal{F}_{m}(\Omega), \mathcal{E}_{m}^{p}(\Omega), \mathcal{F}_{m}^{p}(\Omega),\; p>  0$. We have the following properties
\begin{enumerate}
\item[1)]$\mathcal{E}$ is convex and have the property (1) in Definition \ref{def2}.
\item[2)] if $\mathcal{E}=\mathcal{E}_{m}(\Omega)$, then $\mathcal{E}$ has properties (1) and (2)
in Definition \ref{def2}.
\item[3)] $\mathcal{E}_{m}(\Omega)$ is the largest class for which the properties of  Definition \ref{def2}  hold true.
\end{enumerate}
\end{cor}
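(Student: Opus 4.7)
The plan is to prove the three assertions in order, exploiting the Cegrell-type definitions together with the integration-by-parts and comparison-principle tools developed above.

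For (1), convexity of each class is immediate from linearity of the Baston operator applied to the defining sequences: if $(u_j),(v_j)\subset \mathcal{E}_m^0(\Omega)$ are admissible for $u$ and $v$, then $(\alpha u_j+\beta v_j)$ is admissible for $\alpha u+\beta v$, and the required uniform bounds (total or $p$-weighted mass) follow by expanding $(\Delta(\alpha u_j+\beta v_j))^m\wedge\beta^{n-m}$ into mixed terms and invoking Corollary~\ref{3} and Theorem~\ref{th6}. For stability under $\max\{u,v\}$ with $v\in\mathcal{QSH}_m^-(\Omega)$, I approximate $u$ by its defining sequence $(u_j)$ and $v$ from above by a decreasing sequence $(v_j)\subset\mathcal{E}_m^0(\Omega)\cap C(\overline\Omega)$ produced by Theorem~\ref{th3}; then $w_j:=\max\{u_j,v_j\}\in\mathcal{E}_m^0(\Omega)$ by Proposition~\ref{pro1}(1) and decreases to $\max\{u,v\}$. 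The key uniform bound
$$\int_\Omega(\Delta w_j)^m\wedge\beta^{n-m}\ \leq\ \int_\Omega(\Delta u_j)^m\wedge\beta^{n-m}+\int_\Omega(\Delta v_j)^m\wedge\beta^{n-m}$$
comes from splitting along $\{u_j\geq v_j\}$ and applying the maximum principle stated before Proposition~\ref{pro3}; the $p$-weighted analogues follow via H\"older's inequality using $-w_j\leq\min\{-u_j,-v_j\}$ on the respective pieces.

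For (2), fix $u\in\mathcal{E}_m(\Omega)$ and $\varphi_j\searrow u$ with $\varphi_j\in\mathcal{QSH}_m^-(\Omega)\cap L^\infty_{loc}(\Omega)$. Weak convergence of $(\Delta\varphi_j)^m\wedge\beta^{n-m}$ is a local question; fix $q_0\in\Omega$, a neighborhood $U$ and a defining sequence $(u_k)\subset\mathcal{E}_m^0(\Omega)$ with $u_k\downarrow u$ on $U$ and $\sup_k\int_\Omega(\Delta u_k)^m\wedge\beta^{n-m}<\infty$. Iterated integration by parts (Lemma~\ref{lem21}) rewrites $\int\chi(\Delta\varphi_j)^m\wedge\beta^{n-m}$ in terms of $\varphi_j$ against $\Delta\chi$ and lower-order currents, and the Chern--Levine--Nirenberg estimate of Proposition~\ref{pro111}(4) yields local uniform mass bounds, hence weak sequential compactness. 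To identify the limit uniquely, consider the joint approximants $\psi_{j,k}:=\max\{\varphi_j,u_k\}$, which lie in $\mathcal{QSH}_m^-\cap L^\infty_{loc}$ by part (1); passing through Theorem~\ref{th4} and Proposition~\ref{pro13} with $j,k\to\infty$ in the appropriate order shows that every weak cluster point coincides with the Hessian of $u$ defined through the sequence $(u_k)$, forcing convergence of the full sequence.

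For (3), let $\mathcal{K}\subset\mathcal{QSH}_m^-(\Omega)$ satisfy properties (1) and (2); I show $\mathcal{K}\subset\mathcal{E}_m(\Omega)$. Pick a continuous exhaustion $\rho\in\mathcal{E}_m^0(\Omega)\cap C(\overline\Omega)$ of the $m$-hyperconvex domain $\Omega$ (Theorem~\ref{th3} applied to any exhaustion function). For $u\in\mathcal{K}$ and $j\geq 1$ set $u_j:=\max\{u,j\rho\}\in\mathcal{K}$ by property (1); the $u_j$ are bounded, vanish on $\partial\Omega$, and satisfy $u_j\searrow u$. Property (2) applied to a standard regularization of $u_j$ (cf.\ Proposition~\ref{pro1}(4)) identifies $(\Delta u_j)^m\wedge\beta^{n-m}$ as a positive Radon measure whose total mass is controlled by $(\Delta(j\rho))^m\wedge\beta^{n-m}$ via the comparison principle, placing $u_j\in\mathcal{E}_m^0(\Omega)$. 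A second application of property (2) to the decreasing sequence $u_j\searrow u$ gives weak convergence of $(\Delta u_j)^m\wedge\beta^{n-m}$ to a locally finite measure, so $\sup_j\int_K(\Delta u_j)^m\wedge\beta^{n-m}<\infty$ for each $K\Subset\Omega$, and $u\in\mathcal{E}_m(\Omega)$ by definition. The main obstacle is step (2): the uniqueness of the weak limit for a general decreasing sequence $\varphi_j\searrow u$ (not the particular one from the definition of $\mathcal{E}_m$) requires carefully handling the double approximation $\max\{\varphi_j,u_k\}$, since $j$ and $k$ vary in opposite directions while Lemma~\ref{lem2} only provides weak continuity for one direction at a time.
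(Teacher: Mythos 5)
The paper states this corollary without proof (it is the quaternionic analogue of Cegrell's and Lu's results, cf.\ \cite{C4,Lu2}), so your argument has to stand on its own, and as written it has two genuine gaps. First, in part (1), your ``key uniform bound''
$\int_\Omega(\Delta w_j)^m\wedge\beta^{n-m}\leq\int_\Omega(\Delta u_j)^m\wedge\beta^{n-m}+\int_\Omega(\Delta v_j)^m\wedge\beta^{n-m}$
is not uniform: $v$ is an \emph{arbitrary} function in $\mathcal{QSH}_m^-(\Omega)$, and the approximants $v_j$ supplied by Theorem \ref{th3} have finite mass for each fixed $j$ but $\sup_j\int_\Omega(\Delta v_j)^m\wedge\beta^{n-m}$ is typically $+\infty$ (otherwise every negative $m$-sh function would lie in $\mathcal{F}_m(\Omega)$). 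Moreover, ``splitting along $\{u_j\geq v_j\}$ and applying the maximum principle'' controls $(\Delta w_j)^m\wedge\beta^{n-m}$ only on $\{u_j>v_j\}$ and $\{v_j>u_j\}$, leaving the contact set $\{u_j=v_j\}$ uncontrolled. The correct and much shorter route is to use $w_j=\max\{u_j,v_j\}\geq u_j$ together with the monotonicity of the total mass and of $E_p$ under the pointwise order (the unnamed proposition following Theorem \ref{th8}, and Corollary \ref{cor5}), which gives $\int_\Omega(\Delta w_j)^m\wedge\beta^{n-m}\leq\int_\Omega(\Delta u_j)^m\wedge\beta^{n-m}$ and $E_p(w_j)\leq A\,E_p(u_j)$ directly, with bounds depending on $u$ alone.

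Second, in part (3) the final step ``$\sup_j\int_K(\Delta u_j)^m\wedge\beta^{n-m}<\infty$ for each $K\Subset\Omega$, and $u\in\mathcal{E}_m(\Omega)$ by definition'' does not match Definition \ref{def1}: membership in $\mathcal{E}_m(\Omega)$ requires, near each point, a sequence in $\mathcal{E}_m^0(\Omega)$ decreasing to $u$ on a neighborhood $U$ whose \emph{global} masses $\int_\Omega(\Delta\cdot)^m\wedge\beta^{n-m}$ are uniformly bounded, whereas your sequence $u_j=\max\{u,j\rho\}$ has $\int_\Omega(\Delta u_j)^m\wedge\beta^{n-m}\leq j^m\int_\Omega(\Delta\rho)^m\wedge\beta^{n-m}\to+\infty$. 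You must insert the balayage step: replace $u_j$ by $\tilde u_j:=\sup\{w\in\mathcal{QSH}_m^-(\Omega):w\leq u_j\ \text{on}\ \overline U\}$, which agrees with $u_j$ on $U$, has Hessian measure supported in $\overline U$, and whose total mass is then controlled by the local bound coming from property (2). Finally, you yourself flag that the identification of the weak limit in part (2) (the double approximation $\max\{\varphi_j,u_k\}$ with $j,k$ moving in opposite monotonicity directions) is the main obstacle and you only sketch it; since Theorem \ref{th4} covers only defining sequences in $\mathcal{E}_m^0(\Omega)$, this step needs to be carried out, not just announced. The overall architecture (convexity via the mixed-mass inequalities of Corollary \ref{3} and Theorem \ref{th6}; maximality via $\max\{u,j\rho\}$) is the standard and correct one.
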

\begin{cor}\label{cor5}
Let $u,v \in \mathcal{E}_{m}^{0}(\Omega)$ and  $u \leq v$. Then, $E_{p}(v) \leq AE_{p}(u)$, where the constant $A$ is independent of $u,v.$ In particular, for $p=1$ we have $E_{1}(v) \leq E_{1}(u)$.
\end{cor}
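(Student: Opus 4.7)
The key tool is the Hölder-type inequality from Theorem \ref{th6}. The plan is to bound $E_p(v)$ first by a mutual energy in which the weight function is changed to $(-u)^p$, and then to dominate that mutual energy by a product of $E_p(u)$ and $E_p(v)$ with carefully chosen exponents, from which $E_p(v)$ can be recovered on the right-hand side.

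Concretely, since $u,v\in\mathcal{E}_m^0(\Omega)$ with $u\leq v\leq 0$, one has $(-v)^p\leq(-u)^p$ pointwise, so
\[
E_p(v)=\int_{\Omega}(-v)^p(\Delta v)^m\wedge\beta^{n-m}\;\leq\;\int_{\Omega}(-u)^p(\Delta v)^m\wedge\beta^{n-m}=E_p(u,v,\dots,v),
\]
where the last term is the mutual $p$-energy with $v_1=\dots=v_m=v$. Applying Theorem \ref{th6} to this mutual energy yields
\[
E_p(u,v,\dots,v)\;\leq\; D_p\,E_p(u)^{p/(m+p)}\,E_p(v)^{m/(m+p)}.
\]
Combining the two displayed inequalities gives $E_p(v)\leq D_p\,E_p(u)^{p/(m+p)}\,E_p(v)^{m/(m+p)}$.

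If $E_p(v)=0$ the conclusion is trivial; otherwise, since $E_p(v)<+\infty$ (as $v\in\mathcal{E}_m^0(\Omega)$), we may divide through by $E_p(v)^{m/(m+p)}$ to obtain
\[
E_p(v)^{p/(m+p)}\;\leq\;D_p\,E_p(u)^{p/(m+p)},
\]
and raising both sides to the power $(m+p)/p$ gives $E_p(v)\leq A\,E_p(u)$ with $A:=D_p^{(m+p)/p}$, a constant depending only on $m$ and $p$. In the particular case $p=1$, Theorem \ref{th6} tells us $D_1=1$, hence $A=1$ and $E_1(v)\leq E_1(u)$.

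I do not expect a real obstacle here: both inputs (finiteness of $E_p$ on $\mathcal{E}_m^0(\Omega)$, and the sharp Hölder-type estimate of Theorem \ref{th6}) are already in hand. The only point that needs minor care is the division step, which must be justified by separating the trivial case $E_p(v)=0$, as indicated above.
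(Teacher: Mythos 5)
Your proof is correct and is essentially the argument the paper intends: the paper's own proof consists of the single line ``Applying Theorem \ref{th6} directly we get the desired result,'' and your chain --- the pointwise bound $(-v)^{p}\leq(-u)^{p}$ giving $E_{p}(v)\leq E_{p}(u,v,\ldots,v)$, Theorem \ref{th6} with $v_{1}=\cdots=v_{m}=v$, and cancellation of the finite factor $E_{p}(v)^{m/(m+p)}$ --- is the standard way to carry that out, yielding $A=D_{p}^{(m+p)/p}$ and $A=1$ when $p=1$.
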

\begin{proof}
Applying  Theorem \ref{th6} directly we get the desired result.
\end{proof}
\begin{cor}\label{cor6}
Let $V$ be an open subset of $\Omega$ and $\varphi\in\mathcal{E}_{m}^{0}(\Omega),\;p\geq1.$ Then
$$\int_{V}(\Delta\varphi)^{m}\wedge \beta^{n-m}\leq MC_{m}(V)^{\frac{p}{p+m}}E_{p}(\varphi)^{\frac{m}{p+m}},$$ where $M$ is a constant depending only on $p$ and $m$ and $C_{m}(V)$ is the quaternionic $m$-capacity of $V$.
\end{cor}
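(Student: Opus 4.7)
The plan is to reduce to a compact subset of $V$, build a test function from the relatively $m$-extremal function, and then invoke the H\"older-type inequality of Theorem \ref{th6} together with the capacity formula of Theorem \ref{th1}.

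\emph{Step 1: Reduction to finite unions of closed balls.} Since $(\Delta\varphi)^{m}\wedge\beta^{n-m}$ is a positive Radon measure it is inner regular, so $\int_{V}(\Delta\varphi)^{m}\wedge\beta^{n-m}$ equals the supremum of $\int_{K}(\Delta\varphi)^{m}\wedge\beta^{n-m}$ over compact $K\subset V$. Any such $K$ is contained in a finite union of closed balls still compactly contained in $V$, so I may restrict attention to $K$ of this latter form.

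\emph{Step 2: The test function.} For such a $K$, set $\psi:=u_{m,K,\Omega}$. By Proposition \ref{pro7}, $\psi$ is continuous on $\Omega$; since the constant $-1$ is an admissible competitor in the sup defining $u_{m,K,\Omega}$, one has $\psi\geq -1$ everywhere, and combined with $\psi\leq -1$ on $K$ this forces $\psi\equiv -1$ on $K$. Proposition \ref{pro4} gives $\psi(q)\to 0$ as $q\to\partial\Omega$, and Theorem \ref{th1} gives $\int_{\Omega}(\Delta\psi)^{m}\wedge\beta^{n-m}=C_{m}(K,\Omega)<\infty$. Hence $\psi\in\mathcal{E}_{m}^{0}(\Omega)$. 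Because $\psi\equiv -1$ on $K$ and $-\psi\geq 0$ elsewhere, $(-\psi)^{p}\geq\chi_{K}$ pointwise, so
$$\int_{K}(\Delta\varphi)^{m}\wedge\beta^{n-m}\;\leq\;\int_{\Omega}(-\psi)^{p}(\Delta\varphi)^{m}\wedge\beta^{n-m}\;=\;E_{p}(\psi,\varphi,\ldots,\varphi).$$

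\emph{Step 3: Energy estimate and conclusion.} Applying Theorem \ref{th6} with $u=\psi$ and $v_{1}=\cdots=v_{m}=\varphi$ yields
$$E_{p}(\psi,\varphi,\ldots,\varphi)\;\leq\;D_{p}\,E_{p}(\psi)^{p/(m+p)}\,E_{p}(\varphi)^{m/(m+p)}.$$
Since $0\leq -\psi\leq 1$ one has $(-\psi)^{p}\leq 1$, so
$$E_{p}(\psi)\;=\;\int_{\Omega}(-\psi)^{p}(\Delta\psi)^{m}\wedge\beta^{n-m}\;\leq\;\int_{\Omega}(\Delta\psi)^{m}\wedge\beta^{n-m}\;=\;C_{m}(K,\Omega)\;\leq\;C_{m}(V),$$
using monotonicity of capacity from Proposition \ref{pro2}(1). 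Combining the three inequalities,
$$\int_{K}(\Delta\varphi)^{m}\wedge\beta^{n-m}\;\leq\;D_{p}\,C_{m}(V)^{p/(m+p)}\,E_{p}(\varphi)^{m/(m+p)},$$
and passing to the supremum over $K$ gives the desired inequality with $M=D_{p}$.

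The only mildly delicate point is the identity $\psi\equiv -1$ on $K$, which requires the continuity afforded by Proposition \ref{pro7} (and so the restriction to $K$ being a finite union of closed balls); once that is secured, the energy inequality of Theorem \ref{th6} does all the substantive work and everything else is bookkeeping.
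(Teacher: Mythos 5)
Your proposal is correct and follows essentially the same route as the paper: take the relatively $m$-extremal function as the weight, apply the H\"older-type inequality of Theorem \ref{th6} with $v_{1}=\cdots=v_{m}=\varphi$, and bound $E_{p}$ of the extremal function by the capacity via Theorem \ref{th1}. The only difference is cosmetic — the paper works directly with $u_{m,V,\Omega}^{*}$ for the open set $V$, while you first reduce by inner regularity to compact unions of closed balls to secure continuity and the identity $\psi\equiv-1$ on $K$; this is a slightly more careful rendering of the same argument.
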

\begin{proof}
We can suppose that $V$ is relatively compact in $\Omega$. Denote by $u = u_{m,V,\Omega}^{*}$ the regularized $m$-extremal function
of $V$ in $\Omega$. Then $u \in \mathcal{E}_{m}^{0}(\Omega)$
 and $u = -1$ in $V$. From Theorem \ref{th6} we have
 $$\begin{array}{ll}
 \displaystyle{\int}_{V}(\Delta\varphi)^{m}\wedge \beta^{n-m}&\leq \displaystyle{\int}_{\Omega}(-u)^{p}(\Delta\varphi)^{m}\wedge \beta^{n-m}\\ &\displaystyle\leq D_{p}E_{p}(u)^{\frac{p}{p+m}}E_{p}(\varphi)^{\frac{m}{p+m}}\\
 &\leq D_{p}\Big(\displaystyle{\int}_{\Omega}(\Delta\varphi)^{m}\wedge \beta^{n-m}\Big)^{\frac{p}{p+m}}E_{p}(\varphi)^{\frac{m}{p+m}} \\ &\displaystyle
\leq D_{p}C_{m}(V)^{\frac{p}{p+m}}E_{p}(\varphi)^{\frac{m}{p+m}}.
 \end{array}$$
\end{proof}
\begin{thm}\label{th7}
If $u^{k} \in \mathcal{E}_{m}^{p}(\Omega),\;k=1,\ldots,m,\;p \geq 1$ and $(g_{j}^{k})_{j} \subset \mathcal{E}_{m}^{0}(\Omega)$ decreases to $u^{k},\;j \longrightarrow + \infty$ such that $$\displaystyle{\sup_{j,k}} E_{p}(g_{j}^{k}) < +\infty.$$ Then, the sequence of measures $\Delta g_{j}^{1} \wedge \ldots \wedge \Delta g_{j}^{m}\wedge \beta^{n-m}$ is weakly convergent to a positive measure and the limit does not depend to the particular sequence. We then define $\Delta u^{1}\wedge \ldots\wedge \Delta u^{m}\wedge\beta^{n-m}$ to be this weak limit.
\end{thm}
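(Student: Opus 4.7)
The proof would follow Cegrell's scheme adapted to the quaternionic $m$-Hessian setting. The key tools already at our disposal are the H\"older-type energy inequality (Theorem \ref{th6}), the weak convergence for bounded $m$-sh decreasing sequences (Lemma \ref{lem2}), the capacity estimate (Corollary \ref{cor6}), the monotonicity of the energy (Corollary \ref{cor5}), and Lemma \ref{lem4}, which writes any $\chi \in C_0^\infty(\Omega)$ as a difference of two elements of $\mathcal{E}_m^0 \cap C(\overline{\Omega})$. By the last of these, it suffices to show that $\int \chi\, d\mu_j$ converges for every $\chi \in \mathcal{E}_m^0(\Omega) \cap C(\overline{\Omega})$, where $\mu_j := \Delta g_j^{1} \wedge \ldots \wedge \Delta g_j^{m} \wedge \beta^{n-m}$, and that the limit depends only on $u^1,\ldots,u^m$.

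First I would establish a uniform local mass bound: applying Theorem \ref{th6} with auxiliary function $u_{m,V,\Omega}^{\ast}$ for any relatively compact open $V \Subset \Omega$, combined with Corollary \ref{cor6}, yields $\sup_j \mu_j(V) < \infty$, so $(\mu_j)$ is weak-$\ast$ precompact by Banach--Alaoglu. Next I truncate: for $N \in \mathbb{N}$, set $g_{j,N}^k := \max(g_j^k, -N)$ and $u_N^k := \max(u^k, -N)$. For fixed $N$, the $(g_{j,N}^k)_j$ are uniformly bounded $m$-sh functions decreasing to the bounded $m$-sh function $u_N^k$, so Lemma \ref{lem2} (applied with a constant companion function to handle mixed wedge products) gives, as $j \to \infty$,
\begin{equation*}
\mu_{j,N} := \Delta g_{j,N}^{1} \wedge \ldots \wedge \Delta g_{j,N}^{m} \wedge \beta^{n-m} \;\rightharpoonup\; \nu_N := \Delta u_N^{1} \wedge \ldots \wedge \Delta u_N^{m} \wedge \beta^{n-m}.
\end{equation*}

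The main obstacle is controlling the truncation error $\mu_j - \mu_{j,N}$ uniformly in $j$ and showing it vanishes weakly as $N \to \infty$. Expanding telescopically,
\begin{equation*}
\mu_j - \mu_{j,N} = \sum_{k=1}^m \Delta g_{j,N}^{1} \wedge \ldots \wedge \Delta g_{j,N}^{k-1} \wedge (\Delta g_j^k - \Delta g_{j,N}^k) \wedge \Delta g_j^{k+1} \wedge \ldots \wedge \Delta g_j^{m} \wedge \beta^{n-m},
\end{equation*}
and noting that $g_j^k \equiv g_{j,N}^k$ off $\{g_j^k < -N\}$, each term, tested against a non-negative $\chi \in \mathcal{E}_m^0 \cap C(\overline{\Omega})$, is controlled by Theorem \ref{th6} and the Chebyshev-type bound
\begin{equation*}
\int_{\{g_j^k < -N\}} (-g_j^k)^p (\Delta g_j^k)^m \wedge \beta^{n-m} \leq \frac{E_p(g_j^k)}{N^p},
\end{equation*}
which is $O(N^{-p})$ uniformly in $j$. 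Combined with $\mu_{j,N} \rightharpoonup \nu_N$ for each $N$ and the monotone convergence of $\nu_N$ to a measure $\mu$ determined only by the $u^k$, this identifies $\mu_j \rightharpoonup \mu$.

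For the independence claim, given a second admissible sequence $(\tilde g_j^k) \downarrow u^k$ with $\sup_{j,k} E_p(\tilde g_j^k) < \infty$, set $h_j^k := \max(g_j^k, \tilde g_j^k) \in \mathcal{E}_m^0(\Omega)$. Then $h_j^k \downarrow u^k$ and $E_p(h_j^k)$ is uniformly bounded by Corollary \ref{cor5}. Applying the argument above to $(h_j^k)$ shows that the weak limit of its associated measures is again $\mu$; inserting it between $(g_j^k)$ and $(\tilde g_j^k)$ and using monotonicity ($g_j^k, \tilde g_j^k \leq h_j^k$) identifies the three limits, completing the proof.
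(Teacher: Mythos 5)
Your proof is essentially correct but follows a genuinely different route from the paper's. The paper localizes: for a compact $K\Subset\Omega$ it performs a balayage $h_j^k=\sup\{u\in\mathcal{QSH}_m^-(\Omega):u\le g_j^k \mbox{ on } K\}$ to reduce to limits $v^k\in\mathcal{F}_m^p(\Omega)$ agreeing with $u^k$ on $K$, observes via integration by parts that $j\mapsto\int_\Omega h\,\Delta g_j^1\wedge\ldots\wedge\Delta g_j^m\wedge\beta^{n-m}$ is monotone and bounded for $h\in\mathcal{E}_m^0(\Omega)$, and then invokes Proposition \ref{pro13} (the $\mathcal{F}_m$ convergence statement) together with the Cegrell argument for independence. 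You instead truncate in the range, $g_{j,N}^k=\max(g_j^k,-N)$, use Lemma \ref{lem2} for the bounded truncations, and control the truncation error uniformly in $j$ by an $O(N^{-p})$ estimate coming from the uniform energy bound; this has the pleasant side effect that the limit is identified as $\lim_N\nu_N$, which depends only on the $u^k$, so independence of the approximating sequence comes for free and your final sandwich paragraph (whose appeal to ``monotonicity'' of limits of mixed measures is not actually justified) is redundant.

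Two points need repair for your argument to be complete. First, the displayed ``Chebyshev-type bound'' is wrong as written: with the weight $(-g_j^k)^p$ present the left-hand side is simply bounded by $E_p(g_j^k)$ and you gain no factor $N^{-p}$. What you need, and what is true, is the weighted Chebyshev inequality for the \emph{mixed} measures appearing in your telescoping sum, namely
$$\int_{\{g_j^k\le -N\}}\Delta g_{j,N}^{1}\wedge\ldots\wedge\Delta g_{j}^{k}\wedge\ldots\wedge\Delta g_j^{m}\wedge\beta^{n-m}\le \frac{1}{N^{p}}\,E_p\big(g_j^k;g_{j,N}^1,\ldots,g_j^m\big),$$
and the right-hand side is uniformly bounded by Theorem \ref{th6} combined with Corollary \ref{cor5} (which gives $E_p(g_{j,N}^i)\le A\,E_p(g_j^i)$). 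You also need that each term of the telescoping sum is supported in $\{g_j^k\le -N\}$; this follows from locality of the operator on the open set $\{g_j^k<-N\}^{c\,\circ}$ rather than from Corollary \ref{cor8}, which in the paper is proved \emph{after} (and using) this theorem, so be careful not to introduce a circularity there. Second, drop the claim of ``monotone convergence of $\nu_N$'': mixed Hessian measures are not monotone under truncation of the potentials. It is unnecessary, since the uniform bound $\big|\int\chi\,d\mu_j-\int\chi\,d\mu_{j,N}\big|\le C\|\chi\|_\infty N^{-p}$ together with $\mu_{j,N}\rightharpoonup\nu_N$ already shows that $\int\chi\,d\mu_j$ is Cauchy with limit $\lim_N\int\chi\,d\nu_N$.
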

\begin{proof}
 Let $K$ be a compact
subset of $\Omega$. For each $j \in \mathbb{N},\; k = 1,\ldots, m$ consider
$$
h_{j}^{k} := \sup \lbrace u\in \mathcal{QSH}_{m}^{-}(\Omega) \;/\; u \leq g_{j}^{k} \mbox{ on }  K \rbrace.$$
Then by using a standard balayage argument we see that $supp((\Delta h_{j}^{k})^{m}\wedge\beta^{n-m})\subset K$. It follows that $ h_{j}^{k}$
decreases to $v^{k}\in \mathcal{F}_{m}^{p}(\Omega)$. We have also $v^{k} = u^{k}$ on $K.$
Now, fix $h\in \mathcal{E}_{m}^{0}(\Omega)$, then
$$\int_{\Omega}h\Delta g_{j}^{1}\wedge \ldots \wedge \Delta g_{j}^{m} \wedge \beta^{n-m}$$
is decreasing to a finite number. Thus  $\displaystyle{\lim_{j}\int}_{\Omega}h \Delta g_{j}^{1}\wedge \ldots\wedge \Delta g_{j}^{m} \wedge \beta^{n-m} $ exists for every
$h \in \mathcal{E}_{m}^{0}(\Omega).$
 From  Proposition \ref{pro13} follows the weak convergence of the sequence
$\Delta g_{j}^{1}\wedge \ldots \wedge \Delta g_{j}^{m} \wedge \beta^{n-m}.$
To prove the last statement, it is sufficient to repeat the arguments in the proof of \cite[Theorem 4.2]{C4}.
\end{proof}
\begin{cor}\label{cor66}
Let $u^{k} \in \mathcal{E}_{m}^{p}(\Omega),\;k=1,\ldots,m,\;p \geq 1$ and $(u_{j}^{k})_{j} \subset \mathcal{E}_{m}^{0}(\Omega)$ decreases to $u^{k},\;j \longrightarrow + \infty$ such that $$\displaystyle{\sup_{j,k}} E_{p}(u_{j}^{k}) < +\infty.$$ Then,$$  \displaystyle \lim_{j \longrightarrow +\infty} \int_{\Omega}h \Delta u_{1}^{j} \wedge \ldots \wedge u_{m}^{j} \wedge \beta^{n-m}=\int_{\Omega}h \Delta u_{1} \wedge \ldots \wedge u_{m} \wedge \beta^{n-m} ,$$ for each $h \in \mathcal{E}_{m}^{0}(\Omega) \cap C(\Omega). $
\end{cor}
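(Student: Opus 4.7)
The plan is to leverage the weak convergence already provided by Theorem \ref{th7} and extend it from compactly supported continuous test functions to the possibly non-compactly-supported $h\in\mathcal{E}_{m}^{0}(\Omega)\cap C(\Omega)$, following the template by which Proposition \ref{pro13} extends Theorem \ref{th4}. Set $\mu_{j}:=\Delta u_{j}^{1}\wedge\ldots\wedge\Delta u_{j}^{m}\wedge\beta^{n-m}$ and $\mu:=\Delta u^{1}\wedge\ldots\wedge\Delta u^{m}\wedge\beta^{n-m}$, the latter being the weak limit furnished by Theorem \ref{th7}. For $\epsilon>0$, let $h_{\epsilon}:=\max(h,-\epsilon)\in\mathcal{E}_{m}^{0}(\Omega)$; since $h$ extends continuously to $\overline{\Omega}$ with $h=0$ on $\partial\Omega$, the difference $h-h_{\epsilon}$ is supported in the relatively compact set $\{h<-\epsilon\}\Subset\Omega$ and thus belongs to $C_{c}(\Omega)$, so Theorem \ref{th7} gives $\int_{\Omega}(h-h_{\epsilon})\,d\mu_{j}\to\int_{\Omega}(h-h_{\epsilon})\,d\mu$ as $j\to\infty$. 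It therefore suffices to show that the tails $\int_{\Omega}h_{\epsilon}\,d\mu_{j}$ and $\int_{\Omega}h_{\epsilon}\,d\mu$ both vanish as $\epsilon\to 0$, uniformly in $j$.

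For the uniform tail estimate I would apply the H\"{o}lder-type inequality of Theorem \ref{th6} to the mutual $p$-energy, obtaining
\[
\int_{\Omega}(-h_{\epsilon})^{p}\,d\mu_{j}\;\leq\;D_{p}\,E_{p}(h_{\epsilon})^{p/(m+p)}\prod_{k=1}^{m}E_{p}(u_{j}^{k})^{1/(m+p)}\;\leq\;D_{p}\,E_{p}(h_{\epsilon})^{p/(m+p)}\,C^{m/(m+p)},
\]
where $C:=\sup_{j,k}E_{p}(u_{j}^{k})<+\infty$ by hypothesis. Because $0\leq -h_{\epsilon}\leq\epsilon$ and the total mass $\int_{\Omega}(\Delta h_{\epsilon})^{m}\wedge\beta^{n-m}$ stays bounded as $\epsilon\to 0$ (by the comparison principle applied to $h\leq h_{\epsilon}$ with equal vanishing boundary values), we have $E_{p}(h_{\epsilon})=O(\epsilon^{p})$, which kills the right-hand side. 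To pass from this $L^{p}(d\mu_{j})$-bound to the $L^{1}$-bound actually needed, I would note that $\mathrm{supp}\,h_{\epsilon}$ is contained in a fixed compact $K\Subset\Omega$ (extracted from an exhaustion $\rho$ of the $m$-hyperconvex domain $\Omega$), pick $\phi\in\mathcal{E}_{m}^{0}(\Omega)$ with $\phi\leq-1$ on $K$, and again invoke Theorem \ref{th6} to bound $\mu_{j}(K)\leq\int_{\Omega}(-\phi)^{p}\,d\mu_{j}\leq D_{p}E_{p}(\phi)^{p/(m+p)}C^{m/(m+p)}$ uniformly in $j$; H\"{o}lder's inequality then yields $\sup_{j}\int_{\Omega}|h_{\epsilon}|\,d\mu_{j}\to 0$ as $\epsilon\to 0$. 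The same reasoning, applied to $u^{k}\in\mathcal{E}_{m}^{p}(\Omega)$ (whose mixed Monge-Amp\`ere operator is well defined by Theorem \ref{th7}), handles the limiting measure $\mu$, and an $\epsilon/3$ argument closes the proof.

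The substantive obstacle is precisely this uniform tail bound. In the $\mathcal{F}_{m}$-framework of Proposition \ref{pro13}, the hypothesis $\sup_{j}\int_{\Omega}(\Delta g_{j}^{k})^{m}\wedge\beta^{n-m}<+\infty$ together with Corollary \ref{cor4} gives $\sup_{j}\mu_{j}(\Omega)<+\infty$ for free, so the tail is trivially dominated by $\epsilon\sup_{j}\mu_{j}(\Omega)$. In the $\mathcal{E}_{m}^{p}$ setting only the $p$-energies are controlled and the total masses $\mu_{j}(\Omega)$ may genuinely blow up; the role of the H\"{o}lder inequality of Theorem \ref{th6}, combined with the local mass estimate coming from a well-chosen $\phi\in\mathcal{E}_{m}^{0}$, is precisely to substitute for that global mass bound and push the argument through.
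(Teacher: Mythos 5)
Your overall architecture is the one the paper intends (its proof of this corollary simply says to repeat the argument of Proposition \ref{pro13}): split $h=(h-h_{\epsilon})+h_{\epsilon}$, apply the weak convergence of Theorem \ref{th7} to the compactly supported piece $h-h_{\epsilon}$, and control the tail uniformly in $j$. Your estimate via Theorem \ref{th6}, namely $\sup_{j}\int_{\Omega}(-h_{\epsilon})^{p}\,d\mu_{j}\leq D_{p}E_{p}(h_{\epsilon})^{p/(m+p)}C^{m/(m+p)}$ combined with $E_{p}(h_{\epsilon})\leq\epsilon^{p}\int_{\Omega}(\Delta h)^{m}\wedge\beta^{n-m}$, is correct, and you are also right that the uniform total mass bound which makes the tail trivial in Proposition \ref{pro13} is genuinely unavailable here --- that is a real subtlety the paper's one-line proof glosses over. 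For $p=1$ your argument is complete, since the $L^{p}$-bound then \emph{is} the needed $L^{1}$-bound.

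For $p>1$, however, the final step fails. The function $h_{\epsilon}=\max(h,-\epsilon)$ does \emph{not} have compact support in $\Omega$: it is nonzero wherever $h<0$, i.e.\ on all of $\Omega$ (it is $h-h_{\epsilon}$ that is supported in $\{h<-\epsilon\}\Subset\Omega$, not $h_{\epsilon}$). Hence your H\"older step $\int_{\Omega}|h_{\epsilon}|\,d\mu_{j}\leq\bigl(\int_{\Omega}|h_{\epsilon}|^{p}\,d\mu_{j}\bigr)^{1/p}\mu_{j}(\mathrm{supp}\,h_{\epsilon})^{1-1/p}$ would require a uniform bound on $\mu_{j}(\Omega)$ --- precisely the quantity you observed is not controlled in the $\mathcal{E}_{m}^{p}$ setting. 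The local bound $\sup_{j}\mu_{j}(K)<\infty$ for $K\Subset\Omega$ (which you do obtain correctly from Theorem \ref{th6}) does not help: the problematic region is the collar near $\partial\Omega$ where $0<-h<\delta$, and there $(-h)\geq\delta^{1-p}(-h)^{p}$, so the uniform $L^{p}(\mu_{j})$-control is strictly weaker than the $L^{1}(\mu_{j})$-control you need; no pointwise or H\"older manipulation converts one into the other. Thus the crux you yourself isolated --- $\sup_{j}\int_{\{-h<\delta\}}(-h)\,d\mu_{j}\to 0$ --- is not established for $p>1$. Closing it requires a different input, for instance the monotonicity of $j\mapsto\int_{\Omega}(-h)\,d\mu_{j}$ obtained by repeated integration by parts together with an upper bound matching $\int_{\Omega}(-h)\,d\mu$, or the balayage localization used in the proof of Theorem \ref{th7}; the elementary tail estimate of Proposition \ref{pro13} does not transfer as you propose.
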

\begin{proof}
We repeat the same arguments in the proof of Proposition \ref{pro13}.
\end{proof}
\begin{cor}\label{cor7}
If $u^{k} \in \mathcal{E}_{m}^{p}(\Omega),\;k=1,\ldots,m,\;p \geq 1$ and $(g_{j}^{k})_{j} \subset \mathcal{E}_{m}^{p}(\Omega)$ decreases to $u^{k},\;j \longrightarrow + \infty$, Then, the sequence of measures $\Delta g_{j}^{1} \wedge \ldots \wedge \Delta g_{j}^{m}\wedge \beta^{n-m}$ is weakly convergent to $\Delta u^{1}\wedge \ldots\wedge \Delta u^{m}\wedge\beta^{n-m}.$
\end{cor}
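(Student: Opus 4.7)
The strategy is to reduce the statement to Corollary \ref{cor66}, which already handles decreasing sequences from $\mathcal{E}_{m}^{0}(\Omega)$ with uniformly bounded $p$-energies, via a double approximation coupled with a diagonal and monotonization argument.

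First, since $u^{k}\in \mathcal{E}_{m}^{p}(\Omega)$, fix a reference sequence $(h_{l}^{k})_{l}\subset \mathcal{E}_{m}^{0}(\Omega)$ with $h_{l}^{k}\downarrow u^{k}$ and $M:=\sup_{l,k}E_{p}(h_{l}^{k})<+\infty$. Also, for each fixed $j$ and $k$, pick from Definition \ref{def1} a sequence $(g_{j,l}^{k})_{l}\subset \mathcal{E}_{m}^{0}(\Omega)$ with $g_{j,l}^{k}\downarrow g_{j}^{k}$ as $l\to\infty$. Replace $g_{j,l}^{k}$ by $\widetilde{g}_{j,l}^{k}:=\max\{g_{j,l}^{k},\,h_{l}^{k}\}\in \mathcal{E}_{m}^{0}(\Omega)$ (using Proposition \ref{pro1}); this still decreases to $g_{j}^{k}$ since $h_{l}^{k}\downarrow u^{k}\leq g_{j}^{k}$, and satisfies $\widetilde{g}_{j,l}^{k}\geq h_{l}^{k}$. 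Applying Corollary \ref{cor5} (the monotonicity of $E_{p}$) yields the crucial uniform bound
$$E_{p}(\widetilde{g}_{j,l}^{k})\leq A\,E_{p}(h_{l}^{k})\leq A\,M\qquad\text{for all }j,l,k.$$

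Second, for each fixed $j$, Theorem \ref{th7} applied to $g_{j}^{k}\in \mathcal{E}_{m}^{p}(\Omega)$ with defining sequences $(\widetilde g_{j,l}^{k})_{l}$ gives
$$\Delta \widetilde{g}_{j,l}^{1}\wedge \cdots \wedge \Delta \widetilde{g}_{j,l}^{m}\wedge \beta^{n-m}\xrightarrow[l\to\infty]{}\ \Delta g_{j}^{1}\wedge \cdots \wedge \Delta g_{j}^{m}\wedge \beta^{n-m}$$
weakly in the sense of currents. Fix a countable dense family $\{\varphi_{i}\}_{i}\subset C_{0}(\Omega)$ and, for each $j$, choose $l(j)$ so large that for all $i\leq j$ the integrals of $\varphi_{i}$ against the two sides above differ by less than $1/j$, while simultaneously $\widetilde{g}_{j,l(j)}^{k}-g_{j}^{k}\to 0$ in $L^{1}_{\mathrm{loc}}(\Omega)$ fast enough to ensure $\widetilde{g}_{j,l(j)}^{k}\to u^{k}$ almost everywhere as $j\to\infty$.

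Third, Corollary \ref{cor66} requires a \emph{decreasing} sequence in $\mathcal{E}_{m}^{0}$, so monotonize by
$$\overline{g}_{j}^{k}:=\Bigl(\sup_{r\geq j}\widetilde{g}_{r,l(r)}^{k}\Bigr)^{\!*},$$
which is decreasing in $j$, lies in $\mathcal{QSH}_{m}^{-}(\Omega)\cap L^{\infty}_{\mathrm{loc}}(\Omega)$ by Proposition \ref{pro1}(5), and decreases a.e. to $u^{k}$ by the previous step. That $\overline{g}_{j}^{k}\in \mathcal{E}_{m}^{0}(\Omega)$ with $\sup_{j}E_{p}(\overline{g}_{j}^{k})<+\infty$ is the technical crux: it is obtained by comparing $\overline{g}_{j}^{k}$ to a bounded exhaustion of $\Omega$ (using Proposition \ref{pro4}), invoking the comparison principle, and combining the uniform bound of the first paragraph with the H\"older-type inequality of Theorem \ref{th6}.

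Finally, Corollary \ref{cor66} applied to $(\overline{g}_{j}^{k})_{j}\subset \mathcal{E}_{m}^{0}(\Omega)$ yields
$$\int_{\Omega}\varphi\,\Delta\overline{g}_{j}^{1}\wedge\cdots\wedge\Delta\overline{g}_{j}^{m}\wedge\beta^{n-m}\xrightarrow[j\to\infty]{}\int_{\Omega}\varphi\,\Delta u^{1}\wedge\cdots\wedge\Delta u^{m}\wedge\beta^{n-m}$$
for every $\varphi\in \mathcal{E}_{m}^{0}(\Omega)\cap C(\Omega)$; by Lemma \ref{lem4} and linearity this extends to arbitrary $\varphi\in C_{0}^{\infty}(\Omega)$. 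Coupled with the diagonal estimate of the second paragraph (which pins $\widetilde g_{j,l(j)}^{k}$-products to $g_{j}^{k}$-products up to $1/j$) and the trivial inequality $\widetilde g_{j,l(j)}^{k}\leq \overline{g}_{j}^{k}$, one extracts the desired weak convergence
$$\Delta g_{j}^{1}\wedge\cdots\wedge\Delta g_{j}^{m}\wedge \beta^{n-m}\xrightarrow[j\to\infty]{}\Delta u^{1}\wedge\cdots\wedge\Delta u^{m}\wedge \beta^{n-m}.$$

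The main obstacle is the monotonization step: controlling the upper envelope $\overline{g}_{j}^{k}$ simultaneously in the class $\mathcal{E}_{m}^{0}$ and in its $p$-energy requires a delicate combination of the comparison principle, Proposition \ref{pro1}(5), and Theorem \ref{th6}, because the family $\{\widetilde g_{r,l(r)}^{k}:r\geq j\}$ is not itself monotone.
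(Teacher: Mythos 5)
Your reduction has a genuine gap at the final step. You establish (i) that $\Delta\overline{g}_{j}^{1}\wedge\cdots\wedge\Delta\overline{g}_{j}^{m}\wedge\beta^{n-m}$ converges weakly to $\Delta u^{1}\wedge\cdots\wedge\Delta u^{m}\wedge\beta^{n-m}$ via Corollary \ref{cor66}, and (ii) that $\Delta\widetilde{g}_{j,l(j)}^{1}\wedge\cdots\wedge\Delta\widetilde{g}_{j,l(j)}^{m}\wedge\beta^{n-m}$ is within $1/j$ of $\Delta g_{j}^{1}\wedge\cdots\wedge\Delta g_{j}^{m}\wedge\beta^{n-m}$ on finitely many test functions. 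To conclude you still need the diagonal measures to converge to the limit measure, and the only bridge you offer between them and the monotonized ones is the pointwise inequality $\widetilde{g}_{j,l(j)}^{k}\leq\overline{g}_{j}^{k}$. A pointwise ordering of potentials gives no comparison of the associated mixed Hessian measures against a test function in $C_{0}(\Omega)$ (it does not even order the measures as distributions), so weak convergence does not transfer this way. What the ordering does give, via integration by parts (Theorem \ref{th8}, and the unnumbered proposition on monotonicity of total masses), is the reversed monotonicity $\int_{\Omega}(-h)\,\Delta v^{1}\wedge\cdots\wedge\Delta v^{m}\wedge\beta^{n-m}\geq\int_{\Omega}(-h)\,\Delta w^{1}\wedge\cdots\wedge\Delta w^{m}\wedge\beta^{n-m}$ for $h\in\mathcal{E}_{m}^{0}(\Omega)\cap C(\Omega)$ whenever $v^{k}\leq w^{k}$ for all $k$. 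Closing your argument therefore requires a two-sided squeeze that you never perform: the lower bound $\liminf_{j}\int_{\Omega}(-h)\,\Delta\widetilde{g}_{j,l(j)}^{1}\wedge\cdots\wedge\beta^{n-m}\geq\int_{\Omega}(-h)\,\Delta u^{1}\wedge\cdots\wedge\beta^{n-m}$ from $\widetilde{g}_{j,l(j)}^{k}\leq\overline{g}_{j}^{k}$, and the matching upper bound from $\widetilde{g}_{j,l(j)}^{k}\geq h_{l(j)}^{k}$ together with Corollary \ref{cor66} applied to the reference sequence $(h_{l}^{k})_{l}$; only then does Lemma \ref{lem4} convert the $h$-tested convergence into weak convergence, and the $1/j$ estimate finish the proof. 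As written, the conclusion is asserted rather than derived.

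A secondary weakness is the monotonization itself: that $\overline{g}_{j}^{k}=(\sup_{r\geq j}\widetilde{g}_{r,l(r)}^{k})^{*}$ lies in $\mathcal{E}_{m}^{0}(\Omega)$ with $\sup_{j}E_{p}(\overline{g}_{j}^{k})<+\infty$ is the hypothesis Corollary \ref{cor66} actually needs, and your sketch (Proposition \ref{pro4}, ``the comparison principle,'' Theorem \ref{th6}) does not establish it; one needs to pass through finite maxima $\max_{j\leq r\leq N}\widetilde{g}_{r,l(r)}^{k}$, bound their masses and energies by those of $\widetilde{g}_{j,l(j)}^{k}$ via Corollary \ref{cor5} and the comparison of total masses, and then let $N\to\infty$ using Lemma \ref{lem2}. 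Note finally that the paper's route (as in Theorem \ref{th7}) avoids the envelope construction entirely: since every $g_{j}^{k}$ already lies in $\mathcal{E}_{m}^{p}(\Omega)$, integration by parts (Theorem \ref{th8}) makes $j\mapsto\int_{\Omega}h\,\Delta g_{j}^{1}\wedge\cdots\wedge\Delta g_{j}^{m}\wedge\beta^{n-m}$ monotone for each $h\in\mathcal{E}_{m}^{0}(\Omega)$, so the limit exists and is identified by Corollary \ref{cor66}; this is both shorter and free of the difficulties above.
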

\begin{proof}
Similarly as in the proof of Theorem \ref{th7}.
\end{proof}
\begin{thm}\label{th8}
Let $u,v,w_{1},\ldots,w_{m-1}\in \mathcal{E}_{m}^{p}(\Omega)$ and $T=\Delta w_{1}\wedge \ldots \wedge \Delta w_{m-1}\wedge \beta^{n-m}.$ Then $$\int_{\Omega}u \Delta v\wedge T=\int_{\Omega}v \Delta u\wedge T. $$
\end{thm}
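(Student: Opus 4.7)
The plan is to mimic the argument of Theorem \ref{th5} essentially verbatim, replacing the role of $\mathcal{F}_m$–type approximations and Proposition \ref{pro13} with the corresponding $\mathcal{E}_m^p$–type machinery, namely Theorem \ref{th7} and Corollary \ref{cor66}. The scheme is a double approximation: first reduce to the bounded case via Lemma \ref{lem21}, then pass to the limit twice, freezing one argument at a time.

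First, for each of $u, v, w_1, \ldots, w_{m-1} \in \mathcal{E}_m^p(\Omega)$, I would select decreasing sequences $(u_j), (v_j), (w_i^j) \subset \mathcal{E}_m^0(\Omega)\cap C(\overline{\Omega})$ converging pointwise to $u,v,w_i$ respectively, with uniformly bounded $p$-energies
$$\sup_j \max\bigl\{E_p(u_j),\, E_p(v_j),\, E_p(w_i^j)\bigr\} < +\infty.$$
Such continuous approximating sequences exist by combining Theorem \ref{th3} with the definition of $\mathcal{E}_m^p$ (standard truncation/regularization preserves the relevant $p$-energy bound up to a constant, by Corollary \ref{cor5}). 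Set $T_j := \Delta w_1^j \wedge \cdots \wedge \Delta w_{m-1}^j \wedge \beta^{n-m}$. Since each $u_j, v_j, w_i^j$ is bounded and vanishes on $\partial\Omega$, Lemma \ref{lem21} gives the integration by parts identity
$$\int_\Omega u_j\, \Delta v_j \wedge T_j \;=\; \int_\Omega v_j\, \Delta u_j \wedge T_j.$$

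Second, fix $k \in \mathbb{N}$ and consider $j > k$. Since $v_k \geq v_j \geq v$ (all negative) and the positive $m$-current $\Delta u_j \wedge T_j$ is nonnegative, we get
$$\int_\Omega v_k\, \Delta u_k \wedge T_k \;\geq\; \int_\Omega v_k\, \Delta u_j \wedge T_j \;\geq\; \int_\Omega v_j\, \Delta u_j \wedge T_j,$$
where the first inequality uses Lemma \ref{lem21} again together with $u_k \geq u_j$. Hence the real sequence $\bigl(\int_\Omega v_j\, \Delta u_j \wedge T_j\bigr)_j$ is decreasing, converging to some $a \in \mathbb{R}\cup\{-\infty\}$. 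Applying Corollary \ref{cor66} with the continuous test function $v_k \in \mathcal{E}_m^0(\Omega)\cap C(\Omega)$, the weak convergence $\Delta u_j \wedge T_j \to \Delta u \wedge T$ established in Theorem \ref{th7} upgrades to
$$\lim_{j\to\infty}\int_\Omega v_k\, \Delta u_j \wedge T_j \;=\; \int_\Omega v_k\, \Delta u \wedge T,$$
so that $a \leq \int_\Omega v_k\, \Delta u \wedge T$ for every $k$.

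Third, since $-v_k \uparrow -v$ and $\Delta u \wedge T$ is a positive Radon measure, monotone convergence yields $\int_\Omega v_k\, \Delta u \wedge T \to \int_\Omega v\, \Delta u \wedge T$, giving $a \leq \int_\Omega v\, \Delta u \wedge T$. For the opposite inequality, since $v \leq v_k$,
$$\int_\Omega v\, \Delta u \wedge T \;\leq\; \int_\Omega v_k\, \Delta u \wedge T \;=\; \lim_{j\to\infty}\int_\Omega v_k\, \Delta u_j \wedge T_j \;\leq\; \int_\Omega v_k\, \Delta u_k \wedge T_k,$$
and letting $k\to\infty$ forces $\int_\Omega v\, \Delta u \wedge T \leq a$. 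Therefore $a = \int_\Omega v\, \Delta u \wedge T$. The identity $\int_\Omega u_j \Delta v_j \wedge T_j = \int_\Omega v_j \Delta u_j \wedge T_j$ and the symmetric version of the same argument (interchanging the roles of $u$ and $v$) yield $\int_\Omega u\, \Delta v \wedge T = a$ as well, which gives the claim.

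The main technical obstacle is the very first step: ensuring that the approximating sequences can be taken in $\mathcal{E}_m^0(\Omega)\cap C(\overline{\Omega})$ with uniformly bounded $p$-energies, so that Corollary \ref{cor66} actually applies. Once this is done, the remainder of the argument is a mechanical transcription of the proof of Theorem \ref{th5} from $\mathcal{F}_m$ to $\mathcal{E}_m^p$.
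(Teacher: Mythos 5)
Your proposal is correct and follows essentially the same route as the paper, whose proof of this theorem is just a one-line instruction to repeat the monotone double-approximation argument of Theorem \ref{th5}, with Theorem \ref{th7} and Corollary \ref{cor66} supplying the convergence in the $\mathcal{E}_m^p$ setting exactly as you use them. Your version is in fact slightly more careful (e.g., the limit $a$ of the decreasing sequence correctly lies in $\mathbb{R}\cup\{-\infty\}$, and you flag the unaddressed continuity of the approximating sequences, which the paper also glosses over).
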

\begin{proof}
Thanks to Theorem \ref{th7}  the same arguments as in the proof of Proposition \ref{pro13}  can be used here.
\end{proof}
\begin{pro}
Let $u,v\in \mathcal{E}_{m}^{p}(\Omega)$ (or $\mathcal{F}(\Omega)$) such that $u\leq v$ on $\Omega$. Then $$\int_{\Omega} (\Delta u)^{m}\wedge \beta^{n-m} \geq \int_{\Omega} (\Delta v)^{m}\wedge \beta^{n-m}. $$
\end{pro}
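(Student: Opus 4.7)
The approach is to first establish the monotonicity in the bounded class $\mathcal{E}_{m}^{0}(\Omega)$ via the comparison principle combined with a small rescaling trick, and then transfer it to $\mathcal{E}_{m}^{p}(\Omega)$ and $\mathcal{F}_{m}(\Omega)$ by approximation.

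For the bounded case let $u,v\in\mathcal{E}_{m}^{0}(\Omega)$ with $u\leq v$. If $u\equiv 0$ then $v\equiv 0$ and there is nothing to prove, so I assume $u\not\equiv 0$. Since $\Omega$ is connected and $u$ is subharmonic with $u\leq 0$ on $\Omega$ and $u\to 0$ on $\partial\Omega$, the strong maximum principle gives $u<0$ pointwise on $\Omega$. For $\epsilon\in(0,1)$ I set $w_{\epsilon}:=(1+\epsilon)u\in\mathcal{E}_{m}^{0}(\Omega)$. Then $w_{\epsilon}=u+\epsilon u<u\leq v$ on $\Omega$, so $\{w_{\epsilon}<v\}=\Omega$, while $\liminf_{q\to\xi}(w_{\epsilon}(q)-v(q))=0$ for every $\xi\in\partial\Omega$. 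The comparison principle is therefore applicable to the pair $(w_{\epsilon},v)$ and yields
\[
\int_{\Omega}(\Delta v)^{m}\wedge\beta^{n-m}\leq\int_{\Omega}(\Delta w_{\epsilon})^{m}\wedge\beta^{n-m}=(1+\epsilon)^{m}\int_{\Omega}(\Delta u)^{m}\wedge\beta^{n-m}.
\]
Letting $\epsilon\to 0^{+}$ closes the bounded case.

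For the general case, I will choose decreasing sequences $(u_{j}),(v_{j}')\subset\mathcal{E}_{m}^{0}(\Omega)$ furnished by the definitions of $\mathcal{E}_{m}^{p}(\Omega)$ or $\mathcal{F}_{m}(\Omega)$, with $u_{j}\downarrow u$ and $v_{j}'\downarrow v$, and set $v_{j}:=\max(u_{j},v_{j}')$. Then $v_{j}\in\mathcal{E}_{m}^{0}(\Omega)$, $v_{j}\downarrow\max(u,v)=v$, and $u_{j}\leq v_{j}$ for every $j$. The bounded case gives
\[
\int_{\Omega}(\Delta v_{j})^{m}\wedge\beta^{n-m}\leq\int_{\Omega}(\Delta u_{j})^{m}\wedge\beta^{n-m}\qquad\text{for every }j,
\]
and one then passes to the limit using the weak convergence results of Theorems \ref{th4} and \ref{th7}; if $\int_{\Omega}(\Delta u)^{m}\wedge\beta^{n-m}=+\infty$ the inequality is automatic.

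The main obstacle will be this passage to the limit of the total masses, because weak convergence of $(\Delta u_{j})^{m}\wedge\beta^{n-m}$ to $(\Delta u)^{m}\wedge\beta^{n-m}$ on the open set $\Omega$ only provides lower semicontinuity a priori. The key observation circumventing this is that the bounded-case monotonicity, applied to $u_{j+1}\leq u_{j}$ and $v_{j+1}\leq v_{j}$, makes the numerical sequences $\int_{\Omega}(\Delta u_{j})^{m}\wedge\beta^{n-m}$ and $\int_{\Omega}(\Delta v_{j})^{m}\wedge\beta^{n-m}$ monotone non-decreasing in $j$. Combined with testing against continuous functions in $\mathcal{E}_{m}^{0}(\Omega)\cap C(\Omega)$ in the spirit of Proposition \ref{pro13}, this forces the limits of these total masses to coincide with $\int_{\Omega}(\Delta u)^{m}\wedge\beta^{n-m}$ and $\int_{\Omega}(\Delta v)^{m}\wedge\beta^{n-m}$ respectively. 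The maximum-construction $v_{j}=\max(u_{j},v_{j}')$ is the standard device for transporting the order $u\leq v$ through to the bounded approximants.
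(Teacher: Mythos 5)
Your bounded case is correct and is a genuinely different, more elementary argument than the paper's: you get $\int_{\Omega}(\Delta v)^{m}\wedge\beta^{n-m}\leq(1+\epsilon)^{m}\int_{\Omega}(\Delta u)^{m}\wedge\beta^{n-m}$ straight from the comparison principle applied to $(1+\epsilon)u$ and $v$, whereas the paper never compares total masses of bounded approximants at all; it proves the \emph{weighted} inequality $\int_{\Omega}(-h)(\Delta u_{j})^{m}\wedge\beta^{n-m}\geq\int_{\Omega}(-h)(\Delta v_{j})^{m}\wedge\beta^{n-m}$ for a fixed $h\in\mathcal{E}_{m}^{0}(\Omega)\cap C(\Omega)$ by repeated integration by parts. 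Your reduction $v_{j}=\max(u_{j},v_{j}')$ is the same device the paper hides behind ``we can suppose $u_{j}\leq v_{j}$''.

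The genuine gap is the passage to the limit, which you correctly identify as the main obstacle but do not close. On the $v$-side weak convergence on the open set $\Omega$ gives $\int_{\Omega}(\Delta v)^{m}\wedge\beta^{n-m}\leq\liminf_{j}\int_{\Omega}(\Delta v_{j})^{m}\wedge\beta^{n-m}$, which is the direction you need; but on the $u$-side it gives the \emph{same} inequality $\int_{\Omega}(\Delta u)^{m}\wedge\beta^{n-m}\leq\liminf_{j}\int_{\Omega}(\Delta u_{j})^{m}\wedge\beta^{n-m}$, i.e.\ exactly the wrong direction: you must rule out mass escaping to $\partial\Omega$ and prove $\lim_{j}\int_{\Omega}(\Delta u_{j})^{m}\wedge\beta^{n-m}\leq\int_{\Omega}(\Delta u)^{m}\wedge\beta^{n-m}$. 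The monotonicity in $j$ of the numerical sequence does nothing toward this (an increasing sequence of total masses can perfectly well converge to something strictly larger than the mass of the weak limit), and Proposition \ref{pro13}/Corollary \ref{cor66} only give convergence of the weighted quantities $\int_{\Omega}(-h)(\Delta u_{j})^{m}\wedge\beta^{n-m}$ for a fixed continuous $h\in\mathcal{E}_{m}^{0}(\Omega)$, not of the total masses. The paper's order of limits avoids the issue entirely: fix $h$, prove the weighted monotonicity for bounded approximants, let $j\to+\infty$ at fixed $h$ (where Proposition \ref{pro13} genuinely applies), and only then let $h\downarrow-1$ by monotone convergence on both sides. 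Note that your comparison-principle argument cannot feed into this scheme, because the comparison principle produces unweighted set-wise mass estimates, not $h$-weighted ones; so to repair the proof you either have to replace your bounded-case step by the integration-by-parts inequality, or separately prove continuity of total masses along decreasing sequences in $\mathcal{E}_{m}^{0}(\Omega)$ --- which is a nontrivial lemma (its proof again goes through the weighted estimates), and which moreover needs extra care in $\mathcal{E}_{m}^{p}(\Omega)$ where $\sup_{j}\int_{\Omega}(\Delta u_{j})^{m}\wedge\beta^{n-m}$ may be infinite while $\int_{\Omega}(\Delta u)^{m}\wedge\beta^{n-m}$ is finite, a case your ``the inequality is automatic'' remark does not cover.
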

\begin{proof}
Let  $(u_{j}),(v_{j})$  be two sequences in $  \mathcal{E}_{m}^{0}(\Omega)$ decreasing to $u, v$ as in the definition of $\mathcal{E}_{m}^{p}(\Omega)$ . Fix
$h \in \mathcal{E}_{m}^{0}(\Omega)\cap C(\Omega).$
 We can suppose that $u_{j} \leq v_{j},\;\; \forall j$ in $\Omega$. Then integrating by parts we get
$$\int_{\Omega}(-h) (\Delta u_{j})^{m}\wedge \beta^{n-m} \geq \int_{\Omega}(-h) (\Delta v_{j})^{m}\wedge \beta^{n-m}. $$
From Theorem \ref{th7}, Proposition \ref{pro13} and letting $j\longrightarrow +\infty$ follow
$$\int_{\Omega}(-h) (\Delta u)^{m}\wedge \beta^{n-m} \geq \int_{\Omega}(-h) (\Delta v)^{m}\wedge \beta^{n-m}. $$

to get the result it suffices to let  h decreases to $-1$.
\end{proof}
\begin{pro}\label{pro14}
\begin{enumerate}
\item[1)] If $u \in \mathcal{E}_{m}^{1}(\Omega)$, then $\displaystyle{\int}_{\Omega}(-u)(\Delta u)^{m}\wedge \beta^{n-m}< +\infty$.
\item[2)] If $(u_{j})$ is a sequence in $\mathcal{E}_{m}^{0}(\Omega)$ decreasing to $u$, then $$\displaystyle{\int}_{\Omega}(-u_{j})(\Delta u_{j})^{m}\wedge \beta^{n-m} \nearrow \displaystyle{\int}_{\Omega}(-u)(\Delta u)^{m}\wedge \beta^{n-m}.$$
\end{enumerate}
\end{pro}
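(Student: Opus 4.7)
\textbf{Plan for Proposition \ref{pro14}.} The two parts rest on the same four ingredients already established in the paper: monotonicity of $E_1$ under pointwise inequality (Corollary \ref{cor5} with $p=1$, which forces the constant $A=1$); weak convergence of quaternionic Hessian measures along decreasing sequences (Corollary \ref{cor7}); the energy--capacity comparison (Corollary \ref{cor6}); and the Portmanteau direction for lower semicontinuous integrands against weakly converging Radon measures.

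\textbf{For (1)} I would pick a sequence $(u_j)\subset\mathcal{E}_m^0(\Omega)$ witnessing $u\in\mathcal{E}_m^1(\Omega)$, i.e.\ with $u_j\downarrow u$ and $M:=\sup_jE_1(u_j)<+\infty$. Since $u_{j+1}\leq u_j$ in $\mathcal{E}_m^0$, Corollary \ref{cor5} yields $E_1(u_j)\leq E_1(u_{j+1})$, so the sequence increases to some $L\leq M$. By Corollary \ref{cor7}, $\mu_j:=(\Delta u_j)^m\wedge\beta^{n-m}$ converges weakly to $\mu:=(\Delta u)^m\wedge\beta^{n-m}$. For each fixed $k$, $-u_k$ is bounded, nonnegative and lower semicontinuous, so Portmanteau gives
$$\int_\Omega(-u_k)\,d\mu\;\leq\;\liminf_{j\to\infty}\int_\Omega(-u_k)\,d\mu_j.$$
For $j\geq k$ we have $-u_k\leq -u_j$ pointwise, hence $\int(-u_k)\,d\mu_j\leq E_1(u_j)\leq L$, and therefore $\int(-u_k)\,d\mu\leq L$. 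Letting $k\to\infty$, $-u_k\nearrow -u$ and monotone convergence against $\mu$ gives $E_1(u)\leq L<+\infty$.

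\textbf{For (2)} the $\nearrow$ part is again Corollary \ref{cor5}. Set $L:=\lim_jE_1(u_j)$. The Part (1) argument reused verbatim provides $E_1(u)\leq L$. For the opposite inequality, from $u_j\geq u$ one has $E_1(u_j)=\int(-u_j)\,d\mu_j\leq\int(-u)\,d\mu_j$, so it suffices to prove $\int(-u)\,d\mu_j\to E_1(u)$. The bound $\int(-u)\,d\mu\leq\liminf_j\int(-u)\,d\mu_j$ is Portmanteau again, using that Corollary \ref{cor6} makes the masses $\mu_j(\Omega)$ uniformly bounded. The reverse bound $\limsup_j\int(-u)\,d\mu_j\leq\int(-u)\,d\mu$ is obtained through quasicontinuity of $u$ with respect to $C_m$: given $\varepsilon>0$, choose an open $U_\varepsilon$ with $C_m(U_\varepsilon)<\varepsilon$ on whose complement $u$ is continuous; weak convergence of $\mu_j$ on $\Omega\setminus U_\varepsilon$ handles the outside integral, while on $U_\varepsilon$ a truncation $\max(u,-R)$ together with Corollary \ref{cor6} applied to $\int_{U_\varepsilon}d\mu_j$ controls the remainder uniformly in $j$, the error vanishing as first $R\to\infty$ and then $\varepsilon\to 0$. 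The case $L=+\infty$ is dealt with separately: a $\max(u_j,v_k)$-interpolation between $(u_j)$ and any other approximating sequence $(v_k)$ for $u$, combined with Corollary \ref{cor5}, shows that $\sup_j E_1(u_j)$ depends only on $u$, so $L=+\infty$ forces every approximating sequence to have infinite $E_1$-energy, hence $\int(-u)\,d\mu=+\infty$.

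\textbf{Main obstacle.} The genuinely delicate step is the ``usc direction'' $\limsup_j\int(-u)\,d\mu_j\leq\int(-u)\,d\mu$ in Part (2). Since $-u$ is only lower semicontinuous and potentially unbounded on $\{u=-\infty\}$, plain weak convergence of $\mu_j$ is not enough. The key saving fact is that the quaternionic $m$-capacity controls the mass of $\mu_j$ on small sets uniformly via the energy--capacity inequality of Corollary \ref{cor6}, and that $m$-subharmonic functions are quasicontinuous for $C_m$. Once this uniform concentration estimate is in hand, Part (1) falls out as a clean byproduct of the lower-semicontinuity half of the argument.
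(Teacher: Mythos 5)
Your part (1) is correct and follows the paper's own route: weak convergence of $(\Delta u_j)^m\wedge\beta^{n-m}$ along the defining sequence plus lower semicontinuity of $-u_k$ and monotone convergence in $k$; your write-up is in fact more careful than the paper's. The monotonicity claim $E_1(u_j)\nearrow$ via Corollary \ref{cor5} and the reduction of part (2) to the single inequality $\limsup_j\int_\Omega(-u)\,d\mu_j\leq E_1(u)$ (with $\mu_j=(\Delta u_j)^m\wedge\beta^{n-m}$) are also sound, as is your treatment of the degenerate case $L=+\infty$.

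The gap is in the step you yourself flag as the main obstacle, and it is not merely delicate but, as sketched, does not close. After truncating at level $R$, the remainder you must control uniformly in $j$ is $\int_{\{u<-R\}}\bigl((-u)-R\bigr)\,d\mu_j=\int_R^{\infty}\mu_j(\{u<-s\})\,ds$. Corollary \ref{cor6} bounds $\mu_j(\{u<-s\})$ by $M\,C_m(\{u<-s\})^{1/(1+m)}E_1(u_j)^{m/(1+m)}$, and the only capacity decay available from $u\in\mathcal{E}_m^1(\Omega)$ is of order $C_m(\{u<-s\})\lesssim s^{-(m+1)}E_1(u)$ (compare $u/s$ with the competitors in (\ref{18})). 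Raised to the power $1/(1+m)$ this gives $\mu_j(\{u<-s\})\lesssim s^{-1}$, and $\int_R^\infty s^{-1}\,ds$ diverges: the energy--capacity inequality at the $p=1$ level is exactly too weak to give the uniform integrability of $-u$ with respect to the family $\{\mu_j\}$. (Your order of limits is also problematic: for fixed $\varepsilon$ the truncated bound is $R\cdot M\varepsilon^{1/(1+m)}L^{m/(1+m)}$, which blows up as $R\to\infty$.) The paper avoids this entirely: by the symmetry of the mixed pairing (Theorem \ref{th8}, iterated one factor at a time), for a fixed $h\in\mathcal{E}_m^0(\Omega)$ the sequence $j\mapsto\int_\Omega(-h)(\Delta u_j)^m\wedge\beta^{n-m}$ is \emph{nondecreasing}, and by Corollary \ref{cor66} its limit is $\int_\Omega(-h)(\Delta u)^m\wedge\beta^{n-m}$; hence $\int_\Omega(-h)\,d\mu_j\leq\int_\Omega(-h)\,d\mu$ for every $j$. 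Letting $h\downarrow u$ through $\mathcal{E}_m^0(\Omega)$ and using $-u_j\leq-u$ yields $E_1(u_j)\leq\int_\Omega(-u)\,d\mu_j\leq\int_\Omega(-u)\,d\mu=E_1(u)$ directly, with no capacity or quasicontinuity input. You should replace your quasicontinuity argument by this integration-by-parts monotonicity; the rest of your proposal then goes through.
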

\begin{proof}
(1) Let $(u_{j})$ be a sequence in $\mathcal{E}_{m}^{0}(\Omega)$ such that $u_{j}\searrow u$ and $ \displaystyle \sup_{j}\int_{\Omega}(-u_{j})(\Delta u_{j})^{m}\wedge \beta^{n-m} < +\infty $, then
$$\displaystyle \int_{\Omega}(-u)(\Delta u)^{m}\wedge \beta^{n-m} \leq  \displaystyle \liminf _{j\longrightarrow + \infty}\int_{\Omega}(-u)(\Delta u)^{m}\wedge \beta^{n-m}< +\infty .$$
(2) From Theorem \ref{th7} follows $(\Delta u_{j})^{m} \wedge \beta^{n-m} \rightharpoonup (\Delta u )^{m} \wedge \beta^{n-m}.$ Since $(-u_{j}) \nearrow (-u)$ and are lower semi-continuous, then we obtain $$\displaystyle \int_{\Omega}(-u)(\Delta u)^{m}\wedge \beta^{n-m} \leq  \displaystyle \liminf _{j \longrightarrow + \infty}\int_{\Omega}(-u_{j})(\Delta u_{j})^{m}\wedge \beta^{n-m}.$$
Then, it is sufficient to show that for each $j$ we have
$$\displaystyle \int_{\Omega}(-u)(\Delta u_{j})^{m}\wedge \beta^{n-m} \leq \displaystyle \int_{\Omega}(-u)(\Delta u)^{m}\wedge \beta^{n-m}  .$$ Let $h \in \mathcal{E}_{m}^{0}(\Omega)\cap C(\Omega)$ such that $u \leq h$. Since integration by part is allowed in $\mathcal{E}_{m}^{1}(\Omega)$, then the sequence $\displaystyle \Big(\int_{\Omega}(-h)(\Delta u_{j})^{m} \wedge \beta^{n-m}\Big)_{j}$ is increasing and by corollary \ref{cor66} its limit is $\displaystyle \int_{\Omega}(-h)(\Delta u)^{m}\wedge \beta^{n-m}.$
\end{proof}
\begin{lem}{(\cite[proposition 2.21]{Ba})}
 For $u,v \in \mathcal{QSH}_{m}(\Omega) \cap L_{loc}^{\infty}(\Omega)$, we have
 $$(\Delta \max \lbrace u,v \rbrace )^{m}\wedge \beta^{n-m}\geq \chi_{\lbrace u>v \rbrace}(\Delta u)^{m}\wedge \beta^{n-m}+ \chi_{\lbrace u\leq v \rbrace}(\Delta v)^{m}\wedge \beta^{n-m} $$ where $\chi_{A}$ is the characteristic function of a set $A.$
 \end{lem}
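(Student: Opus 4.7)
The plan is to reduce the statement to the already-proved Maximum principle (stated just after Lemma~\ref{lem21}) by perturbing $v$ by a small positive constant $\varepsilon$ so that the sets where $u$ and $v$ are compared become open, and then letting $\varepsilon\searrow 0$. This is the quaternionic incarnation of the standard Bedford--Taylor trick.

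Write $w:=\max\{u,v\}$ and, for $\varepsilon>0$, $w_\varepsilon:=\max\{u,v+\varepsilon\}$; both are locally bounded and $m$-subharmonic by Proposition~\ref{pro1}. Applying the maximum principle to the pair $(u,v+\varepsilon)$, and using that $\Delta(v+\varepsilon)=\Delta v$ (since $\Delta$ annihilates constants), I obtain
$$\chi_{\{u>v+\varepsilon\}}(\Delta w_\varepsilon)^m\wedge\beta^{n-m}=\chi_{\{u>v+\varepsilon\}}(\Delta u)^m\wedge\beta^{n-m},$$
$$\chi_{\{u<v+\varepsilon\}}(\Delta w_\varepsilon)^m\wedge\beta^{n-m}=\chi_{\{u<v+\varepsilon\}}(\Delta v)^m\wedge\beta^{n-m}.$$
Since $(\Delta w_\varepsilon)^m\wedge\beta^{n-m}$ is a positive Radon measure and the two sets are disjoint, adding these two equalities and then using the inclusion $\{u\le v\}\subset\{u<v+\varepsilon\}$ yields
$$(\Delta w_\varepsilon)^m\wedge\beta^{n-m}\;\ge\; \chi_{\{u>v+\varepsilon\}}(\Delta u)^m\wedge\beta^{n-m}+\chi_{\{u\le v\}}(\Delta v)^m\wedge\beta^{n-m}.$$

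I now let $\varepsilon\searrow 0$. The functions $w_\varepsilon$ decrease pointwise to $w$, so Lemma~\ref{lem2}(1), applied with the constant function $1$ in the $v^0$-slot, gives the weak convergence $(\Delta w_\varepsilon)^m\wedge\beta^{n-m}\rightharpoonup (\Delta w)^m\wedge\beta^{n-m}$. On the right-hand side, $\{u>v+\varepsilon\}\nearrow\{u>v\}$ as $\varepsilon\searrow 0$, so monotone convergence applied against any nonnegative $\varphi\in C_c(\Omega)$ takes care of the first term; the second term is independent of $\varepsilon$. Testing the displayed inequality against arbitrary nonnegative $\varphi\in C_c(\Omega)$ and passing to the limit produces the claimed inequality of measures.

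The main subtle point is precisely why the $\varepsilon$-perturbation is needed: on the non-open set $\{u\le v\}$ one cannot use locality of the Hessian directly (two $m$-subharmonic functions coinciding on a merely Borel set need not share the same Hessian measure there). Replacing $v$ by $v+\varepsilon$ separates the two regimes by open sets, on which the maximum principle applies. Once this is done, the only remaining ingredient is weak continuity of the Hessian along decreasing sequences of locally bounded $m$-subharmonic functions, which is provided by Lemma~\ref{lem2}(1).
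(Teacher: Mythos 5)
Your argument is correct. Note, however, that the paper does not prove this lemma at all: it is quoted verbatim from \cite[Proposition 2.21]{Ba}, so there is no internal proof to compare against. What you give is the standard Bedford--Taylor/Demailly derivation: apply the maximum principle to the pair $(u,v+\varepsilon)$ in both orders, use that the two sets $\{u>v+\varepsilon\}$ and $\{u<v+\varepsilon\}$ are disjoint and that $\{u\le v\}\subset\{u<v+\varepsilon\}$, and then pass to the limit $\varepsilon\searrow 0$ using the weak continuity of $(\Delta\cdot)^m\wedge\beta^{n-m}$ along decreasing sequences (Lemma \ref{lem2}) on the left and monotone convergence on the right. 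Each step is justified by results already recalled in the paper ($\max\{u,v+\varepsilon\}$ is $m$-sh and locally bounded by Proposition \ref{pro1}, constants are annihilated by $\Delta$, and an inequality of positive Radon measures follows from testing against nonnegative functions in $C_c(\Omega)$). The only presentational caveat is that Lemma \ref{lem2} is stated for sequences, so you should fix a sequence $\varepsilon_j\searrow 0$; and the limit on the right is cleanest if you first freeze $\varepsilon'$, let $\varepsilon\to 0$ in $(\Delta w_\varepsilon)^m\wedge\beta^{n-m}\ge\nu_{\varepsilon'}$, and only then let $\varepsilon'\to 0$. With that ordering made explicit, the proof is complete.
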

 \begin{thm}\label{th9}
 Let $u,u_{1},\ldots,u_{m-1} \in \mathcal{F}_{m}(\Omega)$ (or ($\mathcal{E}_{m}^{p}(\Omega)$),  $v \in \mathcal{QSH}_{m}^{-}(\Omega)$ and $T=\Delta u_{1}\wedge \ldots \wedge \Delta u_{m-1}\wedge \beta^{n-m}.$
 Then $$\Delta \max \lbrace u,v \rbrace \wedge T \vert_{\lbrace u>v \rbrace}=\Delta u\wedge T\vert_{\lbrace u> v \rbrace}. $$
 \end{thm}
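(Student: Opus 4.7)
The plan is to prove the identity in two stages: first reducing to bounded $m$-subharmonic functions by truncation at depth $-k$, and then handling the bounded case by smooth approximation, exploiting the fact that for continuous $m$-subharmonic functions the set $\{u>v\}$ becomes open.

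For the reduction, set $u^{(k)}:=\max\{u,-k\}$, $v^{(k)}:=\max\{v,-k\}$, and $u_i^{(k)}:=\max\{u_i,-k\}$. These are bounded $m$-subharmonic functions decreasing pointwise to $u$, $v$, $u_i$ as $k\to\infty$, and they remain in $\mathcal{F}_m(\Omega)$ (resp.\ $\mathcal{E}_m^p(\Omega)$) by the max-stability property listed in Definition \ref{def2}. A direct case analysis on the four configurations of $u(x),v(x)$ relative to $-k$ gives
\[
\{u^{(k)}>v^{(k)}\}\;\nearrow\;\{u>v\}\quad\text{as }k\to\infty,
\]
while Corollary \ref{cor7} yields the weak convergence of the associated Hessian currents. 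It therefore suffices to prove the identity under the additional assumption that $u,v,u_1,\ldots,u_{m-1}$ are all locally bounded.

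In the bounded case, convolve with a smooth mollifier on a slightly shrunken subdomain to obtain decreasing sequences of smooth $m$-subharmonic functions $u_j\searrow u$, $v_j\searrow v$, $u_i^j\searrow u_i$. For each $j$, the set $U_j:=\{u_j>v_j\}$ is open by continuity, and on $U_j$ one has $\max\{u_j,v_j\}=u_j$; since $\Delta$ is a local differential operator on $C^2$-functions, the smooth $(2n-2)$-currents $\Delta\max\{u_j,v_j\}\wedge T_j$ and $\Delta u_j\wedge T_j$ coincide pointwise on $U_j$, where $T_j:=\Delta u_1^j\wedge\cdots\wedge\Delta u_{m-1}^j\wedge\beta^{n-m}$. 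By Lemma \ref{lem2}(1) the two currents converge weakly as $j\to\infty$ to $\Delta\max\{u,v\}\wedge T$ and $\Delta u\wedge T$ respectively, and the pointwise monotone convergences $u_j(x)\searrow u(x)$ and $v_j(x)\searrow v(x)$ yield the set identity
\[
\{u>v\}\;=\;\bigcup_{N\geq 1}\bigcap_{j\geq N}U_j,
\]
from which equality of the restricted measures on the Borel set $\{u>v\}$ follows.

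The main obstacle lies in this last limiting step: because $u$ and $v$ are merely upper semicontinuous, the set $\{u>v\}$ is Borel but typically \emph{not} open, so one cannot simply test the weak convergence of the Hessian currents against a smooth function supported in $\{u>v\}$. The remedy is precisely the monotone approach by the open sets $U_j$, combined with a Portmanteau-type argument and the fact (Proposition \ref{pro9}, extended to the Cegrell classes) that Hessian measures do not charge $m$-polar sets, which controls the boundary-in-measure between $U_j$ and $\{u>v\}$.
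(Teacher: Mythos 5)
Your overall strategy (approximate so that the relevant set becomes open, then pass to the limit) is the same as the paper's, but the limiting step --- which you yourself flag as ``the main obstacle'' --- is exactly where your argument does not close, and you never supply the missing device. Two concrete problems. First, the set identity $\{u>v\}=\bigcup_{N}\bigcap_{j\geq N}U_j$ is false: only the inclusion ``$\subseteq$'' holds, since a point with $u_j(x)>v_j(x)$ for all large $j$ may perfectly well satisfy $u(x)=v(x)$ in the limit (both sequences decrease). Second, and more seriously, weak convergence of the two sequences of currents together with their termwise equality on the open sets $U_j$ does not yield equality of the limit measures on $\{u>v\}$, nor even on a fixed open $O$ contained in $\liminf U_j$: the Portmanteau inequality gives $\mu(O)\leq\liminf_j\mu_j(O)$ for open sets but not the reverse, so one obtains $\mu(O)\leq\liminf_j\nu_j(O)$ and is stuck. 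The same objection applies to your preliminary reduction, where you restrict measures obtained as weak limits via Corollary \ref{cor7} to the increasing Borel sets $\{u^{(k)}>v^{(k)}\}$: restriction to a Borel (or even open) set is simply not continuous under weak convergence, and invoking ``a Portmanteau-type argument'' together with Proposition \ref{pro9} does not repair this.

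The paper closes this gap with a localization weight. It first treats $v\equiv b$ constant: taking continuous $u^j\searrow u$, $u_k^j\searrow u_k$ from Theorem \ref{th3}, the identity holds on the open set $\{u^j>b\}\supset\{u>b\}$, and one then multiplies both currents by the bounded function $\max(u-b,0)$, which vanishes exactly off $\{u>b\}$. The two weighted sequences $\max(u-b,0)\,\Delta\max(u^j,b)\wedge T^j$ and $\max(u-b,0)\,\Delta u^j\wedge T^j$ therefore coincide as measures on all of $\Omega$, and the convergence results for \emph{weighted} Hessian currents (Proposition \ref{pro13}, Theorem \ref{th7}) apply to each sequence separately, so the weak limits coincide; this gives $\max(u-b,0)\bigl(\Delta\max(u,b)-\Delta u\bigr)\wedge T=0$ and hence the claim on $\{u>b\}$, where the weight is strictly positive. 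The general $v$ is then reduced to the constant case following Van Khue--Hiep. To salvage your write-up you would need to introduce such a weight (or an equivalent convergence-in-capacity argument); as written, the proof has a genuine gap at both limit passages.
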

 \begin{proof}
 First we prove the result in the case where $v\equiv b$ with some constant $b$. By Theorem \ref{th3} there exist $(u^{j})_{j}, (u_{k}^{j})_{j} \in \mathcal{E}_{m}^{0}(\Omega) \cap C(\overline{\Omega})$ such that $u^{j} \searrow u$ and $u_{k}^{j}\searrow u_{k}$ for each $k=1,\ldots,m-1$. Since $\{u^{j}>b \}$ is open, then $$\Delta \max \lbrace u^{j},b \rbrace \wedge T^{j} \vert_{\lbrace u^{j}>b \rbrace}=\Delta u^{j}\wedge T^{j}\vert_{\lbrace u^{j}> b \rbrace}. $$ where $T^{j}=\Delta u_{1}^{j}\wedge \ldots \wedge \Delta u_{m-1}^{j}\wedge \beta^{n-m}.$
 Because of $\{u>b \} \subset \{u^{j}> b\}$ we obtain that $$\Delta \max \lbrace u^{j},b \rbrace \wedge T^{j} \vert_{\lbrace u >b \rbrace}=\Delta u^{j}\wedge T^{j}\vert_{\lbrace u > b \rbrace}. $$
 Letting $j\longrightarrow +\infty,$ by Proposition \ref{pro13} and Theorem \ref{th7} follow
 $$\max(u-b,0)\Delta(\max(u^{j},b))\wedge T^{j}\longrightarrow \max(u-b,0)\Delta(\max(u,b))\wedge T$$
   $$\max(u-b,0)\Delta u^{j}\wedge T^{j}\longrightarrow \max(u-b,0)\Delta u \wedge T.$$
   Therefore,
    $$\max(u-b,0)\Delta  ( \max(u-b,0)) - \Delta u ] \wedge T=0.$$
   Which implies that $$\Delta  ( \max(u-b,0)) \wedge T = \Delta u  \wedge T \mbox{ on the set } \{u> b\}.$$
    For the general case, we repeat the same arguments in \cite[Theorem 4.1]{Van}.
\end{proof}
 \begin{cor}\label{cor8}
 For $p\geq 1$ and $u,v \in \mathcal{E}_{m}^{p}(\Omega)$ we have 
  $$\chi_{\lbrace u>v \rbrace}(\Delta \max \lbrace u,v \rbrace )^{m}\wedge \beta^{n-m} = \chi_{\lbrace u>v \rbrace}(\Delta u)^{m}\wedge \beta^{n-m}. $$ 
 \end{cor}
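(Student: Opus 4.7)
The plan is to iterate Theorem \ref{th9} exactly $m$ times, at each stage peeling off one factor of $\Delta\max\{u,v\}$ and replacing it by $\Delta u$ on the set $\{u>v\}$. Setting $w := \max\{u,v\}$, I would first verify that $w \in \mathcal{E}_m^p(\Omega)$: this is property (1) of Definition \ref{def2}, which holds for $\mathcal{E}_m^p(\Omega)$ by the corollary immediately following that definition. Consequently, every mixed Hessian product formed from $u$ and $w$ (in any multiplicities) is a well-defined positive Radon measure by Theorem \ref{th7}, so all the currents written below make sense.

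Next I would prove by induction on $k \in \{0,1,\dots,m\}$ the identity
\begin{equation*}
\chi_{\{u>v\}}\,(\Delta w)^{m}\wedge\beta^{n-m}
\;=\;
\chi_{\{u>v\}}\,(\Delta u)^{k}\wedge(\Delta w)^{m-k}\wedge\beta^{n-m}.
\end{equation*}
The base case $k=0$ is tautological. For the inductive step, assuming the identity at level $k<m$, I would write $(\Delta w)^{m-k}=\Delta w\wedge(\Delta w)^{m-k-1}$ and apply Theorem \ref{th9} with the current
\begin{equation*}
T \;=\; (\Delta u)^{k}\wedge(\Delta w)^{m-k-1}\wedge\beta^{n-m},
\end{equation*}
obtained by taking $u_1=\dots=u_k=u$ and $u_{k+1}=\dots=u_{m-1}=w$, all lying in $\mathcal{E}_m^p(\Omega)$. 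Theorem \ref{th9} then gives $\chi_{\{u>v\}}\,\Delta w\wedge T = \chi_{\{u>v\}}\,\Delta u\wedge T$, which upgrades the identity from level $k$ to level $k+1$. Taking $k=m$ is exactly the conclusion of the corollary.

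The main obstacle is purely bookkeeping: at each step one must check that the hypotheses of Theorem \ref{th9} are met, namely that the $m-1$ functions forming $T$ belong to $\mathcal{E}_m^p(\Omega)$ and that $v\in\mathcal{QSH}_m^-(\Omega)$, and that the intermediate currents are well-defined. Both points are settled at the outset: stability of $\mathcal{E}_m^p(\Omega)$ under $\max$ ensures $w\in\mathcal{E}_m^p(\Omega)$, and Theorem \ref{th7} handles the definition of the mixed Hessian products. No further estimates are required, so the argument reduces to this finite iteration.
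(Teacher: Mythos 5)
Your proposal is correct and is essentially the argument the paper intends: the corollary follows from Theorem \ref{th9} by exactly the iteration you describe, replacing one factor of $\Delta\max\{u,v\}$ by $\Delta u$ on $\{u>v\}$ at each of $m$ steps, with the stability of $\mathcal{E}_{m}^{p}(\Omega)$ under $\max$ and Theorem \ref{th7} guaranteeing that all intermediate mixed currents are well defined.
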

 
\section{The variational approach}\label{sec3}
\subsection{The energy functional}
\begin{defi}
\begin{enumerate}
\item[1)] For a positive Radon measure $\mu$ in $\Omega$, the energy functional $\mathcal{F}_{\mu}
:\mathcal{E}_{m}^{1}(\Omega)\longrightarrow \mathbb{R}$ is defined by
$$\mathcal{F}_{\mu}(u)=\dfrac{1}{m+1}E(u)+L_{\mu}(u),$$ where $L_{\mu}(u)=\displaystyle \int_{\Omega} ud \mu$ and $E(u)$ is the quaternionic energy of $u \in \mathcal{E}_{m}^{0}(\Omega).$
\item[2)]  We say $\mathcal{F}_{\mu}$ is proper if $\mathcal{F}_{\mu}\longrightarrow +\infty$ whenever $E\longrightarrow +\infty$.
\end{enumerate}
\end{defi}
\begin{defi}
We say that a positive measure $\mu$ belong to $\mathcal{M}_{p}$ if there exists a constant $A> 0$ such that
$$\int_{\Omega} (-u)^{p}d \mu\leq AE_{p}(u)^{\frac{p}{m+p}},\;\;\forall u \in \mathcal{E}_{m}^{p}(\Omega)$$
\end{defi}
The following remark will be proved as the complex case in \cite{C3}.
\begin{rem}
$\mu \in \mathcal{M}_{p}$ if and only if $\mathcal{E}_{m}^{p}(\Omega)\subset L^{p}(\Omega,\mu).$
\end{rem}
\begin{pro}\label{pro15}
\begin{enumerate}
\item[1)] If $(u_{j})\subset \mathcal{E}_{m}^{1}(\Omega)$ such that $\sup_{j}E(u_{j})< +\infty,$ then $(\sup_{j}u_{j})^{*}\in \mathcal{E}_{m}^{1}(\Omega).$
\item[2)] If $(u_{j})\subset \mathcal{E}_{m}^{1}(\Omega)$ such that $\sup_{j}E(u_{j})< +\infty,$ and $u_{j}\longrightarrow u$, then $u \in \mathcal{E}_{m}^{1}(\Omega).$
\item[3)] The functional $ E:  \mathcal{E}_{m}^{1}(\Omega)\longrightarrow \mathbb{R}$ is lower semi-continuous.
\item[4)] If $u, v \in \mathcal{E}_{m}^{1}(\Omega),$ then
$$ E(u+v)^{\frac{1}{m+1}} \leq E(u)^{\frac{1}{m+1}}+E(v)^{\frac{1}{m+1}}.$$
Moreover, if $\mu \in \mathcal{M}_{1}$, then $\mathcal{F}_{\mu}$ is proper and convex.
\item[5)] If $\mu \in \mathcal{M}_{1},\;u \in \mathcal{E}_{m}^{1}(\Omega)$ and $u_{j} \in \mathcal{E}_{m}^{0}(\Omega)$ such that $u_{j}\searrow u$, then $\displaystyle{\lim_{j\longrightarrow+\infty}}\mathcal{F}_{\mu}(u_{j})=\mathcal{F}_{\mu}(u).$
\item[6)]  If $u, v \in \mathcal{E}_{m}^{1}(\Omega),$ then
$$\int_{\lbrace u>v\rbrace}(\Delta u)^{m}\wedge \beta^{n-m}\leq \int_{\lbrace u>v\rbrace}(\Delta v)^{m}\wedge \beta^{n-m}.$$
\end{enumerate}
\end{pro}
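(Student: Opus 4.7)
My plan is to tackle (4) first, since its subadditivity is the structural tool needed for several of the other parts, and then address the remaining items.

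For (4), I expand $(\Delta(u+v))^m$ by the binomial theorem and split $-(u+v) = (-u) + (-v)$. Applying Theorem \ref{th6} with $p = 1$ (for which $D_1 = 1$) bounds each mixed integral $\int(-u)(\Delta u)^k\wedge(\Delta v)^{m-k}\wedge\beta^{n-m}$ by $E(u)^{(k+1)/(m+1)}E(v)^{(m-k)/(m+1)}$, with a symmetric estimate for the $(-v)$ pieces. Setting $a = E(u)^{1/(m+1)}$ and $b = E(v)^{1/(m+1)}$ and invoking Pascal's identity collapses the weighted sum to $(a + b)^{m+1}$, yielding $E(u + v)^{1/(m+1)} \leq a + b$. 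Properness of $\mathcal{F}_\mu$ for $\mu \in \mathcal{M}_1$ follows from the bound $L_\mu(u) \geq -A\,E(u)^{1/(m+1)}$ together with $\tfrac{1}{m+1}E(u) - A\,E(u)^{1/(m+1)} \to +\infty$ as $E(u) \to +\infty$. Convexity of $\mathcal{F}_\mu$ combines subadditivity, the homogeneity $E(\lambda u) = \lambda^{m+1}E(u)$, the convexity of $t \mapsto t^{m+1}$ on $[0,\infty)$, and the affinity of $L_\mu$.

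For (1), I first approximate each $u_j$ by a decreasing sequence $v_j^k \in \mathcal{E}_m^0$ with $v_j^k \downarrow u_j$ and $\sup_k E(v_j^k) < +\infty$. Forming $w_N^k := \max(v_1^k, \ldots, v_N^k) \in \mathcal{E}_m^0$, the $p = 1$ case of Corollary \ref{cor5} gives $E(w_N^k) \leq \min_{j \leq N} E(v_j^k) \leq C$ uniformly in $N, k$. Now apply Theorem \ref{th3} to $u := (\sup_j u_j)^*$ to obtain a decreasing sequence $\tilde u_n \in \mathcal{E}_m^0 \cap C(\overline\Omega)$ with $\tilde u_n \downarrow u$. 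To bound $E(\tilde u_n)$ uniformly, consider $W_N^k := \max(\tilde u_n, w_N^k) \in \mathcal{E}_m^0$, which decreases to $\tilde u_n$ as $k \to \infty$ since $\tilde u_n \geq u \geq w_N = \lim_k w_N^k$. By Corollary \ref{cor5} again, $E(W_N^k) \leq E(w_N^k) \leq C$, and Proposition \ref{pro14}(2) then yields $E(\tilde u_n) = \lim_k E(W_N^k) \leq C$, proving $u \in \mathcal{E}_m^1$. Part (2) reduces to (1) via $\lim_j u_j \leq (\sup_j u_j)^*$ and the analogous energy comparison. Part (3), the lower semicontinuity of $E$, follows from weak convergence $(\Delta u_j)^m \wedge \beta^{n-m} \rightharpoonup (\Delta u)^m \wedge \beta^{n-m}$ (Corollary \ref{cor7}) combined with the lower semicontinuity of the functions $-u_j$.

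Part (5) is immediate: Proposition \ref{pro14}(2) gives $E(u_j) \nearrow E(u)$, while $\mu \in \mathcal{M}_1$ forces $-u \in L^1(\Omega, \mu)$, so monotone convergence yields $L_\mu(u_j) \to L_\mu(u)$. For (6), I approximate $u, v$ by decreasing sequences $u_j, v_j \in \mathcal{E}_m^0$; since the approximants vanish on $\partial\Omega$, the comparison principle (applied with the roles of $u$ and $v$ swapped) gives $\int_{\{u_j > v_j\}} (\Delta u_j)^m \wedge \beta^{n-m} \leq \int_{\{u_j > v_j\}} (\Delta v_j)^m \wedge \beta^{n-m}$. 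The main obstacle is the limiting step, since the level sets $\{u_j > v_j\}$ vary with $j$ and the Monge--Amp\`ere measures of $\mathcal{E}_m^1$ functions need not have finite total mass on $\Omega$; I would handle this by first testing against a compactly supported continuous function, using Corollary \ref{cor8} together with the max lemma preceding Theorem \ref{th9} to localize the coincidence on $\{u > v\}$, and then exhausting.
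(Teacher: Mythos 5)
Your argument for part (3) has a genuine gap. You invoke Corollary \ref{cor7} to obtain $(\Delta u_{j})^{m}\wedge\beta^{n-m}\rightharpoonup(\Delta u)^{m}\wedge\beta^{n-m}$, but that corollary --- like every convergence result for the Hessian operator in Section \ref{sec2} --- requires the sequence to be \emph{decreasing}; for a general sequence converging only in $L^{1}_{loc}(\Omega)$ the Hessian measures need not converge to the Hessian measure of the limit, and lower semicontinuity of $E$ is precisely a statement about such general sequences. Moreover, even granting weak convergence, ``combined with the lower semicontinuity of the functions $-u_{j}$'' is not a valid limiting step, because the integrand and the measure vary with $j$ simultaneously. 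The paper's route avoids both problems: set $\varphi_{j}:=(\sup_{k\geq j}u_{k})^{*}$, which lies in $\mathcal{E}_{m}^{1}(\Omega)$ by part (1) and decreases to $u$; since $u_{j}\leq\varphi_{j}$, the monotonicity of the energy (Corollary \ref{cor5} with $p=1$) gives $E(u_{j})\geq E(\varphi_{j})$, and Proposition \ref{pro14}(2) gives $E(\varphi_{j})\uparrow E(u)$, whence $\liminf_{j}E(u_{j})\geq E(u)$. You should replace your argument for (3) by this envelope argument; it also makes precise your one-line reduction of (2) to (1), which as written (``the analogous energy comparison'') is too terse to check.

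The remaining parts are essentially correct, with some differences from the paper worth noting. For (4), your binomial expansion of $(\Delta(u+v))^{m}$ with Theorem \ref{th6} applied term by term and Pascal's identity is a valid alternative; the paper is slicker, applying Theorem \ref{th6} directly to get $\int_{\Omega}(-u)(\Delta(u+v))^{m}\wedge\beta^{n-m}\leq E(u)^{\frac{1}{m+1}}E(u+v)^{\frac{m}{m+1}}$ (and likewise for $v$), then dividing by $E(u+v)^{\frac{m}{m+1}}$. Your derivations of properness and convexity match the paper's. Part (1) is a more elaborate but correct version of the paper's (rather terse) argument, and (5) coincides with it. For (6), your initial plan --- approximating both $u$ and $v$ by $\mathcal{E}_{m}^{0}$ functions and applying the comparison principle to the approximants --- runs into exactly the difficulty you identify, but your fallback is the paper's actual proof: test against $-h$ with $h\in\mathcal{E}_{m}^{0}(\Omega)\cap C(\Omega)$, use Corollary \ref{cor8} (which applies directly to $u,v\in\mathcal{E}_{m}^{1}(\Omega)$, so no approximation of $u,v$ is needed and the varying level sets never arise), exploit the monotonicity $\int_{\Omega}(-h)(\Delta\max\{u,v\})^{m}\wedge\beta^{n-m}\leq\int_{\Omega}(-h)(\Delta v)^{m}\wedge\beta^{n-m}$, and finally let $h\searrow-1$. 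I recommend writing (6) out in that form rather than leaving it as a sketch.
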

\begin{proof}
\begin{enumerate}
\item[1)]  Let $(\varphi_{j})$ be a  sequence in $\mathcal{E}_{m}^{0}(\Omega) \cap C(\Omega)$ decreasing to  $u=(\sup_{j}u_{j})^{*}$. Since $u_{j} \leq \varphi_{j}$ and $\sup_{j}E(u_{j})< +\infty$ then  $\sup_{j}E(\varphi_{j})< +\infty.$ Hence $u \in \mathcal{E}_{m}^{1}(\Omega)$.
\item[2)]  Let $(\varphi_{j})$ be a  sequence in $\mathcal{E}_{m}^{0}(\Omega) \cap C(\Omega)$ decreasing to  $u$. Let denote  $ \psi_{j}:=\max \lbrace u_{j},\varphi_{j}\rbrace$. Then $\psi_{j} \in \mathcal{E}_{m}^{0}(\Omega)$ and $E(\psi_{j})\leq E(u_{j})$, which implies that $u \in \mathcal{E}_{m}^{1}(\Omega)$.
\item[3)]Suppose that $u, u_{j} \in \mathcal{E}_{m}^{1}(\Omega)$ such that $ u_{j}$ converges to $u \in L_{loc}^{1}(\Omega)$. For each $j \in \mathbb{N}$, the function
$\varphi_{j} := (\sup_{k\geq j}u_{k})^{*}$
is in $\mathcal{E}_{m}^{1}(\Omega)$ and $\varphi_{j} \downarrow u$. Hence $E(\varphi_{j}) \uparrow E(u)$. From $E(u_{j}) \geq E(\varphi_{j})$ follows $\lim \inf_{j}E(u_{j})\geq E(u)$.
\item[4)]It follows from Theorem \ref{th6} that

$$\begin{array}{ll}
E(u+v)&=\displaystyle{\int}_{\Omega}(-u)(\Delta(u+v)^{m}\wedge \beta^{n-m}+\displaystyle{\int}_{\Omega}(-v)(\Delta(u+v)^{m}\wedge \beta^{n-m}\\
&\leq E(u)^{\frac{1}{m+1}}E(u+v)^{\frac{m}{m+1}}+E(v)^{\frac{1}{m+1}}E(u+v)^{\frac{m}{m+1}}
\end{array}$$
which implies that $E^{\frac{1}{m+1}}$
 is convex since it is homogeneous of degree $1$. So, $E$ is also convex. If $\mu$ belongs to
$\mathcal{M}_{1}$,  there exists $ A > 0$ such that
 $\displaystyle{\int}_{\Omega} (-u)d \mu\leq AE(u)^{\frac{1}{m+1}}$, for every $u \in \mathcal{E}_{m}^{1}(\Omega)$.
Then we get
$\mathcal{F}_{\mu}(u)\geq \dfrac{1}{m+1}E(u)-AE(u)^{\frac{1}{m+1}}\longrightarrow +\infty .$
\item[5)] Let $u \in \mathcal{E}_{m}^{1}(\Omega)$ and $u_{j} \in \mathcal{E}_{m}^{0}(\Omega)$ such that $u_{j}\searrow u$,  then  by Proposition \ref{pro14} follows $E(u_{j}) \nearrow E(u)$. Applying  the monotone convergence
theorem and the fact that $\mu\in \mathcal{M}_{1}$ we get the result.
\item[6)] let $h \in \mathcal{E}_{m}^{0}(\Omega) \cap C(\Omega)$ and $u \in \mathcal{E}_{m}^{1}(\Omega)$. then  we have $$\displaystyle{\int}_{\Omega}(-h)(\Delta \max \lbrace u,v \rbrace)^{m}\wedge \beta^{n-m}\leq \displaystyle{\int}_{\Omega}(-h)(\Delta u)^{m}\wedge \beta^{n-m},$$
 since $$\displaystyle{\int}_{\Omega}(-h)(\Delta u)^{m}\wedge \beta^{n-m}\leq E(h)^{\frac{1}{m+1}}E(u)^{\frac{m}{m+1}}<+\infty $$
Then, it follows from Corollary \ref{cor8}  that
$$\begin{array}{ll}
\displaystyle{\int}_{\lbrace u> v\rbrace}(-h)(\Delta u)^{m}\wedge \beta^{n-m}&=\displaystyle{\int}_{\lbrace u> v \rbrace}(-h)(\Delta \max\lbrace  u,v \rbrace)^{m}\wedge \beta^{n-m}\\
&\leq \displaystyle{\int}_{\Omega}(-h)(\Delta \max \lbrace  u,v \rbrace)^{m}\wedge \beta^{n-m}+\displaystyle{\int}_{\lbrace u< v \rbrace}(-h)(\Delta \max \lbrace  u,v \rbrace)^{m}\wedge \beta^{n-m}\\
&\leq \displaystyle{\int}_{\Omega}(-h)(\Delta v)^{m} \wedge \beta^{n-m}+\displaystyle{\int}_{\lbrace u< v\rbrace}(-h)(\Delta v)^{m}\wedge \beta^{n-m}\\
&=\displaystyle{\int}_{\lbrace u> v\rbrace}(-h)(\Delta v)^{m}\wedge \beta^{n-m}
\end{array}$$
Let $h \searrow -1$ to get the desired result.
\end{enumerate}

\end{proof}
\subsection{The projection theorem}
\begin{defi}
Let $u:\Omega \longrightarrow \mathbb{R}\cup \lbrace -\infty \rbrace $ be an upper semi-continuous function. We define the projection of $u$ on $\mathcal{E}_{m}^{1}(\Omega) $ by
$$P(u)=\sup \lbrace  v \in \mathcal{E}_{m}^{1}(\Omega)\;:\;v\leq u \rbrace.$$
\end{defi}
\begin{lem}\label{lem5}
Let $u:\Omega \longrightarrow \mathbb{R}$ be a continuous function, suppose that there exists $w \in  \mathcal{E}_{m}^{1}(\Omega) $ such that $w\leq u$. Then $\displaystyle{\int}_{\lbrace P(u)<u\rbrace}(\Delta P(u))^{m}\wedge \beta^{n-m}=0.$
\end{lem}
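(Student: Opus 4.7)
The plan is to show that on the open set $\mathcal{O}:=\{P(u)<u\}$ the function $P(u)$ is $m$-maximal, so that $(\Delta P(u))^{m}\wedge\beta^{n-m}$ vanishes there. First I check that $P(u)\in\mathcal{E}_{m}^{1}(\Omega)$: the defining family is nonempty (it contains $w$); since $u$ is continuous, every admissible $v$ satisfies $v\le u$, hence taking upper semicontinuous regularizations gives $P(u)^{*}\le u$, so $P(u)^{*}$ itself belongs to the family, forcing $P(u)=P(u)^{*}\in\mathcal{QSH}_{m}^{-}(\Omega)$. A standard decreasing approximation from $\mathcal{E}_{m}^{0}\cap C(\overline{\Omega})$ (Theorem \ref{th3}) combined with Corollary \ref{cor5} then gives $P(u)\in\mathcal{E}_{m}^{1}(\Omega)$. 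Note that $\mathcal{O}$ is open because $u-P(u)$ is lower semicontinuous.

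Suppose for contradiction that $\mu(\mathcal{O})>0$ where $\mu:=(\Delta P(u))^{m}\wedge\beta^{n-m}$. By inner regularity pick a compact $K\subset\mathcal{O}$ with $\mu(K)>0$; the lower semicontinuous function $u-P(u)$ attains a minimum $2\epsilon>0$ on $K$. Covering $K$ by finitely many closed balls contained in $\mathcal{O}$ of small radius (chosen so that $P(u)\le u-2\epsilon$ on each $\overline{B}(q_{i},r_{i})$ and $\mathrm{osc}_{\overline{B}(q_{i},r_{i})}u<\epsilon$, which is possible by continuity of $u$), at least one ball $\overline{B}=\overline{B}(q_{0},r_{0})$ must satisfy $\mu(B)>0$. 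Thus on $\overline{B}$,
\[
P(u)\le u-2\epsilon,\qquad \sup_{\partial B}u\le \inf_{B}u+\epsilon.
\]

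The heart of the argument is constructing a balayage of $P(u)$ on $B$. Using Theorem \ref{th3} pick $\varphi_{j}\searrow P(u)$ with $\varphi_{j}\in\mathcal{E}_{m}^{0}\cap C(\overline{\Omega})$. For each $j$ let $h_{j}\in\mathcal{QSH}_{m}(B)\cap C(\overline{B})$ solve $(\Delta h_{j})^{m}\wedge\beta^{n-m}=0$ in $B$ with $h_{j}=\varphi_{j}$ on $\partial B$ (Theorem 3.1 of \cite{Ba}). The comparison principle yields $\varphi_{j}\le h_{j}\le\sup_{\partial B}\varphi_{j}$ on $B$; the sequence $(h_{j})$ is decreasing, so $h_{\infty}:=\lim_{j}h_{j}\ge P(u)$ in $B$ and by Lemma \ref{lem2}(1) satisfies $(\Delta h_{\infty})^{m}\wedge\beta^{n-m}=0$ in $B$. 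A short compactness argument exploiting the continuity of each $\varphi_{j}$ shows $\sup_{\partial B}\varphi_{j}\searrow\sup_{\partial B}P(u)$, hence
\[
h_{\infty}\le \sup_{\partial B}P(u)\le \sup_{\partial B}u-2\epsilon\le \inf_{B}u-\epsilon<u(q)\qquad\text{for every }q\in B.
\]

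Finally, define $\eta:=h_{\infty}$ on $B$ and $\eta:=P(u)$ on $\Omega\setminus B$. Passing to the limit in $h_{j}\big|_{\partial B}=\varphi_{j}$ gives $\limsup_{q\to\zeta}h_{\infty}(q)\le P(u)(\zeta)$ for $\zeta\in\partial B$, so Proposition \ref{pro1}(6), together with $h_{\infty}\ge P(u)$ in $B$, yields $\eta\in\mathcal{QSH}_{m}^{-}(\Omega)$; combined with $P(u)\le\eta\le u$ and the same decreasing approximation argument used before, $\eta\in\mathcal{E}_{m}^{1}(\Omega)$. If $h_{\infty}=P(u)$ Lebesgue-a.e.\ on $B$, then $\mu(B)=0$, contradicting the choice of $B$; hence $\eta>P(u)$ on a set of positive Lebesgue measure, contradicting the maximality of $P(u)$ in its defining class. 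The main technical obstacle is this third step: keeping the $m$-maximal extension $h_{\infty}$ dominated by $u$ on $B$ requires the delicate coupling of the gap $u-P(u)\ge 2\epsilon$ with the oscillation control on $u$, together with the non-trivial passage $\sup_{\partial B}\varphi_{j}\to\sup_{\partial B}P(u)$ when $P(u)$ is only upper semicontinuous on $\partial B$.
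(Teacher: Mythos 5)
Your argument is correct and rests on the same core idea as the paper's proof: balayage on a small ball $B$ on which $u$ is nearly constant and uniformly separated from $P(u)$, so that the maximal $m$-subharmonic replacement stays below $u$ and is therefore an admissible competitor for $P(u)$. The execution differs in the direction of approximation. The paper approximates $P(u)$ from \emph{below} by an increasing Choquet sequence $(u_j)\subset\mathcal{E}_m^{1}(\Omega)\cap L^{\infty}(\Omega)$, replaces each $u_j$ on $B$ by its maximal extension $\psi_j$, checks $u_j\le\psi_j\le u$ so that $(\lim\psi_j)^{*}=P(u)$, and concludes from the convergence of $(\Delta\psi_j)^{m}\wedge\beta^{n-m}$ along the increasing sequence together with the portmanteau inequality for the open set $B$. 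You approximate from \emph{above} via Theorem \ref{th3}, pass to the decreasing limit $h_\infty$ of the balayages, and show that the glued function $\eta$ is itself admissible, forcing $\eta=P(u)$ and hence the maximality of $P(u)$ on $B$. Your route needs the extra Dini-type step $\sup_{\partial B}\varphi_j\searrow\sup_{\partial B}P(u)$ (which you rightly flag, and which does hold for a decreasing sequence of continuous functions on a compact set), a step the paper avoids because its increasing approximants already satisfy $u_j\le P(u)\le u(\tilde q)-\varepsilon$ on $\partial B$; in exchange you identify $P(u)$ with its own balayage on $B$ rather than only killing the mass of the approximants. Two minor points: the contradiction-plus-covering scaffolding is unnecessary, since the argument localizes at every point of the open set $\lbrace P(u)<u\rbrace$ exactly as in the paper; and the assertion that $P(u)\le u-2\epsilon$ on the covering balls (rather than merely on $K$) requires the upper semicontinuity of $P(u)$ together with the continuity of $u$, which degrades the gap from $2\epsilon$ to, say, $\epsilon$ --- harmless, since all you need is some positive gap exceeding the oscillation of $u$ on the ball.
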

\begin{proof}
Without loss of generality we can assume that $w$ is bounded. From Choquet’s lemma, there exists
an increasing sequence $(u_{j}) \subset \mathcal{E}_{m}^{1}(\Omega)\cap L^{\infty}(\Omega)$  such that
$(\lim_{j} u_{j})^{*}=P(u).$

Let $\tilde{q} \in \lbrace P(u)< u \rbrace .$ Since $u$ is continuous, there exist $\varepsilon > 0, r > 0$ such that
$$P(u)(q) < u(\tilde{q}) -\varepsilon 	 < u(q),\;\; \forall q \in B = B(\tilde{q}, r).$$
For  fixed $j$, by approximating $u_{j}\vert_{\partial B} $ from above by a sequence of continuous functions on $\partial B$ and by
using \cite[Theorem 3.1]{Ba}, we can find a function $\varphi_{j} \in \mathcal{QSH}_{m}(B)  $ such that $\varphi_{j} = u_{j}$ on $\partial B$ and $ (\Delta \varphi _{j})^{m}\wedge \beta^{n-m}=0$
in $B$. The comparison principle gives us that $\varphi_{j} \geq u_{j}$ in $B$. The function $\psi_{j}$ defined by $\psi_{j} = \varphi_{j}$ in $B$ and
$\psi_{j} = u_{j}$ in $\Omega \setminus B,$ belongs to $\mathcal{E}_{m}^{1}(\Omega)\cap L^{\infty}(\Omega).$ For each $q \in \partial B$ we have $\varphi_{j}(q) = u_{j}(q) \leq P(u)(q) \leq u(\tilde{q})-\varepsilon .$
It then follows that $\varphi_{j} \leq u(\tilde{q}) -\varepsilon$ in $B$ since $u(\tilde{q}) -\varepsilon $ is a constant and $\varphi_{j}\in \mathcal{QSH}_{m}(\Omega)$. Hence, $ u_{j} \leq \psi_{j} \leq u$
in $\Omega$. This implies that
$(\lim \psi_{j})^{*} = P(u)$.
It follows from Lemma \ref{lem2} that $(\Delta \psi_{j})^{m}\wedge \beta^{n-m} \rightharpoonup (\Delta P(u))^{m}\wedge \beta^{n-m}$. Therefore,
$(\Delta P(u))^{m}\wedge \beta^{n-m})(B) \leq \lim_{j\longrightarrow +\infty}\inf((\Delta \psi_{j})^{m}\wedge \beta^{n-m})(B)=0.$
from which the result follows.
\end{proof}
\begin{lem}\label{lem6}
Let  $u, v \in \mathcal{E}_{m}^{1}(\Omega)$ with $v$ is continuous. We define for $t < 0,$
$$h_{t}=\dfrac{P(u+tv)-tv-u}{t}.$$ Then, for each $0\leq k\leq m,$
$$\lim_{t \longrightarrow 0^{-}}\int_{\Omega}h_{t}(\Delta u)^{k}\wedge (\Delta P(u+tv))^{m-k}\wedge \beta^{n-m}=0. $$
In particular,
$$\lim_{t\longrightarrow 0^{-}}\int_{\Omega}\dfrac{P(u+tv)-u}{t}(\Delta u)^{k}\wedge (\Delta P(u+tv))^{m-k}\wedge \beta^{n-m}=\int_{\Omega}v(\Delta u)^{m}\wedge \beta^{n-m}. $$
\end{lem}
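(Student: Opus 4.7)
My plan is to combine uniform bounds on $h_t$ with the concentration of Monge--Ampère measures of the projection $P(u+tv)$ on the contact set $\{P(u+tv)=u+tv\}$, where $h_t\equiv 0$. The first step is to show $0\le h_t\le -v$. Indeed, since $t<0$ and $v\le 0$, we have $tv\ge 0$ and hence $u\le u+tv$; as $u\in\mathcal{E}_m^1$ is a candidate in the sup defining $P(u+tv)$, this gives $u\le P(u+tv)$. On the other hand, $-tv=|t|v$ is $m$-subharmonic, so $P(u+tv)-tv\in\mathcal{E}_m^1$; being admissible in the sup defining $P(u)=u$, it satisfies $P(u+tv)-tv\le u$. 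Rewriting the resulting two inequalities as $-tv\le P(u+tv)-u-tv\le 0$ and dividing by $t<0$ yields the bound, so $h_t$ is dominated by the continuous function $-v$.

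Next I prove that $\int_\Omega h_t(\Delta P(u+tv))^m\wedge\beta^{n-m}=0$ for every $t<0$. I approximate $u$ by $u_j\in\mathcal{E}_m^0\cap C(\overline\Omega)$ with $u_j\downarrow u$ (Theorem~\ref{th3}); then $u_j+tv$ is continuous, so Lemma~\ref{lem5} gives that $(\Delta P(u_j+tv))^m\wedge\beta^{n-m}$ is concentrated on $\{P(u_j+tv)=u_j+tv\}$, where the analog $h_t^{(j)}:=(P(u_j+tv)-tv-u_j)/t$ vanishes, whence $\int h_t^{(j)}(\Delta P(u_j+tv))^m\wedge\beta^{n-m}=0$. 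Passing to the limit $j\to\infty$ via $P(u_j+tv)\downarrow P(u+tv)$, the weak convergence of Monge--Ampère measures from Corollary~\ref{cor7}, and the uniform boundedness of $h_t^{(j)}$ yields the claim. For the mixed cases $0<k<m$, a Hölder-type inequality (Corollary~\ref{3} extended to $\mathcal{E}_m^1$ via Theorem~\ref{th6}) gives
\[
\int_\Omega h_t(\Delta u)^k\wedge(\Delta P(u+tv))^{m-k}\wedge\beta^{n-m}\le C\, A_t^{k/m}\, B_t^{(m-k)/m},
\]
where $A_t:=\int_\Omega h_t(\Delta u)^m\wedge\beta^{n-m}$ is uniformly bounded by $\int(-v)(\Delta u)^m\wedge\beta^{n-m}<\infty$ (Theorem~\ref{th6}) and $B_t:=\int_\Omega h_t(\Delta P(u+tv))^m\wedge\beta^{n-m}=0$ from the previous step, so the integral vanishes identically for $0\le k<m$ and every $t<0$.

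The remaining case $k=m$ is handled by dominated convergence: $-v$ is an integrable majorant against $(\Delta u)^m\wedge\beta^{n-m}$, and one establishes that $h_t\to 0$ almost everywhere with respect to this measure as $t\to 0^-$, exploiting the fact that $h_t$ vanishes on the contact set together with the $m$-maximality of $P(u+tv)$ on the non-contact set to show the latter becomes $(\Delta u)^m$-null in the limit. Finally, the "in particular" statement follows by writing $(P(u+tv)-u)/t=h_t+v$: the first part controls the $h_t$-contribution, while continuity of $v$ together with the weak convergence $(\Delta u)^k\wedge(\Delta P(u+tv))^{m-k}\wedge\beta^{n-m}\rightharpoonup(\Delta u)^m\wedge\beta^{n-m}$, which follows from $P(u+tv)\downarrow u$ and Corollary~\ref{cor7}, yields $\int v\,(\Delta u)^k\wedge(\Delta P(u+tv))^{m-k}\wedge\beta^{n-m}\to\int v(\Delta u)^m\wedge\beta^{n-m}$.

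The main obstacle is the case $k=m$: establishing the almost-everywhere convergence $h_t\to 0$ with respect to the Monge--Ampère measure $(\Delta u)^m\wedge\beta^{n-m}$. This is a delicate measure-theoretic statement that relies on the $m$-maximality of $P(u+tv)$ on the non-contact set together with the quaternionic comparison principle, in order to ensure that the non-contact set indeed becomes $(\Delta u)^m$-null as $t\to 0^-$.
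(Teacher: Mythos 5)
Your opening step ($0\le h_t\le -v$, with $h_t$ decreasing in $t$) agrees with the paper and is correct, but the rest of the argument has two genuine gaps. The central one is your treatment of the mixed terms $0<k<m$: you invoke a H\"older-type inequality with weight $h_t$ to get $\int_\Omega h_t(\Delta u)^k\wedge(\Delta P(u+tv))^{m-k}\wedge\beta^{n-m}\le C\,A_t^{k/m}B_t^{(m-k)/m}$ and conclude, since $B_t=0$, that these integrals vanish for \emph{every fixed} $t<0$. But the energy inequalities available (Proposition \ref{pro12}, Corollary \ref{3}, Theorem \ref{th6}) all require the weight to be $(-h)^p$ with $h$ itself a quaternionic $m$-subharmonic function in a Cegrell class; $h_t=(P(u+tv)-tv-u)/t$ is a normalized \emph{difference} of $m$-sh functions, not the negative of one, so none of these inequalities applies. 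The conclusion you draw is also stronger than the lemma itself (vanishing for all $t$, not only in the limit), which is a warning sign: there is no reason for $(\Delta u)^k\wedge(\Delta P(u+tv))^{m-k}\wedge\beta^{n-m}$ with $k>0$ to be carried by the contact set. The second gap is the case $k=m$, i.e.\ $A_t=\int_\Omega h_t(\Delta u)^m\wedge\beta^{n-m}\to0$: you reduce it to the assertion that $h_t\to0$ almost everywhere for $(\Delta u)^m\wedge\beta^{n-m}$, ``exploiting the $m$-maximality of $P(u+tv)$ on the non-contact set'', but no argument is given, and this is precisely the hard core of the lemma. (A smaller issue: in your $k=0$ step, weak convergence of $(\Delta P(u_j+tv))^m\wedge\beta^{n-m}$ together with uniform boundedness of the discontinuous integrands $h_t^{(j)}$ does not by itself yield convergence of the integrals.)

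The paper's proof avoids all of this with a single monotonicity trick: since $h_t$ decreases as $t\uparrow0^-$, for any fixed $s<0$ one has
$$\limsup_{t\to0^-}\int_\Omega h_t(\Delta u)^k\wedge(\Delta P(u+tv))^{m-k}\wedge\beta^{n-m}\le\int_\Omega h_s(\Delta u)^m\wedge\beta^{n-m},$$
using $h_t\le h_s$, $P(u+tv)\downarrow P(u)=u$ and the convergence results of Section \ref{sec2} to pass to the limit in $t$ for fixed $s$. This handles all $k$ simultaneously and reduces the lemma to showing $\int_{\{P(u+sv)-sv<u\}}(-v)(\Delta u)^m\wedge\beta^{n-m}\to0$ as $s\to0^-$, which is then proved quantitatively (a bound of order $-sM$) via continuous approximants $u_k$, Proposition \ref{pro15}(6) and Lemma \ref{lem5}. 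You would need to supply an argument of this type for your $k=m$ case and abandon the H\"older step for $0<k<m$.
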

\begin{proof}
An easy computation shows that $h_{t}$ is decreasing in $t$ and $0 \leq h_{t} \leq -v.$ For each fixed $s < 0$ we have
$$\begin{array}{ll}
\displaystyle{\lim_{t \longrightarrow 0^{-}}\int_{\Omega}h_{t}(\Delta u)^{k}\wedge (\Delta P(u+tv))^{m-k}\wedge \beta^{n-m}}
&\leq \displaystyle{ \lim_{t \longrightarrow 0^{-}}\int}_{\Omega}h_{s}(\Delta u)^{k}\wedge (\Delta P(u+tv))^{m-k}\wedge \beta^{n-m}\\
&=\displaystyle{\int_{\Omega}h_{s}(\Delta u)^{k}\wedge (\Delta P(u))^{m-k}\wedge \beta^{n-m}}\\
&= \displaystyle\int_{\Omega}h_{s}(\Delta u)^{m}\wedge \beta^{n-m}\\ &=\displaystyle \int_{\lbrace P(u+sv)-sv < u \rbrace} h_{s}(\Delta u)^{m}\wedge \beta^{n-m}\\
& \leq \displaystyle{ \int_{\lbrace P(u+sv)-sv < u \rbrace}(-v)(\Delta u)^{m}\wedge \beta^{n-m}}.
\end{array}$$
Let $ u_{k}\in \mathcal{E}_{m}^{0}(\Omega)\cap C(\Omega) $ be a decreasing sequence  which tends to $u$ such that
$$\int_{\lbrace P(u+sv)-sv < u \rbrace}(-v)(\Delta u)^{m}\wedge \beta^{n-m} \leq A \int_{\lbrace P(u_{k}+sv)-sv < u \rbrace}(-v)(\Delta u)^{m}\wedge \beta^{n-m}.$$
Taking into account Theorem \ref{th3}  and  proposition \ref{pro15} we can conclude that
$$\begin{array}{ll}
\displaystyle{\int_{\lbrace P(u_{k}+sv)-sv < u \rbrace}h_{s}(-v)(\Delta u)^{m}\wedge \beta^{n-m}}
&\leq \displaystyle{\int_{\lbrace P(u_{k}+sv)-sv < u \rbrace}(-v)(\Delta(P(u_{k}+sv)-sv  )^{m}\wedge \beta^{n-m}}\\
&\leq \displaystyle{\int_{\lbrace P(u_{k}+sv)-sv < u_{k} \rbrace}(-v)(\Delta(P(u_{k}+sv)-sv  )^{m}\wedge \beta^{n-m}}\\
&\leq -sM \longrightarrow 0, \mbox{ as } s\longrightarrow 0.
\end{array}$$

Where $ M$ is a positive constant which depends only on $m,\; \Vert v\Vert$ and $\displaystyle{\int_{\Omega}v(\Delta( u+v))^{m}\wedge \beta^{n-m}} .$ The second equality  follows
from the first one. Thus we complete the proof.
\end{proof}
\begin{thm}\label{th10}
Let  $u, v \in \mathcal{E}_{m}^{1}(\Omega),$ and $v$ is continuous. Then
$$\dfrac{d}{dt}\Big| _{t=0}E(P(u+tv))=(m+1)\int_{\Omega}(-v)(\Delta u)^{m}\wedge \beta^{n-m}. $$
\end{thm}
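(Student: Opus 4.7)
The strategy is to express the difference quotient $(E(P(u+tv)) - E(u))/t$ as a finite sum of mixed Monge--Amp\`ere integrals via a telescoping identity combined with Theorem \ref{th8} (integration by parts on $\mathcal{E}_m^1$), and then to invoke Lemma \ref{lem6} term by term to pass to the limit. Write $\varphi_t := P(u+tv)$ for small $|t|$. A preliminary observation is $\varphi_0 = u$: since $u \in \mathcal{E}_m^1$ is admissible in the supremum defining $P(u)$ we have $P(u) \geq u$, while every admissible candidate is dominated by $u$ by definition, so $P(u)\leq u$.

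Set $T_k := (\Delta \varphi_t)^{k}\wedge (\Delta u)^{m-k} \wedge \beta^{n-m}$, so $T_0$ and $T_m$ recover the pure Monge--Amp\`ere measures of $u$ and $\varphi_t$. Add and subtract $\int_\Omega(-\varphi_t)T_0$ to obtain
$$E(\varphi_t)-E(u) \;=\; \int_\Omega (u-\varphi_t)\,T_0 \;+\; \int_\Omega (-\varphi_t)(T_m - T_0).$$
The telescoping $T_m - T_0 = \sum_{k=0}^{m-1} \Delta(\varphi_t - u)\wedge (\Delta \varphi_t)^k\wedge (\Delta u)^{m-k-1}\wedge \beta^{n-m}$ together with Theorem \ref{th8}, applied by linearity to $\Delta(\varphi_t - u) = \Delta \varphi_t - \Delta u$, converts each summand into $\int_\Omega (u-\varphi_t) T_{k+1}$; summing yields the clean identity
$$E(\varphi_t)-E(u) \;=\; \sum_{k=0}^{m}\int_\Omega (u-\varphi_t)\, T_k.$$

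Divide by $t<0$. Using $h_t = (P(u+tv) - tv - u)/t$ from Lemma \ref{lem6} one has $(u-\varphi_t)/t = -v - h_t$, so each summand becomes $\int_\Omega(-v-h_t)\,T_k$. Lemma \ref{lem6} provides $\int_\Omega h_t\,T_k \to 0$ and, via the ``in particular'' clause, $\int_\Omega \tfrac{\varphi_t - u}{t}\,T_k \to \int_\Omega v\,(\Delta u)^m \wedge \beta^{n-m}$ as $t\to 0^-$; hence each of the $m+1$ terms converges to $\int_\Omega (-v)(\Delta u)^m \wedge \beta^{n-m}$, giving the left-derivative identity. The right derivative is obtained by repeating the argument with $-v$ in place of $v$, which returns the same value, so the two-sided derivative exists and equals $(m+1)\int_\Omega(-v)(\Delta u)^m\wedge \beta^{n-m}$.

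The principal obstacle is the integration-by-parts step: the increment $\varphi_t - u$ is not itself $m$-subharmonic, so Theorem \ref{th8} cannot be applied to $\Delta(\varphi_t - u)$ as a single object and must be used linearly on $\Delta \varphi_t$ and $\Delta u$ separately. The other delicate ingredient is the uniform control of the mixed currents $(\Delta \varphi_t)^k (\Delta u)^{m-k}\wedge \beta^{n-m}$ tested against $h_t$ and $v$ as $t\to 0$, but this is precisely what Lemma \ref{lem6} supplies and hence is already in hand.
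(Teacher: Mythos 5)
Your treatment of the left derivative is essentially the paper's own proof: the same telescoping of $(\Delta P(u+tv))^{m}-(\Delta u)^{m}$, the same use of integration by parts (Theorem \ref{th8}, applied linearly to $\Delta\varphi_t$ and $\Delta u$ separately) to arrive at
$$\frac{E(P(u+tv))-E(u)}{t}=\sum_{k=0}^{m}\int_{\Omega}\frac{u-P(u+tv)}{t}\,(\Delta u)^{k}\wedge(\Delta P(u+tv))^{m-k}\wedge\beta^{n-m},$$
and the same term-by-term passage to the limit via Lemma \ref{lem6}. That part is correct and matches the paper.

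The flaw is in your handling of the right derivative. You propose to ``repeat the argument with $-v$ in place of $v$,'' i.e.\ to convert $t\to 0^{+}$ into $s=-t\to 0^{-}$ for the data $(u,-v)$. But $-v\geq 0$ is not a negative quaternionic $m$-subharmonic function, so $-v\notin\mathcal{E}_{m}^{1}(\Omega)$, and Lemma \ref{lem6} (whose hypotheses require $v\in\mathcal{E}_{m}^{1}(\Omega)$ continuous, and whose proof uses $0\leq h_t\leq -v$) simply does not apply; nothing in the machinery you invoke covers this case, so the two-sided derivative is not established. The repair is the one the paper uses: for $t>0$ the function $u+tv$ already lies in the convex cone $\mathcal{E}_{m}^{1}(\Omega)$, hence $P(u+tv)=u+tv$ and no projection is involved; $E(u+tv)$ is then a polynomial in $t$ whose linear coefficient is $\int_{\Omega}(-v)(\Delta u)^{m}\wedge\beta^{n-m}+m\int_{\Omega}(-u)\Delta v\wedge(\Delta u)^{m-1}\wedge\beta^{n-m}$, and a single application of Theorem \ref{th8} turns the second integral into $\int_{\Omega}(-v)(\Delta u)^{m}\wedge\beta^{n-m}$, giving the right derivative $(m+1)\int_{\Omega}(-v)(\Delta u)^{m}\wedge\beta^{n-m}$ directly. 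With that substitution your argument is complete and coincides with the paper's.
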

\begin{proof}
If $t > 0, P(u + tv) = u + tv.$ It is easy to see that
$$\dfrac{d}{dt}\Big| _{t=0^{+}}E(P(u+tv))=(m+1)\int_{\Omega}(-v)(\Delta u)^{m}\wedge \beta^{n-m}. $$
For $t< 0$, observing that $P(u+tv), \;u \in \mathcal{E}_{m}^{1}(\Omega)$, we can integrate by parts to have
$$\begin{array}{ll}
&\displaystyle{\dfrac{1}{t}\Big(\int_{\Omega}-P(u+tv)(\Delta P(u+tv))^{m}\wedge  \beta^{n-m}-\int_{\Omega}(-u)(\Delta u)^{m}\wedge  \beta^{n-m}\Big)}\\
&=\displaystyle{\sum_{k=0}^{m}\int_{\Omega}\dfrac{u-P(u+tv)}{t}(\Delta u)^{k}\wedge(\Delta P(u+tv))^{m-k}\wedge   \beta^{n-m}}.
\end{array}$$
It suffices to apply Lemma \ref{lem6}.
\end{proof}
\subsection{The quaternionic Hessian equation}
In this section, we introduce the variational method to solve the quaternionic Hessian equation on finite energy classes of Cegrell type $$(\Delta u)^{m}\wedge \beta^{n-m}=\mu,$$
where $\mu$ is a positive Radon measure.
 The idea is to minimize the energy functional on a compact subset of $m$-sh functions.
\begin{lem}\label{lem7}
Assume that $\mu$ is a positive Radon measure such that $\mathcal{F}_{\mu}$ is proper and lower semi-continuous on $\mathcal{E}_{m}^{1}(\Omega)$. Then, there exists $\varphi \in \mathcal{E}_{m}^{1}(\Omega)$ such that $\mathcal{F}_{\mu}(\varphi)=\inf_{\psi\in \mathcal{E}_{m}^{1}(\Omega)}\mathcal{F}_{\mu}(\psi).$
\end{lem}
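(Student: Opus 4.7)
The plan is a standard direct-method argument from the calculus of variations: produce a minimizing sequence in $\mathcal{E}_m^1(\Omega)$, use the properness of $\mathcal{F}_\mu$ to bound the energies uniformly, extract a subsequential limit that still lies in $\mathcal{E}_m^1(\Omega)$, and close with the assumed lower semi-continuity of $\mathcal{F}_\mu$.

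First I would set $d := \inf_{\psi \in \mathcal{E}_m^1(\Omega)} \mathcal{F}_\mu(\psi)$. Since $\mathcal{E}_m^0(\Omega)\subset \mathcal{E}_m^1(\Omega)$ contains test functions yielding a finite $\mathcal{F}_\mu$-value, $d<+\infty$. Fix a minimizing sequence $(\psi_j)\subset \mathcal{E}_m^1(\Omega)$ with $\mathcal{F}_\mu(\psi_j)\to d$. Properness forces $\sup_j E(\psi_j)\leq C<+\infty$ (otherwise $\mathcal{F}_\mu(\psi_j)\to+\infty$), which in particular yields $d>-\infty$.

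To extract a subsequential limit, fix $\eta\in\mathcal{E}_m^0(\Omega)$ whose Hessian measure $(\Delta\eta)^m\wedge\beta^{n-m}$ dominates a positive multiple of the Lebesgue measure on each compact $K\Subset\Omega$; a rescaled $\Vert q\Vert^2$ works, since $\Delta(\Vert q\Vert^2)=8\beta$. Theorem \ref{th6}, applied to the mixed energy of $\psi_j$ against $\eta,\ldots,\eta$, gives
$$\int_K (-\psi_j)\,dV \leq C_K \int_\Omega (-\psi_j)(\Delta\eta)^m\wedge\beta^{n-m} \leq C_K\,D_1\,E(\eta)^{m/(m+1)}\,E(\psi_j)^{1/(m+1)},$$
so $(\psi_j)$ is uniformly bounded in $L^1_{loc}(\Omega)$. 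Standard compactness for subharmonic functions then delivers a subsequence, still denoted $(\psi_j)$, converging in $L^1_{loc}(\Omega)$ to some $\varphi\in \mathcal{QSH}_m^-(\Omega)$; combined with the uniform energy bound, Proposition \ref{pro15}(2) places $\varphi$ in $\mathcal{E}_m^1(\Omega)$.

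The lower semi-continuity hypothesis on $\mathcal{F}_\mu$ finally gives $\mathcal{F}_\mu(\varphi)\leq \liminf_j \mathcal{F}_\mu(\psi_j)=d$, and since $\varphi\in\mathcal{E}_m^1(\Omega)$ forces $\mathcal{F}_\mu(\varphi)\geq d$, equality follows. The main obstacle is a matter of compatibility of topologies: the $L^1_{loc}$-compactness, Proposition \ref{pro15}(2), and the hypothesized lower semi-continuity must all refer to the same mode of convergence on $\mathcal{E}_m^1(\Omega)$. If this matching is delicate, an alternative is the Choquet-type detour of replacing $(\psi_j)$ by the regularized suprema $\tilde\psi_j:=(\sup_{k\geq j}\psi_k)^*$: Proposition \ref{pro15}(1) keeps $\tilde\psi_j$ in $\mathcal{E}_m^1(\Omega)$, the decreasing pointwise limit $\varphi:=\lim_j\tilde\psi_j$ lies in $\mathcal{E}_m^1(\Omega)$ by Proposition \ref{pro15}(2), and one then only needs to verify that $\liminf_j\mathcal{F}_\mu(\tilde\psi_j)\leq d$, which reduces to the lsc assumption together with a careful bookkeeping of the opposite monotonicities of $E$ and $L_\mu$ along the replacement $\psi_j\leq\tilde\psi_j$.
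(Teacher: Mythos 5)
Your proposal is correct and follows essentially the same route as the paper: a minimizing sequence, properness to get a uniform bound on $E$, extraction of an $L^1_{loc}$-convergent subsequence whose limit stays in $\mathcal{E}_m^1(\Omega)$ by Proposition \ref{pro15}, and the assumed lower semi-continuity to conclude. The only difference is that you spell out the compactness step (the uniform $L^1_{loc}$ bound via Theorem \ref{th6}) that the paper leaves implicit.
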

\begin{proof}
As in the proof of \cite[lemma 4.12]{Lu2}.
Let $(\varphi_{j} ) \in \mathcal{E}_{m}^{1}(\Omega)$  be such that$$ \displaystyle \lim_{j}\mathcal{F}_{\mu}(\varphi_{j})=\inf_{\psi\in \mathcal{E}_{m}^{1}(\Omega)}\mathcal{F}_{\mu}(\psi) \leq 0. $$

From the properness of the functional $\mathcal{F}_{\mu}$ , we obtain $\sup_{j}E(\varphi_{j}) < + \infty$ . It follows that the sequence $ (\varphi_{j} )$
forms a compact subset of $\mathcal{E}_{m}^{1}(\Omega)$ . Hence there exists a subsequence $(\varphi_{j_{k}})$
converging to $\varphi$ in $L_{loc}^1(\Omega) $ Since $\mathcal{F}_{\mu}$  is lower semi-continuous we have$$\displaystyle \liminf_{j\longrightarrow +\infty}\mathcal{F}_{\mu}(\varphi_{j_{k}}) \geq \mathcal{F}_{\mu}(\varphi) $$
We then deduce that $\varphi$  is a minimum point of $\mathcal{F}_{ \mu}$  on $\mathcal{E}_{m}^{1}(\Omega)$.
\end{proof}
\begin{thm}\label{th11}
Suppose that $\varphi \in \mathcal{E}_{m}^{1}(\Omega)$ and $\mu \in \mathcal{M}_{1}$. Then
$$(\Delta \varphi)^{m}\wedge \beta^{n-m}=\mu\Longleftrightarrow \mathcal{F}_{\mu}(\varphi)=\inf_{\psi\in \mathcal{E}_{m}^{1}(\Omega)}\mathcal{F}_{\mu}(\psi).$$
\end{thm}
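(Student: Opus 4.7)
The plan is to establish both directions by differentiating the convex functional $\mathcal{F}_\mu$ along carefully chosen one-parameter families through $\varphi$; throughout, write $\mu_0 := (\Delta\varphi)^m \wedge \beta^{n-m}$.

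For the implication $(\Rightarrow)$, assume $\mu = \mu_0$ and fix any $\psi \in \mathcal{E}_m^1(\Omega)$. The segment $w_t := (1-t)\varphi + t\psi$, $t \in [0,1]$, stays in $\mathcal{E}_m^1(\Omega)$ since the class is a convex cone by Proposition \ref{pro15}(4). Multilinearity of the mixed Hessian operator on $\mathcal{E}_m^1$ (Theorem \ref{th7}) together with the finiteness of mixed energies (Theorem \ref{th6}) exhibit $E(w_t)$ as a polynomial in $t$ of degree $m+1$; reading off the linear coefficient and collapsing the mixed terms via the symmetry provided by Theorem \ref{th8} yields
\[
\tfrac{d}{dt}\Big|_{t=0}E(w_t) = (m+1)\int_\Omega (\varphi-\psi)\, d\mu_0.
\]
Adding the trivial linear derivative of $L_\mu(w_t) = (1-t)L_\mu(\varphi) + t L_\mu(\psi)$ and using $\mu = \mu_0$ makes the right derivative of $\mathcal{F}_\mu(w_t)$ at $t=0$ vanish. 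Since $t \mapsto \mathcal{F}_\mu(w_t)$ is convex on $[0,1]$ (a sum of the convex $E$ and the affine $L_\mu$), a zero right derivative at $0$ forces $\mathcal{F}_\mu(w_t) \geq \mathcal{F}_\mu(\varphi)$ throughout the segment; evaluating at $t = 1$ produces $\mathcal{F}_\mu(\psi) \geq \mathcal{F}_\mu(\varphi)$.

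For the converse $(\Leftarrow)$, fix a continuous $v \in \mathcal{E}_m^1(\Omega)$ and study $G(t) := \mathcal{F}_\mu(P(\varphi + tv))$ for $t$ near $0$. Since $P(\varphi) = \varphi$ and $P(\varphi+tv) \in \mathcal{E}_m^1(\Omega)$, the minimization assumption forces $G(t) \geq G(0)$, whence $G'(0^+) \geq 0$ and $G'(0^-) \leq 0$. Theorem \ref{th10} identifies the $E$-derivative contribution on both sides as $(m+1)\int_\Omega(-v)\, d\mu_0$. For the $L_\mu$-part, the decomposition $P(\varphi + tv) = \varphi + tv + t h_t$ from Lemma \ref{lem6}, with $0 \leq h_t \leq -v$ and $h_t \equiv 0$ for $t \geq 0$, gives
\[
\frac{L_\mu(P(\varphi+tv)) - L_\mu(\varphi)}{t} = L_\mu(v) + \int_\Omega h_t\, d\mu.
\]
Since $\mu \in \mathcal{M}_1$, $-v \in L^1(\mu)$, so $\int h_t\, d\mu$ is uniformly bounded, and monotonicity of $h_t$ in $t$ produces a nonnegative limit $\eta := \lim_{t \to 0^-}\int h_t\, d\mu$. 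Combining the two parts,
\[
G'(0^+) = \int_\Omega v\, d(\mu - \mu_0) \geq 0, \qquad G'(0^-) = \int_\Omega v\, d(\mu - \mu_0) + \eta \leq 0,
\]
which together force $\int_\Omega v\, d(\mu - \mu_0) = 0$ for every continuous $v \in \mathcal{E}_m^1(\Omega)$ (and incidentally $\eta = 0$).

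To upgrade the testing class to arbitrary compactly supported smooth functions, Lemma \ref{lem4} decomposes each $\chi \in C_0^\infty(\Omega)$ as $\chi = \varphi_1 - \varphi_2$ with $\varphi_i \in \mathcal{E}_m^0(\Omega) \cap C(\overline{\Omega}) \subset \mathcal{E}_m^1(\Omega) \cap C(\Omega)$, so linearity yields $\int \chi\, d(\mu - \mu_0) = 0$ and the two Radon measures coincide. The main technical obstacle lies in the left-sided derivative analysis of the converse: one must track the defect $\eta$ arising from the fact that the projection $P$ acts nontrivially on $\varphi + tv$ for $t < 0$, and verify that the opposite signs coming from the two one-sided derivatives still enforce equality, not merely an inequality; this is precisely where the hypothesis $\mu \in \mathcal{M}_1$ is indispensable, ensuring the dominated integrability of $h_t$ against $\mu$. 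The forward direction, by contrast, is essentially bookkeeping once the integration-by-parts identity on $\mathcal{E}_m^1$ (Theorem \ref{th8}) and the multilinear expansion of $E$ on $\mathcal{E}_m^1$ (Theorem \ref{th7}) are in hand.
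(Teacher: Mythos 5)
Your proof is correct, and the converse direction follows essentially the paper's own route (projection $P$, Theorem \ref{th10}, Lemma \ref{lem6}, then Lemma \ref{lem4} to pass from continuous $v\in\mathcal{E}_m^1$ to arbitrary test functions), with one cosmetic difference: you differentiate $t\mapsto\mathcal{F}_\mu(P(\varphi+tv))$ and must therefore track the defect $\eta=\lim_{t\to0^-}\int_\Omega h_t\,d\mu$ in the linear part, whereas the paper sidesteps this by minimizing the auxiliary function $f(t)=\tfrac{1}{m+1}E(P(\varphi+t\psi))+L_\mu(\varphi+t\psi)$, whose $L_\mu$-part is affine; since $P(\varphi+t\psi)\le\varphi+t\psi$ gives $f(t)\ge\mathcal{F}_\mu(P(\varphi+t\psi))\ge f(0)$, the two-sided derivative $f'(0)$ vanishes outright and no $\eta$ appears. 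Your handling of $\eta$ via the monotonicity $0\le h_t\le -v$ and the squeeze between $G'(0^+)\ge0$ and $G'(0^-)\le0$ is sound. The forward direction is where you genuinely diverge: the paper gets $\mathcal{F}_\mu(\psi)\ge\mathcal{F}_\mu(\varphi)$ in two lines from the H\"older-type energy inequality of Theorem \ref{th6} combined with Young's inequality, using only that $\mathcal{F}_\mu(\varphi)=-\tfrac{m}{m+1}E(\varphi)$ when $\mu=(\Delta\varphi)^m\wedge\beta^{n-m}$; you instead expand $E$ multilinearly along the segment $w_t=(1-t)\varphi+t\psi$, identify the first-order coefficient via the symmetry of Theorem \ref{th8}, and invoke convexity of $E$ (Proposition \ref{pro15}(4)). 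Your argument is the standard Euler--Lagrange/convexity mechanism and is more robust (it does not depend on the sharp exponents in Theorem \ref{th6}), but it leans on the multilinearity of $(u_1,\dots,u_m)\mapsto\Delta u_1\wedge\cdots\wedge\Delta u_m\wedge\beta^{n-m}$ on $\mathcal{E}_m^1$, which Theorem \ref{th7} does not state explicitly (it only defines the mixed operator); that property does follow by the approximation scheme of Theorem \ref{th7} and Corollary \ref{cor66}, so this is a citation gap rather than a mathematical one, but it is worth flagging since the paper's shorter route avoids needing it altogether.
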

\begin{proof}
First assume that $(\Delta \varphi)^{m}\wedge \beta^{n-m}=\mu$ and let $\psi \in \mathcal{E}_{m}^{1}(\Omega)$. From Theorem \ref{th6} and Young's inequality follow
$$\begin{array}{ll}
\displaystyle{\int}_{\Omega}(-\psi)(\Delta \varphi)^{m}\wedge \beta^{n-m}& \leq E(\psi)^{\frac{1}{m+1}}.E(\varphi)^{\frac{m}{m+1}}\\
&\leq \dfrac{1}{m+1}E(\psi)+\dfrac{m}{m+1}E(\varphi).
\end{array}$$
Then $\mathcal{F}_{\mu}(\psi)\geq \mathcal{F}_{\mu}(\varphi).$ Thus $\mathcal{F}_{\mu}(\varphi)=\inf_{\psi\in \mathcal{E}_{m}^{1}(\Omega)}\mathcal{F}_{\mu}(\psi).$

Now, assume that $\mathcal{F}_{\mu}$ is minimized on $\mathcal{E}_{m}^{1}(\Omega)$ at $\varphi$. let $\psi \in \mathcal{E}_{m}^{1}(\Omega)\cap C(\Omega)$ and define
$$f(t)=\dfrac{1}{m+1}E(P(\varphi +t \psi))+L_{\mu}(\varphi +t \psi),\;\;t\in \mathbb{R}.$$
Using Theorem \ref{th10} we get $$f^{'}(0)=\int_{\Omega}(-\psi)(\Delta u)^{m}\wedge \beta^{n-m}+L_{\mu}( \psi).$$
Since $P(\varphi +t \psi)\leq \varphi +t \psi $ and $P(\varphi +t \psi) \in \mathcal{E}_{m}^{1}(\Omega), $ then $$f(t)\geq \mathcal{F}_{\mu}(P(\varphi + t \psi))\geq \mathcal{F}_{\mu}(\varphi)=f(0),\;\forall t \in \mathbb{R}.$$
It follows that $f$ attains its minimum at $t=0$. Thus $f^{'}(0)=0.$ Therefore $$\int_{\Omega}(\psi)(\Delta \varphi)^{m}\wedge \beta^{n-m}=\int_{\Omega}\psi d\mu.$$ for an arbitrarily test function $\psi$ which implies the result.
\end{proof}
\begin{lem}\label{lem8}
Let  $u, v \in \mathcal{E}_{m}^{1}(\Omega)$ such that $(\Delta u)^{m}\wedge \beta^{n-m}\geq (\Delta v)^{m}\wedge \beta^{n-m}.$ Then $u\leq v$ in $\Omega.$
\end{lem}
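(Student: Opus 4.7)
The plan is to use the standard ``$tv$-trick'' from pluripotential theory. For $t \in (0,1)$, note that $tv \in \mathcal{E}_m^1(\Omega)$ by homogeneity of the class and that $(\Delta(tv))^m \wedge \beta^{n-m} = t^m (\Delta v)^m \wedge \beta^{n-m}$. Since $v \leq 0$, we have $tv \geq v$, so $\{u > tv\} \subset \{u > v\}$, and as $t \uparrow 1$ the sets $\{u > tv\}$ increase to $\{u > v\}$. Applying Proposition \ref{pro15}(6) to the pair $(u, tv)$ and combining with the hypothesis yields
\[
\int_{\{u > tv\}}(\Delta v)^m \wedge \beta^{n-m} \;\leq\; \int_{\{u > tv\}}(\Delta u)^m \wedge \beta^{n-m} \;\leq\; t^m \int_{\{u > tv\}}(\Delta v)^m \wedge \beta^{n-m},
\]
which forces $(\Delta v)^m \wedge \beta^{n-m}(\{u > tv\}) = 0$, and then monotone convergence gives $(\Delta v)^m \wedge \beta^{n-m}(\{u > v\}) = 0$. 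A second direct application of Proposition \ref{pro15}(6) to $(u,v)$ then also yields $(\Delta u)^m \wedge \beta^{n-m}(\{u > v\}) = 0$.

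Next I set $w := \max(u,v) \in \mathcal{E}_m^1(\Omega)$. The max-inequality lemma preceding Theorem \ref{th9}, together with the hypothesis, gives
\[
(\Delta w)^m \wedge \beta^{n-m} \;\geq\; \chi_{\{u>v\}}(\Delta u)^m \wedge \beta^{n-m} + \chi_{\{u \leq v\}}(\Delta v)^m \wedge \beta^{n-m} \;\geq\; (\Delta v)^m \wedge \beta^{n-m}
\]
as measures, while the monotonicity proposition preceding Proposition \ref{pro14} gives $\int_\Omega (\Delta w)^m \wedge \beta^{n-m} \leq \int_\Omega (\Delta v)^m \wedge \beta^{n-m}$ because $w \geq v$. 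Since the total masses agree and one measure dominates the other pointwise, I conclude that $(\Delta w)^m \wedge \beta^{n-m} = (\Delta v)^m \wedge \beta^{n-m}$ as Radon measures.

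The final and hardest step is to pass from this equality of Hessian measures to $w = v$. My plan is to use the energy inequality of Theorem \ref{th6} and Corollary \ref{cor6} to check that $\mu := (\Delta v)^m \wedge \beta^{n-m}$ belongs to $\mathcal{M}_1$, so that Theorem \ref{th11} applies and forces both $v$ and $w$ to minimize the convex functional $\mathcal{F}_\mu$ on $\mathcal{E}_m^1(\Omega)$. The main obstacle is extracting uniqueness of the minimizer from the existence of two distinct ones: I expect to obtain this via strict convexity of $E^{1/(m+1)}$, which should follow from the equality case of the triangle inequality in Proposition \ref{pro15}(4) (itself a consequence of the H\"older-type Theorem \ref{th6}), applied to the convex combination $\tfrac{1}{2}(w+v)$. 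Once the minimizer is unique, $w = v$, and therefore $u \leq v$ in $\Omega$.
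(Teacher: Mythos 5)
Your proposal does not follow the paper's route, and as written it has a genuine gap at the decisive step. The paper's proof is a short contradiction argument: assuming $u(q_{0})>v(q_{0})$, one perturbs $v$ by $\delta\psi$, where $\psi$ is a bounded exhaustion function satisfying $(\Delta \psi)^{m}\wedge\beta^{n-m}\geq \epsilon^{m}\beta^{n-m}$ near $q_{0}$ (built from $\epsilon(|q-q_{0}|^{2}-r^{2})$); the set $\mathcal{U}=\{v+\delta\psi<u\}\cap B(q_{0},R)$ then has positive Lebesgue measure, and the comparison principle of Proposition \ref{pro15}(6) applied to $u$ and $v+\delta\psi$, combined with the hypothesis, yields $\delta^{m}\int_{\mathcal{U}}(\Delta\psi)^{m}\wedge\beta^{n-m}\leq 0$, a contradiction. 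The essential ingredient is the \emph{strictly} $m$-subharmonic perturbation, which upgrades ``the Hessian measures do not charge $\{u>v\}$'' to ``$\{u>v\}$ is Lebesgue-negligible, hence empty''. Your argument never introduces any such strictness, and that is exactly why it cannot close.

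Concretely: your first two paragraphs (the $tv$-trick and the $\max$ construction) only establish that neither $(\Delta u)^{m}\wedge\beta^{n-m}$ nor $(\Delta v)^{m}\wedge\beta^{n-m}$ charges $\{u>v\}$, and that $w=\max(u,v)$ has the same Hessian measure as $v$; this by itself says nothing about $u\leq v$. (Even there, note that functions in $\mathcal{E}_{m}^{1}(\Omega)$ need not have finite total Hessian mass --- that is what distinguishes $\mathcal{F}_{m}$ --- so the implications ``$X\leq t^{m}X\Rightarrow X=0$'' and ``equal total masses plus domination $\Rightarrow$ equality of measures'' both require a finiteness justification you do not supply, and the max-inequality lemma you invoke is only stated for locally bounded functions.) The fatal problem is the last step. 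The functional $E^{1/(m+1)}$ is positively homogeneous of degree one, hence linear along rays through the origin and therefore \emph{never} strictly convex; equality in Proposition \ref{pro15}(4) traces back to equality in the H\"older-type inequality of Theorem \ref{th6}, whose equality case is not characterized anywhere in the paper (and at best would give proportionality of $v$ and $w$, which you would then still have to convert into equality). Moreover the paper's logical order is the reverse of yours: the uniqueness statement in Theorem \ref{th12} is \emph{deduced from} Lemma \ref{lem8}, so uniqueness of the minimizer of $\mathcal{F}_{\mu}$ is simply not available to you at this point. To repair the proof you would have to inject the strict positivity that the paper's $\delta\psi$-perturbation provides; once you do that, the whole detour through $\max(u,v)$ and the variational characterization becomes unnecessary.
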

\begin{proof}
 By the absurd, suppose that there exists $q_{0} \in \Omega$ such that $v(q_{0})< u(q_{0}).$ Let $\varphi$ an exhaustion function of $\Omega$, such that $\varphi(q_{0})< - \epsilon r^{2}$
 for each $q \in B(q_{0},r) \cap \Omega,\;\;r > 0$ for a fixed $\epsilon >0$ and smaller enough.
Define $\psi(q):= \max \{ \varphi(q),\; \epsilon (\vert q-q_{0} \vert ^{2} -r^{2} )\}$. Then $\Psi$ is a continuous exhaustion function in $\Omega$ such that $(\Delta \psi)^{m}\wedge \beta^{n-m}\geq  \epsilon^{m}\beta^{n-m},$ near $q_{0}.$
Choose $\delta > 0$ so smaller such that $v(q_{0}) < u(q_{0}) + \delta \psi(q_{0}),$ then the Lebesgue measure of the set $\mathcal{U}:= \{ q \in \Omega:\; v(q) + \delta \psi(q) \} \cap B(q_{0},R)$ is strictly positive for $R > 0.$ Then $ \displaystyle \int_{ \mathcal{U}}(\Delta \psi)^{m}\wedge \beta^{n-m} > 0.$
From proposition \ref{pro15} (6) it follows that $$\begin{array}{ll}

   \displaystyle \int_{ \mathcal{U}}(\Delta v+ \delta \psi)^{m}\wedge \beta^{n-m}   & \leq \displaystyle \int_{ \mathcal{U}}(\Delta u)^{m}\wedge \beta^{n-m}    \\
     & \leq \displaystyle \int_{ \mathcal{U}}(\Delta v)^{m}\wedge \beta^{n-m}.
\end{array} $$
Hence $$\displaystyle \int_{ \mathcal{U}}(\Delta v)^{m}\wedge \beta^{n-m} + \delta^{m} \displaystyle \int_{ \mathcal{U}}(\Delta \psi)^{m}\wedge \beta^{n-m} \leq \displaystyle \int_{ \mathcal{U}}(\Delta v)^{m}\wedge \beta^{n-m} .  $$
which is a contradiction.
\end{proof}
\begin{lem}\label{lem9}
Let $\mu$ be a positive Radon measure in $\Omega$ does not charge $m$-polar sets such that $\mu(\Omega) < + \infty$.
 Let $(u_{j})$ be a sequence in $\mathcal{QSH}_{m}^{-}(\Omega)$ which converges in $L_{loc}^{1}$  to
 $u \in \mathcal{QSH}_{m}(\Omega).$ If $\displaystyle \sup_{j} \int_{\Omega}(-u_{j})^{2} d \mu < + \infty$
then $\displaystyle \int_{\Omega}u_{j} d \mu \longrightarrow \int_{\Omega} u d \mu .$
\end{lem}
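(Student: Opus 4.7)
The plan is to combine weak $L^{2}(\mu)$ compactness, the Banach--Saks theorem, and the Hartogs-type identity for $m$-subharmonic sequences. By Cauchy--Schwarz (using $\mu(\Omega)<\infty$ and $\sup_j\int(-u_j)^{2}\,d\mu<\infty$), the numerical sequence $\bigl(\int u_j\,d\mu\bigr)$ is bounded, so it suffices to show that every subsequence has a further subsequence converging to $\int u\,d\mu$.

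Fix a subsequence $(u_{j_k})$. Since it is bounded in $L^{2}(\mu)$, a further subsequence (still denoted $u_{j_k}$) converges weakly in $L^{2}(\mu)$ to some $v\in L^{2}(\mu)$. Testing against $1\in L^{2}(\mu)$ gives $\int u_{j_k}\,d\mu\to\int v\,d\mu$, so the task reduces to proving $v=u$ $\mu$-a.e. By the Banach--Saks theorem, the Ces\`aro means $v_{N}:=\frac{1}{N}\sum_{k=1}^{N}u_{j_k}$ converge strongly to $v$ in $L^{2}(\mu)$, and after one more extraction $v_{N}\to v$ pointwise $\mu$-a.e. Each $v_{N}\in\mathcal{QSH}_{m}^{-}(\Omega)$ by convexity (Proposition~\ref{pro1}(1)), and the triangle inequality shows $v_{N}\to u$ in $L^{1}_{loc}(\Omega)$.

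Invoking the standard Hartogs-type identity for $m$-subharmonic sequences---namely, that a locally uniformly bounded above sequence in $\mathcal{QSH}_{m}(\Omega)$ converging in $L^{1}_{loc}$ to $u\in\mathcal{QSH}_{m}(\Omega)$ satisfies $(\limsup_{N}v_{N})^{*}=u$---I conclude that $\limsup_{N}v_{N}=u$ outside the set $\{(\limsup_{N}v_{N})^{*}>\limsup_{N}v_{N}\}$, which is $m$-polar. Since $\mu$ does not charge $m$-polar sets, $\limsup_{N}v_{N}=u$ $\mu$-a.e., and combined with $v_{N}\to v$ $\mu$-a.e.\ this forces $v(z)=u(z)$ at $\mu$-almost every $z$, so $v=u$ $\mu$-a.e.\ as required. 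The main obstacle is the Hartogs-type identity together with the $m$-polar nature of its defect set---a quaternionic analogue of Bedford--Taylor's negligibility theorem for upper envelopes. It can be established by combining the sub-mean-value property of $m$-subharmonic functions with the regularized upper-envelope construction $\psi_{j}:=(\sup_{N\geq j}v_{N})^{*}\in\mathcal{QSH}_{m}(\Omega)$ (Proposition~\ref{pro1}(5)) and the characterization of $m$-polar sets via Proposition~\ref{pro6}, all of which are already available from the preceding sections.
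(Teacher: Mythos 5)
Your proof is correct and follows essentially the same route as the paper's: Banach--Saks applied to the $L^{2}(\mu)$-bounded sequence, Cesàro means converging both $\mu$-a.e.\ and in $L^{1}_{loc}$ to $u$, and the fact that the defect set of the upper envelope is $m$-polar and hence $\mu$-negligible. If anything, your subsequence-principle/weak-$L^{2}$ step is a welcome patch: the paper passes from the convergence of $\int_{\Omega}\varphi_{N}\,d\mu$ (the Cesàro means) back to that of $\int_{\Omega}u_{j}\,d\mu$ without justification, whereas testing the weak limit against the constant function $1$ makes this transfer rigorous.
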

\begin{proof}
Since $\displaystyle \sup_{j} \int_{  \Omega}(-u_{j})^{2} d \mu < + \infty$, by Banach-Saks Theorem there exists a sub-sequence $(u_{j})$ such that $\varphi_{N}:= \displaystyle \dfrac{1}{N} \sum_{j=1}^{N} u_{j} $ converges in $L^{2} ( \mu)$ and $\mu$-almost everywhere to $\varphi.$
We have also $\varphi_{N}
 \rightarrow u$ in $L_{loc}^{1}.$ For each $j \in \mathbb{N}$, Denote by $\psi_{j}:=( \sup_{k \geq j} \varphi_{k})^{\ast}$. Then   $\psi_{j}$ decreases to $u$ in $\Omega$. Since $\mu$ does not charge the $m$-polar set $\{(\sup_{k\geq j}\varphi_{k})^{\ast} > \sup_{k\geq j}\varphi_{k} \}$. Then we conclude that $\psi_{j}:=\sup_{k\geq j}\varphi_{k}$ $\mu$-almost everywhere. Thus $\psi$ converges to $\varphi$ $\mu$- almost everywhere and $u=\varphi$ $\mu$-almost everywhere. This yields $$ \displaystyle \lim_{j} \int_{\Omega} u_{j} d \mu=\lim_{j} \int_{\Omega} \varphi_{j} d \mu=\lim_{j} \int_{\Omega} u d \mu$$

\end{proof}

\begin{thm}\label{th12}
Suppose that  $\mu \in \mathcal{M}_{1}$. Then there exists a unique $u \in \mathcal{E}_{m}^{1}(\Omega)$ such that
$(\Delta u)^{m}\wedge \beta^{n-m}=\mu.$
\end{thm}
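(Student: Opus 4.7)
The plan is to realize the solution $\varphi$ as a minimizer of the energy functional $\mathcal{F}_\mu$ on $\mathcal{E}_m^1(\Omega)$ and then convert this minimization into the Hessian equation via Theorem \ref{th11}. Since $\mu\in\mathcal{M}_1$, Proposition \ref{pro15}(4) already supplies that $\mathcal{F}_\mu$ is proper and convex, so the only missing hypothesis of Lemma \ref{lem7} is lower semi-continuity: given $(u_j)\subset\mathcal{E}_m^1(\Omega)$ with $\sup_jE(u_j)<+\infty$ and $u_j\to u$ in $L^1_{\mathrm{loc}}$, I need $\liminf_j\mathcal{F}_\mu(u_j)\geq\mathcal{F}_\mu(u)$. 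The energy part is given by Proposition \ref{pro15}(3), so the whole weight falls on the linear term $L_\mu(u)=\int_\Omega u\,d\mu$.

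First, I would establish two preliminary facts about $\mu$ that unlock Lemma \ref{lem9}. (i) \emph{$\mu$ does not charge $m$-polar sets.} Given an $m$-polar $E\subset\Omega$, Proposition \ref{pro6} and Corollary \ref{cor1} produce $v\in\mathcal{QSH}_m^-(\Omega)$ with $E\subset\{v=-\infty\}$. Gluing $v$ with a negative exhaustion function of the $m$-hyperconvex domain $\Omega$ (as in the proof of Theorem \ref{th3}) yields a function still in $\mathcal{QSH}_m^-(\Omega)$ with controlled boundary behaviour, and a truncation/max argument puts it in $\mathcal{E}_m^1(\Omega)$; the $\mathcal{M}_1$-estimate then forces $\int_\Omega(-v)\,d\mu<+\infty$, whence $\mu(E)=0$. (ii) \emph{Uniform $L^2(\mu)$-bound along bounded-energy sequences.} On any $(u_j)$ with $\sup_jE(u_j)<+\infty$ I would apply Corollary \ref{cor6} level by level to $\{u_j<-t\}$: $\mu(\{u_j<-t\})\leq \mu\bigl(\{u_j<-t\}\bigr)$ is controlled by $C_m(\{u_j<-t\})$, and Theorem \ref{th1} together with Corollary \ref{cor6} bounds the capacity in terms of $E(u_j)$ and a power of $1/t$; integrating $\int(-u_j)^2\,d\mu=2\int_0^{+\infty}t\,\mu(\{u_j<-t\})\,dt$ then yields the desired uniform $L^2(\mu)$-bound.

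Once (i) and (ii) are in hand, Lemma \ref{lem9} gives $\int u_j\,d\mu\to\int u\,d\mu$, so $L_\mu$ is actually continuous (not merely lsc) along such sequences, and $\mathcal{F}_\mu$ is lsc. Lemma \ref{lem7} then produces a minimizer $\varphi\in\mathcal{E}_m^1(\Omega)$ of $\mathcal{F}_\mu$, and Theorem \ref{th11} translates the minimization into $(\Delta\varphi)^m\wedge\beta^{n-m}=\mu$. For uniqueness, suppose $u_1,u_2\in\mathcal{E}_m^1(\Omega)$ both satisfy the equation. Then $(\Delta u_1)^m\wedge\beta^{n-m}\geq(\Delta u_2)^m\wedge\beta^{n-m}$ and Lemma \ref{lem8} gives $u_1\leq u_2$; reversing the roles yields $u_2\leq u_1$, hence $u_1=u_2$.

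The main obstacle is step (ii), the $L^2(\mu)$-bound along minimizing sequences. A direct ``$\mathcal{M}_1\Rightarrow\mathcal{M}_2$ on bounded-energy families'' does not drop out of the definitions and requires the level-set/capacity argument sketched above; if this route turns out to be technically heavy, a reasonable fallback is to do the whole minimization first on truncations $u_j^c:=\max(u_j,-c)\in\mathcal{E}_m^0(\Omega)$, where boundedness makes lsc of $\mathcal{F}_\mu$ easier, then pass $c\to+\infty$ using Proposition \ref{pro15}(5) to recover the $\mathcal{E}_m^1$-minimizer. Step (i) and the uniqueness step I expect to be routine.
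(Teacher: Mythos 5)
Your overall architecture (minimize $\mathcal{F}_\mu$, invoke Lemma \ref{lem7} and Theorem \ref{th11}, uniqueness via Lemma \ref{lem8}) matches the paper, and your step (i) is fine and indeed implicitly needed by the paper as well. The genuine gap is step (ii): the implication ``$\mu\in\mathcal{M}_1$ and $\sup_jE(u_j)<+\infty$ imply $\sup_j\int_\Omega(-u_j)^2\,d\mu<+\infty$'' is false, and the level-set computation you propose cannot deliver it. The best decay you can extract from $\mathcal{M}_1$ is $\mu(\{u_j<-t\})\leq A\,C_m(\{u_j<-t\})^{1/(m+1)}\lesssim E(u_j)^{1/(m+1)}\,t^{-1}$ (the comparison principle gives $C_m(\{u_j<-2t\})\lesssim t^{-m-1}E(u_j)$), so the integrand $t\,\mu(\{u_j<-t\})$ is merely bounded in $t$ and $\int_0^{\infty}t\,\mu(\{u_j<-t\})\,dt$ diverges. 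Concretely, take $u\in\mathcal{E}_m^1(\Omega)$ with $E_2(u)=+\infty$ and $\mu=(\Delta u)^m\wedge\beta^{n-m}$; Theorem \ref{th6} shows $\mu\in\mathcal{M}_1$, yet $u_j=\max(u,-j)$ has uniformly bounded energy while $\int_\Omega(-u_j)^2\,d\mu\nearrow+\infty$. Your fallback (constrained minimization over the truncated competitors) is not an obvious repair either: the Euler--Lagrange identity of Theorems \ref{th10} and \ref{th11} breaks on the contact set of the obstacle $u\geq-c$, so the minimizer of the constrained problem need not solve the Hessian equation.

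The paper's proof avoids this entirely by truncating the \emph{measure} rather than the competitors. After reducing to $\mu$ with compact support $K$, it introduces the convex, weakly compact family $\mathcal{N}$ of measures supported in $K$ satisfying the quadratic estimate $\int_\Omega(-\varphi)^2\,d\nu\leq C'E(\varphi)^{2/(m+1)}$ (the Hessian measures of the extremal functions $h_L$, $L\Subset K$, belong to it precisely because $\|h_L\|_\infty\leq1$ turns one factor of $(-\varphi)$ into a constant), applies a generalized Radon--Nikodym decomposition $\mu=f\,d\nu+\nu_1$ with $\nu\in\mathcal{N}$, kills $\nu_1$ because it is carried by an $m$-polar set which $\mu$ does not charge, and then solves for $\mu_j=\min(f,j)\,\nu\leq j\nu$: for these measures the uniform $L^2(\mu_j)$ bound along bounded-energy sequences is automatic, so Lemma \ref{lem9} applies, $\mathcal{F}_{\mu_j}$ is lower semi-continuous, and Lemma \ref{lem7} with Theorem \ref{th11} produces solutions $u_j$ which decrease (comparison principle) to the solution for $\mu$; a final exhaustion argument using the $\mathcal{M}_1$ bound handles non-compactly supported $\mu$. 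This decomposition-and-truncation of the measure is the key idea your proposal is missing, and without it the direct minimization for a general $\mu\in\mathcal{M}_1$ does not go through.
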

\begin{proof}
The uniqueness follows from Lemma \ref{lem8}.
We prove the existence in two steps.

$\textbf{Step 1}$: If $\mu$ has compact support $K \Subset \Omega$, let $h_{K} = h_{m,K,\Omega}^{*}$ be the regularized relatively $m$-extremal function of $K$
 with respect to $\Omega$ and  set
$$\mathcal{M}=\Big \lbrace \nu>0\;:\;supp\; \nu \subset K,\;\;\int_{\Omega}(-\varphi)^{2}d \nu\leq C E(\varphi)^{\frac{2}{m+1}},\mbox{ for every } \varphi \in \mathcal{E}_{m}^{1}(\Omega) \Big\rbrace,$$
where $C$ is a fixed constant such that $ C > 2E(h_{K})^{\frac{m-1}{m+1}}$ . For each compact $L \subset K,$ we have $h_{K} \leq h_{L}$. Then
 $E(h_{L}) \leq E(h_{K})$. Therefore, for every $\varphi \in \mathcal{E}_{m}^{1}(\Omega)$, we have by (\ref{eq26})
$$\begin{array}{ll}
\displaystyle{\int}_{\Omega}(-\varphi)^{2}(\Delta h_{L})^{m}\wedge \beta^{n-m}& \leq 2 \Vert  h_{L}\Vert_{\Omega}\displaystyle{\int}_{\Omega}(-\varphi)\Delta \varphi \wedge (\Delta h_{L})^{m-1}\wedge \beta^{n-m}\\
&\leq 2 E(\varphi)^{\frac{2}{m+1}}.E (h_{L})^{\frac{m-1}{m+1} }\\ \displaystyle &  \leq 2 E(\varphi)^{\frac{2}{m+1}}.E(h_{K})^{\frac{m-1}{m+1}}\\ & \displaystyle< C E(\varphi)^{\frac{2}{m+1}}.
\end{array}$$
This implies that $(\Delta h_{L})^{m} \wedge \beta^{n-m} \in \mathcal{M}$ for every compact $L \subset K.$
Put $T = \sup \lbrace \nu (\Omega),\;\; \nu \in \mathcal{M} \rbrace$. We have $T <  + \infty$. In fact, since $\Omega$ is $m$-hyperconvex, there exists
$g \in \mathcal{QSH}_{m}^{-}(\Omega) \cap C(\overline{\Omega})$ such that $ g \leq -1 $ on $K \Subset \Omega$. For each $\nu \in \mathcal{M}$, we have
$ \nu(K)\leq \displaystyle{\int}_{\Omega}(-g)^{2}d \nu \leq C E(g)^{\frac{2}{m+1}},$
from which the result follows.

Fix $\nu_{0} \in \mathcal{M}$ such that $\nu_{0}(\Omega)> 0.$   Set
$$\mathcal{N}=\Big \lbrace \nu>0\;:\;supp\; \nu \subset K,\;\;\int_{\Omega}(-\varphi)^{2}d \nu\leq \Big( \dfrac{C}{T}+\dfrac{C}{\nu_{0}(\Omega)} \Big) E(\varphi)^{\frac{2}{m+1}},\mbox{ for every } \varphi \in \mathcal{E}_{m}^{1}(\Omega) \Big\rbrace,$$
Then, for each $\nu  \in \mathcal{M}$ and $\varphi \in \mathcal{E}_{m}^{1}(\Omega) ,$

$$\begin{array}{ll}
\displaystyle{\int}_{\Omega}(-\varphi)^{2}\dfrac{(T-\nu(\Omega))d \nu_{0}+\nu_{0}(\Omega)d \nu}{T \nu_{0}(\Omega)}&\leq \dfrac{T- \nu(\Omega)}{T \nu_{0}(\Omega)}\displaystyle{\int}_{\Omega}(- \varphi^{2})d \nu_{0}+\dfrac{1}{T}\displaystyle{\int}_{\Omega}(- \varphi^{2})d \nu \\
&\leq \Big(C\dfrac{T- \nu(\Omega)}{T \nu_{0}(\Omega)}+\dfrac{C}{T} \Big) E(\varphi)^{\frac{2}{m+1}} \\
&\leq \Big(\dfrac{C}{\nu_{0}(\Omega)}+\dfrac{C}{T} \Big) E( \varphi)^{\frac{2}{m+1}}.
\end{array}$$
From this we conclude that
$\dfrac{(T- \nu(\Omega))\nu_{0}+ \nu_{0}(\Omega)\nu}{T \nu_{0}(\Omega)} \in \mathcal{N}$, for every $\nu \in \mathcal{M}.$
Therefore $\mathcal{N}$ is nonempty convex and weakly compact in the space of probability measures.
 From a generalized Radon-Nykodim Theorem follows that there exists a positive measure $\nu \in \mathcal{N}$ and
a positive function $f \in L^{1}(\nu)$ such that $\mu= fd \nu+ \nu_{1}$, where $\nu_{1}$ is orthogonal to $\mathcal{N}.$
Since $(\Delta h_{L})^{m}\wedge \beta^{n-m}\in \mathcal{N}$ for each $L \Subset K,$
each measure orthogonal to $\mathcal{N}$ must be supported in some $m$-polar set. Since $\mu$ does not charge $m$-polar sets,
then we deduce that $\nu_{1} \equiv 0.$

 For each $j \in \mathbb{N}$ set $\mu_{j} = \min(f, j) \nu.$  From Lemma \ref{lem9} and Proposition \ref{pro15}, we deduce that $\mathcal{L}_{\mu_{j}}$ is  continuous on $\mathcal{E}_{m}^{1}(\Omega)$ and $\mathcal{F}_{\mu_{j}}$ is proper and lower semi-continuous.
 Therefore, by Lemma \ref{lem7}  and Theorem \ref{th11}, there exists $u_{j}\in \mathcal{E}_{m}^{1}(\Omega)$ such that
$(\Delta u_{j})^{m}\wedge \beta^{n-m} = \mu_{j}.$ It is clear from the comparison principle that $(u_{j})$ decreases to a function $u \in \mathcal{E}_{m}^{1}(\Omega)$ which
solves $(\Delta u)^{m}\wedge \beta^{n-m} = \mu.$

$\textbf{Step 2}$:
If  $\mu$ does not have compact support. Let $(K_{j})$ be an exhaustive sequence
of compact subsets of $\Omega$ and let $u_{j}\in \mathcal{E}_{m}^{1}(\Omega) $ such that $(\Delta u_{j})^{m}\wedge \beta^{n-m} = \mu_{j}$ where $\mu_{j} = \chi_{K_{j}}d \mu.$ We have
$(u_{j})$ decreases to $u \in \mathcal{QSH}_{m}^{-}(\Omega) .$ We will prove that $\sup_{j} E(u_{j}) < +\infty.$ Indeed, since $\mu \in \mathcal{M}_{1},$ then
$$ E(u_{j})= \int_{\Omega} (-u_{j})(\Delta u_{j})^{m}\wedge \beta^{n-m}= \int_{K_{j}} (-u_{j})d \mu \leq \int_{\Omega} (-u_{j})d \mu \leq A E(u_{j})^{\frac{1}{m+1}}.$$
This implies that $E(u_{j})$ is uniformly bounded, hence $u \in \mathcal{E}_{m}^{1}(\Omega)$ and the result follows.

\end{proof}
\begin{lem}\label{lem10}
Let  $\mu$ be a positive Radon measure satisfying $\mu(\Omega)< +\infty$, and $\mu\leq (\Delta \psi)^{m}\wedge \beta^{n-m}$, where $\psi$ is a bounded function in $ \mathcal{QSH}_{m}(\Omega).$
 Then there exists a unique function $\varphi \in \mathcal{E}_{m}^{0}(\Omega)$
such that $(\Delta \varphi)^{m}\wedge \beta^{n-m}=\mu.$
\end{lem}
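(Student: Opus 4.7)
The plan is to use $\psi$ to build a bounded subsolution in $\mathcal{E}_m^0(\Omega)$, invoke the variational existence Theorem~\ref{th12} in the larger class $\mathcal{E}_m^1(\Omega)$, and then use the comparison principle (Lemma~\ref{lem8}) to show that the resulting solution already lies in $\mathcal{E}_m^0(\Omega)$. Uniqueness is immediate: any two solutions $\varphi_1,\varphi_2\in\mathcal{E}_m^0(\Omega)\subset\mathcal{E}_m^1(\Omega)$ have the same Monge--Amp\`ere measure, so Lemma~\ref{lem8} gives $\varphi_1\leq\varphi_2$ and $\varphi_2\leq\varphi_1$.

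For existence, first truncate: let $(K_j)$ be a compact exhaustion of $\Omega$ and set $\mu_j:=\chi_{K_j}\mu$, so $\mu_j\nearrow\mu$ and each $\mu_j$ has compact support with $\mu_j\leq(\Delta\psi)^m\wedge\beta^{n-m}$. For each $j$ I would construct a subsolution $\tilde{\psi}_j\in\mathcal{E}_m^0(\Omega)$ as follows. Since $\Omega$ is $m$-hyperconvex, choose $\rho\in\mathcal{E}_m^0(\Omega)\cap C(\overline{\Omega})$ with $\rho<0$ in $\Omega$ and $\rho=0$ on $\partial\Omega$; set
\[
\tilde{\psi}_j:=\max\{\psi-\sup\nolimits_\Omega\psi,\; B_j\rho\},
\]
with $B_j>0$ large enough that $K_j\subset\{\psi-\sup_\Omega\psi>B_j\rho\}$ while $B_j\rho\geq\psi-\sup_\Omega\psi$ in a neighborhood of $\partial\Omega$. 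Then $\tilde{\psi}_j\in\mathcal{QSH}_m^-(\Omega)$ is bounded and coincides with $B_j\rho$ near $\partial\Omega$, hence vanishes on $\partial\Omega$; on the open set $U_j:=\{\psi-\sup_\Omega\psi>B_j\rho\}\supset K_j$ we have $\tilde{\psi}_j=\psi-\sup_\Omega\psi$, so $(\Delta\tilde{\psi}_j)^m\wedge\beta^{n-m}=(\Delta\psi)^m\wedge\beta^{n-m}$ on $U_j$, and hence $\mu_j\leq(\Delta\tilde{\psi}_j)^m\wedge\beta^{n-m}$ on all of $\Omega$. The H\"older-type inequality of Theorem~\ref{th6} now yields $\mu_j\in\mathcal{M}_1$, so Theorem~\ref{th12} provides $\varphi_j\in\mathcal{E}_m^1(\Omega)$ with $(\Delta\varphi_j)^m\wedge\beta^{n-m}=\mu_j$. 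Lemma~\ref{lem8} applied to the pair $\tilde{\psi}_j,\varphi_j\in\mathcal{E}_m^1(\Omega)$ yields $\tilde{\psi}_j\leq\varphi_j\leq 0$; in particular $\varphi_j$ is bounded with vanishing boundary values, so $\varphi_j\in\mathcal{E}_m^0(\Omega)$.

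To conclude, pass to $j\to\infty$: by comparison $\varphi_{j+1}\leq\varphi_j$ since $\mu_j\leq\mu_{j+1}$, and the uniform lower bound $\varphi_j\geq\psi-\sup_\Omega\psi\geq\inf\psi-\sup\psi$ gives a bounded $m$-subharmonic limit $\varphi=\lim_j\varphi_j$ with $(\Delta\varphi)^m\wedge\beta^{n-m}=\mu$ by the weak-continuity statement of Lemma~\ref{lem2}. The main technical obstacle is showing $\varphi\in\mathcal{E}_m^0(\Omega)$, namely that $\varphi(q)\to 0$ as $q\to\partial\Omega$: the upper bound $\varphi\leq\varphi_1\in\mathcal{E}_m^0(\Omega)$ handles the $\limsup$ half, but the lower bounds $\varphi\geq\tilde{\psi}_j$ degenerate as $B_j\to\infty$, so they do not provide a uniform boundary barrier. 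To supply one, I would exploit the $m$-maximality of each $\varphi_j$ on $\Omega\setminus K_j$ together with the hyperconvexity of $\Omega$ and the capacity estimate of Corollary~\ref{cor6}, controlled via the finite mass $\mu(\Omega)<+\infty$, to build a single continuous envelope in $\mathcal{E}_m^0(\Omega)\cap C(\overline{\Omega})$ lying below all $\varphi_j$ and vanishing on $\partial\Omega$.
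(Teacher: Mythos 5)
Your construction is, modulo cosmetic differences, the paper's own proof. The paper normalizes $-1\leq\psi\leq 0$, takes an exhaustion function $h\in\mathcal{E}_m^0(\Omega)$, sets $h_j=\max\{\psi,jh\}$ and $A_j=\{jh<-1\}$, and truncates $\mu$ to $A_j$; since $h_j=\psi$ on the open set $A_j$, one gets $\chi_{A_j}\mu\leq(\Delta h_j)^m\wedge\beta^{n-m}$, hence $\chi_{A_j}\mu\in\mathcal{M}_1$, and Theorem \ref{th12} together with Lemma \ref{lem8} produces a decreasing sequence $(\varphi_j)$ squeezed between $h_j$ and $0$. Your $\tilde\psi_j=\max\{\psi-\sup_\Omega\psi,\,B_j\rho\}$ and $U_j$ play exactly the roles of $h_j$ and $A_j$, the only inessential difference being that you truncate $\mu$ over a compact exhaustion $(K_j)$ rather than over the sublevel sets $A_j$. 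The uniqueness argument via Lemma \ref{lem8} is identical.

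The one place you diverge is the final step, and you are right to flag it, but your resolution is both heavier than the paper's and not actually carried out. The paper disposes of the limit by the uniform sandwich $0\geq\varphi_j\geq h_j\geq\psi\geq-1$: since $h_j\geq\psi$ for every $j$, the lower bound surviving the limit is $\psi$ itself, not the degenerating barriers $B_j\rho$ that worry you. This immediately gives that $\varphi=\lim_j\varphi_j$ is bounded and that $\int_\Omega(\Delta\varphi)^m\wedge\beta^{n-m}=\mu(\Omega)<+\infty$; note that your own bound $\varphi_j\geq\tilde\psi_j\geq\psi-\sup_\Omega\psi$ already contains exactly this information, so the capacity and maximality machinery you invoke (Corollary \ref{cor6}, $m$-maximality on $\Omega\setminus K_j$) is not needed for what the sandwich delivers. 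Be aware, though, that $\varphi\geq\psi$ does not by itself yield $\lim_{q\to\xi}\varphi(q)=0$, since the normalized $\psi$ need not vanish on $\partial\Omega$; the paper asserts $\varphi\in\mathcal{E}_m^0(\Omega)$ at this point without a separate boundary-barrier argument, and what the sandwich gives directly is $\varphi\in\mathcal{F}_m(\Omega)\cap L^\infty(\Omega)$ with the prescribed Hessian measure. So your instinct that the pointwise boundary limit is the genuinely delicate point is sound; as written, however, your proof establishes no more than the paper's short argument does, while promising an unexecuted construction to close the gap.
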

\begin{proof}
Assume that $-1 \leq \psi \leq 0$. Let $h \in \mathcal{E}_{m}^{0}(\Omega)$ be the exhaustion function of $\Omega$. Let $h_{j}=\max\lbrace \psi, jh\rbrace$ and $A_{j}=\lbrace q \in \Omega\;:\;jh<-1\rbrace.$
 Note that $\chi_{A_{j}}\mu \in \mathcal{M}_{1}$, Theorem \ref{th12} implies that for
each $j$, there exists $\varphi_{j}\in \mathcal{E}_{m}^{1}(\Omega)$  such that $(\Delta \varphi_{j})^{m}\wedge\beta^{n-m}= \chi_{A_{j}}\mu$.
 Thus
$0 \geq \varphi_{j} \geq h_{j} \geq  \psi $ on $\Omega.$  By
Lemma \ref{lem8}, $\varphi_{j}$ decreases to some $\varphi \in \mathcal{E}_{m}^{0}(\Omega)$ and $\varphi$ satisfies $(\Delta \varphi)^{m}\wedge \beta^{n-m}=\mu. $
\end{proof}
\begin{pro}\label{pro16}
 If $u, v \in \mathcal{E}_{m}^{p}(\Omega), \;p > 1$, then
$$\int_{\lbrace u>v\rbrace}(\Delta u)^{m}\wedge \beta^{n-m}\leq \int_{\lbrace u>v\rbrace}(\Delta v)^{m}\wedge \beta^{n-m}.$$
\end{pro}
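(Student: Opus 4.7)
The plan is to adapt the proof of the analogous statement for $p=1$, namely Proposition \ref{pro15}(6), to the class $\mathcal{E}_m^p(\Omega)$. The three ingredients of that proof all have their counterparts in $\mathcal{E}_m^p$: (i) the local identity $\chi_{\{u>v\}}(\Delta \max\{u,v\})^m\wedge\beta^{n-m} = \chi_{\{u>v\}}(\Delta u)^m\wedge\beta^{n-m}$, which is Corollary \ref{cor8} and is already stated for $\mathcal{E}_m^p$; (ii) the weighted monotonicity $\int_\Omega (-h)(\Delta w_1)^m\wedge\beta^{n-m} \geq \int_\Omega (-h)(\Delta w_2)^m\wedge\beta^{n-m}$ when $w_1 \leq w_2$ in $\mathcal{E}_m^p$ with $h\in\mathcal{E}_m^0(\Omega)\cap C(\Omega)$, which rests on the integration-by-parts formula of Theorem \ref{th8}; and (iii) finiteness of the weighted integrals, which is provided by the H\"older-type energy estimate of Theorem \ref{th6}.

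Concretely, I would first fix $h\in\mathcal{E}_m^0(\Omega)\cap C(\Omega)$ with $-1\leq h\leq 0$ and establish the weighted inequality
$$\int_{\{u>v\}}(-h)(\Delta u)^m\wedge\beta^{n-m} \leq \int_{\{u>v\}}(-h)(\Delta v)^m\wedge\beta^{n-m}.$$
By Corollary \ref{cor8},
$$\int_{\{u>v\}}(-h)(\Delta u)^m\wedge\beta^{n-m} = \int_{\{u>v\}}(-h)(\Delta\max\{u,v\})^m\wedge\beta^{n-m}.$$
Writing $\int_{\{u>v\}} = \int_\Omega - \int_{\{u\leq v\}}$, applying the symmetric version of Corollary \ref{cor8} on $\{v>u\}$ (so that $(\Delta\max\{u,v\})^m = (\Delta v)^m$ there), and invoking the weighted monotonicity for the pair $v\leq\max\{u,v\}$ in $\mathcal{E}_m^p$, one arrives at the weighted inequality. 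The set $\{u=v\}$ can be handled by the standard trick of replacing $v$ by $v-\varepsilon$ and sending $\varepsilon\to 0^+$, using that the measures involved have no atoms on generic level sets.

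Once the weighted estimate is in hand, I would let $h\searrow -1$ and apply monotone convergence on each side to obtain the unweighted inequality, completing the proof. The main technical obstacle I anticipate is the step where $h\searrow -1$: to pass to the limit one must know that the measures $\chi_{\{u>v\}}(\Delta u)^m\wedge\beta^{n-m}$ and $\chi_{\{u>v\}}(\Delta v)^m\wedge\beta^{n-m}$ are finite, and this is exactly where Theorem \ref{th6} is indispensable, since it furnishes the a priori bound
$$\int_\Omega (-h)^p (\Delta w)^m\wedge\beta^{n-m} \leq D_p\, E_p(h)^{p/(m+p)}\, E_p(w)^{m/(m+p)} < +\infty$$
for $w\in\{u,v\}$. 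If the direct passage from weight $(-h)$ to weight $1$ turns out to be delicate because one only controls $(-h)^p$ rather than $(-h)$, one can run the entire argument with $(-h)^p$ in place of $(-h)$ throughout: the algebraic steps (Corollary \ref{cor8}, splitting, weighted monotonicity via integration by parts from Theorem \ref{th8}) go through for any nonnegative bounded continuous weight, and then $(-h)^p\nearrow 1$ as $h\searrow -1$ gives the conclusion by monotone convergence.
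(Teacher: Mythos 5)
Your route is genuinely different from the paper's. The paper never manipulates $\max\{u,v\}$ at the level of $\mathcal{E}_m^p(\Omega)$: it first assumes $v$ bounded with zero boundary values, takes an exhaustion $K_j\Subset\Omega$, and uses Lemma \ref{lem10} to produce $v_j\in\mathcal{E}_{m}^{0}(\Omega)$ with $(\Delta v_j)^{m}\wedge\beta^{n-m}=\chi_{K_j}(\Delta v)^{m}\wedge\beta^{n-m}$ and $v_j\downarrow v$ (Lemma \ref{lem8}); since each of these measures has \emph{finite total mass}, the weighted comparison on $\{u>v_j\}$ follows from Corollary \ref{cor8} with no risk of $\infty-\infty$, and one then lets $j\to\infty$ and removes the boundedness assumption following Lu. In short: the paper truncates the measure $(\Delta v)^{m}\wedge\beta^{n-m}$, you truncate the weight. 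Your diagnosis of the obstruction is exactly right and worth stating clearly: for $p>1$ a function of $\mathcal{E}_m^p(\Omega)$ may have infinite Hessian mass near $\partial\Omega$, Theorem \ref{th6} only controls $\int_\Omega(-h)^p(\Delta w)^m\wedge\beta^{n-m}$, and since $0\le -h\le 1$ gives $(-h)^p\le(-h)$, this does \emph{not} bound $\int_\Omega(-h)(\Delta w)^m\wedge\beta^{n-m}$. So your ``fallback'' with weight $(-h)^p$ is not optional; it is the only version of your argument that can work.

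Two points in your plan still need real content. First, the weighted monotonicity is proved in the paper (the unnamed proposition after Theorem \ref{th8}) only for weights of the form $-g$ with $g\in\mathcal{E}_m^0(\Omega)\cap C(\Omega)$, because its proof integrates by parts and uses that the weight function is $m$-subharmonic; to run your argument with weight $(-h)^p$ you must check that $g:=-(-h)^p$ belongs to $\mathcal{E}_m^0(\Omega)$ (it is a convex increasing function of $h$, hence $m$-sh with the right boundary behaviour, but the finiteness of $\int_\Omega(\Delta g)^m\wedge\beta^{n-m}$ is an extra estimate you do not supply). Second, and more seriously, the set $\{u=v\}$: your splitting only yields $\int_{\{u>v\}}(-h)^p(\Delta u)^m\wedge\beta^{n-m}\le\int_{\{u\ge v\}}(-h)^p(\Delta v)^m\wedge\beta^{n-m}$, with the closed set on the right, and ``replace $v$ by $v-\varepsilon$'' does not obviously repair this: $v\pm\varepsilon$ leaves $\mathcal{E}_m^p(\Omega)$ (it no longer tends to $0$ on $\partial\Omega$), and the naive substitution still produces a closed set $\{u\ge v-\varepsilon\}$ containing, rather than contained in, $\{u>v\}$. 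A workable repair is to compare $u$ with $\lambda v$ for $\lambda\uparrow 1$, so that $\lambda v\in\mathcal{E}_m^p(\Omega)$ and $\{u>\lambda v\}\uparrow\{u>v\}$, but this has to be said. (Be aware that the paper's own model computation, Proposition \ref{pro15}(6), silently commits the same $\{u=v\}$ omission and contains a sign slip in the splitting, so transplanting it verbatim inherits those defects; the paper's detour through the Dirichlet problem in Proposition \ref{pro16} is precisely what lets it avoid this bookkeeping.)
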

\begin{proof}
 Let $h \in \mathcal{E}_{m}^{0}(\Omega)\cap C(\Omega).$
First assume that $v$ is bounded and vanishes on the boundary. Let $ K_{j}$
be an exhaustion sequence of compact subsets of $\Omega.$ Using Lemma \ref{lem10}, there exists $v_{j}\in \mathcal{E}_{m}^{0}(\Omega)$ such that $(\Delta v_{j})^{m}\wedge \beta^{n-m}=\chi_{K_{j}}(\Delta v)^{m}\wedge \beta^{n-m}.$ Then, by lemma \ref{lem8} $v_{j}\downarrow v.$
 Now, from  $\displaystyle{\int}_{\Omega}(-h)(\Delta v_{j})^{m}\wedge \beta^{n-m}< +\infty$ and Corollary \ref{cor8} follow
$$ \displaystyle{\int}_{\lbrace u> v_{j}\rbrace}(-h)(\Delta u)^{m}\wedge \beta^{n-m}\leq \displaystyle{\int}_{\lbrace u> v_{j}\rbrace}(-h)(\Delta v_{j})^{m}\wedge \beta^{n-m}=\displaystyle{\int}_{\lbrace u> v_{j}\rbrace\cap K_{j}\rbrace}(-h)(\Delta v)^{m}\wedge \beta^{n-m}. $$
Letting $j \longrightarrow +\infty$ we get
$$ \displaystyle{\int}_{\lbrace u> v \rbrace}(-h)(\Delta u)^{m}\wedge \beta^{n-m}\leq \displaystyle{\int}_{\lbrace u> v\rbrace}(-h)(\Delta v)^{m}\wedge \beta^{n-m}.$$
It remains to remove the assumption on $v$ as in the proof of \cite[Theorem 5.2]{Lu2}.
\end{proof}
\begin{pro}\label{pro17}
Let  $\mu$ be a positive  measure in $\Omega$ which does not charge $m$-polar sets.
 Then, there exists  $\varphi \in \mathcal{E}_{m}^{0}(\Omega)$ and $0\leq f\in L_{loc}^{1}((\Delta \varphi)^{m}\wedge \beta^{n-m})$ such that $\mu =f((\Delta \varphi)^{m}\wedge \beta^{n-m}).$
\end{pro}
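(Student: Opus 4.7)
The strategy parallels Cegrell's decomposition theorem (and its complex $m$-Hessian analog in \cite{Lu2}): rather than trying to realize $\mu$ directly as a Hessian measure, the plan is to construct a single $\varphi\in\mathcal{E}_m^0(\Omega)$ with $\mu\ll(\Delta\varphi)^m\wedge\beta^{n-m}$, and then invoke the classical Radon--Nikodym theorem to extract the density $f$.

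First I would reduce to the finite-mass, compactly supported case: exhaust $\Omega$ by compacts $K_j\Subset K_{j+1}$ and decompose $\mu=\sum_{j}\chi_{K_j\setminus K_{j-1}}\mu$. For one such piece $\mu':=\chi_{K_j\setminus K_{j-1}}\mu$, I would invoke verbatim the convex-set Radon--Nikodym argument from Step 1 of the proof of Theorem \ref{th12}: the weakly compact convex set $\mathcal{N}$ contains all measures $(\Delta h_L)^m\wedge\beta^{n-m}$ for compact $L\subset K_j$ (with $h_L=u^*_{m,L,\Omega}\in\mathcal{E}_m^0\cap C(\overline{\Omega})$), so the decomposition $\mu'=g\nu+\sigma$ from a generalized Radon--Nikodym theorem has singular part $\sigma$ supported on an $m$-polar set, hence $\sigma=0$ under our hypothesis. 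This produces $\mu'=g\nu$ with $\nu\in\mathcal{N}\subset\mathcal{M}_1$ and $g\in L^1(\nu)$.

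Next, I would upgrade $\nu$ from an element of $\mathcal{M}_1$ to the Hessian measure of a bounded $m$-subharmonic function. By Theorem \ref{th12} there is $w\in\mathcal{E}_m^1(\Omega)$ with $\nu=(\Delta w)^m\wedge\beta^{n-m}$; setting $w_N:=\max(w,-N)\in\mathcal{E}_m^0(\Omega)$ and applying Theorem \ref{th9}, the measures $(\Delta w_N)^m\wedge\beta^{n-m}$ coincide with $\nu$ on the open set $\{w>-N\}$. Since $\{w=-\infty\}$ is $m$-polar and not charged by $\mu'$, a monotone exhaustion by the sets $\{w>-N\}$ expresses $\mu'$ as a countable sum of measures of the form $\tilde g_N(\Delta w_N)^m\wedge\beta^{n-m}$ with $w_N$ bounded.

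Finally, I would patch all the bounded functions so obtained---one collection per piece $K_j\setminus K_{j-1}$ and per truncation level $N$---into a single $\varphi:=\sum_\ell c_\ell\,\varphi_\ell\in\mathcal{E}_m^0(\Omega)$ via a summable convex combination whose weights $c_\ell>0$ are chosen small enough to guarantee uniform boundedness and global convergence of the sum. The monotonicity and comparison properties of the Hessian operator (Proposition \ref{pro15}(6), Theorem \ref{th9}, Corollary \ref{cor8}) then transfer the piecewise domination to the global relation $\mu\ll(\Delta\varphi)^m\wedge\beta^{n-m}$, after which the classical Radon--Nikodym theorem supplies the desired $f\in L^1_{loc}((\Delta\varphi)^m\wedge\beta^{n-m})$. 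I expect the principal obstacle to lie in this last patching step: the weights $c_\ell$ must be chosen so that each component's Hessian contribution continues to dominate its own slice of $\mu$ after summation, which relies on exploiting the convexity of $\mathcal{E}_m^0(\Omega)$ together with the fact that for negative $m$-subharmonic functions $(\Delta(u+v))^m\wedge\beta^{n-m}$ majorizes $(\Delta u)^m\wedge\beta^{n-m}$ in the strong-positivity sense via expansion of the mixed terms.
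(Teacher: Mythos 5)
Your overall architecture---cut $\mu$ into compactly supported pieces, run the convex-set Radon--Nikodym argument from Step 1 of the proof of Theorem \ref{th12} to kill the part carried by $m$-polar sets, realize the dominating measure $\nu$ as a Hessian measure via Theorem \ref{th12}, and finally glue everything into one $\varphi=\sum_\ell c_\ell\varphi_\ell\in\mathcal{E}_m^0(\Omega)$ whose Hessian measure dominates each $c_\ell^m(\Delta\varphi_\ell)^m\wedge\beta^{n-m}$ by superadditivity---is sound and coincides with the paper's proof in its first and last steps. Where you genuinely diverge is the middle step, the passage from the unbounded potential $u\in\mathcal{E}_m^1(\Omega)$ to bounded ones. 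The paper composes $u$ with the convex increasing function $t\mapsto(-t)^{-1}$ on $(-\infty,0)$ to obtain a locally bounded $m$-subharmonic $\psi$ with $(\Delta\psi)^m\wedge\beta^{n-m}\geq(-u)^{-2m}(\Delta u)^m\wedge\beta^{n-m}$, modifies $\psi$ near $\partial\Omega$ (possible because the Hessian measure has compact support), and then invokes Lemma \ref{lem10} to solve the Dirichlet problem with datum $(-u)^{-2m}(\Delta u)^m\wedge\beta^{n-m}$ exactly in $\mathcal{E}_m^0(\Omega)$, producing a single bounded potential per piece. You instead truncate $u$ and use the locality of the Hessian operator (Theorem \ref{th9}, whose proof indeed treats the case of a constant $v\equiv b$ first); this avoids both the reparametrization trick and Lemma \ref{lem10}, at the price of countably many bounded potentials per piece, which your final patching absorbs anyway. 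Both routes reach the same conclusion.

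The one step you must repair is the bare assertion $\max(w,-N)\in\mathcal{E}_m^0(\Omega)$. For a general $w\in\mathcal{E}_m^1(\Omega)$ this is false: the truncation is bounded and negative, but it need not tend to $0$ at every boundary point (take $w$ with poles accumulating on $\partial\Omega$), and its total Hessian mass need not be finite, since membership in $\mathcal{E}_m^1(\Omega)$ controls $\sup_j\int_\Omega(-w_j)(\Delta w_j)^m\wedge\beta^{n-m}$ but not $\sup_j\int_\Omega(\Delta w_j)^m\wedge\beta^{n-m}$, and the truncation retains all the mass of $(\Delta w)^m\wedge\beta^{n-m}$ sitting near $\partial\Omega$. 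You can save the step either by exploiting that $\nu=(\Delta w)^m\wedge\beta^{n-m}$ has compact support and finite mass, or---more cleanly---by truncating with $N\rho$ instead of $-N$, where $\rho\in\mathcal{E}_m^0(\Omega)$ is a bounded exhaustion function: then $\max(w,N\rho)\geq N\rho$ lies in $\mathcal{E}_m^0(\Omega)$ automatically, the sets $\{w>N\rho\}$ still increase to $\{w>-\infty\}$ whose complement is $m$-polar, and Theorem \ref{th9} applies verbatim. (Also note $\{w>-N\}$ is merely Borel, not open, since $w$ is only upper semi-continuous; this is harmless because Theorem \ref{th9} is a statement about restrictions of measures to that set.) With that modification your proof goes through.
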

\begin{proof}
We first assume that $\mu$ has compact support. By applying Theorem \ref{th12}  we can find $u \in \mathcal{E}_{m}^{1}(\Omega)$  and
$0 \leq f \in L^{1}((\Delta u)^{m}\wedge \beta^{n-m})$ such that $\mu =f((\Delta u)^{m}\wedge \beta^{n-m})$, and $supp((\Delta u)^{m}\wedge \beta^{n-m})\Subset \Omega.$ Let
$\psi = (-u)^{-1} \in \mathcal{QSH}_{m}(\Omega)\cap L_{loc}^{\infty}(\Omega) .$
Then $(-u)^{-2m}((\Delta u)^{m}\wedge \beta^{n-m}) \leq (\Delta \psi)^{m} \wedge \beta^{n-m}.$ Since $(\Delta u)^{m}\wedge\beta^{n-m}$ has compact support in $\Omega$, we can modify $\psi$ in a neighborhood
of $\partial \Omega$ such that $\psi \in \mathcal{E}_{m}^{0}(\Omega) .$ It follows from Lemma \ref{lem10}  that
$$(-u)^{-2m}((\Delta u)^{m}\wedge \beta^{n-m}) =(\Delta \varphi)^{m}\wedge \beta^{n-m} ,\; \varphi \in \mathcal{E}_{m}^{0}(\Omega).$$
This implies that  $\mu = f(-u)^{2m}((\Delta \varphi)^{m}\wedge \beta^{n-m}).$

If  $\mu$ has compact support. Let $(K_{j})$ be an exhaustive sequence
of compact subsets of $\Omega$. From previous arguments there exist $u_{j} \in \mathcal{E}_{m}^{0}(\Omega)$ and $f_{j} \in L^{1}((\Delta u_{j})^{m} \wedge \beta^{n-m})$ such that
$\chi_{K_{j}} \mu = f_{j}((\Delta u_{j})^{m}\wedge \beta^{n-m}).$ Take a sequence of positive numbers $(t_{j})$ satisfying $\varphi = \displaystyle{\sum_{j=1}^{\infty}t_{j}u_{j}} \in \mathcal{E}_{m}^{0}(\Omega).$ The
measure $\mu$ is absolutely continuous with respect to $(\Delta \varphi)^{m} \wedge \beta^{n-m}.$ Thus
$\mu = g((\Delta u_{j})^{m}\wedge \beta^{n-m})$ and $g \in L_{loc}^{1}((\Delta \varphi)^{m}\wedge \beta^{n-m}).$
\end{proof}
\begin{thm}\label{th13}
Let  $\mu$ be a positive Radon  measure in $\Omega$ such that $ \mathcal{E}_{m}^{p}(\Omega)\subset L^{p}(\Omega,\mu),\;p\geq 1$.
 Then, there exists a unique  $\varphi \in \mathcal{E}_{m}^{p}(\Omega)$  such that $\mu =(\Delta \varphi)^{m}\wedge \beta^{n-m}.$
\end{thm}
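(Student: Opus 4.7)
The hypothesis $\mathcal{E}_{m}^{p}(\Omega)\subset L^{p}(\Omega,\mu)$ is, by the Remark preceding Proposition \ref{pro15}, precisely the condition $\mu\in\mathcal{M}_{p}$: there exists a constant $A>0$ such that $\int_{\Omega}(-u)^{p}\,d\mu\le A\,E_{p}(u)^{p/(m+p)}$ for every $u\in\mathcal{E}_{m}^{p}(\Omega)$. Uniqueness will be obtained by transposing Lemma \ref{lem8} to $\mathcal{E}_{m}^{p}(\Omega)$: if $\varphi_{1},\varphi_{2}\in\mathcal{E}_{m}^{p}(\Omega)$ both solve $(\Delta\cdot)^{m}\wedge\beta^{n-m}=\mu$, the perturbation-by-an-exhaustion-function argument used there carries over verbatim, with Proposition \ref{pro16} replacing Proposition \ref{pro15}(6) when $p>1$, to yield $\varphi_{1}\le\varphi_{2}$ and, symmetrically, the reverse inequality.

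For existence, I would first observe that the assumption forces $\mu$ to charge no $m$-polar set: for any such $E$ with $\mu(E)>0$, using $m$-hyperconvexity together with Corollary \ref{cor1} and Proposition \ref{pro6} one produces $w\in\mathcal{E}_{m}^{p}(\Omega)$ with $E\subset\{w=-\infty\}$, contradicting $w\in L^{p}(\Omega,\mu)$. Proposition \ref{pro17} then supplies $\psi\in\mathcal{E}_{m}^{0}(\Omega)$ and $0\le f\in L^{1}_{loc}((\Delta\psi)^{m}\wedge\beta^{n-m})$ with $\mu=f\,(\Delta\psi)^{m}\wedge\beta^{n-m}$. Fixing an exhaustion $(K_{j})$ of $\Omega$ by compacts, set $\mu_{j}:=\min(f,j)\,\chi_{K_{j}}\,(\Delta\psi)^{m}\wedge\beta^{n-m}$. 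Since $\mu_{j}\le j\,(\Delta\psi)^{m}\wedge\beta^{n-m}=(\Delta(j^{1/m}\psi))^{m}\wedge\beta^{n-m}$ and $j^{1/m}\psi$ is bounded $m$-subharmonic, Lemma \ref{lem10} delivers $\varphi_{j}\in\mathcal{E}_{m}^{0}(\Omega)$ with $(\Delta\varphi_{j})^{m}\wedge\beta^{n-m}=\mu_{j}$, and Lemma \ref{lem8} shows that $\varphi_{j}\searrow\varphi$ for some $\varphi\in\mathcal{QSH}_{m}^{-}(\Omega)$.

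The core step is the self-improving energy estimate
\begin{equation*}
E_{p}(\varphi_{j})=\int_{\Omega}(-\varphi_{j})^{p}\,d\mu_{j}\le\int_{\Omega}(-\varphi_{j})^{p}\,d\mu\le A\,E_{p}(\varphi_{j})^{p/(m+p)},
\end{equation*}
which yields $\sup_{j}E_{p}(\varphi_{j})\le A^{(m+p)/m}<\infty$; in particular $\varphi\not\equiv-\infty$ and $\varphi\in\mathcal{E}_{m}^{p}(\Omega)$ directly from Definition \ref{def1}. Since $(\varphi_{j})\subset\mathcal{E}_{m}^{0}(\Omega)\subset\mathcal{E}_{m}^{p}(\Omega)$ decreases to $\varphi\in\mathcal{E}_{m}^{p}(\Omega)$, Corollary \ref{cor7} implies $(\Delta\varphi_{j})^{m}\wedge\beta^{n-m}\rightharpoonup(\Delta\varphi)^{m}\wedge\beta^{n-m}$, and since $\mu_{j}\nearrow\mu$ by monotone convergence we conclude $(\Delta\varphi)^{m}\wedge\beta^{n-m}=\mu$. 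I expect the principal obstacle to be the preliminary reduction that $\mu$ charges no $m$-polar set: constructing an $\mathcal{E}_{m}^{p}$ function whose $-\infty$ locus contains a set of positive $\mu$-mass requires the full strength of $m$-hyperconvexity and a careful summation trick \emph{à la} Corollary \ref{cor1}. Once this is in hand, the truncation-and-compactness scheme runs smoothly on the Hölder inequality of Theorem \ref{th6} and the continuity results of Section \ref{sec2}.
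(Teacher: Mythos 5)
Your proposal is correct and follows essentially the same route as the paper: reduce to $\mu\in\mathcal{M}_{p}$, decompose $\mu=f\,(\Delta\psi)^{m}\wedge\beta^{n-m}$ via Proposition \ref{pro17}, solve for the truncations $\min(f,j)$ by Lemma \ref{lem10}, extract the uniform bound $\sup_{j}E_{p}(\varphi_{j})<+\infty$ from the $\mathcal{M}_{p}$ condition, and pass to the decreasing limit in $\mathcal{E}_{m}^{p}(\Omega)$, with uniqueness from the comparison principle of Proposition \ref{pro16}. The only additions are expository: you spell out the self-improving energy estimate and the fact that $\mu$ charges no $m$-polar set, both of which the paper leaves implicit.
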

\begin{proof}
The uniqueness follows from Proposition \ref{pro16}. Since $\mu$ does not charge $m$-polar sets,
by Proposition \ref{pro17} there exist $u \in \mathcal{E}_{m}^{0}(\Omega)$  and $0 \leq f \in L_{loc}^{1} ((\Delta \varphi)^{m}\wedge \beta^{n-m})$ such that $\mu = f ((\Delta \varphi)^{m}\wedge \beta^{n-m})$.
For each $j,$ let $\mu_{j}= \min(f, j)((\Delta \varphi)^{m}\wedge \beta^{n-m})$. By Lemma \ref{lem10}, we can find $\varphi_{j} \in \mathcal{E}_{m}^{0}(\Omega)$  such that
$(\Delta \varphi_{j})^{m}\wedge \beta^{n-m} = \mu_{j}.$ Since $\mu \in \mathcal{M}_{p},$ we have $\sup_{j} E_{p}(\varphi_{j}) < +\infty.$ It follows from Proposition \ref{pro16}
and definition of $\mathcal{E}_{m}^{p}(\Omega)$
 that $\varphi_{j}$ decreases to some $\varphi \in \mathcal{E}_{m}^{p}(\Omega)$ and $\varphi$ satisfies $(\Delta \varphi)^{m}\wedge \beta^{n-m}=\mu.$
\end{proof}
\begin{lem}\label{lem11}
Let $u,v \in \mathcal{E}_{m}^{p}(\Omega)$ and $p\geq 1$. Then there exist two sequences $(u_{j}),(v_{j}) \subset \mathcal{E}_{m}^{0}(\Omega)$ decreasing to $u,v$ respectively, such that
$$ \lim_{j\longrightarrow +\infty}\int_{\Omega}(-u_{j})^{p}(\Delta v_{j})^{m}\wedge\beta^{n-m}=\int_{\Omega}(-u)^{p}(\Delta v)^{m}\wedge\beta^{n-m}.$$
\end{lem}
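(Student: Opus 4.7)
My plan is to construct sequences in $\mathcal{E}_m^0(\Omega)\cap C(\overline{\Omega})$ that decrease to $u$ and $v$ with uniformly bounded $p$-energies, then establish the stated convergence by sandwiching $\liminf$ and $\limsup$. First, by the definition of $\mathcal{E}_m^p(\Omega)$, there exist $(\widetilde{u}_j),(\widetilde{v}_j)\subset\mathcal{E}_m^0(\Omega)$ decreasing to $u,v$ with $\sup_j E_p(\widetilde{u}_j)<\infty$ and $\sup_j E_p(\widetilde{v}_j)<\infty$. Applying Theorem \ref{th3} to each $\widetilde{u}_j$ and $\widetilde{v}_j$, together with the energy-comparison Corollary \ref{cor5} (which provides $E_p(g)\leq A\,E_p(\widetilde{u}_j)$ for any $\mathcal{E}_m^0$-function $g\geq\widetilde{u}_j$), I extract via a diagonal procedure continuous refinements $(u_j),(v_j)\subset\mathcal{E}_m^0(\Omega)\cap C(\overline{\Omega})$ still decreasing to $u,v$ with $\sup_j E_p(u_j),\sup_j E_p(v_j)<\infty$. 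Corollary \ref{cor7} then yields the weak convergence $(\Delta v_j)^m\wedge\beta^{n-m}\rightharpoonup(\Delta v)^m\wedge\beta^{n-m}$, with uniformly bounded total mass by Theorem \ref{th6}.

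For the lower bound, I fix $k$ and observe that $(-u_j)^p\geq(-u_k)^p$ for $j\geq k$, so
$$
\int_{\Omega}(-u_j)^p(\Delta v_j)^m\wedge\beta^{n-m}\;\geq\;\int_{\Omega}(-u_k)^p(\Delta v_j)^m\wedge\beta^{n-m}.
$$
Since $(-u_k)^p\in C(\overline{\Omega})$ vanishes on $\partial\Omega$, Lemma \ref{lem4} (writing $C_0^{\infty}(\Omega)$-approximations as differences of continuous $\mathcal{E}_m^0$-functions) combined with the uniform mass bound on $(\Delta v_j)^m\wedge\beta^{n-m}$ gives $\int(-u_k)^p(\Delta v_j)^m\wedge\beta^{n-m}\to\int(-u_k)^p(\Delta v)^m\wedge\beta^{n-m}$ as $j\to\infty$. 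Taking $\liminf$ in $j$ and then $k\to\infty$ by monotone convergence (using $(-u_k)^p\uparrow(-u)^p$) yields $\liminf_j\int(-u_j)^p(\Delta v_j)^m\wedge\beta^{n-m}\geq\int(-u)^p(\Delta v)^m\wedge\beta^{n-m}$.

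The hardest part is the matching $\limsup$ bound, because $(-u)^p$ is only lower semi-continuous, so the weak convergence of $(\Delta v_j)^m\wedge\beta^{n-m}$ does not directly yield an upper bound for $\int(-u)^p(\Delta v_j)^m\wedge\beta^{n-m}$. Following the pattern of Proposition \ref{pro14}(2), I would introduce $h\in\mathcal{E}_m^0(\Omega)\cap C(\overline{\Omega})$ with $u\leq h$, use the integration by parts Theorem \ref{th8} to rewrite the telescoped difference $\int(-h)^p[(\Delta v_j)^m-(\Delta v_i)^m]\wedge\beta^{n-m}$ in a form whose sign I can control, pass to the limit $j\to\infty$ via Corollary \ref{cor66} to identify it with $\int(-h)^p(\Delta v)^m\wedge\beta^{n-m}$, and finally let $h\downarrow u$ along a continuous decreasing sequence.

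The genuine obstacle appears precisely when $p>1$: for $p=1$, the IBP reduction writes the telescoped difference as $\int(v_i-v_j)\Delta h\wedge T$ against a closed positive current $T$, which is non-negative; but for $p>1$, the polarization identity (\ref{11}) forces the expansion to contain an additional $\gamma(h,h)$-type remainder of a priori ambiguous sign. To handle this, I would iterate the key inequality (\ref{eq26}) of the proof of Theorem \ref{th6}, absorbing the offending remainder into products of lower-order mixed $p$-energies whose convergence can be controlled using Proposition \ref{pro111} (in particular the weak convergence of $\gamma$-currents and the Chern–Levine–Nirenberg estimate), closing the $\limsup$ inequality.
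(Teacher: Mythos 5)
Your proposal does not close the proof: the $\limsup$ half, which you yourself identify as ``the hardest part'' and ``the genuine obstacle,'' is left as a plan rather than an argument. You propose to telescope $\int(-h)^{p}[(\Delta v_{j})^{m}-(\Delta v_{i})^{m}]\wedge\beta^{n-m}$ by integration by parts, concede that for $p>1$ the polarization produces a remainder ``of a priori ambiguous sign,'' and then say you ``would iterate'' inequality (\ref{eq26}) and ``absorb the offending remainder'' — but you never exhibit the inequality that actually controls that remainder, nor explain how letting $h\downarrow u$ at the end interacts with the $j$-limit (the function $-(-h)^{p}$ need not be $m$-subharmonic for $p>1$, so Corollary \ref{cor66} does not directly apply to it as a test function either). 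As written, the upper bound is not established, so the lemma is not proved. The deeper issue is that you are trying to prove a stronger statement than the lemma requires: convergence for an essentially arbitrary continuous decreasing approximation of $v$. The lemma only asks for the \emph{existence} of suitable sequences, and that freedom is exactly what the paper exploits.

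The paper's proof takes $(u_{j})$ to be any defining sequence for $u\in\mathcal{E}_{m}^{p}(\Omega)$, but chooses $(v_{j})$ very specifically: since $(\Delta v)^{m}\wedge\beta^{n-m}$ does not charge $m$-polar sets, Proposition \ref{pro17} writes $(\Delta v)^{m}\wedge\beta^{n-m}=f\,(\Delta\psi)^{m}\wedge\beta^{n-m}$ with $\psi\in\mathcal{E}_{m}^{0}(\Omega)$ and $0\le f\in L_{loc}^{1}$, and Lemma \ref{lem10} produces $v_{j}\in\mathcal{E}_{m}^{0}(\Omega)$ with $(\Delta v_{j})^{m}\wedge\beta^{n-m}=\min(f,j)\,(\Delta\psi)^{m}\wedge\beta^{n-m}$; the comparison principle shows $(v_{j})$ decreases to a function with the same Hessian measure as $v$, i.e.\ to $v$. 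With this choice the measures $(\Delta v_{j})^{m}\wedge\beta^{n-m}$ increase monotonically (as measures) to $(\Delta v)^{m}\wedge\beta^{n-m}$, while $(-u_{j})^{p}$ increases pointwise to $(-u)^{p}$, so
$$\int_{\Omega}(-u_{j})^{p}\min(f,j)\,(\Delta\psi)^{m}\wedge\beta^{n-m}\;\nearrow\;\int_{\Omega}(-u)^{p}f\,(\Delta\psi)^{m}\wedge\beta^{n-m}$$
by the monotone convergence theorem alone — no weak-convergence or $\limsup$ argument is needed. I recommend you replace your construction of $(v_{j})$ by this one; your $\liminf$ computation then becomes superfluous and the problematic $\limsup$ step disappears entirely.
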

\begin{proof}
Since $u \in \mathcal{E}_{m}^{p}(\Omega)$, there exists a sequence $(u_{j})\subset \mathcal{E}_{m}^{0}(\Omega)$ decreasing to $u$ such that $$\sup_{j}\int_{\Omega}(-u_{j})^{p}(\Delta u_{j})^{m}\wedge\beta^{n-m}<+\infty.$$
From Proposition \ref{pro17} and the fact that $(\Delta v)^{m}\wedge \beta^{n-m}$ does not charge $m$-polar sets, we can find $\psi  \in \mathcal{E}_{m}^{0}(\Omega)$ and $0\leq f \in L_{loc}^{1}((\Delta \psi)^{m}\wedge \beta^{n-m})$ such that  $(\Delta v)^{m}\wedge \beta^{n-m}=f((\Delta \psi)^{m}\wedge \beta^{n-m}).$ Then by lemma \ref{lem10}, there exists a sequence $(v_{j})\subset \mathcal{E}_{m}^{0}(\Omega)$ such that $(\Delta v_{j})^{m}\wedge \beta^{n-m}=\min(f,j)(\Delta \psi)^{m}\wedge \beta^{n-m}.$
Thus from the comparison principle follows that $(v_{j})$ decreases to some function $\varphi \in \mathcal{E}_{m}^{p}(\Omega)$ such that $(\Delta v)^{m}\wedge \beta^{n-m}=(\Delta \varphi)^{m}\wedge
 \beta^{n-m}.$ Hence, we have $v\equiv \varphi.$ Therefore,
 $$\lim_{j\longrightarrow +\infty}\int_{\Omega}(-u_{j})^{p}(\Delta v_{j})^{m}\wedge\beta^{n-m}=\lim_{j\longrightarrow +\infty}\int_{\Omega}(-u_{j})^{p}\min(f,j)(\Delta \psi)^{m}\wedge\beta^{n-m}=\int_{\Omega}(-u)^{p}(\Delta v)^{m}\wedge\beta^{n-m}.$$
\end{proof}
\begin{proof}[\textbf{Proof of Theorem} \ref{th00}]
Assume that $\mu=(\Delta \varphi)^{m}\wedge \beta^{n-m} $ with $\varphi\in \mathcal{E}_{m}^{p}(\Omega).$
and $\psi $ is an other function in $\mathcal{E}_{m}^{p}(\Omega).$ By lemma \ref{lem11} we can find two sequences $(\varphi_{j}),(\psi_{j}) \subset \mathcal{E}_{m}^{0}(\Omega)$ decreasing to $\varphi,\psi$ respectively such that $$\sup_{j}\int_{\Omega}(-\varphi_{j})^{p}(\Delta \varphi_{j})^{m}\wedge\beta^{n-m}<+\infty \mbox{ and } \sup_{j}\int_{\Omega}(-\psi_{j})^{p}(\Delta \psi_{j})^{m}\wedge\beta^{n-m}<+\infty.$$ From Theorem \ref{th6} it follows that $$\lim_{j\longrightarrow +\infty}\int_{\Omega}(-\psi_{j})^{p}(\Delta \varphi_{j})^{m}\wedge\beta^{n-m}=\int_{\Omega}(-\psi)^{p}(\Delta \varphi)^{m}\wedge\beta^{n-m}.$$
Then we get $\psi \in L^{p}(\Omega,\mu)$. It suffices to apply Theorem \ref{th13} to get the result.
\end{proof}

\end{document}